\DeclareSymbolFontAlphabet{\amsmathbb}{AMSb}
\newtheorem{proposition}{Proposition}[section]
\newtheorem{definition}{Definition}[section]
\newtheorem{theorem}{Theorem}[section]
\newtheorem{corollary}{Corollary}[section]
\newtheorem{remark}{Remark}[section]
\newtheorem{lemma}{Lemma}[section]
\newtheorem{mainthm}{Theorem}
\def\Co{\amsmathbb C}
\def\Na{\amsmathbb{N}}
\def\Re{\amsmathbb R}
\def\I{\mathds{I}}
\def\II{\mathds{II}}
\def\id{\mathbb{id}}
\def\A{\mathbb A}
\def\B{\mathbb B}
\def\D{\mathbb D}
\def\F{\mathbb F}
\def\G{\mathbb G}
\def\M{\mathbb{M}}
\def\U{\mathbb{U}}
\def\W{\mathbb{W}}
\def\US{\mathscr{U}}
\def\VS{\mathscr{V}}
\def\EC{\mathcal{E}}
\def\HC{\mathcal{H}}
\def\UC{\mathcal{U}}
\def\VC{\mathcal{V}}
\def\WC{\mathcal{W}}
\def\XC{\mathcal{X}}
\def\YC{\mathcal{Y}}
\def\ZC{\mathcal{Z}}
\def\XF{\mathfrak{X}}
\def\gt{\tilde{g}}
\def\dt{\tilde{\nabla}}
\newcommand{\Gat}[3]{\tilde{\Gamma}_{{#1}{#3}}^{#2}}
\newcommand{\Gah}[3]{\hat{\Gamma}_{{#1}{#3}}^{#2}}
\newcommand{\z}{\bar z}
\newcommand{\J}[1]{\mathds{J}_{#1}}
\newcommand{\dd}[2]{\frac{\partial{#1}}{\partial{#2}}}
\title[On the regularity of the structure of branched immersions]{On the regularity of the structure of branched immersions}
\author{T. A. Medina-Tejeda}
\date{\today}
\thanks{The author was supported by FAPESP Grant no. 2021/11253-3}
\address{Instituto de Ci\^encias Matem\'aticas e de Computa\c{c}\~ao - Universidade de S\~ao Paulo,
Av. Trabalhador s\~ao-carlense, 400 - Centro,
CEP: 13566-590 - S\~ao Carlos - SP, Brazil}
\email{talexanmedinat@gmail.com}
\subjclass[2020]{Primary 53C42; Secondary 26B35, 35J46, 53B25}\keywords{branched immersion, branch point, branch coordinates, distinguished degree, order and index of branch points}
\begin{document}
\begin{abstract}
We show the existence of some special coordinate systems for expressing maps with branch points. These coordinates allow obtaining an explicit representation formula of branch immersions and understanding the regularity of some fundamental elements in the theory of R. D. Gulliver, R. Osserman and H. L. Royden. For conformal maps, we prove that these coordinates exist with certain regularity if and only if the mean curvature vector extends and is in a particular Hölder space. In general, we characterize these types of coordinate systems. Also, using these coordinates, we study the regularity and behavior of curvatures near branch points. 
\end{abstract}

\maketitle

\tableofcontents

\section{Introduction.}
This paper is devoted to determining the regularity of different elements associated with branched immersions. These elements include representation formulas, the tangent bundle, the mean curvature vector, the curvature tensor, the sectional curvature, and the Gauss-Kronecker curvature, among others. We show how the regularity and behavior of these elements near a branch point are determined by some special coordinate systems, the order, and the index of the branch point. 

Branched immersions were introduced by R. D. Gulliver, R. Osserman, and H. L. Royden in \cite{O1} to understand the nature of the solutions of some variational problems in geometry. One of the most popular variational problems is the Plateau problem. Although R. Osserman in \cite{O2} proved that the solution to this problem in $\Re^3$ is free of branch points, there are minimizing solutions of the problem in $\Re^n$, $n\geq 4$ having branch points (cf.\cite{WB}). This fact demonstrates the importance of studying maps with the presence of branch points. 

If $M^2$, $N^n$ are a bi-dimensional and an $n$-dimensional smooth manifolds respectively with $n\geq 2$, and $f:M^2\to N^n$ is a $C^1$-map, a point $p\in M^2$ is said to be a {\it branch point} of $f$ if there exist $s\in \amsmathbb{N}$ and coordinate systems $(U,x)$, $(V,y)$ at $p$ and $f(p)$ respectively, such that $x(p)=0$, $y(f(p))=0$ and $\hat{f}=y\circ f\circ x^{-1}=(\hat{f_1},\dots,\hat{f_n})$ satisfies:
\begin{align}\label{branchcond1}
	 \sigma(z):=\hat{f}_1(z)+i\hat{f}_2(z)-z^{s+1}=o(|z|^{s+1}),  
\end{align}
\begin{align}\label{branchcond2}
	\hat{f}_k(z)=o(|z|^{s+1}),\text{ } 3\leq k\leq n,
\end{align}
\begin{align}\label{branchcond3}
	\frac{\partial\sigma}{\partial x_j}(z),\frac{\partial\hat{f}_k}{\partial x_j}(z)=o(|z|^{s}),\text{ } 3\leq k\leq n,\text{ }j=1,2.
\end{align}
The point $p$ is also called a {\it branch point of order $s$}. A map $f:M^2\to N^n $ is a {\it branched immersion} if it is a $C^1$-map, regular, except for branch points. In the case that $f$ is a {\it branched conformal immersion} (i.e., a branched immersion which is also a conformal map) with extendable mean curvature vector $\HC$ of certain local Hölder regularity $C^{r,\beta}_{loc}$, the expression above of $f$ in the coordinate systems $(U,x)$ and $(V,y)$ hides an expression involving maps with certain regularity, which we will describe below in our first main result:
\begin{mainthm}[\ref{Theo1}]\label{TheoA}
	Let $f:M^2\to N^n$ be a $C^{r+2,\beta}_{loc}$-branched conformal immersion, $r\in \Na\cup \{0,\infty,\omega\}$, $\beta \in(0,1)$ and $\tilde{g}$ a $C^{r+1,\beta}_{loc}$-Riemannian metric on $N^n$. If the mean curvature vector $\HC$ of $f$ has a $C^{r,\beta}_{loc}$-extension to $M^2$, then for any branch point $p\in M^2$ of order $s$ and coordinate systems $(U,x)$, $(V,y)$ at $p$, $f(p)$ respectively, such that $x(p)=0$, $y(f(p))=0$ and $(\ref{branchcond3})$ is satisfied, we have
	\begin{align}
		&\dd{\hat{f}}{z}(z)=(s+1)z^sd(z),\label{cond1intro}\\
		&\dd{^2\hat{f}}{\z\partial z}(z)=|z|^{2s}l(z),\label{cond2intro}
	\end{align} 
	where $l:x(U)\to \Re^n$ is a $C^{r,\beta}_{loc}$-map and $d:x(U)\to \Co^n$ is a $C^{r+1,\beta}_{loc}$-map in which
	\begin{align}\label{cond3intro}
		d_1(z)=\frac{1}{2}(1+d'_1(z)),\text{ }d_2(z)=-\frac{i}{2}(1+d'_2(z)),
	\end{align}	 
	$d'_1(0)=0$, $d'_2(0)=0$ and $d_j(0)=0$ for $j=3,\dots,n$. 
\end{mainthm}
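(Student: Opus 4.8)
The plan is to reduce the statement to the second-order elliptic system that a conformal immersion satisfies, and then to read off both identities from the vanishing order of its right-hand side.

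\textbf{(1) The conformal immersion system.} In the coordinates $(U,x)$ put $z=x_1+ix_2$ and abbreviate $f_z=\dd{\hat{f}}{z}$, $f_{\z}=\dd{\hat{f}}{\z}$. Conformality of $f$ with conformal factor $\lambda^2$ is the pair of complex-bilinear identities $\inner{f_z}{f_z}_{\gt}=0$ and $\inner{f_z}{f_{\z}}_{\gt}=\tfrac12\lambda^2$. Differentiating the first in $\z$ and the identity $\inner{f_{\z}}{f_{\z}}_{\gt}=0$ in $z$ shows that $\dt_z f_{\z}$ is $\gt$-orthogonal to the whole complexified tangent space, hence normal; as its normal part is by definition $\tfrac12\lambda^2\HC$, one obtains componentwise
\begin{align}\label{eq:sys}
	\dd{^2\hat{f}^k}{\z\partial z}(z)=-\Gat{i}{k}{j}(\hat{f})\,f_z^i f_{\z}^j+\tfrac12\lambda^2\HC^k,\qquad k=1,\dots,n.
\end{align}
This is the only place the hypotheses enter: $\gt\in C^{r+1,\beta}_{loc}$ makes the coefficients $\Gat{i}{k}{j}(\hat{f})$ of class $C^{r,\beta}_{loc}$, and $\HC\in C^{r,\beta}_{loc}$ by assumption.

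\textbf{(2) Crude vanishing of the right-hand side.} Conditions $(\ref{branchcond1})$--$(\ref{branchcond3})$ give $f_z^1+if_z^2=(s+1)z^s+o(|z|^s)$ and $f_z^k=o(|z|^s)$ for $k\geq3$; thus $f_z=O(|z|^s)$ and, since $\lambda^2=2\inner{f_z}{f_{\z}}_{\gt}$, also $\lambda^2=O(|z|^{2s})$. Both summands in $(\ref{eq:sys})$ are then $O(|z|^{2s})$, so
\begin{align}\label{eq:van}
	\partial_{\z}f_z=\dd{^2\hat{f}}{\z\partial z}=O(|z|^{2s}).
\end{align}

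\textbf{(3) The factorization (main step).} The content of $(\ref{cond1intro})$ is that the a priori only $C^{r+1,\beta}_{loc}$ map $f_z$ factors as $z^s$ times a map of the \emph{same} class. I would write the Cauchy--Pompeiu representation on a disc $D\subset x(U)$ about $0$,
\begin{align}\label{eq:CP}
	f_z(z)=\Phi(z)-\frac1\pi\iint_{D}\frac{\partial_{\z}f_z(\zeta)}{\zeta-z}\,dA(\zeta),
\end{align}
with $\Phi$ holomorphic; the solid Cauchy transform carries $C^{r,\beta}_{loc}$ into $C^{r+1,\beta}_{loc}$. By $(\ref{eq:van})$ the Taylor jet of $\partial_{\z}f_z$ has no monomial of total degree $<2s$, which annihilates the coefficients $c_{ab}$ of $f_z$ with $b\geq1$ and $a+b\leq2s$ (those with $a+b<s$ vanish already since $f_z=O(|z|^s)$). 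A bootstrap on $(\ref{eq:sys})$---re-inserting this partial information on $f_z$, hence on $\lambda^2=2\inner{f_z}{f_{\z}}_{\gt}$---upgrades $(\ref{eq:van})$ to $\partial_{\z}f_z=z^s\z^s\,l$ with $l$ regular, whose jet carries both a $z$- and a $\z$-factor of degree $s$, and this kills the remaining coefficients with $a<s$. Therefore $\Phi=z^s\Phi_0$ with $\Phi_0$ holomorphic and the transform term is $z^s$ times a $C^{r+1,\beta}_{loc}$ remainder, giving $f_z=(s+1)z^s d$ with $d\in C^{r+1,\beta}_{loc}$; reading the leading coefficients in $(\ref{branchcond1})$--$(\ref{branchcond2})$ yields $(\ref{cond3intro})$, namely $d_1(0)=\tfrac12$, $d_2(0)=-\tfrac i2$ and $d_j(0)=0$ for $j\geq3$. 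I expect this to be the main obstacle: it is the precise $|z|^{2s}$-structure of the right-hand side, not a bare order bound, that lets one divide by the \emph{holomorphic} factor $z^s$ without losing Hölder regularity, and closing the bootstrap while matching the exact classes---with $r\in\{\infty,\omega\}$ handled by iteration, respectively by analytic elliptic regularity---couples $(\ref{cond1intro})$ and $(\ref{cond2intro})$ inextricably.

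\textbf{(4) The second identity.} Substituting $f_z=(s+1)z^s d$ and $f_{\z}=(s+1)\z^s\bar d$ into $(\ref{eq:sys})$, and using $\lambda^2=2(s+1)^2|z|^{2s}\inner{d}{\bar d}_{\gt}$, both summands carry the factor $|z|^{2s}=z^s\z^s$, whence $\dd{^2\hat{f}}{\z\partial z}=|z|^{2s}l$ with
\begin{align}\label{eq:l}
	l^k=(s+1)^2\Big(-\Gat{i}{k}{j}(\hat{f})\,d^i\bar d^{\,j}+\inner{d}{\bar d}_{\gt}\HC^k\Big).
\end{align}
Since $d\in C^{r+1,\beta}_{loc}$, $\Gat{i}{k}{j}(\hat{f})\in C^{r,\beta}_{loc}$ and $\HC\in C^{r,\beta}_{loc}$, we get $l\in C^{r,\beta}_{loc}$, which is $(\ref{cond2intro})$.
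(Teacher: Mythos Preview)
Your steps (1), (2) and (4) match the paper's argument, and your formula for $l$ is essentially the paper's. The gap is in step (3).

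The paper does \emph{not} try to divide $f_z$ by $z^s$ inside the Cauchy--Pompeiu representation. Instead it observes (this is Lemma~\ref{lembranch}, a two-line computation from (\ref{branchcond3})) that $d:=f_z/((s+1)z^s)$ extends \emph{continuously} to $0$ with the values in (\ref{cond3intro}), and is $C^{r+1,\beta}_{loc}$ on the punctured domain. Then, dividing your (\ref{eq:sys}) by $z^s$, one gets on $x(U)\setminus\{0\}$ the closed nonlinear Cauchy--Riemann system
\[
\partial_{\z}d_j(z)=(s+1)\,\z^s\sum_{i,k}\big(\hat g_{ik}\HC_j-\Gah{i}{j}{k}\big)\,\mathrm{Re}\{d_i\bar d_k\}=:E_j(z,d(z)),
\]
with $E\in C^r$. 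The paper then invokes the elliptic regularity of Coffman--Pan--Zhang for scalar equations $\partial_{\z}u=E(z,u)$ (extended componentwise to systems in Lemma~\ref{reg}): a merely continuous solution of such an equation is automatically $C^{r,\beta}_{loc}$. One more Schauder step ($\partial_{\z}d\in C^{r,\beta}_{loc}$ implies $d\in C^{r+1,\beta}_{loc}$) finishes (\ref{cond1intro}); the real-analytic case follows from analytic ellipticity of $\partial_z\partial_{\z}d=\partial_zE(z,d)$.

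Your bootstrap in (3) is circular as written: to upgrade (\ref{eq:van}) to the sharp form $\partial_{\z}f_z=z^s\z^s l$ with $l\in C^{r,\beta}_{loc}$ you already need $d\in C^{r+1,\beta}_{loc}$, which is what you are proving. And the claim that the Cauchy transform of $z^s\z^s l$ is ``$z^s$ times a $C^{r+1,\beta}_{loc}$ remainder'' is not justified: writing $T(z^s\z^s l)=z^s T(\z^s l)+\psi$ with $\psi$ holomorphic, there is no reason for $\psi$ to vanish to order $s$ at $0$, so you cannot factor $z^s$ out of the whole expression without further argument. The paper's route avoids this division problem entirely by working with $d$ from the outset and appealing to the regularity theory for $\partial_{\z}d=E(z,d)$; that is the ingredient your proposal is missing.
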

Here $\dd{}{z}$, $\dd{}{\z}$ are the differential operators $\frac{1}{2}(\dd{}{x_1}-i\dd{}{x_2})$ and $\frac{1}{2}(\dd{}{x_1}+i\dd{}{x_2})$ respectively. The most important aspect of theorem \ref{TheoA} is the regularity of $d$ and $l$, which influence the regularity of other elements associated with $f$. For example, the converse of theorem \ref{TheoA} is also true (see proposition \ref{theo1reciprocal}) in the following way. If for each branch point $p\in M^2$ there exist coordinate systems $(U,x)$, $(V,y)$ at $p$, $f(p)$ respectively, such that the conditions (\ref{cond1intro}), (\ref{cond2intro}) and (\ref{cond3intro}) are satisfied for some $s\in \Na$, then the mean curvature vector $\HC$ of $f$ has a $C^{r,\beta}_{loc}$-extension to $M^2$. So, the existence of such coordinate systems characterizes the extendibility and regularity of the mean curvature vector of branched conformal immersions. In section \ref{sectiontangentbundle}, we show that the regularity of $d$ in (\ref{cond1intro}) determines the regularity of a vector subbundle $T_f$ of the pullback bundle $f^*TN^n$ near branch points. The bundle $T_f$ is called the tangent bundle of $f$, and its fibers $T_{fp}$ contain the spaces $f_*T_pM^2$. These notable facts motivate us to give the following definition. 

If $f:M^2\to N^n$ is a $C^1$-map and $p\in M^2$, we will call {\it $C^{k,\beta}_{loc}$-branch coordinates at $p$} a pair of coordinate systems $(U,x)$, $(V,y)$ as in theorem \ref{TheoA} satisfying conditions (\ref{cond1intro}) and (\ref{cond3intro}) with $d$ being a $C^{k,\beta}_{loc}$-map. Strictly speaking, the definition requires two more technical conditions affordable if we shrink $U$ (see definition \ref{branchcoord} for details). Also, if additionally the condition (\ref{cond2intro}) is satisfied with $l$ being a $C^{k-1,\beta}_{loc}$-map, $(U,x)$, $(V,y)$ are called {\it $C^{k,\beta}_{loc}$-regular branch coordinates at $p$}. It is not hard to prove (see lemma \ref{lembranch}) that any pair of coordinates systems in which $\hat{f}$ satisfies (\ref{branchcond1}), (\ref{branchcond2}) and (\ref{branchcond3}) are $C^0$-branch coordinates. The converse is also valid (see lemma \ref{equivalencebranchcoor}), that is, every pair of $C^0$-branch coordinates satisfies (\ref{branchcond1}), (\ref{branchcond2}) and (\ref{branchcond3}). The benefit of these coordinates lies in the differentiability of $d$ and the Hölder continuity of $l$, as we will see later. So we will focus on $C^{k,\beta}_{loc}$-branch coordinates with $k\geq 1$. 

As we are interested in maps with branch points in general, it is natural to wonder about the existence of these types of coordinates for non-conformal maps and how can be estimated their associated degree of differentiability. If we look carefully, the existence of $C^{k}$-branch coordinates at a point $p$ for a $C^{k+1}$-map with $k\geq s$, implies that $\dd{^{s+1}\hat{f}}{z^{s+1}}(0)=(\frac{(s+1)!}{2},-i\frac{(s+1)!}{2},0,\dots,0)$ and for each $h'\in\{0,\dots,s-1\}$, $\dd{^{h+h'+1}\hat{f}}{\z^h\partial z^{h'+1}}(0)=0$ for all $h\in\{0,\dots,k-h'\}$. This type of condition is sufficient to determine if a pair of coordinates $(U,x)$, $(V,y)$ are branch coordinates, as indicated in our second main result:
\begin{mainthm}[\ref{Theo2}]\label{TheoB}
	Let $f:M^2\to N^n$ be a $C^{r+1,\beta}_{loc}$-map, $r\in\Na$ (resp. $r\in \{\infty,\omega\}$), $\beta \in [0,1]$, $s\in \Na$ such that $s\leq r$ and $p\in M^2$. If $(U,x)$, $(V,y)$ are coordinate systems at $p$, $f(p)$ respectively, such that $x(p)=0$, $y(f(p))=0$, $$\dd{^{s+1}\hat{f}}{z^{s+1}}(0)=(\frac{(s+1)!}{2},-i\frac{(s+1)!}{2},0,\dots,0)$$ and for each $h'\in\{0,\dots,s-1\}$, we have that $\dd{^{h+h'+1}\hat{f}}{\z^h\partial z^{h'+1}}(0)=0$ for all $h\in\{0,\dots,r-h'\}$ (resp. for all $h\in \{0\}\cup \Na$), then $p$ is a branch point of order $s$ and there exists an open neighborhood $U'\subset U$ of $p$ such that $(U',x)$, $(V,y)$ are $C^{r-s,\beta}_{loc}$-branch coordinates at $p$. If additionally, $r\geq (s+1)(s+2)/2$ then $U'$ can be shrunk in such way that $(U',x)$, $(V,y)$ are $C^{r-(s+1)(s+2)/2+1,\beta}_{loc}$-regular branch coordinates at $p$. 
\end{mainthm}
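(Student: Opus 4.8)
The plan is to reduce the entire statement to a single \emph{division lemma} and then to iterate it. The lemma I would establish first is: if $w$ is a $C^{m,\beta}_{loc}$ map defined near $0\in\Co$ (with $m\geq 1$, $\beta\in[0,1]$) whose pure antiholomorphic derivatives $\dd{^b w}{\z^b}(0)$ vanish for every $b\in\{0,\dots,m\}$, then $w(z)/z$ extends to a $C^{m-1,\beta}_{loc}$ map. I would prove this by writing $w=P+R$, where $P$ is the Taylor polynomial of $w$ of degree $m$ at $0$ and $R$ is the remainder. The hypothesis forces every monomial of $P$ to carry a positive power of $z$, so $P=zQ$ for a polynomial $Q$; and since all derivatives of $R$ up to order $m$ vanish at $0$, one has $\big|\dd{^{a+b}R}{z^a\partial\z^b}(z)\big|\leq C|z|^{m-(a+b)+\beta}$ for $a+b\leq m$. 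Expanding $\dd{^{a+b}}{z^a\partial\z^b}(R/z)$ by Leibniz as a combination of the terms $z^{-(j+1)}\dd{^{a-j+b}R}{z^{a-j}\partial\z^b}$, each term is bounded by $C|z|^{m-(a+b)-1+\beta}$; hence every derivative of $R/z$ of order $\leq m-1$ is bounded and tends to $0$ at the origin, while every derivative of order $m$ is bounded by $C|z|^{\beta-1}$, and a path-integration estimate upgrades the top-order derivatives to class $C^{0,\beta}$. This Hölder estimate for the top derivative of the quotient is the technical heart, and I expect it to be the main obstacle of the whole argument.

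Granting the lemma, the first conclusion follows by iteration. Set $g=\dd{\hat f}{z}\in C^{r,\beta}_{loc}$; the hypotheses say exactly that $\dd{^{a+b}g}{z^a\partial\z^b}(0)=0$ whenever $a\leq s-1$ and $a+b\leq r$, so in particular the pure $\z$-derivatives of $g$ vanish to order $r$ and the lemma gives $g=zg_1$ with $g_1\in C^{r-1,\beta}_{loc}$. Differentiating the identity $g=zg_1$ yields $\dd{^{a-1+b}g_1}{z^{a-1}\partial\z^b}(0)=\tfrac1a\dd{^{a+b}g}{z^a\partial\z^b}(0)$ for $a\geq1$, which shows that $g_1$ satisfies the same vanishing conditions with $s$ and $r$ each lowered by one. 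Applying the lemma $s$ times produces $g=z^s\tilde d$ with $\tilde d\in C^{r-s,\beta}_{loc}$, and setting $d=\tilde d/(s+1)$ gives $(\ref{cond1intro})$. The limit $\tilde d(0)=\tfrac1{s!}\dd{^s g}{z^s}(0)=\tfrac1{s!}\dd{^{s+1}\hat f}{z^{s+1}}(0)=(\tfrac{s+1}{2},-i\tfrac{s+1}{2},0,\dots,0)$ then yields $(\ref{cond3intro})$.

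After shrinking $U$ to a neighborhood $U'$ of $p$ on which the two additional requirements of definition \ref{branchcoord} hold, which can be arranged on a small enough $U'$ since $d_1(p),d_2(p)\neq 0$, the pair $(U',x),(V,y)$ is a system of $C^{r-s,\beta}_{loc}$-branch coordinates at $p$ (here one uses $s\leq r$). Being in particular $C^{0}$-branch coordinates, lemma \ref{equivalencebranchcoor} shows that $(\ref{branchcond1})$--$(\ref{branchcond3})$ hold, so $p$ is a branch point of order $s$.

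For the final statement I would begin from $\dd{^2\hat f}{\z\partial z}=\dd{}{\z}\big((s+1)z^s d\big)=(s+1)z^s\dd{d}{\z}$, so that $(\ref{cond2intro})$ amounts to dividing $\dd{d}{\z}$ by $\z^s$, or equivalently to dividing the real-valued map $\dd{^2\hat f}{\z\partial z}=\tfrac14\Delta\hat f\in C^{r-1,\beta}_{loc}$ by $|z|^{2s}=z^s\z^s$. The hypotheses already give the vanishing of its Taylor coefficients of $z$-degree $<s$; the decisive extra input is that each component of $\Delta\hat f$ is real-valued, so its Taylor coefficients are conjugate-symmetric and the low $z$-degree vanishing is converted into the low $\z$-degree vanishing as well. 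Dividing first by $z^s$ and then by $\z^s$ through the lemma (and its conjugate) produces the factorization $\dd{^2\hat f}{\z\partial z}=|z|^{2s}l$; a careful count of the derivatives consumed in successively verifying and using these vanishing conditions throughout the iterated division is what yields the stated loss of $(s+1)(s+2)/2$ orders, so that $l\in C^{r-(s+1)(s+2)/2,\beta}_{loc}$ and $(U',x),(V,y)$ become $C^{r-(s+1)(s+2)/2+1,\beta}_{loc}$-regular branch coordinates after one further shrinking of $U'$.
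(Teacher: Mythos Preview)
Your approach is correct. For the first conclusion it parallels the paper: your division lemma is the content of item~(i) of Lemma~\ref{lemext4}, which the paper proves via the integral representation $e(z)=z\int_0^1\dd{e}{z}(tz)\,dt+\z\int_0^1\dd{e}{\z}(tz)\,dt$ together with Lemma~\ref{lemext1} rather than by a Taylor-plus-remainder H\"older estimate; the iteration, the identification of $d(0)$, the shrinking of $U$, and the appeal to Lemma~\ref{equivalencebranchcoor} are then identical to the paper's proof.

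For the final statement your route is genuinely different from the paper's and in fact sharper. The paper invokes Lemma~\ref{relations}, which uses only that $z^s\dd{d}{\z}$ is real and then appeals to the general division Lemma~\ref{lemdivision}(ii) via Lemma~\ref{lemspecial}(i); the inductive proof of Lemma~\ref{lemdivision}(ii) carries an intrinsic $s(s+1)/2$ loss, and that is the source of the stated $(s+1)(s+2)/2$. You instead feed the \emph{full} Taylor hypotheses of the theorem, together with the conjugate symmetry of the coefficients of the real-valued $\Delta\hat f$, into $2s$ applications of your division lemma. Those two inputs force every monomial in the degree-$(r-1)$ Taylor polynomial of $\Delta\hat f$ to carry a factor $z^s\z^s$, and your remainder estimate then yields $l=\tfrac14\Delta\hat f/|z|^{2s}\in C^{r-1-2s,\beta}_{loc}$, hence $C^{r-2s,\beta}_{loc}$-regular branch coordinates. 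For $s\ge 2$ this strictly improves the paper's bound (for $s=1$ the two agree), so your closing phrase ``yields the stated loss of $(s+1)(s+2)/2$'' undersells your own method rather than overstating it.
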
 
The theorem \ref{TheoB} implies that every $C^{k,\beta}$-branch coordinates with $k$ sufficiently larger than $s$ are regular branch coordinates with a little less regularity. In the cases $k=\infty$ or $k=\omega$, there is no loss of regularity. A fundamental question that arises naturally is, what is the explicit expression of a map in these coordinates? We answer this question in the following result:
\begin{mainthm}[\ref{representbranchcoord}]\label{TheoC}
	Let $f:M^2\to N^n$ be a $C^{r+1,\beta}_{loc}$-map, $r\in \Na\cup \{\infty,\omega\}$, $\beta\in (0,1)$ and $p\in M^2$ a branch point of order $s$. If $(U,x)$, $(V,y)$ are $C^{r,\beta}_{loc}$-regular branch coordinates at $p$, then  
	\begin{align}
		\hat{f}_1(z)=&Re\{z^{s+1}\}+\sum_{j=0}^{s}\sum_{k=0}^{s}(-1)^{2s-j-k}\frac{(s!)^2}{j!k!}\frac{\partial^{j+k}\varphi_1}{\partial \z^{j}\partial z^{k}}(z)\z^{j}z^k\label{formulap1intro}\\
		&+2Re\{\int_{0}^{z}w^sF_1(w)dw\},\nonumber\\
		\hat{f}_2(z)=&Im\{z^{s+1}\}+\sum_{j=0}^{s}\sum_{k=0}^{s}(-1)^{2s-j-k}\frac{(s!)^2}{j!k!}\frac{\partial^{j+k}\varphi_2}{\partial \z^{j}\partial z^{k}}(z)\z^{j}z^k\label{formulap2intro}\\
		&+2Re\{\int_{0}^{z}w^sF_2(w)dw\},\nonumber\\
		\hat{f}_h(z)=&\sum_{j=0}^{s}\sum_{k=0}^{s}(-1)^{2s-j-k}\frac{(s!)^2}{j!k!}\frac{\partial^{j+k}\varphi_h}{\partial \z^{j}\partial z^{k}}(z)\z^{j}z^k+2Re\{\int_{0}^{z}w^sF_h(w)dw\},\label{formulap3intro}\\
		&h\in\{3,\dots,n\},\nonumber
	\end{align}
	for some real $C^{2s+r+1,\beta}_{loc}$-functions $\varphi_h$ and holomorphic functions $F_h$ on an open connected neighborhood $W\subset x(U)$ of $0$, satisfying 
	\begin{align}\label{conditionrepresentintro}
		(-1)^{s}s!\frac{\partial^{s+1}\varphi_h}{\partial z^{s+1}}(0)+F_h(0)=\varphi_h(0)=0
	\end{align} for each $h\in\{1,\dots,n\}$. Furthermore, for any $(m,\gamma)\in (\Na\cup \{0,\infty,\omega\})\times [0,1]$ and $s\in \Na$, if we substitute in the formulas (\ref{formulap1intro}), (\ref{formulap2intro}) and (\ref{formulap3intro}), real $C^{2s+m+1,\gamma}_{loc}$-functions $\varphi_h$ on an open connected neighborhood $\hat{U}\subset\Re^2$ of $0$ and holomorphic functions $F_h$ on $\hat{U}$ satisfying (\ref{conditionrepresentintro}), we obtain a $C^{m+1,\gamma}_{loc}$-map $\hat{f}:\hat{U}\to \Re^n$ with $0$ being a branch point of order $s$. Also, shrinking $\hat{U}$ if necessary, the standard coordinates of $\hat{U}$ and $\Re^n$ are $C^{m,\gamma}_{loc}$-regular branch coordinates at $0$ for $\hat{f}$.  
\end{mainthm}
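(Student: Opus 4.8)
The plan is to read condition (\ref{cond2intro}) as an iterated Poisson equation and to recognize the double-sum term in (\ref{formulap1intro})--(\ref{formulap3intro}) as an explicit particular solution of it, the remaining pieces being harmonic. Everything rests on one algebraic identity. For fixed $s$, introduce the finite-order operators $\mathcal{D}^{(s)}_z\varphi=s!\sum_{k=0}^{s}\frac{(-1)^k}{k!}z^k\dd{^k\varphi}{z^k}$ and $\mathcal{D}^{(s)}_{\z}\varphi=s!\sum_{j=0}^{s}\frac{(-1)^j}{j!}\z^j\dd{^j\varphi}{\z^j}$, which commute. Since $(-1)^{2s-j-k}=(-1)^{j+k}$, the double sum multiplying $(s!)^2$ in each formula is exactly $g_h:=\mathcal{D}^{(s)}_{\z}\mathcal{D}^{(s)}_z\varphi_h$. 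A one-line telescoping gives $\dd{}{z}\mathcal{D}^{(s)}_z\varphi=(-1)^sz^s\dd{^{s+1}\varphi}{z^{s+1}}$ (all intermediate terms cancel), and likewise in $\z$; combining these with the fact that $z^s$ is holomorphic (so it passes through $\mathcal{D}^{(s)}_{\z}$) yields the two identities used throughout:
\begin{align}
\dd{g_h}{z}&=(-1)^sz^s\,\mathcal{D}^{(s)}_{\z}\dd{^{s+1}\varphi_h}{z^{s+1}},\nonumber\\
\dd{^2g_h}{\z\partial z}&=|z|^{2s}\dd{^{2s+2}\varphi_h}{\z^{s+1}\partial z^{s+1}}=|z|^{2s}\Big(\tfrac14\Delta\Big)^{s+1}\varphi_h.\nonumber
\end{align}

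For the forward direction I start from the regular branch coordinate hypothesis, which by definition gives $\dd{^2\hat f_h}{\z\partial z}=|z|^{2s}l_h$ with $l_h\in C^{r-1,\beta}_{loc}$. First I would solve the polyharmonic equation $(\tfrac14\Delta)^{s+1}\varphi_h=l_h$ on a small disk $W\ni0$, normalized by $\varphi_h(0)=0$; iterating the Schauder/Newtonian-potential gain of two derivatives $s+1$ times produces $\varphi_h\in C^{2s+r+1,\beta}_{loc}$ (and a real-analytic, resp.\ $C^\infty$, solution when $r=\omega,\infty$). Forming $g_h$ as above, the second identity shows $\dd{^2g_h}{\z\partial z}=|z|^{2s}l_h=\dd{^2\hat f_h}{\z\partial z}$, so $u_h:=\hat f_h-(\text{polynomial part})-g_h$ is harmonic on $W$; on the disk I write $u_h=2Re\{\Psi_h\}$ with $\Psi_h$ holomorphic and $\Psi_h(0)=0$ (here $u_h(0)=0$ since $\hat f_h(0)=0$, the polynomial part vanishes at $0$, and $g_h(0)=(s!)^2\varphi_h(0)=0$). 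The first branch condition (\ref{cond1intro}) forces $\dd{\hat f_h}{z}$, and hence $\Psi_h'=\dd{u_h}{z}$, to vanish to order $s$ at $0$; since $\Psi_h'$ is holomorphic this means $\Psi_h'=z^sF_h$ for a holomorphic $F_h$, giving $u_h=2Re\{\int_0^zw^sF_h\,dw\}$ and the claimed formula. Finally, dividing $\dd{\hat f_h}{z}$ by $z^s$ and evaluating at $0$ (at $z=0$ only the $j=0$ term of $\mathcal{D}^{(s)}_{\z}$ survives, contributing the factor $s!$) gives $(s+1)d_h(0)=(\text{poly coeff})+(-1)^ss!\dd{^{s+1}\varphi_h}{z^{s+1}}(0)+F_h(0)$; the normalizations $d_1(0)=\tfrac12$, $d_2(0)=-\tfrac i2$, $d_h(0)=0$ $(h\ge3)$ from (\ref{cond3intro}) then collapse to exactly (\ref{conditionrepresentintro}), and $\varphi_h(0)=0$ was arranged.

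For the converse I reverse these steps as a verification. Substituting $C^{2s+m+1,\gamma}_{loc}$ data $\varphi_h$ and holomorphic $F_h$, the middle term uses at most $2s$ derivatives of $\varphi_h$ and the other terms are real-analytic, so $\hat f\in C^{m+1,\gamma}_{loc}$. The two identities compute $\dd{\hat f_h}{z}=(s+1)z^sd_h$ with $(s+1)d_h=(\text{poly coeff})+(-1)^s\mathcal{D}^{(s)}_{\z}\dd{^{s+1}\varphi_h}{z^{s+1}}+F_h\in C^{m,\gamma}_{loc}$, and $\dd{^2\hat f_h}{\z\partial z}=|z|^{2s}l_h$ with $l_h=\dd{^{2s+2}\varphi_h}{\z^{s+1}\partial z^{s+1}}\in C^{m-1,\gamma}_{loc}$; evaluating $d_h$ at $0$ and invoking (\ref{conditionrepresentintro}) recovers precisely the normalization (\ref{cond3intro}). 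Thus (\ref{cond1intro}), (\ref{cond2intro}) and (\ref{cond3intro}) hold with the stated regularity, and after shrinking $\hat U$ so that the remaining technical requirements of definition \ref{branchcoord} hold (open conditions secured by $d(0)=(\tfrac12,-\tfrac i2,0,\dots,0)\neq0$), the standard coordinates are $C^{m,\gamma}_{loc}$-regular branch coordinates at $0$; that $0$ is then a branch point of order $s$ follows from the equivalence with (\ref{branchcond1})--(\ref{branchcond3}) recorded in lemma \ref{equivalencebranchcoor}.

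The main obstacle is the analytic input in the forward direction: producing $\varphi_h$ with exactly the regularity $C^{2s+r+1,\beta}_{loc}$ by solving $(\tfrac14\Delta)^{s+1}\varphi_h=l_h$ and, on the same simply connected disk, extracting the holomorphic primitive $\Psi_h$ and the factor $F_h$ with control of the orders of vanishing. The algebraic heart---the telescoping identities for $\mathcal{D}^{(s)}_z,\mathcal{D}^{(s)}_{\z}$---is elementary once set up, but it is what makes the explicit particular solution $g_h$ carry the weight $|z|^{2s}$ with no loss of derivatives beyond the unavoidable $2s+2$, and so it is the structural key on which both directions turn.
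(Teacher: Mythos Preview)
Your proposal is correct and follows essentially the same approach as the paper. Both arguments solve the iterated Poisson equation $\Delta^{s+1}\varphi_h=4^{s+1}l_h$ (this is the paper's Lemma~\ref{lemform}) to build the double-sum term and then identify the leftover piece via the same telescoping identity; the only organizational difference is that the paper applies the telescoping at the level of $d_h$ (showing $\hat f_h-f'_h$ is antiholomorphic, real, and vanishes at $0$, hence zero), whereas you work one integration up, showing $u_h=\hat f_h-(\text{poly})-g_h$ is harmonic and extracting $F_h$ from its holomorphic part---your operator notation $\mathcal{D}^{(s)}_z,\mathcal{D}^{(s)}_{\z}$ is a clean way to package what the paper computes by hand.
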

Although the theorem \ref{TheoC} gives us explicitly the expression of branched immersions locally, this expression is not as simplified as one would like. However, there exists a procedure that can simplify the expression but with a loss of regularity. Given a map $\hat{f}$ satisfying (\ref{branchcond1}), (\ref{branchcond2}) and (\ref{branchcond3}), R. D. Gulliver, R. Osserman, and H. L. Royden proved in \cite{O1} the existence of a $C^1$-diffeomorphism $c$ on a neighborhood of $0$ such that $(\hat{f}_1\circ c)(w)+i(\hat{f}_2\circ c)(w)=w^{s+1}$, $(\hat{f}_k\circ c)(w)=o(w^{s+1})$ and $\dd{(\hat{f}\circ c)_k}{u_j}(w)=o(w^s)$ for all $3\leq k\leq n$, $j=1,2$. Initially, R. D. Gulliver showed this particular form before in \cite{Gu} for the case $N^n=\Re^3$; its proof is also valid for the general case. Despite $c$ is of class $C^1$, the particular form of $\hat{f}\circ c$ compensated for this inconvenience in the work developed by R. D. Gulliver, R. Osserman, and H. L. Royden. Several results in \cite{O1}, like the fundamental factorization theorem (cf. proposition 3.17 in \cite{O1}), rely heavily on this special form. Also, M. J. Micallef and B. White in \cite{M-W} proved the existence of a $C^2$-diffeormorphism $c$ with the same property as before for a generalized minimal immersion $f:B\to N^n$ of class $C^2$, where $B\subset \Co$ is the closed unit disk and $\hat{f}$ is the expression of $f$ respect to the standard coordinates of $B$ and a normal coordinate system $(V,y)$ at $f(0)$. They used the local form obtained through $c$ to deduce some consequences about the structure and topology of area-minimizing surfaces near branch points.

Due to the benefit of this local form in the study of branched immersions, we characterized the regularity of a diffeomorphism like $c$. Maps with the property of $c$ are related to a certain coefficient defined in terms of the maps $d_1'$, $d_2'$ in (\ref{cond3intro}). If $(U,x)$, $(V,y)$ are $C^{r,\beta}_{loc}$-branch coordinates, we define the {\it distinguished coefficient of $\hat{f}$} to be the map $\varpi(z)=\z^s\frac{1}{2}(\overline{d'_1(z)-d'_2(z)})(z^s(1+\frac{1}{2}(d'_1(z)+d'_2(z))))^{-1}$. This map extends at $0$; in fact, the strict definition of branch coordinates (see definition \ref{branchcoord}) implies that $\varpi$ is a $C^{0,\beta}_{loc}$-map on $x(U)$ (see lemma \ref{beltrami}). The main properties of $\varpi$ are shown in our fourth main result: 

\begin{mainthm}[\ref{Main3}]\label{TheoD}
	Let $f:M^2\to N^n$ be a $C^{r+1,\beta}_{loc}$-map, $(r,m)\in (\Na\cup \{\infty,\omega\})\times (\Na\cup \{\infty,\omega\})$ with $r\geq m$, $p\in M^2$ a branch point of order $s$ and $\beta\in [0,1]$. If $(U,x)$ and $(V,y)$ are $C^{r,\beta}_{loc}$-regular branch coordinates at $p$ with $x(U)$ an open convex set, the following assertions hold: 
	\begin{enumerate}[label=(\roman*)]
		\item The distinguished coefficient of $\hat{f}$ is a $C^{m,\beta}_{loc}$-map on $U$ if and only if there exist open neighborhoods $U'\subset \Re^2$, $U''\subset x(U)$ of $0$ and a $C^{m+1,\beta}_{loc}$-diffeomorphism $c:U'\to U''$ with $c(0)=0$, such that $(\hat{f}_1\circ c)(w)+i(\hat{f}_2\circ c)(w)=w^{s+1}$. 
		\item Every $C^{m+1,\beta}_{loc}$-diffeomorphism $c:U'\to U''$ as in the item $(i)$ is a quasiconformal map satisfiying $\dd{(c^{-1})}{\z}(z)=\varpi(z)\dd{(c^{-1})}{z}(z)$ and has the form $c(w)=wc_0(w)$, where $c_0$ is a $C^{m,\beta}_{loc}$-map with $c_0(0)\neq 0$.
		\item If $c:U'\to U''$ is a $C^{m+1,\beta}_{loc}$-diffeomorphism as in the item $(i)$ then $(U',id_{U'})$, $(\Re^n,id_{\Re^n})$ are $C^{min\{r,m+1\},\beta}_{loc}$-branch coordinates at $0$ for $\hat{f}\circ c$. More precisely, the expression of $\hat{f}\circ c$ is given by $(\hat{f}_1\circ c)(w)+i(\hat{f}_2\circ c)(w)=w^{s+1}$ and $(\hat{f}_h\circ c)(w)=(s+1)Re\{\int_{0}^{w}\overline{b}_h(z)z^{s}dz\}, h\in\{3,\dots,n\}$, for some complex $C^{min\{r,m+1\},\beta}_{loc}$-maps $b_h$ on $U'$  satisfying $b_h(0)=0$ for each $h\in\{3,\dots,n\}$. 
		\item If $m\in \Na$ (resp. $m\in\{\infty, \omega\}$), the distinguished coefficient $\varpi$ of $\hat{f}$ is a $C^{m,\beta}_{loc}$-map on $x(U)$ if and only if the maps $d'_1$ and $d'_2$ in (\ref{cond3}) satisfy $\dd{^j(\overline{d'_1-d'_2})}{\z^j}(0)=0$ for all $j\in\{0,\dots,m\}$ (resp. for all $j\in\Na$). Also, if $\varpi$ is a $C^{m,\beta}_{loc}$-map on $U$, then $\dd{^j\varpi}{\z^j}(0)=0$ for all $j\in\{0,\dots,m\}$ (resp. for all $j\in\Na\cup \{0\}$).
		\item If $f$ is a $C^{r+1,\beta}_{loc}$-branched conformal immersion and $(V,y)$ is a normal coordinate system at $f(p)$, the distinguished coefficient of $\hat{f}$ is a $C^{1,\beta}_{loc}$-map. 
	\end{enumerate}
\end{mainthm}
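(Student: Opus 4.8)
The plan is to recognize that the distinguished coefficient $\varpi$ is nothing but the Beltrami coefficient of the planar map $\phi:=\hat f_1+i\hat f_2$. Using the formulas of Theorem~\ref{TheoA} for $\dd{\hat f}{z}$, together with the fact that $\hat f_1,\hat f_2$ are real (so that $\dd{\hat f_j}{\z}=\overline{\dd{\hat f_j}{z}}$), a direct computation gives $\dd{\phi}{z}=(s+1)z^s\big(1+\tfrac12(d'_1+d'_2)\big)$ and $\dd{\phi}{\z}=(s+1)\z^s\tfrac12\overline{(d'_1-d'_2)}$, whence $\dd{\phi}{\z}\big/\dd{\phi}{z}=\varpi$. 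Every assertion will be read off from this identity. Since the denominator is nonzero for small $z\neq0$ and $\varpi$ extends continuously with $\varpi(0)=0$ (the factor $(\z/z)^s$ is bounded while $d'_1-d'_2\to0$), $\varpi$ is a genuine Beltrami coefficient with $\|\varpi\|_\infty<1$ on a small neighborhood.

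For item $(ii)$ I would argue structurally: given a diffeomorphism $c$ with $(\hat f_1\circ c)+i(\hat f_2\circ c)=w^{s+1}$, set $g=c^{-1}$, so that $g^{s+1}=\phi$. Differentiating and dividing kills the common factor $(s+1)g^s$ and yields $\dd{g}{\z}=\varpi\,\dd{g}{z}$, the stated Beltrami equation for $c^{-1}$; quasiconformality then follows from $\|\varpi\|_\infty<1$. The form $c(w)=wc_0(w)$ comes from taking the $(s+1)$-th root: since $\phi=z^{s+1}\times(\text{unit})$ near $0$, the branch $g=\phi^{1/(s+1)}$ equals $z$ times a nonvanishing $C^{m,\beta}_{loc}$-factor, and inverting gives the same shape for $c$ with $c_0(0)\neq0$. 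The easy (``if'') half of $(i)$ is then immediate: if such a $C^{m+1,\beta}_{loc}$-diffeomorphism $c$ exists, then $g=c^{-1}\in C^{m+1,\beta}_{loc}$ with $\dd{g}{z}(0)\neq0$ (because $\dd{g}{\z}(0)=\varpi(0)=0$ and $Dg(0)$ is invertible), so $\varpi=\dd{g}{\z}\big/\dd{g}{z}$ is a quotient of $C^{m,\beta}_{loc}$-maps with nonvanishing denominator near $0$, hence $C^{m,\beta}_{loc}$.

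The substantial (``only if'') half of $(i)$ is the construction of $c$ from the hypothesis $\varpi\in C^{m,\beta}_{loc}$, and this is where I expect the main obstacle. Here I would solve the Beltrami equation $\dd{g}{\z}=\varpi\,\dd{g}{z}$ normalized so that $g(z)=z+o(|z|)$, and set $c=g^{-1}$. Away from the branch point this is classical, and the Schauder-type regularity theory for the Beltrami equation promotes a $C^{m,\beta}_{loc}$-coefficient to a $C^{m+1,\beta}_{loc}$-solution; the delicate part is the behavior \emph{at} $0$, where $\phi$ degenerates to order $s+1$. I would realize $g$ as the single-valued root $\phi^{1/(s+1)}$ (single-valued because $\phi$ winds $s+1$ times around $0$), check that $\dd{g}{z}(0)\neq0$ and $\dd{g}{\z}(0)=0$ so that $g$ is a genuine local diffeomorphism at $0$, and identify this solution uniquely via the fact that two maps with the same Beltrami coefficient and the same $(s+1)$-jet at $0$ coincide. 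Showing that the root extraction preserves Schauder regularity up to the degenerate point $0$ is the crux, and this is where the convexity of $x(U)$ and the representation of Theorem~\ref{TheoC} are available to control $\phi/z^{s+1}$ near the origin.

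For $(iii)$ I would compute $\hat f\circ c$ directly: the first two components give $w^{s+1}$ by construction, and for $h\ge3$ the chain rule combined with $\dd{\hat f_h}{z}=(s+1)z^s d_h$ (Theorem~\ref{TheoA}, with $d_h(0)=0$), the reality of $\hat f_h$, and the Beltrami equation for $c$ turns $\dd{(\hat f_h\circ c)}{w}$ into an exact form whose primitive, integrated along the segment from $0$ (here convexity of the domain is used), is $(s+1)\,Re\{\int_0^w \overline{b}_h(z)z^s\,dz\}$ with $b_h(0)=0$; tracking regularities through this yields the exponent $\min\{r,m+1\}$, and one then verifies the defining conditions of branch coordinates. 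Item $(iv)$ is a Taylor-expansion analysis of $\varpi=(\z/z)^s\cdot\frac{\frac12\overline{(d'_1-d'_2)}}{1+\frac12(d'_1+d'_2)}$: the only obstruction to $C^{m,\beta}_{loc}$-smoothness is the singular factor $(\z/z)^s$, and multiplying it by $\overline{(d'_1-d'_2)}$ produces a $C^{m,\beta}_{loc}$-map precisely when enough $\z$-derivatives of $\overline{(d'_1-d'_2)}$ vanish at $0$, giving the stated conditions. Finally $(v)$ follows from $(iv)$ with $m=1$: conformality forces $\sum_k(\dd{\hat f_k}{z})^2$ to vanish in the metric $\tilde g$, which in normal coordinates (where $\tilde g$ is Euclidean to second order at $f(p)$) gives $(d'_1-d'_2)(2+d'_1+d'_2)=-4\sum_{h\ge3}d_h^2=O(|z|^2)$; hence $d'_1-d'_2$ vanishes to second order, the first-order vanishing condition of $(iv)$ holds, and $\varpi\in C^{1,\beta}_{loc}$.
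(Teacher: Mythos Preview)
Your identification of $\varpi$ as the Beltrami coefficient of $\phi=\hat f_1+i\hat f_2$, the derivation of the Beltrami equation for $c^{-1}$, and the outline for $(iii)$ and $(v)$ are all in line with the paper. There are, however, three genuine gaps.

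\textbf{Item (i), the ``only if'' direction.} Your first suggestion---solve the Beltrami equation and invoke Schauder regularity to promote a $C^{m,\beta}_{loc}$ coefficient to a $C^{m+1,\beta}_{loc}$ solution---is exactly the route the paper explicitly rejects: it fails when $\beta\in\{0,1\}$, and the theorem is stated for all $\beta\in[0,1]$. Your fallback to the explicit root $g=\phi^{1/(s+1)}$ is the right construction, but you have not supplied the mechanism that gives its regularity \emph{at} $0$. The paper proceeds by proving $(iv)$ \emph{first}; from $(iv)$ and the analogue of Lemma~\ref{lemext1} one gets that the extra factor $\varpi'(z)=(\z/z)\varpi(z)$ also extends to $C^{m,\beta}_{loc}$. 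Writing $a(z)=z^{s+1}c'(z)$ via the integral $a(z)=\int_0^1\big(z\,\partial_z a(tz)+\z\,\partial_{\z}a(tz)\big)\,dt$ then makes $c'$ a $C^{m,\beta}_{loc}$-function of $d'_1,d'_2,\varpi'$ with $c'(0)=1$, so $e(z)=z\,c'(z)^{1/(s+1)}$ is \emph{a priori} only $C^{m,\beta}_{loc}$. The missing derivative is recovered by computing $\partial e/\partial z$ and $\partial e/\partial\z$ explicitly, checking that both extend to $C^{m,\beta}_{loc}$-functions at $0$, and bootstrapping via the isolated-point lemma (Lemma~\ref{lemisolated}). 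Neither Theorem~\ref{TheoC} nor Schauder theory enters.

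\textbf{Item (ii).} Your argument establishes the form $c(w)=wc_0(w)$ only for the \emph{particular} root $c=e^{-1}$, whereas the statement concerns \emph{every} $C^{m+1,\beta}_{loc}$-diffeomorphism with $(\phi\circ c)(w)=w^{s+1}$. The paper closes this by Stoilow factorization: any such $c^{-1}$ solves the same Beltrami equation as $e$, so $c^{-1}=\Phi\circ e$ with $\Phi$ conformal, whence $c^{-1}(z)=z\,e_0(z)$ and the form for $c$ follows. (A more elementary alternative is to note $(c^{-1}/e)^{s+1}\equiv 1$ on the punctured neighborhood, forcing $c^{-1}=\zeta e$ for a root of unity $\zeta$; but some such comparison is needed.)

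\textbf{Item (iv).} Your claim that ``$(\z/z)^s\overline{(d'_1-d'_2)}$ is $C^{m,\beta}_{loc}$ precisely when enough $\z$-derivatives of $\overline{(d'_1-d'_2)}$ vanish at $0$'' is false in general for $s\ge2$: with $s=2$ and $e(z)=z$ one has $\partial^j e/\partial\z^j(0)=0$ for all $j$, yet $(\z/z)^2 e=\z^2/z$ is not $C^1$ at $0$. The paper's Lemma~\ref{lemext3} gives the equivalence only under the additional hypothesis that the mixed derivatives $\partial^{h+h'}e/\partial\z^h\partial z^{h'}(0)$ with $h'\ge1$ vanish; this is supplied precisely by the \emph{regular} branch coordinate condition (\ref{cond2}), which yields $\partial(\overline{d'_1-d'_2})/\partial z=\tfrac{2}{s+1}z^s(l_1-il_2)$. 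You never use condition (\ref{cond2}), so your argument for $(iv)$ is incomplete, and since the proof of $(i)$ rests on $(iv)$, this gap propagates.
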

In theorem \ref{TheoD}, if $m=0$, $r\geq 0$ and $(U,x)$, $(V,y)$ are $C^{r,\beta}_{loc}$-branch coordinates at $p$, the items $(i)$, $(ii)$, $(iii)$ and $(iv)$ are true. If $f$ is a branched conformal map, note that items $(i)$, $(iii)$ and $(v)$ imply the existence of a $C^{2,\beta}_{loc}$-diffeomorphism $c$ like the one found by M. J. Micallef and B. White in \cite{M-W}. 
Theorem \ref{TheoD} tells us that the optimal regularity that one could expect from a $C^1$-diffeomorphism $c$ with the property mentioned in the item $(i)$ is determined by $d'_1-d'_2$. This optimal regularity could be influenced by geometrical properties of $f$ and $y$, as we will see later. 

Using $C^{\infty}$-branch coordinates $(U,x)$, $(V,y)$ at $p$, we define in section \ref{sectionrepresentation} the {\it distinguished degree of $p$ respect to $(U,x)$, $(V,y)$} to be $\varrho\in\Na\cup\{\infty\}$ if and only if the distinguished coefficient of $\hat{f}$ is of class $C^{\varrho-1}$ but not $C^{\varrho}$ on every neighborhood of $0$. To estimate $\varrho$, we extend the notion of {\it index} of a branch point studied in minimal surfaces (see, for example, \cite{Di} or \cite{Trom}) to be an element $\iota\in\Na\cup\{\infty\}$ associated to the pair of $C^{\infty}$-branch coordinates $(U,x)$, $(V,y)$ at $p$ (see definition \ref{defindex}). The estimation of $\varrho$ is possible because the inequality $\varrho\geq 2(\iota-s)$ holds when $f$ is a branched conformal immersion and $y$ is a conformal diffeomorphism (see proposition \ref{estimation}). Additionally, if $n=3$ we have $\varrho= 2(\iota-s)$. On the other hand, S. Hildebrandt and A. Tromba in \cite{Hild} gave estimates for the index using the behavior of the boundary contour of a minimal surface. We will give a different estimate based on the behavior of curvatures. In our latest main result below, we show that the index characterizes the behavior of curvatures near a branch point, so we can use this behavior to estimate the index.
\begin{mainthm}[\ref{Theofinal}]\label{TheoE}
	Let $f:M^2\to N^n$ be a $C^{\infty}$-map, $(N^n,\gt)$ a smooth Riemannian manifold, $p\in M^2$ a branch point of order $s$ and $(U,x)$, $(V,y)$ $C^{\infty}$-regular branch coordinates at $p$. If $y$ is a conformal diffeomorphism and $\iota$ the index of $p$, the following statements hold:
	\begin{enumerate}[label=(\roman*)]
		\item If $\iota\geq2s+1$, then the sectional curvature $Sec$ of the induced metric on $U-\{p\}$, the mean curvature vector $\HC\vert_{U-\{p\}}$ and the Gauss-Kronecker curvature $K^\xi\vert_{U-\{p\}}$ have $C^{0,1}_{loc}$-extensions on $U$, where $\xi$ is any local smooth section of the normal bundle of $f$ over $U$.  
		\item If $\iota<2s+1$, then $\lim_{q\to p}Sec(q)=\infty$ and $\lim_{q\to p}\sum_{j=1}^{n-2}K^{\xi'_j}=\infty$ for any smooth orthonormal frame field $(\xi'_1,\dots,\xi'_{n-2})$ for the normal bundle of $f$ over $U$.    
	\end{enumerate}
\end{mainthm}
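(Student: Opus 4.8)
The plan is to reduce everything to an explicit asymptotic analysis of the first and second fundamental forms of $f$ expressed in the regular branch coordinates $(U,x)$, $(V,y)$, using the representation formula of Theorem~\ref{TheoC} together with the conformality of $y$. First I would set up the induced metric $g=f^*\gt$ in the coordinate $z=x_1+ix_2$. By Theorem~\ref{TheoA} we have $\dd{\hat f}{z}(z)=(s+1)z^s d(z)$ with $d$ a $C^{\infty}$-map and $d_1(0)=\tfrac12$, $d_2(0)=-\tfrac{i}{2}$, $d_h(0)=0$ for $h\geq 3$; since $y$ is conformal the metric $\gt$ in the coordinates $(V,y)$ is a smooth positive multiple of the Euclidean one, so the coefficients of $g$ are $|z|^{2s}$ times smooth functions, and the conformal factor of $g$ degenerates exactly to order $2s$ at $z=0$. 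The index $\iota$ enters as the vanishing order of the relevant off-diagonal (Beltrami-type) component of $g$: concretely $\iota$ measures the order to which $\dd{^2\hat f}{\bar z\,\partial z}$ and the distinguished data $\overline{d_1'-d_2'}$ vanish, so one should first translate the hypothesis $\iota\geq 2s+1$ versus $\iota<2s+1$ into a statement about the leading exponents appearing in $g$ and in the second fundamental form $\II=(\nabla d\hat f)^{\perp}$.

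Next I would compute the sectional curvature. In two dimensions $Sec$ equals the Gauss curvature of $g$, which is $-\tfrac{1}{2\lambda}\Delta_0\log\lambda$ plus lower-order terms when $g=\lambda\,|dz|^2+(\text{off-diagonal})$; the point is that $\lambda\sim c|z|^{2s}$, so $\log\lambda\sim 2s\log|z|+(\text{smooth})$ and $\Delta_0(2s\log|z|)=0$ away from the origin. Thus the singular part of $\log\lambda$ is harmonic and contributes nothing, and the boundedness of $Sec$ near $p$ is governed by the next-order corrections to $\lambda$ and by the off-diagonal term, precisely the terms whose order is controlled by $\iota$. When $\iota\geq 2s+1$ these corrections are small enough (they enter divided by $\lambda\sim|z|^{2s}$ but numerators vanish to sufficiently high order) that $Sec$ extends Lipschitz-continuously; when $\iota<2s+1$ the leading correction produces a term of order $|z|^{2(\iota-s)-2}$ with $2(\iota-s)-2<0$, forcing $Sec\to\infty$. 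For the mean curvature vector and the Gauss--Kronecker curvature I would use that $\HC=\tfrac{1}{2\lambda}\,\mathrm{tr}_g\II$ and that each normal-direction second fundamental form $\II^{\xi}$ has coefficients of the form $|z|^{2s}$ (smooth) coming from $\dd{^2\hat f}{\bar z\,\partial z}(z)=|z|^{2s}l(z)$ in Theorem~\ref{TheoA}; since $K^{\xi}=\det(\II^{\xi})/\det(g)$ and both numerator and denominator carry the factor $|z|^{4s}$, the curvature $K^{\xi}$ is a ratio of smooth functions whose behavior is again dictated by whether the numerator's lower-order vanishing beats the $|z|^{-4s}$ blow-up, i.e.\ by $\iota$.

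For item~$(ii)$ the key algebraic fact is that $\sum_{j=1}^{n-2}K^{\xi_j'}$ and $Sec$ are related through the Gauss equation, $Sec=\widetilde{Sec}+\sum_j K^{\xi_j'}$ (the ambient sectional curvature $\widetilde{Sec}$ is bounded since $\gt$ is smooth), so it suffices to show one of the two blows up and that they blow up with the same sign; I would establish this by exhibiting the common leading term of order $|z|^{2(\iota-s)-2}$ and checking its coefficient is positive, which follows from the definition of $\iota$ as the first order at which the obstruction to regular branch coordinates fails to vanish. The main obstacle I anticipate is bookkeeping the exact exponents: one must carefully identify \emph{which} Taylor coefficient of the representation data is the first nonvanishing one responsible for the singularity, show it is genuinely the index $\iota$ in the sense of Definition~\ref{defindex}, and verify that no cancellation occurs between the contributions of $\lambda$, the off-diagonal metric term, and $\II$. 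This requires handling the conformal normalization (using that $y$ is conformal to kill cross terms and reduce $g$ to a clean form) and then a delicate separation of the harmonic singular part $2s\log|z|$ from the curvature-producing remainder; getting the threshold exactly at $\iota=2s+1$, rather than off by one, is where the computation must be done with care.
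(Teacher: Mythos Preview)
Your plan has a genuine gap at its foundation: you have misidentified what the index $\iota$ measures. By Definition~\ref{defindex} and Remark~\ref{remindex}, the index is $\iota=s+\min\{ord_z(d_3),\dots,ord_z(d_n)\}$, i.e.\ it records the $z$-order of the \emph{normal} components $(\hat f_3,\dots,\hat f_n)$ of $\hat f$. It is \emph{not} the vanishing order of $\overline{d_1'-d_2'}$; that quantity is the distinguished degree $\varrho$, a different invariant (Proposition~\ref{estimation} relates them only under the additional hypothesis that $f$ is conformal, which is not assumed in Theorem~\ref{TheoE}). Since your entire asymptotic bookkeeping is built on the wrong interpretation of $\iota$, the threshold analysis and the exponents you write down do not track the actual objects in the statement. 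Relatedly, you invoke Theorem~\ref{TheoA} for the form of $d$ and $l$, but that theorem requires $f$ to be a branched \emph{conformal} immersion with extendable $\HC$; here you only have $C^\infty$-regular branch coordinates, so the decomposition $\dd{\hat f}{z}=(s+1)z^sd(z)$ and $\dd{^2\hat f}{\bar z\partial z}=|z|^{2s}l(z)$ come directly from Definition~\ref{branchcoord}, not Theorem~\ref{TheoA}. A further problem: your Gauss-curvature computation assumes $g=\lambda|dz|^2+(\text{off-diagonal})$ and treats it via $-\tfrac{1}{2\lambda}\Delta_0\log\lambda$, but $f$ is not assumed conformal, so the induced metric is genuinely non-conformal in the coordinates $x$ and the scalar-Laplacian formula is not directly available.

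The paper's route is structurally different and avoids these pitfalls. It first reduces to the case where $y$ is an isometry (handling the general conformal $y$ at the end via standard conformal-change formulas for $Sec$, $\alpha$, $H^\xi$, $K^\xi$). In the isometry case it uses the frontal machinery of Section~\ref{sectiontangentbundle}: Proposition~\ref{branchfrontal} shows $\hat f$ is a normalized frontal with an explicit complex co-principal part $b$, and Lemma~\ref{lemfundamentalparts} produces a framed coordinate system $\VS$ and a normal frame $\xi$ with $\J{\VS}=\J{A}\circ x$ and $\II_{\VS}^{\xi_k}=\J{B_k}\circ x$. The decisive analytic input is Lemma~\ref{lemcoprincipalpart}, which factors $\dd{b}{z}=z^{s}e(z)$ (when $\iota\geq 2s+1$) or $\dd{b}{z}=z^{\iota-s-1}e(z)$ with $e(0)\neq 0$ (when $\iota<2s+1$); this is exactly where the correct meaning of $\iota$ as the normal-component order enters. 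From this one writes $\II_{\VS}^{\xi_k}=\F_k\M$ with $\M$ an explicit rotation-like matrix in $z^s$ (resp.\ $z^{\iota-s-1}$), and Lemma~\ref{lemsectionalGK} converts the behaviour of $\det(\M)/\lambda_{\VS}$ into the claimed extendability or blow-up of $Sec$ and $K^{\xi}$. Your Gauss-equation observation for item~(ii) is the right idea and is used in the paper (equation~(\ref{eqgauss2})), but the input you feed into it must come from the co-principal part $b$, not from $d_1'-d_2'$.
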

Here, the normal bundle $\bot_f$ of $f$ is a vector subbundle of $f^*TN^n$, the orthogonal complement of the tangent bundle $T_{f}$. Now, observe that by theorem \ref{TheoA} and remark \ref{remGulliver}, if $(N^n,\gt)$ is a conformally flat manifold, $f$ a $C^{\infty}$-branched conformal immersion and $\HC$ has a $C^{\infty}$-extension on $M^2$, then the hypotheses of theorem \ref{TheoE} are affordable. If additionally the sectional curvature is bounded near to $p$, then by theorem \ref{TheoE} $\iota\geq2s+1$ and as a consequence the distinguished degree of $p$ satisfies $\varrho\geq 2(s+1)$. Theorem \ref{TheoE} also determines the behavior of some fundamental tensors. If $\XC$, $\YC$ and $\ZC$ are vector fields defined on a neighborhood of the branch point $p$, it is known that away from the branch points, the curvature tensor, the Ricci tensor and the scalar curvature of the induced metric $g$ on $M^2$ are given by $R(\XC,\YC)\ZC=(g(\YC,\ZC)\XC-g(\XC,\ZC)\YC)Sec$, $Ric(\XC,\YC)=g(\XC,\YC)Sec$ and $Scal=2Sec$ respectively. Therefore, under the hypotheses of theorem \ref{TheoE}, it holds that $R$, $Ric$ and $Scal$ have $C^{0,1}_{loc}$-extensions at $p$ if $\iota\geq2s+1$. 

We will now give a brief description of the proofs of the main results. To prove theorem \ref{TheoA}, we first show that $C^{0}$-branch coordinates exist at a branch point with $d$ being differentiable on a punctured neighborhood. Then, using the $\HC$-surface equation, we show that $d$ satisfies a non-linear inhomogeneous Cauchy-Riemann system of the type $\dd{d}{\z}(z)=E(z,d(z))$, where $E$ is of class $C^r$. A. Coffman, Y. Pan, and Y. Zhang established elliptic regularity for an equation of this type in \cite{Coff}. The corollary 2.7 in \cite{Coff} determines the regularity of continuous solutions. Using the corollary, we can extend it for a system of equations (see lemma \ref{reg}), so we imply that $d$ is a $C^{r,\beta}_{loc}$-map. By the explicit form of $E$, it is deduced that $E(z,d(z))$ is a $C^{r,\beta}_{loc}$-map and using Schauder estimates, we obtain that $d$ is a $C^{r+1,\beta}_{loc}$-map. The regularity of $l$ follows from algebraic manipulations in the equations.

To prove theorem \ref{TheoB}, first, we study the local form of complex maps $e$ in Hölder spaces satisfying $Im\{z^se(z)\}=0$ near the origin. We show this condition implies that $e(z)=\z^s\ell(z)$ for a real function $\ell$ with a specific regularity. Also, given a complex map $e'$ in a Hölder space, we establish the extendibility and Hölder regularity of complex maps with the forms $\frac{\z^s}{z^s}e'(z)$ and $\frac{e'(z)}{z^s}$ in terms of the higher order partial derivatives of $e'$ at $0$. We obtain the lemmas about this part using the integral representation $e'(z)=z\int_{0}^{1}\dd{e'}{z}(tz)dt+\z\int_{0}^{1}\dd{e'}{\z}(tz)dt$ and mathematical induction. Applying these lemmas in the context of theorem \ref{TheoB}, we show that $\dd{\hat{f}}{z}(z)/z^s$ extends with a specific Hölder regularity. Denoting this extension by $d$ and since $Im\{z^s\dd{d}{\z}\}=0$, then $\dd{^2\hat{f}}{\z\partial z}(z)=|z|^{2s}l(z)$ for some map $l$ with real components and certain regularity. Thus, we get the conditions required for branch coordinates with the regularity stated in theorem \ref{TheoB}.

As the distinguished coefficient $\varpi$ has the form $\frac{\z^s}{z^s}e(z)$, we use the lemmas obtained before to prove item $(iv)$ of theorem \ref{TheoD}. Item $(iv)$ implies $(v)$ by direct computations. To construct the diffeomorphism $c$ of item $(i)$ and determine its regularity, using the existence theorem for solutions of Beltrami equations and Schauder estimates for Beltrami operators is tentative. However, this way fails when $\beta=1$ or $\beta=0$. So, we construct the diffeomorphism $c^{-1}$ explicitly using $\varpi$, and we show directly that $c^{-1}$ satisfies a Beltrami equation with Beltrami coefficient equal to $\varpi$, which implies that $c$ is a quasiconformal map. For the rest, we use a result from the theory of quasiconformal mappings called the Stoilow factorization theorem (see \cite{As} for details) and some results obtained in section \ref{sectiontangentbundle}. 

To prove theorem \ref{TheoC}, we first show that solutions of equations like $\dd{b}{\z}(z)=\ell(z)\z^s$ have the local form $b(z)=\sum_{j=0}^{s}(-1)^{s-j}\frac{s!}{j!}\frac{\partial^{j+s+1}\varphi}{\partial \z^{j}\partial z^{s+1}}(z)\z^{j}+F(z)$ and conversely, for some real function $\varphi$ and $F(z)$ a holomorphic function. This is done observing that the local form satisfies $\dd{b}{\z}=\frac{1}{4^{s+1}}\Delta^{s+1}\varphi(z)\z^s$, where $\Delta^{s+1}$ is the Laplace operator applied $s+1$ times. Then, considering the equation $\Delta^{s+1}\varphi(z)=4^{s+1}\ell(z)$ we can obtain solutions $\varphi$ solving recursively $s+1$ Poisson's equations. Now, from conditions (\ref{cond1intro}) and (\ref{cond2intro}), we get the equation $\dd{d}{\z}(z)=\z^sl(z)$, which is an equation of the type discussed before. So, we can obtain the explicit form of $d$ and then verify that the derivative $\dd{}{z}$ of the expression given in theorem \ref{TheoC} is equal to the explicit expression of $(s+1)z^sd$. 

To prove theorem \ref{TheoE}, we construct a mathematical machinery using the tangent bundle and the normal bundle of $f$ that makes it possible to understand the behavior of fundamental tensors and curvatures commonly used in geometry. This machinery is accessible to a general reader with a modest submanifold theory and Riemannian geometry background. We use this in combination with the results of sections \ref{sectiontangentbundle} and \ref{sectionrepresentation} to prove theorem \ref{TheoE}. 

The paper is organized as follows. In section \ref{section-definition}, we establish the notation and conventions we use in most of the paper. In section \ref{sectionregularity}, we prove theorem \ref{TheoA} and characterize the extendibility and regularity of the mean curvature vector of branched conformal immersions. In section \ref{sectioncharacterizationbranch}, we prove theorem \ref{TheoB}. The lemmas in this section also will be essential for the remaining sections. In section \ref{sectiontangentbundle}, we first treat a type of map $f$ more general than branched immersions with a well-defined tangent bundle. Then, we treat maps with branch points in the context of the terminology introduced in this section. We show how to construct the tangent bundle $T_f$ of a branched immersion starting from branch coordinates, which also determine the regularity of this bundle. J. Eschenburg and R. Tribuzy in \cite{Eschenburg1988} also constructed this bundle via the Gauss map and proved the smoothness of $T_f$ for generalized minimal immersions and conformal maps of prescribed mean curvature in $3$-dimensional manifolds. In section \ref{sectionrepresentation}, we prove theorem \ref{TheoC} and theorem \ref{TheoD}. Also, we show the properties of the distinguished degree and the index of a branch point using $C^{\infty}$-branch coordinates. Finally, in section \ref{sectioncurvatures}, we prove theorem \ref{TheoE}. 
\section{Notation, conventions and basic notions.}\label{section-definition}
In this paper, we write $M^m$ to denote an $m$-dimensional smooth or real analytic manifold, $N^n$ an $n$-dimensional smooth or real analytic manifold, and $f: M^m\to N^n$ a $C^{r,\beta}_{loc}$-map. The term $C^{r,\beta}_{loc}$ will be used to mean some local Hölder regularity, i.e., for coordinate systems $(U,x)$, $(V,y)$ of $M^m$, $N^n$ respectively, such that $f(U)\subset V$, the map $y\circ f \circ x^{-1}$ belongs to the Hölder space $C^{r,\beta}_{loc}(x(U),\Re^n)$, where $r\in \Na\cup \{0,\infty,\omega\}$ and $\beta \in[0,1]$. Here $C^0=C^{0,\beta}_{loc}$ means continuous, $C^\infty=C^{\infty,\beta}_{loc}$ smooth, $C^\omega=C^{\omega,\beta}_{loc}$ real analytic and $C^k=C^{k,0}_{loc}$ $k$-differentiable for $k\in \Na$. If $f:M^m\to N^n$ is a real analytic map, we will always assume that $M^m$ and $N^n$ are real analytic manifolds. Otherwise, these will be smooth manifolds. In the case that $r=\omega$ and $s\in \Na\cup\{0\}$, the symbols $C^{r-s}_{loc}$, $C^{r+s}_{loc}$ mean also real analytic; similarly with $r=\infty$. We endow the set $\{0,\infty,\omega\}\cup \Na$ with the well-order $\leq:=\leq'\cup (\Na\cup\{0\}\times\{\infty,\omega\})\cup (\infty,\omega)$, where $\leq'$ is the usual well-order of $\Na\cup\{0\}$ and we write $x < y$ to mean that $x \leq y$ but $x\neq y$. Thus, we have $k<\infty<\omega$ for all $k\in\Na\cup\{0\}$. In this way, expressions involving inequalities with degrees of differentiability like the statement of theorem \ref{TheoD} for example, make sense.

For a point $p\in M^m$, whenever we write ``$(U,x)$, $(V,y)$ are coordinate systems at $p$, $f(p)$ respectively'', we will assume that $f(U)\subset V$ and usually write $\hat{f}$ instead of $y\circ f\circ x^{-1}$ if there is no risk of confusion. Suppose $E$ is a vector bundle over an arbitrary $s$-dimensional smooth or real analytic manifold $S^s$. We denote the sections of $E$ by $\Gamma(E)$ and the $k$th Cartesian power of $\Gamma(E)$ by $\Gamma(E)^k$. In particular, $\Gamma(TS^s)$ will be denoted by $\XF(S^s)$ and $\Gamma(TS^s\rvert_U)$ by $\XF(U)$ for any open set $U\subset S^s$. If $f:M^m\to S^s$ is a $C^1$-map, we denote the pullback bundle by $f^*E$ and the sections of $E$ along $f$ by $\Gamma_f(E)$. We shall identify $\Gamma(f^*E)$ with $\Gamma_f(E)$ via the natural bijection. If $(U,x)$ is a coordinate system of $M^m$, we write  $\dd{}{x}=(\dd{}{x_1},\dots,\dd{}{x_m})\in \XF(U)^m$ and $f_*\frac{\partial}{\partial x}=(f_*\frac{\partial}{\partial x_1},\dots,f_*\frac{\partial}{\partial x_m})\in \Gamma(f^*TS^s\vert_U)^m$. If $\gt$ is a Riemannian metric on $S^s$, this induces a Riemannian metric on the vector bundle $f^*TS^s$, which will also be denoted by $\gt$. 

Elements of $\Re^n$ are identified with vector columns in $\mathcal{M}_{n\times 1}(\Re)$. We denote the identity matrix in $\mathcal{M}_{n\times n}(\Re)$ by $\id_{n\times n}$ and the identity map on any set $U$ by $id_U:U\to U$. If $A\in \mathcal{M}_{n\times n}(\Re)$, we write $tr(A)$ the trace of $A$, $det(A)$ the determinant of $A$, $A^t$ the transpose of $A$ and $adj(A)$ the adjugate of $A$ (i.e., $Aadj(A)=adj(A)A=det(A)\id_{n\times n}$). Now, we give some definitions that will be used in later sections. 
\begin{definition}
	\normalfont Let $M^2$ be a Riemann surface and $(N^n,\tilde{g})$ a Riemannian manifold with $n\geq 2$. A $C^1$-map $f:M^2\to N^n$ is called a {\it conformal map} if for each $p\in M^2$ and coordinate systems $(U,x)$, $(V,y)$ at $p$ and $f(p)$ respectively, it is fulfilled that 
	\begin{align}
		\sum_{j=1}^{n}\sum_{k=1}^{n}\gt_{jk}\circ f\circ x^{-1}\frac{\partial\hat{f}_j}{\partial x_1}\frac{\partial\hat{f}_k}{\partial x_1}&=\sum_{j=1}^{n}\sum_{k=1}^{n}\gt_{jk}\circ f\circ x^{-1}\frac{\partial\hat{f}_j}{\partial x_2}\frac{\partial\hat{f}_k}{\partial x_2}\text{ and}\label{conformaleq1}\\
		\sum_{j=1}^{n}\sum_{k=1}^{n}\gt_{jk}\circ f\circ x^{-1}\frac{\partial\hat{f}_j}{\partial x_1}\frac{\partial\hat{f}_k}{\partial x_2}&=0,\label{conformaleq2}
	\end{align}
	where $\gt_{jk}$ are the components of the Riemannian metric on $N^n$ in the coordinates $(V,y)$.
\end{definition}
\begin{definition}\normalfont
	Let $f:M^m\to N^n$ be a $C^1$-map with $n\geq m$. The set $\Sigma(f):=\{p\in M^m: f_*(p) \text{ is not injective}\}$ is called the {\it singular set} of $f$ and $Reg(f):=M^m-\Sigma(f)$ the {\it regular set} of $f$. A point of $\Sigma(f)$ will be called a {\it singularity} or {\it singular point} of $f$, and a point of $Reg(f)$ will be called {\it regular point}.  
\end{definition}

From now on, when we write the expression $f:M^m\to N^n$, we will assume that $n\geq m$. Let $f:M^m\to N^n$ be a $C^{r+2,\beta}_{loc}$-map, $r\in \Na\cup \{0,\infty,\omega\}$, $\beta\in[0,1]$ and $\gt$ a $C^{r+1,\beta}_{loc}$-Riemannian metric on $N^n$. We will denote the metric induced by $f$ on $M^m$ by $g$, a degenerate metric on $\Sigma(f)$. The Levi-Civita connection $\tilde{\nabla}$ of $N^n$  induces a connection on $f^*TN^n$ denoted with the same symbol $\dt$. Since $Reg(f)$ is an open set of $M^m$, this is an $m$-dimensional $C^{\infty}$-submanifold (possibly non-connected) of $M^m$ if $Reg(f)\neq \emptyset$. If $p\in Reg(f)$, the orthogonal complement of $f_*T_pM^m$ in $T_{f(p)}N^n$ is denoted by $\bot_p Reg(f)$. The normal bundle $\bot Reg(f)$ of $f$ is the vector subbundle of $f^*TN^n\vert_{Reg(f)}$ whose fiber at $p$ is $\bot_p Reg(f)$. We have the decomposition $f^*TN^n\vert_{Reg(f)}=f_*(TM^m\vert_{Reg(f)})\bigoplus \bot Reg(f)$, so for all $\XC,\YC\in \XF(Reg(f))$, we can express $\dt_{\XC}f_*\YC=(\dt_{\XC}f_*\YC)^T+(\dt_{\XC}f_*\YC)^\bot,$ with respect to the last decomposition. The second fundamental form $\alpha:\XF(Reg(f))\times \XF(Reg(f))\to \Gamma(\bot Reg(f))$ of $f$ defined by $\alpha(\XC,\YC)=\dt_{\XC}f_*\YC^\perp$ is symmetric and $C^{\infty}(Reg(f),\Re)$-bilinear, therefore $\alpha$ can be seen as a symmetric $\bot Reg(f)$-valued covariant $2$-tensor field on $Reg(f)$. 

Despite the fact that the metric $g$ is degenerate on $\Sigma(f)$, this is a Riemannian metric on any open set $U\subset Reg(f)$ and we can define the Levi-Civita connection $\nabla$ of $Reg(f)$ with the Koszul formula $g(\nabla_{\XC}\YC,\ZC)=\frac{1}{2}(\XC(g(\YC,\ZC))+\YC(g(\ZC,\XC))-\ZC(g(\XC,\YC))-g(\YC,[\XC,\ZC])-g(\ZC,[\YC,\XC])+g(\XC,[\ZC,\YC])),$	for all $\XC,\YC,\ZC\in \XF(U)$, where $[\XC,\YC]$ is the Lie bracket of vector fields. Since $\nabla_{\XC}\YC$ is $C^{\infty}(U,\Re)$-linear in $\XC$, we can see $\nabla \YC$ as a $TM^m\rvert_{U}$-valued covariant 1-tensor field on $U$. The mean curvature vector $\mathcal{H}$ of $f$ at $p\in Reg(f)$ is defined by $\HC(p)=\frac{1}{m}\sum_{i=1}^{m}\alpha(\XC_{i}(p),\XC_{i}(p))$, where $\XC_{1}(p),\dots,\XC_{m}(p)$ is an orthonormal basis of $T_pReg(f)$. Note that $\HC:Reg(f)\to f^*TN^n$ is a $C^{r,\beta}_{loc}$-map.

\section{Regularity of branched coordinates for conformal maps.}\label{sectionregularity}
The next lemma is obtained from the results and ideas of A. Coffman, Y. Pan and Y. Zhang in \cite{Coff}. The lemma extends the corollary 2.7 in \cite{Coff} for a system of equations. We write $R\Subset U$ to mean that $R$ is a bounded, open rectangle of the form
$(t_1, t_2) \times (t_3, t_4)$, with closure $\bar{R}$ contained in the open set $U\subset \Co$.    

\begin{lemma}\label{reg}
	Let $U \subset \Co$, $V_k\subset \Co$ be open sets for each $k\in \{1,2,..,n\}$ and let $E:U\times V\to\Co^n$, $f:U\to V$ be
	continuous maps, where $V=V_1\times V_2\times ..\times V_n$. Suppose that the partial derivatives $\dd{f}{x_1}$, $\dd{f}{x_2}$ exist at every point in $U$ except for countably many and the equality $\frac{\partial f}{\partial\z}(z)=E(z,f(z))$ is satisfied almost everywhere.
	\begin{enumerate}[label=(\roman*)]
		\item If $E$ is continuous, then for any $R\Subset U$,
		$f|_R\in C^{0,\beta}(R,\Co^n)$ for all $\beta \in(0,1)$.
		\item If $\beta\in(0,1)$ and
		$E\in C^{0,\beta}_{loc}(U\times V,\Co^n)$, then for any
		$R\Subset U$, $f|_R\in C^{1,\beta}(R,\Co^n)$.
		\item For $r\in\Na\cup \{\infty,\omega\}$, if
		$E\in C^r(U\times V,\Co^n)$, then $f\in C^{r,\beta}_{loc}(U,\Co^n)$ for all $\beta \in (0,1)$.
	\end{enumerate}
\end{lemma}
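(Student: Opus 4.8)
The plan is to reduce the system to $n$ scalar nonlinear Cauchy--Riemann equations and apply Corollary~2.7 of \cite{Coff} to each one. Fix $k\in\{1,\dots,n\}$ and \emph{freeze} the other components of $f$, putting
\[
\tilde E_k(z,w):=E_k\big(z,f_1(z),\dots,f_{k-1}(z),w,f_{k+1}(z),\dots,f_n(z)\big),\qquad (z,w)\in U\times V_k .
\]
Then $f_k$ is continuous, the partials $\dd{f_k}{x_1},\dd{f_k}{x_2}$ exist off the same countable set as those of $f$, and $\dd{f_k}{\z}(z)=\tilde E_k(z,f_k(z))$ holds almost everywhere; hence $f_k$ meets the scalar hypotheses of Corollary~2.7 with coefficient $\tilde E_k$. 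The entire argument reduces to checking, stage by stage, that $\tilde E_k$ inherits from the already-established regularity of the frozen components $f_j$ ($j\ne k$) precisely the regularity required by the relevant part of Corollary~2.7.

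Part (i) is immediate: $E$ and all $f_j$ continuous make $\tilde E_k$ continuous, so Corollary~2.7(i) gives $f_k|_R\in C^{0,\beta}(R,\Co)$ for every $\beta\in(0,1)$. For part (ii) the one real subtlety appears, namely the loss of H\"older exponent under composition: if $F\in C^{0,a}_{loc}$ and $G\in C^{0,b}_{loc}$ then $F\circ G\in C^{0,ab}_{loc}$. Applying (i) first yields $f\in C^{0,\gamma}_{loc}$ for all $\gamma<1$, so $\tilde E_k=E_k\circ\Phi$ with $\Phi(z,w)=(z,f_1(z),\dots,w,\dots,f_n(z))$ lies only in $C^{0,\delta}_{loc}$ for every $\delta<\beta$, not a priori in $C^{0,\beta}_{loc}$. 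Corollary~2.7(ii) still gives $f_k\in C^{1,\delta}$ for each such $\delta$, whence $f\in C^1$ and in particular $f\in C^{0,1}_{loc}$. Re-freezing with these now Lipschitz components makes $\Phi$ locally Lipschitz, so $\tilde E_k=E_k\circ\Phi\in C^{0,\beta}_{loc}$ with the sharp exponent, and a second application of Corollary~2.7(ii) gives $f_k\in C^{1,\beta}(R,\Co)$.

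For part (iii) it is more efficient to regard each scalar equation as \emph{linear}, since once $f$ is known the inhomogeneity $g_k(z):=E_k(z,f(z))$ is a function of $z$ alone. We induct on $k$. Parts (i)--(ii) furnish the base case $f\in C^{1,\beta}_{loc}$ (all $\beta<1$). Assuming $f\in C^{k,\beta}_{loc}$ with $k<r$, composition of $E\in C^{k+1}$ with $f\in C^{k,\beta}_{loc}$ gives $g_k\in C^{k,\beta}_{loc}$; the gain-one-derivative property of the solid Cauchy transform---locally $f_k=(\text{holomorphic})+Tg_k$ with $Tg_k\in C^{k+1,\beta}$, which is exactly the mechanism behind Corollary~2.7(iii)---then yields $f_k\in C^{k+1,\beta}_{loc}$. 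Iterating reaches $f\in C^{r,\beta}_{loc}$ for finite $r$ and $f\in C^{\infty}$ for $r=\infty$; for $r=\omega$ one first obtains $f\in C^{\infty}$ and then concludes real analyticity from analytic elliptic regularity for the Cauchy--Riemann operator $\dd{}{\z}$ with real-analytic nonlinearity $E$.

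The main obstacle is the exponent bookkeeping in (ii): naive composition only returns $C^{0,\delta}$ for $\delta<\beta$, and the sharp exponent is recovered solely through the intermediate upgrade to local Lipschitz regularity and a second freezing. The remaining care is to invoke, in the induction for (iii), only the gain-one-derivative step rather than a self-referential appeal to part (iii) of the scalar corollary, and to supply the single external input---analytic hypoellipticity of $\dd{}{\z}$---needed for the real-analytic case.
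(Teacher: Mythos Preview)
Your proposal is correct and follows essentially the same approach as the paper: freeze all but one component to reduce to scalar equations, apply Corollary~2.7 of \cite{Coff} for parts (i)--(ii) with the same two-pass bootstrap in (ii) to recover the sharp H\"older exponent, and for (iii) induct using the gain-one-derivative elliptic regularity for $\partial_{\bar z}$ (the paper cites this as Theorem~15.0.7 in \cite{As}), finishing the analytic case via analytic elliptic regularity. The only cosmetic difference is that for $r=\omega$ the paper explicitly passes to the second-order system $\partial_z\partial_{\bar z}f = \partial_z\bigl(E(z,f(z))\bigr)$ before invoking Morrey, whereas you appeal to analytic hypoellipticity of the first-order operator directly; both are valid.
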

\begin{proof} Setting $f=(f_1,f_2,..,f_n)$, $E=(E_1,E_2,..,E_n)$ and
	the complex functions $E_{kj}:U\times V_j \to \Co$ by  $E_{kj}(z,w)=E_k(z,f_1(z),..,f_{j-1}(z),w,f_{j+1}(z),..,f_n(z))$,
	for $k,j \in \{1,2,..,n\}$. For the item $(i)$, if $E$ is continuous then so is $E_{kk}$ for all $k$ and since $\frac{\partial f_k}{\partial\z}=E_{kk}(z,f_k(z))$ almost everywhere, using the first item of corollary 2.7 in \cite{Coff}, we get that $f_k|_R\in C^{0,\beta}(R,\Co)$ for all $0<\beta<1$ and $1 \leq k\leq n$. Thus, $f|_R\in C^{0,\beta}(R,\Co^n)$. For the item $(ii)$, if $E\in C^{0,\beta}_{loc}(U\times V,\Co^n)$ then by item $(i)$ for any $R\Subset U$,
	$f|_R\in C^{0,\beta}(R,\Co^n)$ and therefore $E_{kk}\in C^{0,\beta^2}_{loc}(R\times V_k)$ for all $k\in \{1,2,..,n\}$. Using the second item of corollary 2.7 in \cite{Coff}, we get that $f_k\in C^{1}(U)$ for all $k\in \{1,2,..,n\}$ and hence $f\in C^{1}(U,\Co^n)$. Thus, $E_{kk}\in C^{0,\beta}_{loc}(U\times V_k)$ for all $k\in \{1,2,..,n\}$ and using again item $(ii)$ of corollary 2.7 in \cite{Coff}, we have $f\vert_R\in C^{1,\beta}(R,\Co^n)$. For the last item, if $r=1$ then by item $(i)$, the map $E':U\times V\to \Co^n$ given by $E'(z,w)=E(z,f(z))$ belongs to $C^{0,\beta}_{loc}(U\times V,\Co^n)$ and by item $(ii)$ $f\in C^{1,\beta}_{loc}(U,\Co^n)$. Now, let us suppose the assertion is true for $r=m$. If $E\in C^{m+1}(U\times V,\Co^n)$ then by inductive hypothesis $f\in C^{m,\beta}_{loc}(U,\Co^n)$ and therefore $E(z,f(z))$ belongs to $C^{m,\beta}_{loc}(U,\Co^n)$. Since $\frac{\partial f_k}{\partial\z}=E_{kk}(z,f_k(z))$ for all $k\in \{1,2,..,n\}$, then by theorem 15.0.7 in \cite{As} $f_k\in C^{m+1,\beta}_{loc}(U,\Co^n)$. Thus, the assertion holds for $r\in\Na\cup \{\infty\}$. If $r=\omega$, we have that $f\in C^{\infty}(U,\Co^n)$. Also, using the chain rule, the equality $\dd{^2f}{z\partial\z}=\dd{E(z,f(z))}{z}$ leads to a second-order elliptic system, where the RHS is a real analytic expression in $z$, $f$ and the first derivatives of $f$. This implies that $f\in C^{\omega}(U,\Co^n)$ (cf.\cite{Mo}).  
\end{proof}
\begin{lemma}\label{lembranch}
	Let $f:U\to \Re^n$ be a $C^{r,\beta}_{loc}$-map, $r\in\Na\cup\{\infty,\omega\}$, $\beta \in[0,1]$ and $U\subset \Co$ an open neighborhood of $0$. If $f=(f_1,\dots,f_n)$ satisfies (\ref{branchcond3}), then there exists $d\in C(U,\Co^n)\cap C^{r-1,\beta}_{loc}(U-\{0\},\Co^n)$ such that $\dd{f}{z}(z)=(s+1)z^sd(z)$, $d_1(0)=\frac{1}{2}$, $d_2(0)=-\frac{i}{2}$ and $d_j(0)=0$, for $j=3,\dots,n$. 
\end{lemma}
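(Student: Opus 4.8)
The plan is to define $d$ away from the origin as the quotient of $\dd{f}{z}$ by $(s+1)z^{s}$ and to prove that this quotient extends continuously across $0$ with the asserted values. For $z\in U-\{0\}$ put $d(z)=\frac{1}{(s+1)z^{s}}\dd{f}{z}(z)$. Since $f\in C^{r,\beta}_{loc}(U,\Re^{n})$ with $r\geq 1$, the derivative $\dd{f}{z}$ is $C^{r-1,\beta}_{loc}$, and $(s+1)z^{s}$ is real analytic and nonvanishing on $U-\{0\}$; hence $d\in C^{r-1,\beta}_{loc}(U-\{0\},\Co^{n})$ with no extra work, and $d$ is continuous at every nonzero point. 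The entire content of the lemma is thus the continuity of $d$ at $0$ together with the identification of the limits $d_{k}(0)$, which I would extract by reading off the leading Wirtinger behaviour of each component of $\dd{f}{z}$ from $(\ref{branchcond3})$.

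First I would dispatch the components $f_{k}$ with $3\leq k\leq n$. Writing $\dd{f_{k}}{z}=\frac{1}{2}\bigl(\dd{f_{k}}{x_{1}}-i\dd{f_{k}}{x_{2}}\bigr)$ and using $(\ref{branchcond3})$, both partials are $o(|z|^{s})$, so $\dd{f_{k}}{z}=o(|z|^{s})$ and $\frac{|\dd{f_{k}}{z}(z)|}{(s+1)|z|^{s}}\to 0$ as $z\to 0$. Hence $d_{k}$ extends continuously with $d_{k}(0)=0$.

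For the first two components I would set $F:=f_{1}+if_{2}=z^{s+1}+\sigma$, with $\sigma$ as in $(\ref{branchcond1})$, and use that $f_{1}=\frac{1}{2}(F+\bar F)$ and $f_{2}=\frac{1}{2i}(F-\bar F)$ are real. Because $z^{s+1}$ is holomorphic, $\dd{F}{z}=(s+1)z^{s}+\dd{\sigma}{z}$ and $\dd{F}{\z}=\dd{\sigma}{\z}$, while the reality of $f_{1},f_{2}$ gives $\dd{\bar F}{z}=\overline{\dd{F}{\z}}$. Condition $(\ref{branchcond3})$ forces both $\dd{\sigma}{z}$ and $\dd{\sigma}{\z}$ to be $o(|z|^{s})$, each being a fixed linear combination of $\dd{\sigma}{x_{1}}$ and $\dd{\sigma}{x_{2}}$. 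Therefore
\begin{align*}
\dd{f_{1}}{z}&=\tfrac{1}{2}\Bigl(\dd{F}{z}+\overline{\dd{F}{\z}}\Bigr)=\tfrac{s+1}{2}z^{s}+o(|z|^{s}),\\
\dd{f_{2}}{z}&=\tfrac{1}{2i}\Bigl(\dd{F}{z}-\overline{\dd{F}{\z}}\Bigr)=-\tfrac{i(s+1)}{2}z^{s}+o(|z|^{s}),
\end{align*}
and dividing by $(s+1)z^{s}$ yields $d_{1}(z)\to\frac{1}{2}$ and $d_{2}(z)\to-\frac{i}{2}$ as $z\to 0$. Setting $d_{1}(0)=\frac{1}{2}$, $d_{2}(0)=-\frac{i}{2}$ and $d_{j}(0)=0$ for $j\geq 3$ completes the continuous extension and gives the prescribed data.

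There is no genuine analytic obstacle here; the work is entirely Wirtinger-calculus bookkeeping. The one point that needs care, and where I expect the main subtlety to sit, is isolating $\dd{f_{1}}{z}$ and $\dd{f_{2}}{z}$ separately: the branch hypothesis controls only the single complex quantity $\sigma$, so one must exploit the reality of $f_{1}$ and $f_{2}$ through $\dd{\bar F}{z}=\overline{\dd{F}{\z}}$, and it is essential that $(\ref{branchcond3})$ bounds both the $x_{1}$- and $x_{2}$-derivatives of $\sigma$ rather than a single complex derivative, so that $\dd{\sigma}{z}$ and $\dd{\sigma}{\z}$ are $o(|z|^{s})$ simultaneously.
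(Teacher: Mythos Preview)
Your proof is correct and follows essentially the same route as the paper: define $d$ as the quotient away from $0$, read off the $C^{r-1,\beta}_{loc}$ regularity there, and extract the limits at $0$ by separating $\dd{f_1}{z}$ and $\dd{f_2}{z}$ via conjugation of $F=f_1+if_2=z^{s+1}+\sigma$. The only cosmetic difference is that the paper conjugates the equation $f_1+if_2=z^{s+1}+\sigma$ directly to obtain $f_1-if_2=\bar z^{s+1}+\bar\sigma$ before differentiating, whereas you pass through the identity $\dd{\bar F}{z}=\overline{\dd{F}{\z}}$; these are the same computation.
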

\begin{proof}
	Conjugating the equality $f_1(z)+if_2(z)=z^{s+1}+\sigma(z)$, we obtain that $f_1(z)-if_2(z)=\z^{s+1}+\bar{\sigma}(z)$. Therefore, $\dd{f_1}{z}(z)+i\dd{f_2}{z}(z)=(s+1)z^s+\dd{\sigma}{z}(z)\text{ and }\dd{f_1}{z}(z)-i\dd{f_2}{z}(z)=\dd{\bar{\sigma}}{z}(z)$.
	This implies that $\dd{f_1}{z}(z)=(s+1)z^s\frac{1}{2}(1+z^{-s}(s+1)^{-1}(\dd{\sigma}{z}(z)+\dd{\bar{\sigma}}{z}(z)))$ and $\dd{f_2}{z}(z)=-(s+1)z^s\frac{i}{2}(1+z^{-s}(s+1)^{-1}(\dd{\sigma}{z}(z)-\dd{\bar{\sigma}}{z}(z)))$. As we have $\dd{\sigma}{z}(z), \dd{\bar{\sigma}}{z}(z),\dd{f_j}{z}(z)=o(|z|^s)$, for $3\leq j\leq n,$ then the maps $d_1(z)=\frac{1}{2}(1+z^{-s}(s+1)^{-1}(\dd{\sigma}{z}(z)+\dd{\bar{\sigma}}{z}(z)))$,
	$d_2(z)=-\frac{i}{2}(1+z^{-s}(s+1)^{-1}(\dd{\sigma}{z}(z)-\dd{\bar{\sigma}}{z}(z)))$,
	$d_j(z)=z^{-s}\dd{f_j}{z}(z)$, $3\leq j\leq n$
	have continuous extensions to $U$, satisfying $d_1(0)=\frac{1}{2}$, $d_2(0)=-\frac{i}{2}$, $d_j(0)=0$ for $j=3,\dots,n$ and $\dd{f}{z}(z)=(s+1)z^sd(z)$ on $U$. Also, observe that $d$ is a $C^{r-1,\beta}_{loc}$-map on $U-\{0\}$.
\end{proof}
\begin{theorem}\label{Theo1}
	Let $f:M^2\to N^n$ be a $C^{r+2,\beta}_{loc}$-branched conformal immersion, $r\in \Na\cup \{0,\infty,\omega\}$, $\beta \in(0,1)$ and $\tilde{g}$ a $C^{r+1,\beta}_{loc}$-Riemannian metric on $N^n$. If the mean curvature vector $\HC$ of $f$ has a $C^{r,\beta}_{loc}$-extension to $M^2$, then for any branch point $p\in M^2$ of order $s$ and coordinate systems $(U,x)$, $(V,y)$ at $p$, $f(p)$ respectively, such that $x(p)=0$, $y(f(p))=0$ and $(\ref{branchcond3})$ is satisfied, we have
	\begin{align}
		&\dd{\hat{f}}{z}(z)=(s+1)z^sd(z)\label{cond1},\\
		&\dd{^2\hat{f}}{\z\partial z}(z)=|z|^{2s}l(z)\label{cond2},
	\end{align} 
	where $l:x(U)\to \Re^n$ is a $C^{r,\beta}_{loc}$-map and $d:x(U)\to \Co^n$ is a $C^{r+1,\beta}_{loc}$-map in which
	\begin{align}\label{cond3}
		d_1(z)=\frac{1}{2}(1+d'_1(z)),\text{ }d_2(z)=-\frac{i}{2}(1+d'_2(z)),
	\end{align}	 
	$d'_1(0)=0$, $d'_2(0)=0$ and $d_j(0)=0$ for $j=3,\dots,n$.
\end{theorem}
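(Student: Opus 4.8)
The plan is to build on Lemma \ref{lembranch}, which already produces a continuous map $d$ on $x(U)$ satisfying $\dd{\hat f}{z}(z)=(s+1)z^sd(z)$, the normalization (\ref{cond3}), and the prescribed values $d_1(0)=\tfrac12$, $d_2(0)=-\tfrac i2$, $d_j(0)=0$ for $j\geq3$; applied to the $C^{r+2,\beta}_{loc}$-map $\hat f$ it furthermore gives $d\in C^{r+1,\beta}_{loc}(x(U)-\{0\},\Co^n)$. So the algebraic content of (\ref{cond1}) and (\ref{cond3}) is in hand, and the only real work is to upgrade the regularity of $d$ \emph{across} the branch point to $C^{r+1,\beta}_{loc}$ on all of $x(U)$ and to deduce (\ref{cond2}). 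My strategy is to show that $d$ solves a non-degenerate inhomogeneous Cauchy--Riemann system coming from the $\HC$-surface equation, and then to invoke the elliptic regularity of Lemma \ref{reg}.

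First I would write down the $\HC$-surface equation. Because $f$ is conformal, the tangential part of the trace of the Hessian vanishes and, on $Reg(f)$, one has $\dt_{\partial_z}f_*\partial_{\z}=\tfrac{\lambda}{2}\HC$, where $\lambda$ is the conformal factor; in the coordinates $(U,x)$, $(V,y)$ this reads
\begin{align*}
\dd{^2\hat f_k}{\z\partial z}(z)+\sum_{i,j}\Gat{i}{k}{j}(\hat f)\dd{\hat f_i}{z}\dd{\hat f_j}{\z}=\tfrac{\lambda}{2}\HC_k.
\end{align*}
Substituting $\dd{\hat f}{z}=(s+1)z^sd$ and $\dd{\hat f}{\z}=(s+1)\z^s\bar d$ (the latter because $\hat f$ is real), together with $\lambda=2(s+1)^2|z|^{2s}\sum_{i,j}\gt_{ij}(\hat f)d_i\bar d_j$, turns this into $\dd{^2\hat f_k}{\z\partial z}(z)=|z|^{2s}l_k(z)$ with
\begin{align*}
l_k=(s+1)^2\Big(-\sum_{i,j}\Gat{i}{k}{j}(\hat f)d_i\bar d_j+\big(\textstyle\sum_{i,j}\gt_{ij}(\hat f)d_i\bar d_j\big)\HC_k\Big).
\end{align*}
I would note that $\lambda>0$ on a punctured neighborhood of $0$ (its value at $0$ is positive since $\gt$ is positive definite and $d(0)=(\tfrac12,-\tfrac i2,0,\dots)$), so $x(U)-\{0\}\subset Reg(f)$ after shrinking, and the identity holds there with the $C^{r,\beta}_{loc}$-extension of $\HC$.

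Next, since $z^s$ is holomorphic, $\dd{^2\hat f_k}{\z\partial z}=(s+1)z^s\dd{d_k}{\z}$; dividing the previous identity by $(s+1)z^s$ off the origin yields the system $\dd{d_k}{\z}(z)=E_k(z,d(z))$ on $x(U)-\{0\}$, where
\begin{align*}
E_k(z,w)=(s+1)\z^s\Big(-\sum_{i,j}\Gat{i}{k}{j}(\hat f(z))w_i\bar w_j+\big(\textstyle\sum_{i,j}\gt_{ij}(\hat f(z))w_i\bar w_j\big)\HC_k(z)\Big).
\end{align*}
Here $\z^s$ and the polynomial dependence on $(w,\bar w)$ are real analytic, while the coefficients $\Gat{i}{k}{j}(\hat f(\cdot))$, $\gt_{ij}(\hat f(\cdot))$ and $\HC_k(\cdot)$ are $C^{r,\beta}_{loc}$ (the Christoffel symbols of the $C^{r+1,\beta}_{loc}$-metric $\gt$ are $C^{r,\beta}_{loc}$, composed with the $C^{r+2,\beta}_{loc}$-map $\hat f$, and $\HC$ extends as a $C^{r,\beta}_{loc}$-map); hence $E\in C^{r,\beta}_{loc}(U\times V,\Co^n)$. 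Since $d$ is continuous on $x(U)$ and differentiable on $x(U)-\{0\}$ (indeed $C^{r+1,\beta}_{loc}$ there), the equation holds almost everywhere, so the hypotheses of Lemma \ref{reg} are satisfied.

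Finally I would run the bootstrap. When $r=0$ the coefficients are $C^{0,\beta}_{loc}$, so items $(i)$ and $(ii)$ of Lemma \ref{reg} give $d\in C^{1,\beta}_{loc}=C^{r+1,\beta}_{loc}$ directly. When $r\in\Na$, item $(iii)$ (with $E\in C^r$) gives $d\in C^{r,\beta}_{loc}$; then $E(z,d(z))\in C^{r,\beta}_{loc}$, and gaining one derivative for the $\dd{}{\z}$-equation via Schauder estimates (theorem 15.0.7 in \cite{As}) improves this to $d\in C^{r+1,\beta}_{loc}$. The cases $r\in\{\infty,\omega\}$ follow from Lemma \ref{reg} with no loss. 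With $d\in C^{r+1,\beta}_{loc}$ in hand, the displayed formula for $l_k$ shows $l\in C^{r,\beta}_{loc}(x(U),\Re^n)$, its components being real because $\dd{^2\hat f}{\z\partial z}=\tfrac14\Delta\hat f$ is real, and (\ref{cond3}) with the values at $0$ are inherited from Lemma \ref{lembranch}. The hard part will be exactly this passage across the branch point: the factor $z^s$ makes the geometric PDE degenerate there while $d$ is a priori only continuous at $0$, so naive Schauder theory does not apply; dividing out $z^s$ to expose the non-degenerate system $\dd{d}{\z}=E(z,d)$ and appealing to the continuous-solution regularity of Lemma \ref{reg} is precisely what removes this singularity.
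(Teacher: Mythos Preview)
Your proposal is correct and follows essentially the same approach as the paper: invoke Lemma~\ref{lembranch} to obtain $d$, derive the $\HC$-surface equation, substitute $\dd{\hat f}{z}=(s+1)z^sd$ and divide out $z^s$ to get the nonlinear Cauchy--Riemann system $\dd{d}{\z}=E(z,d)$, apply Lemma~\ref{reg} for the first gain, and bootstrap once via the Schauder estimate (theorem 15.0.7 in \cite{As}) to reach $C^{r+1,\beta}_{loc}$; the paper carries out exactly these steps, differing only cosmetically in that it writes the $\HC$-surface equation first in real coordinates $(\partial_{x_1},\partial_{x_2})$ before converting, whereas you work in $(\partial_z,\partial_{\z})$ from the start.
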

\begin{proof}
	If $p \in M^2$ is a branch point of order $s$ and $(U,x)$, $(V,y)$ are coordinate systems at $p$, $f(p)$ respectively, such that $x(p)=0$, $y(f(p))=0$ and $(\ref{branchcond3})$ is satisfied, then by lemma \ref{lembranch}, there exists $d\in C(x(U),\Co^n)\cap C^{r+1,\beta}_{loc}(x(U)-\{0\},\Co^n)$ such that $\dd{\hat{f}}{z}(z)=(s+1)z^sd(z)$, $d_1(0)=\frac{1}{2}$, $d_2(0)=-\frac{i}{2}$, $d_j(0)=0$, $j=3,\dots,n$. Observe that, we can write $d_1$ and $d_2$ in the form of $(\ref{cond3})$. Shrinking $U$ if necessary, we can suppose that $U\cap Reg(f)=U-\{p\}$ and $Re\{d\}$, $Im\{d\}$ are linearly independent on $x(U)$. Now, let us prove that $d$ is a $C^{r+1,\beta}_{loc}$-map on $x(U)$. As $g$ is a conformal metric on $Reg(f)$, we have that $\nabla_\dd{}{x_1}\dd{}{x_1}+\nabla_\dd{}{x_2}\dd{}{x_2}=0$ on $U-\{p\}$, $\delta=g(\dd{}{x_1},\dd{}{x_1})=g(\dd{}{x_2},\dd{}{x_2})$ and $g(\dd{}{x_1},\dd{}{x_2})=0$. Therefore, on $U-\{p\}$ we have that 
	\begin{align*}
		&\dt_\dd{}{x_1}f_*\dd{}{x_1}+\dt_\dd{}{x_2}f_*\dd{}{x_2}\\
		&=(\dt_\dd{}{x_1}f_*\dd{}{x_1})^T+(\dt_\dd{}{x_2}f_*\dd{}{x_2})^T+\alpha(\dd{}{x_1},\dd{}{x_1})+\alpha(\dd{}{x_2},\dd{}{x_2})\\
		&=f_*(\nabla_\dd{}{x_1}\dd{}{x_1}+\nabla_\dd{}{x_2}\dd{}{x_2})+\alpha(\dd{}{x_1},\dd{}{x_1})+\alpha(\dd{}{x_2},\dd{}{x_2})\\
		&=\delta(\alpha(\delta^{-\frac{1}{2}}\dd{}{x_1},\delta^{-\frac{1}{2}}\dd{}{x_1})+\alpha(\delta^{-\frac{1}{2}}\dd{}{x_2},\delta^{-\frac{1}{2}}\dd{}{x_2}))=2\delta\HC.
	\end{align*}
	Now, using that $f_*\dd{}{x_i}=\sum_{j=1}^{n}\dd{(y_j\circ f)}{x_i}\dd{}{y_j}\circ f$, $\delta=\sum_{j=1}^{n}\sum_{k=1}^{n}\gt_{jk}\circ f\frac{\partial (y_j\circ f)}{\partial x_1}\frac{\partial (y_k\circ f)}{\partial x_1}=\sum_{j=1}^{n}\sum_{k=1}^{n}\gt_{jk}\circ f\frac{\partial (y_j\circ f)}{\partial x_2}\frac{\partial (y_k\circ f)}{\partial x_2}$ and $\sum_{j=1}^{n}\sum_{k=1}^{n}\gt_{jk}\circ f\frac{\partial (y_j\circ f)}{\partial x_1}\frac{\partial (y_k\circ f)}{\partial x_2}=0$, we can express the equality $\dt_\dd{}{x_1}f_*\dd{}{x_1}+\dt_\dd{}{x_2}f_*\dd{}{x_2}=2\delta\HC$ in the local coordinates $(V,y)$ and get the system of equations $\dd{^2(y_j\circ f)}{x_1^2}+\dd{^2(y_j\circ f)}{x_2^2}=\sum_{i=1}^{n}\sum_{k=1}^{n}((\gt_{ik}\circ f)\HC_j-\Gat{i}{j}{k}\circ f)\sum_{i'=1}^{2}\dd{(y_i\circ f)}{x_{i'}}\dd{(y_k\circ f)}{x_{i'}}$ on $U-\{p\}$, for $j=1,\dots, n$, obtained by H. Kaul in \cite{kaul1}, where $\gt_{ik}$ are the components of the Riemannian metric $\gt$ in $(V,y)$, $\Gat{i}{j}{k}$ are the Christoffel symbols associated to the Levi-Civita connection of $N^n$ and $\HC_j$, for $j=1,\dots,n$ are the unique $C^{r,\beta}_{loc}$-functions on $U-\{p\}$ such that $\HC=\sum_{j=1}^{n}\HC_j\dd{}{y_j}\circ f$. Composing this with $x^{-1}$, we obtain the system of equations
	\begin{align}\label{eqH}
		\dd{^2\hat{f}_j}{x_1^2}+\dd{^2\hat{f}_j}{x_2^2}=\sum_{i=1}^{n}\sum_{k=1}^{n}(\hat{g}_{ik}(\HC_j\circ x^{-1})-\Gah{i}{j}{k})\sum_{i'=1}^{2}\dd{\hat{f}_i}{x_{i'}}\dd{\hat{f}_k}{x_{i'}}
	\end{align} on $x(U)-\{0\}$, for $j=1,\dots, n$, where $\hat{g}_{ik}=\gt_{ik}\circ f\circ x^{-1}$ and $\Gah{i}{j}{k}=\Gat{i}{j}{k}\circ f\circ x^{-1}$. Since $\dd{^2\hat{f}_j}{x_1^2}(z)+\dd{^2\hat{f}_j}{x_2^2}(z)=4\dd{^2\hat{f}_j}{\z\partial z}(z)=4(s+1)z^s\dd{d_j}{\z}(z)$ on $x(U)-\{0\}$ and \begin{align}\label{eqauxH}
		\dd{\hat{f}_i}{x_{1}}\dd{\hat{f}_k}{x_{1}}(z)+\dd{\hat{f}_i}{x_{2}}\dd{\hat{f}_k}{x_{2}}(z)&=4Re\{\dd{\hat{f}_i}{z}(z)\}Re\{\dd{\hat{f}_k}{z}(z)\}+4Im\{\dd{\hat{f}_i}{z}(z)\}Im\{\dd{\hat{f}_k}{z}(z)\}\\
		&=4Re\{\dd{\hat{f}_i}{z}(z)\overline{\dd{\hat{f}_k}{z}}(z)\}=4(s+1)^2z^s\z^sRe\{d_i\overline{d_k}\}\nonumber,
	\end{align} 
	substituting these expressions in (\ref{eqH}) and eliminating the factor $z^s$ in both sides this leads to the system of equations
	\begin{align}\label{eqHconformal}
		\dd{d_j}{\z}(z)=(s+1)\sum_{i=1}^{n}\sum_{k=1}^{n}(\hat{g}_{ik}(z)(\HC_j\circ x^{-1})(z)-\Gah{i}{j}{k}(z))\z^sRe\{d_i(z)\overline{d_k}(z)\},
	\end{align} on $x(U)-\{0\}$, for $j=1,\dots,n$. Since $\HC$ has a $C^{r,\beta}_{loc}$-extension to $U$, the functions $\HC_j$ for $j=1,\dots,n$ have $C^{r,\beta}_{loc}$-extensions as well. Thus, this system of equations has the form $\frac{\partial d}{\partial\z}(z)=E(z,d(z))$, where $E:x(U)\times \Co^n \to \Co^n$ is a $C^r$-map. Therefore, by lemma \ref{reg} $d$ is a $C^{r,\beta}_{loc}$-map on $x(U)$ and thus the RHS of the system of equations (\ref{eqHconformal}) is $C^{r,\beta}_{loc}$ on $x(U)$. By theorem 15.0.7 in \cite{As}, $d$ is a $C^{r+1,\beta}_{loc}$-map. Also, substituting  $\dd{\hat{f}_i}{x_{1}}\dd{\hat{f}_k}{x_{1}}(z)+\dd{\hat{f}_i}{x_{2}}\dd{\hat{f}_k}{x_{2}}(z)=4(s+1)^2z^s\z^sRe\{d_i\overline{d_k}\}$ in the system (\ref{eqH}) extended to $x(U)$, we obtain (\ref{cond2}).
\end{proof}

\begin{remark}\label{remGulliver}\normalfont
	Let $f:M^2 \to N^n$ be a $C^{2}$-conformal map and $p\in \Sigma(f)$. Suppose the mean curvature vector $\HC$ of $f$ has a $C^1$-extension to $M^2$. It is clear that $f$ satisfies the system (\ref{eqH}) for any coordinate systems $(U,x)$, $(V,y)$ at $p$, $f(p)$ respectively such that $x(p)=0$ and $y(f(p))=0$. By corollary 7.1 in \cite{Gu} and lemma 2.2 in \cite{O1}, there exists $\A\in GL(n)$ such that $\A (y\circ f\circ x^{-1})$ satisfies (\ref{branchcond1}), (\ref{branchcond2}) and (\ref{branchcond3}). In the case that $N^n$ is conformally flat, we can choose a conformal diffeomorphism $y$ (i.e., a diffeomorphism $y$ whose pullback metric is a multiple of the metric on the domain by a positive scalar function $\rho$ that we call the conformal factor), so $y\circ f:U\to \Re^n$ is a conformal map with $\Re^n$ endowed with the usual metric. Thus, by remark 2.3 in \cite{O1}, $\A$ can be chosen in such a way that $\ell\A \in O(n)$ for some $\ell \in \Re^+$. Therefore, setting $y'=\ell\A y$ and $x'(z)=\ell^{1/s}x(z)$, we have that $y'\circ f\circ x'^{-1}$ satisfies (\ref{branchcond1}), (\ref{branchcond2}) and (\ref{branchcond3}) with $y'$ being a conformal diffeomorphism. Thus, we can apply the theorem \ref{Theo1} to the coordinate systems $(U,x')$ and $(V,y')$. 
\end{remark}

\begin{proposition}\label{theo1reciprocal}
	Let $f:M^2\to N^n$ be a $C^{r+2,\beta}_{loc}$-branched conformal immersion, $r\in \Na\cup \{0,\infty,\omega\}$, $\beta \in[0,1]$ and $\tilde{g}$ a $C^{r+1,\beta}_{loc}$-Riemannian metric on $N^n$. If for each branch point $p\in M^2$ there exist coordinate systems $(U,x)$, $(V,y)$ at $p$, $f(p)$ respectively with $x(p)=0$, such that the conditions (\ref{cond1}), (\ref{cond2}) and (\ref{cond3}) are satisfied for some $s\in \Na$, then the mean curvature vector $\HC$ of $f$ has a $C^{r,\beta}_{loc}$-extension to $M^2$.
\end{proposition}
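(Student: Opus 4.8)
The plan is to reverse the mechanism of the proof of theorem \ref{Theo1}: there the $\HC$-surface equation was used to deduce the regularity of $d$, whereas here I will feed the hypotheses (\ref{cond1}), (\ref{cond2}), (\ref{cond3}) into the \emph{same} equation and solve algebraically for the components of $\HC$. Fix a branch point $p$ together with the coordinate systems $(U,x)$, $(V,y)$ supplied by the hypothesis, and (as in the proof of theorem \ref{Theo1}) shrink $U$ so that $U\cap Reg(f)=U-\{p\}$. On this punctured neighborhood $\HC$ is already defined and $C^{r,\beta}_{loc}$, and conformality gives $\dt_{\dd{}{x_1}}f_*\dd{}{x_1}+\dt_{\dd{}{x_2}}f_*\dd{}{x_2}=2\delta\HC$, hence the system (\ref{eqH}) with $\HC_j$ the a priori mean-curvature components. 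Now I substitute the two structural identities at hand: by (\ref{cond2}) the left-hand side of (\ref{eqH}) is $4|z|^{2s}l_j$, and by (\ref{cond1}) together with (\ref{eqauxH}) the quadratic factor $\sum_{i'}\dd{\hat f_i}{x_{i'}}\dd{\hat f_k}{x_{i'}}$ equals $4(s+1)^2|z|^{2s}Re\{d_i\overline{d_k}\}$. Cancelling the common factor $4|z|^{2s}$, which is legitimate on $U-\{p\}$, turns (\ref{eqH}) into the purely algebraic relation
\begin{align*}
l_j=(s+1)^2\sum_{i=1}^{n}\sum_{k=1}^{n}\big(\hat g_{ik}(\HC_j\circ x^{-1})-\Gah{i}{j}{k}\big)Re\{d_i\overline{d_k}\},
\end{align*}
valid on $x(U)-\{0\}$ for each $j$.

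Next I isolate the term carrying $\HC_j$, obtaining
\begin{align*}
\HC_j\circ x^{-1}=\frac{l_j+(s+1)^2\sum_{i,k}\Gah{i}{j}{k}Re\{d_i\overline{d_k}\}}{(s+1)^2\sum_{i,k}\hat g_{ik}Re\{d_i\overline{d_k}\}}.
\end{align*}
The crux is that the denominator does not vanish at $0$, and this is exactly where the normalization (\ref{cond3}) is used. Since $d(0)=(\tfrac12,-\tfrac i2,0,\dots,0)$, the only nonzero entries of $Re\{d_i(0)\overline{d_k(0)}\}$ are $Re\{d_1(0)\overline{d_1(0)}\}=Re\{d_2(0)\overline{d_2(0)}\}=\tfrac14$, so the denominator at $0$ equals $\tfrac{(s+1)^2}{4}(\hat g_{11}(0)+\hat g_{22}(0))$, which is strictly positive because $\hat g(0)=\gt(f(p))$ is positive definite. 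Shrinking $U$ further, the denominator stays bounded away from $0$, so the quotient defines a function on all of $x(U)$ that agrees with $\HC_j\circ x^{-1}$ off the origin; this is the candidate extension.

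It remains to count Hölder exponents. By (\ref{cond2}) the $l_j$ are $C^{r,\beta}_{loc}$; by (\ref{cond1}) the $d_i$ are $C^{r+1,\beta}_{loc}$, hence $Re\{d_i\overline{d_k}\}$ is $C^{r+1,\beta}_{loc}$; the coefficients $\hat g_{ik}=\gt_{ik}\circ f\circ x^{-1}$ are $C^{r+1,\beta}_{loc}$, while the $\Gah{i}{j}{k}=\Gat{i}{j}{k}\circ f\circ x^{-1}$, being first derivatives of the $C^{r+1,\beta}_{loc}$ metric composed with the $C^{r+2,\beta}_{loc}$ map $\hat f$, are $C^{r,\beta}_{loc}$. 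Thus the numerator is $C^{r,\beta}_{loc}$ and the denominator is a non-vanishing $C^{r+1,\beta}_{loc}$ function, so the quotient is $C^{r,\beta}_{loc}$; the cases $r\in\{\infty,\omega\}$ are identical, since quotients of smooth (resp.\ real-analytic) functions by non-vanishing ones remain smooth (resp.\ real-analytic). Hence each $\HC_j\circ x^{-1}$, and therefore $\HC=\sum_j\HC_j\,\dd{}{y_j}\circ f$, extends $C^{r,\beta}_{loc}$ across $p$; because these local extensions agree with the already $C^{r,\beta}_{loc}$ field $\HC$ on $Reg(f)$, they determine a single $C^{r,\beta}_{loc}$-extension of $\HC$ to all of $M^2$. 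The only genuine obstacle is the non-vanishing of the denominator, which is supplied precisely by (\ref{cond3}) and the positivity of the metric; the rest is bookkeeping of regularity.
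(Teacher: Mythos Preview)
Your proof is correct and follows essentially the same approach as the paper: substitute (\ref{cond2}) and (\ref{eqauxH}) into the $\HC$-surface equation (\ref{eqH}), cancel the common factor $|z|^{2s}$, and solve for $\HC_j\circ x^{-1}$ using that the coefficient $\sum_{i,k}\hat g_{ik}Re\{d_i\overline{d_k}\}$ is nonzero at $0$ by (\ref{cond3}). Your version is slightly more explicit in tracking the H\"older exponents of each ingredient, but the argument is the same.
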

\begin{proof}
	If $(U,x)$, $(V,y)$ are coordinate systems at $p$, $f(p)$ respectively, satisfying conditions (\ref{cond1}), (\ref{cond2}) and (\ref{cond3}), then equations (\ref{eqH}) and (\ref{eqauxH}) hold on $x(U)-\{0\}$. Substituting (\ref{cond2}) in the LHS of (\ref{eqH}) and (\ref{eqauxH}) in the RHS of (\ref{eqH}), we get that $|z|^{2s}l_j(z)=\sum_{i=1}^{n}\sum_{k=1}^{n}(\hat{g}_{ik}(\HC_j\circ x^{-1})-\Gah{i}{j}{k})(s+1)^2|z|^{2s}Re\{d_i\overline{d_k}\}$. Eliminating the term $|z|^{2s}$, we can write the last equality in the form $(\HC_j\circ x^{-1})\sum_{i=1}^{n}\sum_{k=1}^{n}\hat{g}_{ik}Re\{d_i\overline{d_k}\}=F_j$, where $F_j\in C^{r,\beta}_{loc}(x(U),\Re)$. By condition (\ref{cond3}), we have $(\sum_{i=1}^{n}\sum_{k=1}^{n}\hat{g}_{ik}Re\{d_i\overline{d_k}\})(0)=\hat{g}_{11}|d_1(0)|^2+\hat{g}_{22}|d_2(0)|^2+2\hat{g}_{12}Re\{d_1(0)\overline{d_2(0)}\}=(\hat{g}_{11}(0)+\hat{g}_{22}(0))/4>0$. Then, shrinking $U$ if necessary, we have $(\HC_j\circ x^{-1})=F_j(\sum_{i=1}^{n}\sum_{k=1}^{n}\hat{g}_{ik}Re\{d_i\overline{d_k}\})^{-1}$. Thus, $\HC_j$ extends at $p$ and this extension  belongs to $C^{r,\beta}_{loc}(x(U),\Re)$. It follows the result.   
\end{proof}

\section{Characterization of branch coordinates.}\label{sectioncharacterizationbranch}
The lemmas in this section also will be essential in the sections \ref{sectionrepresentation} and \ref{sectioncurvatures}; here we will use these lemmas to obtain a criterion to determine if a map $f$ (possibly non-conformal) expressed in coordinate systems, satisfies the conditions  (\ref{cond1}), (\ref{cond2}) and (\ref{cond3}) of theorem \ref{Theo1}.
Observe that the existence of a pair of coordinate systems $(U,x)$, $(V,y)$ at points $p$, $f(p)$ respectively, satisfying the condition (\ref{cond1}) with $d$ being a $C^{r+1,\beta}_{loc}$-map, implies that $f$ is of class $C^{r+2,\beta}_{loc}$ on $U$. If in addition (\ref{cond3}) is satisfied, the open set $U$ can be shrunk in such a way that
\begin{align}
	&det(\begin{pmatrix}
		2Re\{d_1\}&-2Im\{d_1\}\\
		2Re\{d_2\}&-2Im\{d_2\}
	\end{pmatrix})\neq 0 \text{ on } x(U),\label{frontalcond}\\ 
	&\text{ }1+\frac{1}{2}(d'_1(z)+d'_2(z))\neq 0\text{ on } x(U)\text{ and }\sup_{z\in x(U)}\frac{|\frac{1}{2}(d'_1-d'_2)|}{|1+\frac{1}{2}(d'_1+d'_2)|}<1.\label{quasiregularcond}
\end{align}
The condition (\ref{frontalcond}) guarantees $\Sigma(f)\cap U=\{p\}$ and that $y\circ f\circ x^{-1}$ is a normalized frontal, a notion we will introduce later. The condition (\ref{quasiregularcond}) is related to the quasiconformality and quasiregularity of some maps that we consider in section \ref{sectionrepresentation}. All these conditions motivate the following definition. 
\begin{definition}\label{branchcoord}\normalfont
	Let $f:M^2\to N^n$ be a $C^{1}$-map, $\beta\in [0,1]$, $k\in \Na\cup \{0,\infty,\omega\}$ and $p\in M^2$. We will call {\it $C^{k,\beta}_{loc}$-branch coordinates at $p$ for $f$} or simply {\it $C^{k,\beta}_{loc}$-branch coordinates at $p$}, a pair of coordinate systems $(U,x)$, $(V,y)$ at $p$, $f(p)$ respectively, with $x(p)=0$ and $y(f(p))=0$, such that the conditions (\ref{cond1}), (\ref{cond3}), (\ref{frontalcond}) and (\ref{quasiregularcond}) are satisfied with $d$ being a $C^{k,\beta}_{loc}$-map. If $(U,x)$, $(V,y)$ are $C^{k,\beta}$-branch coordinates at $p$ with $k\geq 1$, we will call these pair of coordinate systems {\it $C^{k,\beta}_{loc}$-regular branch coordinates at $p$ for $f$} or simply {\it $C^{k,\beta}_{loc}$-regular branch coordinates at $p$} if additionally the condition (\ref{cond2}) is satisfied with $l$ being a $C^{k-1,\beta}_{loc}$-map.  
\end{definition}
From lemma \ref{lembranch}, we can see that for every branch point $p$ of a $C^1$-map, there exist $C^0$-branch coordinates at $p$. We will see that every $C^{k,\beta}_{loc}$-branch coordinates with $k\in \Na\cup \{0,\infty,\omega\}$ satisfies (\ref{branchcond1}), (\ref{branchcond2}) and (\ref{branchcond3}). On the other hand, it is natural to wonder which of the conditions (\ref{cond1}), (\ref{cond2}) and (\ref{cond3}) imply the rest so that we will show in lemma \ref{relations} a relation between (\ref{cond1}) and (\ref{cond2}). We shall need the following lemmas. 
\begin{lemma}\label{lemdivision}
	Let $U=(t_1,t_2)\times(t_3,t_4)\subset \Re^2$ be an open neighborhood of $0$, $k\in\{0\}\cup\Na$, $V\subset \Re^k$ an open set and $F:U\times V \to \Co$ a map of class $C^{r,\beta}_{loc}$, where $r \in \amsmathbb{N}\cup\{\omega,\infty\}$, $\beta\in [0,1]$. The following assertions hold:
	\begin{enumerate}[label=(\roman*)]
		\item There exists a real function $l:U\times V \to \Re$ of class $C^{r-1,\beta}_{loc}$ such that $F(z,y)=l(z,y)z$ on $U\times V$ if and only if $Im\{F(z,y)\bar{z}\}=0$ on $U\times V$.
		\item If $n$ is a natural number such that $r\geq n(n+1)/2$ and $F(z,y)\bar{e}(z,y)\z^n=E(z,y)e(z,y)z^n$ for some $C^{r,\beta}_{loc}$-map $E:U\times V \to \Co$ and a non-vanishing $C^{r,\beta}_{loc}$-map $e:U\times V \to \Co$, then there exist a complex $C^{r-n(n+1)/2,\beta}_{loc}$-map $G$ on $U\times V$, such that $F(z,y)=G(z,y)e(z,y)z^n$ and $E(z,y)=G(z,y)\bar{e}(z,y)\z^n$.
	\end{enumerate}
\end{lemma}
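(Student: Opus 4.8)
The plan is to handle the two items separately, using item $(i)$ as the engine for item $(ii)$.

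For $(i)$, the implication ``$\Leftarrow$'' is trivial, since $F=lz$ with $l$ real gives $F\z=l|z|^2\in\Re$. For ``$\Rightarrow$'', I would write $F=F_1+iF_2$ and $z=x_1+ix_2$, so that $Im\{F\z\}=F_2x_1-F_1x_2$; the hypothesis is $F_2x_1=F_1x_2$ on $U\times V$. Restricting to $\{x_2=0\}$ forces $F_2(x_1,0,y)=0$ and restricting to $\{x_1=0\}$ forces $F_1(0,x_2,y)=0$. Because $U$ is a rectangle containing the origin, the segments $t\mapsto(tx_1,x_2)$ and $t\mapsto(x_1,tx_2)$ lie in $U$, so Hadamard's lemma (the fundamental theorem of calculus along these segments) produces real $C^{r-1,\beta}_{loc}$-functions $a,b$ with $F_1=x_1a$ and $F_2=x_2b$. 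Substituting into $F_2x_1=F_1x_2$ gives $x_1x_2(b-a)=0$, whence $a=b$ on the dense set $\{x_1x_2\neq0\}$ and thus everywhere by continuity. Setting $l:=a=b$ yields $F=l(x_1+ix_2)=lz$ with $l$ real and of class $C^{r-1,\beta}_{loc}$.

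For $(ii)$, since $e$ is non-vanishing and of class $C^{r,\beta}_{loc}$, both $1/e$ and $1/\overline{e}$ are $C^{r,\beta}_{loc}$; putting $P:=F/e$ and $Q:=E/\overline{e}$ turns $F\overline{e}\z^n=Eez^n$ into $P\z^n=Qz^n$, and it suffices to produce $G\in C^{r-n(n+1)/2,\beta}_{loc}$ with $P=Gz^n$, for then $Q=G\z^n$ follows on $\{z\neq0\}$ and by continuity, and finally $F=Gez^n$, $E=G\overline{e}\z^n$. The cornerstone is the case $n=1$, which I would deduce from $(i)$: if $P\z=Qz$ then, combining this identity with its conjugate $\overline{Q}\z=\overline{P}z$, the function $\Psi:=P+\overline{Q}$ satisfies $\Psi\z=Qz+\overline{P}z=\overline{\Psi}z$, so $Im\{\Psi\z\}=0$ and part $(i)$ gives $\Psi=\ell z$ with $\ell$ real; likewise $\Xi:=i(P-\overline{Q})$ has $\Xi\z=\overline{\Xi}z$, so $\Xi=\ell'z$ with $\ell'$ real. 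Adding, $2P=(\ell-i\ell')z$, i.e. $P=Gz$ and $Q=G\z$ with $G=\tfrac12(\ell-i\ell')\in C^{r-1,\beta}_{loc}$. This is a clean division by one holomorphic factor that loses a single derivative and, crucially, works for every $\beta\in[0,1]$. The plan for general $n$ is then an induction that strips one factor at a time: comparing the Taylor data of the two sides of $P\z^n=Qz^n$ at the origin shows that all derivatives $\dd{^{j+k}P}{\z^{k}\partial z^{j}}(0,y)$ with $j<n$ vanish (and symmetrically for $Q$), which is exactly the vanishing needed to feed the base case. Applying the $n=1$ step to the pair $(P\z^{n-1},Qz^{n-1})$ and then successively peeling the surplus antiholomorphic factors by repeated use of part $(i)$, I would obtain $P=zP'$, $Q=\z Q'$ with $P',Q'$ of class $C^{r-n,\beta}_{loc}$ satisfying the lower identity $P'\z^{n-1}=Q'z^{n-1}$; the induction hypothesis applied to $(P',Q')$ costs a further $(n-1)n/2$ derivatives, and $n+(n-1)n/2=n(n+1)/2$ gives the asserted regularity, the hypothesis $r\geq n(n+1)/2$ ensuring it is meaningful.

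The hard part will be the Hölder bookkeeping through the cascade of divisions: one must check that each removal of a factor of $z$ or $\z$ genuinely costs the predicted number of derivatives and that these costs sum to the triangular number $n(n+1)/2$, while keeping the argument valid at the endpoints $\beta=0$ and $\beta=1$. Routing every division through part $(i)$ — hence through the Hadamard integral representation $F(z,y)=z\int_0^1\dd{F}{z}(tz,y)\,dt+\z\int_0^1\dd{F}{\z}(tz,y)\,dt$ rather than through a Beltrami solver or elliptic estimates — is what keeps the endpoints under control, and organizing the induction so that the vanishing supplied by $P\z^n=Qz^n$ is exactly what each successive application of part $(i)$ requires is the delicate point.
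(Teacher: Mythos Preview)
Your proof is correct and follows essentially the same route as the paper's: part $(i)$ via the Hadamard integral representation along the coordinate segments, the $n=1$ case of $(ii)$ by forming the combinations $P+\overline{Q}$ and $i(P-\overline{Q})$ (the paper uses $E\pm\overline{F}$, which is the conjugate of your choice and yields the same $G$), and the general case by iterating the $n=1$ step to peel off one factor of $z$ and one of $\bar z$ at a time before invoking the inductive hypothesis, with the derivative losses summing to the triangular number. One small slip: the ``successive peeling'' in the inductive step is driven by repeated application of the $n=1$ case of $(ii)$ (which returns a complex quotient), not by part $(i)$ directly, and the Taylor-vanishing remark, while true, is not needed---the identity $P\bar z^{n}=Qz^{n}$ alone is what feeds each iteration.
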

\begin{proof}
	One direction of the item $(i)$ is immediate. We shall prove the other direction. Setting $F(x_1,x_2,y)=(a(x_1,x_2,y),b(x_1,x_2,y))$, we have $Im\{F(z,y)\bar{z}\}=-x_2a(x_1,x_2,y)+x_1b(x_1,x_2,y)=0$ which implies that $a(0,x_2,y)=0$, $b(x_1,0,y)=0$ for all $x_1\in (t_1,t_2)$, $x_2\in (t_3,t_4)$ and $y\in V$. Then, $a(x_1,x_2,y)=\int_{0}^{1}\dd{a(tx_1,x_2,y)}{t}dt=x_1\int_{0}^{1}\dd{a}{x_1}(tx_1,x_2,y)dt$ and similarly $b(x_1,x_2,y)=x_2\int_{0}^{1}\dd{b}{x_2}(x_1,tx_2,y)dt$ on $U$. Therefore, setting $c_1(x_1,x_2,y)=\int_{0}^{1}\dd{a}{x_1}(tx_1,x_2)dt \text{ and } c_2(x_1,x_2,y)=\int_{0}^{1}\dd{b}{x_2}(x_1,tx_2,y)dt$, we have $0=-x_2a(x_1,x_2,y)+x_1b(x_1,x_2,y)=-x_2x_1c_1(x_1,x_2,y)+x_1x_2c_2(x_1,x_2,y)$ which implies $c_1(x_1,x_2,y)=c_2(x_1,x_2,y)$ on $U\times V$. Also, by the triangle inequality and the mean value theorem for integrals, $c_1$ and $c_2$ are $C^{r-1,\beta}_{loc}$-maps. Thus, $F(z)=c_1(x_1,x_2,y)z$ with $c_1$ being a real function of class $C^{r-1,\beta}_{loc}$ on $U\times V$. For the item $(ii)$, it suffices to prove the case in which $e=1$ on $U\times V$. Let us prove this item by induction on $n$. If $F(z,y)\z=E(z,y)z$ for some $C^{r,\beta}_{loc}$-map $E:U\times V \to \Co$ on $U\times V$, set the maps $H'_1(z,y):=E(z,y)-\overline{F}(z,y)$ and $H'_2(z,y):=E(z,y)+\overline{F}(z,y)$. Then, $z H'_1(z,y)$ is a purely imaginary number and $z H'_2(z,y)$ is a real number, which is equivalent to $Im\{z iH'_1(z,y)\}=0$ and $Im\{z H'_2(z,y)\}=0$ on $U\times V$. By item $(i)$, there exist real functions $l'_1$, $l'_2$ of class $C^{r-1,\beta}_{loc}$ on $U\times V$ such that $iH'_1(z,y)=l'_1(z,y)\z$ and $H'_2(z,y)=l'_2(z,y)\z$. Thus, $E(z,y)=(H'_1(z,y)+H'_2(z,y))/2=\frac{1}{2}(l'_2(z,y)-il'_1(z,y))\z$ and $\overline{F}(z,y)=(H'_2(z,y)-H'_1(z,y))/2=\frac{1}{2}(l'_2(z,y)+il'_1(z,y))\z$. So, the assertion is true for $n=1$. Let us suppose the assertion holds for all the natural numbers less than or equal to $n$. If $r\geq (n+1)(n+2)/2$ and 
	\begin{align}\label{specialeq}
		F(z,y)\z^{n+1}=E(z,y)z^{n+1}
	\end{align} for some $C^{r,\beta}_{loc}$-map $E:U\times V \to \Co$ on $U\times V$, then we have 
	\begin{align}\label{specialeq2}
		E(z,y)z^n=E_1(z,y)\overline{z} \text{ and } F(z,y)\z^n=F_1(z,y)z
	\end{align} for some complex $C^{r-1,\beta}_{loc}$-maps $E_1$, $F_1$ on $U\times V$. Then, there exist complex $C^{r-2,\beta}_{loc}$-maps $E_2$, $F_2$ on $U\times V$ such that $E_1(z,y)=E_2(z,y)z$ and $F_1(z,y)=F_2(z,y)\z$. Substituting these in (\ref{specialeq2}) we obtain that $E(z,y)z^{n-1}=E_2(z,y)\overline{z}$ and $F(z,y)\z^{n-1}=F_2(z,y)z$. Continuing this procedure, we obtain $C^{r-n-1,\beta}_{loc}$-maps $E_{n+1}$, $F_{n+1}$ such that $E(z,y)=E_{n+1}(z,y)\overline{z}$ and $F(z,y)=F_{n+1}(z,y)z$. Now, substituting these expressions in (\ref{specialeq}), we get $F_{n+1}(z,y)\z^n=E_{n+1}(z,y)z^n$. Since $r-n-1\geq n(n+1)/2$, by inductive hypothesis there exists a complex $C^{r-(n+1)(n+2)/2,\beta}_{loc}$-map $G$ on $U\times V$, such that $E_{n+1}(z,y)=\z^nG(z,y)$ and $F_{n+1}(z,y)=z^nG(z,y)$. We have $E(z,y)=E_{n+1}(z,y)\overline{z}=G(z,y)\z^{n+1}$ and $F(z,y)=F_{n+1}(z,y)z=G(z,y)z^{n+1}$.    
\end{proof}
\begin{remark}\normalfont
	If $k=0$ in lemma \ref{lemdivision}, it means that $V=\{0\}$ or $F$ is simply a map defined on $U$.
\end{remark}
\begin{lemma}\label{lemspecial}
	Let $U=(t_1,t_2)\times(t_3,t_4)\subset \Re^2$ be an open neighborhood of $0$, $k\in\{0\}\cup\Na$, $V\subset \Re^k$ an open set and $F:U\times V \to \Co$ a complex map of class $C^{r,\beta}_{loc}$, where $r \in \amsmathbb{N}\cup\{\omega,\infty\}$, $\beta\in [0,1]$. If $n$ is a natural number such that $r\geq n(n+1)/2$, then the following assertions hold: 
	\begin{enumerate}[label=(\roman*)]
		\item If $b:U\times V\to \Co$ is a non-vanishing complex map of class $C^{r,\beta}_{loc}$, then there exists a real function $l:U\times V \to \Re$ of class $C^{r-n(n+1)/2,\beta}_{loc}$ such that $F(z,y)=l(z,y)b(z,y)z^n$ on $U\times V$ if and only if $Im\{F(z,y)\bar{b}(z)\bar{z}^n\}=0$ on $U\times V$.
		\item If $F(z,y)(a(z,y)\z^n+b(z,y)z^n)=E(z,y)(a(z,y)\z^n-b(z,y)z^n)$ for some $C^{r,\beta}_{loc}$-map $E:U\times V \to \Co$ and non-vanishing complex $C^{r,\beta}_{loc}$-maps $a$, $b$ on $U\times V$, then there exists a complex $C^{r-n(n+1)/2,\beta}_{loc}$-map $G$ on $U\times V$, such that $F(z,y)=G(z,y)(a(z,y)\z^n-b(z,y)z^n)$ and $E(z,y)=G(z,y)(a(z,y)\z^n+b(z,y)z^n)$. 
	\end{enumerate}
\end{lemma}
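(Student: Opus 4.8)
The plan is to derive both statements as corollaries of part $(ii)$ of Lemma \ref{lemdivision}, which already carries the genuine analytic content (the iterated division that costs $n(n+1)/2$ derivatives); here only algebraic rearrangements and an elementary reality argument remain. In both items I would check at the outset that $r\geq n(n+1)/2$ (hypothesis) guarantees the map produced by Lemma \ref{lemdivision}$(ii)$ is at least continuous, which is what the density argument below needs.

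For item $(i)$, one implication is immediate: if $F=lbz^n$ with $l$ real, then $F\bar b\z^n=l|b|^2|z|^{2n}$ is real, so $Im\{F\bar b\z^n\}=0$. For the converse I would rewrite the hypothesis $Im\{F\bar b\z^n\}=0$ as the identity $F\bar b\z^n=\bar F b z^n$ on $U\times V$. This is precisely an equation of the form treated in Lemma \ref{lemdivision}$(ii)$ with $e:=b$ (non-vanishing and $C^{r,\beta}_{loc}$ by hypothesis) and $E:=\bar F\in C^{r,\beta}_{loc}$. That lemma then yields a $C^{r-n(n+1)/2,\beta}_{loc}$-map $G$ with $F=Gbz^n$ and $\bar F=G\bar b\z^n$. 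Conjugating the first equality gives $\bar F=\bar G\bar b\z^n$, hence $(G-\bar G)\bar b\z^n=0$; as $b$ is non-vanishing, $G=\bar G$ wherever $z\neq 0$, and since $G$ is continuous while $\{z\neq 0\}$ is dense in $U\times V$, the map $G$ is real everywhere. Setting $l:=G$ closes this direction.

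For item $(ii)$ I would expand and regroup the hypothesis $F(a\z^n+bz^n)=E(a\z^n-bz^n)$ into $(F-E)a\,\z^n=-(F+E)b\,z^n$, and set $F_1:=(F-E)a$ and $E_1:=-(F+E)b$, both of class $C^{r,\beta}_{loc}$. Applying Lemma \ref{lemdivision}$(ii)$ with $e:=1$ to $F_1\z^n=E_1 z^n$ produces $H\in C^{r-n(n+1)/2,\beta}_{loc}$ with $F_1=Hz^n$ and $E_1=H\z^n$. Solving the resulting linear system $F-E=Hz^n/a$, $F+E=-H\z^n/b$ for $F$ and $E$ gives $F=\tfrac{H}{2}(z^n/a-\z^n/b)=-\tfrac{H}{2ab}(a\z^n-bz^n)$ and, likewise, $E=-\tfrac{H}{2ab}(a\z^n+bz^n)$. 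Since $a,b$ are non-vanishing and $C^{r,\beta}_{loc}$, the map $G:=-H/(2ab)$ lies in $C^{r-n(n+1)/2,\beta}_{loc}$ and satisfies the two claimed identities.

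The truly delicate step lies neither in the rearrangements nor in the reality argument but upstream, in Lemma \ref{lemdivision}, where the regularity loss $n(n+1)/2$ is produced by the inductive peeling of factors $z$ and $\z$; here I need only reuse that output. Consequently the main thing to watch is bookkeeping: verifying that $\bar F$, $1/b$, and $1/(ab)$ remain in $C^{r,\beta}_{loc}$ so that the hypotheses of Lemma \ref{lemdivision}$(ii)$ are genuinely met, and that the continuity of $G$ together with the density of $\{z\neq 0\}$ legitimately upgrades ``$G$ real off $z=0$'' to ``$G$ real.'' Both checks are routine once the correct rewritings above are in place.
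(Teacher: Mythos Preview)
Your proof is correct and follows essentially the same route as the paper: for (i) you rewrite $Im\{F\bar b\z^n\}=0$ as $F\bar b\z^n=\bar Fbz^n$, invoke Lemma~\ref{lemdivision}(ii), and conclude $G$ is real; for (ii) you regroup the hypothesis into the form $(F-E)a\,\z^n=-(F+E)b\,z^n$ and apply Lemma~\ref{lemdivision}(ii) with $e=1$, then solve the resulting linear system. The paper does exactly this, only more tersely (and with what appears to be a harmless typo swapping $z^n$ and $\z^n$ in its rearrangement for (ii)).
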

\begin{proof}
	For the item $(i)$, if $Im\{F(z,y)\bar{b}(z)\bar{z}^n\}=0$ on $U\times V$, then $F(z,y)\bar{b}(z,y)\bar{z}^n=\bar{F}(z,y)b(z,y)z^n$ and by lemma \ref{lemdivision}, there exists a complex $C^{r-n(n+1)/2,\beta}_{loc}$-map $G$ on $U\times V$ such that $F(z,y)=G(z,y)b(z,y)z^n$ and $\bar{F}(z,y)=G(z,y)\bar{b}(z,y)\bar{z}^n$. It implies that $\bar{G}(z,y)=G(z,y)$ and therefore $G(z,y)$ is real. The converse is immediate. On the other hand, since the first equality given in item $(ii)$ is equivalent to $a(z,y)z^n(F(z,y)-E(z,y))=-b(z,y)\z^n(E(z,y)+F(z,y))$, applying the item $(ii)$ of lemma \ref{lemdivision}, the result follows. 	   
\end{proof}
\begin{lemma}\label{relations}
	Let $f:M^2\to N^n$ be a $C^{1}$-map, $\beta \in[0,1]$ and $p\in M^2$. If there exist coordinate systems $(U,x)$, $(V,y)$ at $p$, $f(p)$ respectively, such that for some $s\in\Na$, the condition (\ref{cond1}) is satisfied with $d$ being a $C^{k,\beta}_{loc}$-map with $k\in (\Na-\{1\})\cup \{\infty,\omega\}$, then if $k\geq s(s+1)/2+1$, the condition (\ref{cond2}) is satisfied on an open neighborhood $\hat{U}\subset x(U)$ of $0$ with $l$ being a map of class $C^{k-s(s+1)/2-1,\beta}_{loc}$.
\end{lemma}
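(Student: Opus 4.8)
The plan is to differentiate the hypothesis (\ref{cond1}) with respect to $\z$ and then exploit that the resulting mixed second derivative is forced to be real-valued, because $\hat{f}$ has real components. Since $z^s$ is holomorphic, applying $\dd{}{\z}$ to (\ref{cond1}) gives
\[
\dd{^2\hat{f}}{\z\partial z}(z)=(s+1)z^s\dd{d}{\z}(z)
\]
on $x(U)$; the expression makes sense on all of $x(U)$ because $d$ is a $C^{k,\beta}_{loc}$-map with $k\geq 2$. On the other hand, for each component $\dd{^2\hat{f}_j}{\z\partial z}=\frac{1}{4}\Delta\hat{f}_j$ is real, since $\hat{f}_j$ is a real function. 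Comparing, I conclude that $Im\{z^s\dd{d_j}{\z}(z)\}=0$ for every $j\in\{1,\dots,n\}$.

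Next I would feed this vanishing-imaginary-part condition into Lemma \ref{lemspecial}(i). Passing to complex conjugates, the identity $Im\{z^s\dd{d_j}{\z}\}=0$ is equivalent to $Im\{\overline{\dd{d_j}{\z}}\,\z^s\}=0$. After replacing $x(U)$ by an open rectangular neighborhood $\hat{U}$ of $0$ contained in $x(U)$ (so that the format hypotheses of Lemma \ref{lemspecial} are met, in the parameter-free case $V=\{0\}$ of that lemma), I set $F=\overline{\dd{d_j}{\z}}$, which is of class $C^{k-1,\beta}_{loc}$, and take $b\equiv 1$ and $n=s$. Since the assumption $k\geq s(s+1)/2+1$ forces $k-1\geq s(s+1)/2$, Lemma \ref{lemspecial}(i) yields a real function $l_j$ of class $C^{k-s(s+1)/2-1,\beta}_{loc}$ with $F=l_j z^s$. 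Conjugating back gives $\dd{d_j}{\z}=l_j\z^s$ with $l_j$ real.

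Finally I would reassemble the pieces. Substituting $\dd{d_j}{\z}=l_j\z^s$ into the displayed formula yields
\[
\dd{^2\hat{f}_j}{\z\partial z}(z)=(s+1)z^s l_j(z)\z^s=(s+1)|z|^{2s}l_j(z),
\]
so setting $l=(s+1)(l_1,\dots,l_n):\hat{U}\to\Re^n$ produces precisely (\ref{cond2}) on $\hat{U}$, with $l$ of class $C^{k-s(s+1)/2-1,\beta}_{loc}$, as claimed.

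The only genuinely substantive step is the division step, and it is already packaged in Lemma \ref{lemspecial}; the polynomial loss $s(s+1)/2$ in the differentiability index is exactly the loss incurred there, so I expect this to be the main (indeed essentially the only) technical point, with everything else amounting to bookkeeping. I would also remark that the standing restriction $k\neq 1$ together with $k\geq s(s+1)/2+1$ is precisely what guarantees that $\dd{d_j}{\z}$ is differentiable enough to be an admissible input of Lemma \ref{lemspecial}, and that for $k\in\{\infty,\omega\}$ the conventions on degrees of differentiability yield no loss of regularity.
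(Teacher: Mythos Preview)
Your proof is correct and follows exactly the same approach as the paper: differentiate (\ref{cond1}) in $\z$, use that $\dd{^2\hat{f}}{\z\partial z}$ is real to obtain $Im\{z^s\dd{d}{\z}\}=0$, and then invoke Lemma~\ref{lemspecial}(i). The paper's argument is stated in two lines and leaves the conjugation needed to match the form $Im\{F\bar b\bar z^s\}=0$ implicit; your write-up simply spells this out.
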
 
\begin{proof}
	If we have (\ref{cond1}) then $\dd{^2\hat{f}}{\z\partial z}(z)=(s+1)z^sd_{\z}(z)$ and therefore $Im\{z^sd_{\z}\}=0$ on $x(U)$. Now, applying the item $(i)$ of lemma \ref{lemspecial}, the assertion follows.
\end{proof}
By lemma \ref{relations}, if $p$ is a branch point, every $C^{k,\beta}_{loc}$-branch coordinates at $p$ in the cases $k=\infty, \omega$ are $C^{k,\beta}_{loc}$-regular branch coordinates at $p$. The following lemma and lemma \ref{lembranch} show that a point $p$ is a branch point of order $s'$ if and only if there exist $C^{0}$-branch coordinates at $p$ with the number $s$ in $(\ref{cond1})$ being equal to $s'$. 
\begin{lemma}\label{equivalencebranchcoor}
	Let $f:M^2\to N^n$ be a $C^{1}$-map and $p\in M^2$. Then, for all pair of coordinate systems $(U,x)$, $(V,y)$ at $p$ and $f(p)$ respectively with $x(p)=0$ and $y(f(p))=0$, such that conditions (\ref{cond1}) and (\ref{cond3}) are satisfied with $d$ being a continuous map, we also have that (\ref{branchcond1}), (\ref{branchcond2}) and (\ref{branchcond3}) are satisfied. 
\end{lemma}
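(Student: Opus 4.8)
The plan is to translate conditions (\ref{cond1}) and (\ref{cond3}) into estimates on the Wirtinger derivatives of $\sigma$ and of $\hat{f}_k$ for $3\le k\le n$, then convert these into estimates on the real partial derivatives to obtain (\ref{branchcond3}), and finally integrate along rays emanating from $0$ to recover (\ref{branchcond1}) and (\ref{branchcond2}).

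First I would compute $\dd{\sigma}{z}$ directly. Combining (\ref{cond1}) with (\ref{cond3}) gives $\dd{}{z}(\hat{f}_1+i\hat{f}_2)=(s+1)z^s(d_1+id_2)=(s+1)z^s(1+\tfrac12(d'_1+d'_2))$, and since $\dd{}{z}z^{s+1}=(s+1)z^s$ we obtain $\dd{\sigma}{z}(z)=(s+1)z^s\cdot\tfrac12(d'_1(z)+d'_2(z))$. Because each $\hat{f}_k$ is real-valued, $\dd{\hat{f}_k}{\z}=\overline{\dd{\hat{f}_k}{z}}$, so conjugating (\ref{cond1}) and using (\ref{cond3}) yields $\dd{\sigma}{\z}(z)=(s+1)\z^s\cdot\tfrac12(\overline{d'_1(z)}-\overline{d'_2(z)})$, the holomorphic term $z^{s+1}$ dropping out under $\dd{}{\z}$. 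As $d$ is continuous with $d'_1(0)=d'_2(0)=0$, both expressions are $o(|z|^s)$; likewise $\dd{\hat{f}_k}{z}(z)=(s+1)z^sd_k(z)=o(|z|^s)$ for $k\ge3$ since $d_k(0)=0$, and its conjugate gives the same bound for $\dd{\hat{f}_k}{\z}$.

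Next I would pass to the real partial derivatives through $\dd{}{x_1}=\dd{}{z}+\dd{}{\z}$ and $\dd{}{x_2}=i(\dd{}{z}-\dd{}{\z})$, which immediately turns the bounds above into $\dd{\sigma}{x_j}(z),\dd{\hat{f}_k}{x_j}(z)=o(|z|^s)$ for $j=1,2$ and $3\le k\le n$; this is exactly (\ref{branchcond3}). To obtain (\ref{branchcond1}) and (\ref{branchcond2}), I would use that $\sigma(0)=0$ and $\hat{f}_k(0)=0$ (consequences of $\hat{f}(0)=y(f(p))=0$) together with the fact that $\hat{f}$ is $C^1$; after shrinking $U$ so that $x(U)$ contains a ball about $0$, I may integrate along the segment $t\mapsto tz$ to get $\sigma(z)=\int_0^1\frac{d}{dt}\sigma(tz)\,dt=\int_0^1\big(x_1\dd{\sigma}{x_1}(tz)+x_2\dd{\sigma}{x_2}(tz)\big)\,dt$, and similarly for $\hat{f}_k$. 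Given $\epsilon>0$, the uniform bound $|\dd{\sigma}{x_j}(w)|\le\epsilon|w|^s$ valid for small $|w|$ forces $|\sigma(z)|\le\epsilon\,(|x_1|+|x_2|)\,|z|^s\int_0^1 t^s\,dt\le\tfrac{2\epsilon}{s+1}|z|^{s+1}$, proving $\sigma(z)=o(|z|^{s+1})$, and analogously $\hat{f}_k(z)=o(|z|^{s+1})$.

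The computations are elementary; the only point requiring a little care is the passage from the $o(|z|^s)$ bound on the gradient to the $o(|z|^{s+1})$ bound on the function. Here I would be explicit that the estimate $\dd{\sigma}{x_j}(z)=o(|z|^s)$ is uniform in direction, i.e. $|\dd{\sigma}{x_j}(w)|\le\epsilon|w|^s$ for every $w$ in a punctured ball, so that the radial integration produces the clean factor $\int_0^1 t^s\,dt=1/(s+1)$ and the estimate closes uniformly as $z\to0$. No deeper obstacle is anticipated, since $C^1$ regularity of $\hat{f}$ (guaranteed by (\ref{cond1})) and the convexity of the shrunken domain make the fundamental theorem of calculus along segments applicable.
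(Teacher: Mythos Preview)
Your proof is correct and follows essentially the same approach as the paper: compute $\partial\sigma/\partial z$ and $\partial\sigma/\partial\bar z$ (and likewise for $\hat f_k$, $k\ge 3$) from (\ref{cond1}) and (\ref{cond3}), convert to real partials to obtain (\ref{branchcond3}), and then integrate along the ray $t\mapsto tz$ to deduce (\ref{branchcond1}) and (\ref{branchcond2}). The only cosmetic difference is that the paper carries out the radial integration in Wirtinger form, writing $\sigma(z)=\int_0^1\big(\tfrac{\partial\sigma}{\partial z}(tz)\,z+\tfrac{\partial\sigma}{\partial\bar z}(tz)\,\bar z\big)\,dt$ and factoring out $z^{s+1}$ and $\bar z^{s+1}$ directly, whereas you express the same integral in terms of $\partial\sigma/\partial x_j$ and close with an explicit $\epsilon$-estimate; the content is identical.
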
 
\begin{proof}
	In such coordinate systems, for $j\in\{1,\dots,n\}$, since $\dd{\hat{f}_j}{z}(z)=(s+1)z^sd_j(z)$, then $\dd{\hat{f}_j}{\z}(z)=(s+1)\z^s\bar{d}_j(z)$. Therefore, as $d_j(0)=0$ for $j=3,\dots,n$, we have that $\dd{\hat{f}_j}{x_1}(z)=\dd{\hat{f}_j}{z}(z)+\dd{\hat{f}_j}{\z}(z)=o(z^s)$ and $\dd{\hat{f}_j}{x_2}(z)=i(\dd{\hat{f}_j}{z}(z)-\dd{\hat{f}_j}{\z}(z))=o(z^s)$. Using (\ref{cond3}), we have $(s+1)z^s+\dd{\sigma}{z}=\dd{\hat{f}_1+i\hat{f}_2}{z}(z)=(s+1)z^s(1+\frac{d'_1(z)+d'_2(z)}{2})$ and then $\dd{\sigma}{z}=(s+1)z^s(\frac{d'_1(z)+d'_2(z)}{2})$. Also,  $\dd{\sigma}{\z}=\dd{\hat{f}_1+i\hat{f}_2}{\z}(z)=(s+1)\z^s(\frac{\overline{d'_1(z)-d'_2(z)}}{2})$ and since $d'_1(0)=d'_2(0)=0$, we have that $\dd{\sigma}{x_1}(z)=\dd{\sigma}{z}(z)+\dd{\sigma}{\z}(z)=o(z^s)$,  $\dd{\sigma}{x_2}(z)=i(\dd{\sigma}{z}(z)-\dd{\sigma}{\z}(z))=o(z^s)$, so condition (\ref{branchcond3}) is satisfied. On the other hand, $\sigma(z)=\int_{0}^{1}\dd{\sigma(tz)}{t}dt=\int_{0}^{1}(\dd{\sigma}{z}(tz)z+\dd{\sigma}{\z}(tz)\z) dt=z^{s+1}(s+1)\int_{0}^{1}t^s(\frac{d'_1(tz)+d'_2(tz)}{2})dt+\z^{s+1}(s+1)\int_{0}^{1}t^s(\frac{\overline{d'_1(tz)-d'_2(tz)}}{2})dt=o(z^{s+1})$ and similarly, for $j=3,\dots,n$, we obtain that $\hat{f}_j(z)=z^{s+1}(s+1)\int_{0}^{1}t^sd_j(tz)dt+\z^{s+1}(s+1)\int_{0}^{1}t^s\bar{d}_j(tz)dt=o(z^{s+1})$, so conditions (\ref{branchcond1}) and (\ref{branchcond2}) are satisfied.
\end{proof}
The following lemmas will be fundamental for most results of the rest of the paper.
\begin{lemma}\label{holderl1}
	Let $e:U \to \Co$ be a complex $C^{0,\beta}$-function, $U\subset \Co$ an open neighborhood of $0$, $s\in \Na$ and $\beta \in [0,1]$. Let us set the map 
	\begin{align}\label{function}
		\varpi(z)=\frac{\z^s}{z^s}e(z)
	\end{align} defined on $U-\{0\}$.
	\begin{enumerate}[label=(\roman*)]
		\item If $e(0)=0$, then $\varpi$ is a complex $C^{0,\beta}$-function on $U-\{0\}$. Also, if we suppose that $e$ is a complex $C^{0,\beta}_{loc}$-function on $U-\{0\}$ then $\varpi$ is a complex $C^{0,\beta}_{loc}$-function on $U-\{0\}$.  
		\item The map $\varpi$ has a unique $C^{0,\beta}_{loc}$-extension on $U$ if and only if $e(0)=0$. Also, a continuous extension $\varpi$ satisfies that $\varpi(0)=0$.  
	\end{enumerate}  
\end{lemma}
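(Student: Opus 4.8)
The plan is to write $\varpi=u\cdot e$ on $U-\{0\}$, where $u(z)=\z^s/z^s$, and to exploit two facts about $u$. First, $|u(z)|=1$ everywhere on $\Co-\{0\}$, so $u$ is bounded. Second, writing $z=|z|e^{i\theta}$ one has $u(z)=e^{-2is\theta}$, which is homogeneous of degree $0$ and smooth on $\Co-\{0\}$, with operator-norm gradient bound $|\nabla u(z)|=2s/|z|$ (the radial derivative vanishes and the angular derivative has modulus $2s$). The whole point is that $u$ is \emph{not} $\beta$-Hölder near the origin: its Hölder seminorm on balls of radius $\rho$ about $0$ blows up like $\rho^{-\beta}$. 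Thus the assumption $e(0)=0$, which forces $|e(z)|=|e(z)-e(0)|\le[e]_\beta|z|^\beta$ (writing $[e]_\beta$ for the Hölder seminorm), will be exactly what is needed to absorb this blow-up.

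For item $(i)$ I would estimate, for $z_1,z_2\in U-\{0\}$,
\[
\varpi(z_1)-\varpi(z_2)=u(z_1)\bigl(e(z_1)-e(z_2)\bigr)+\bigl(u(z_1)-u(z_2)\bigr)e(z_2).
\]
The first summand is controlled at once by $|u|\le1$ and the Hölder seminorm of $e$. For the second summand I would split according to the size of $|z_1-z_2|$ relative to $|z_2|$: if $|z_1-z_2|\ge\tfrac12|z_2|$ I use the crude bound $|u(z_1)-u(z_2)|\le2$ together with $|e(z_2)|\le[e]_\beta|z_2|^\beta\le[e]_\beta(2|z_1-z_2|)^\beta$; if $|z_1-z_2|<\tfrac12|z_2|$ the segment joining $z_1$ and $z_2$ avoids $0$ and stays in $\{|z|>\tfrac12|z_2|\}$, where the mean value inequality and $|\nabla u|=2s/|z|$ give $|u(z_1)-u(z_2)|\le(4s/|z_2|)|z_1-z_2|$, and then $|e(z_2)|\le[e]_\beta|z_2|^\beta$ together with $|z_1-z_2|^{1-\beta}\le|z_2|^{1-\beta}$ converts the factor $|z_1-z_2|\,|z_2|^{\beta-1}$ into $|z_1-z_2|^\beta$. (Here $u$ is defined on all of $\Co-\{0\}$, so no convexity of $U$ is needed for the segment argument.) This yields a global $\beta$-Hölder bound on $U-\{0\}$. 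The second assertion of $(i)$, with $e$ only $C^{0,\beta}_{loc}$ on $U-\{0\}$, is easier: on a compact subset of $U-\{0\}$, $u$ is smooth hence $\beta$-Hölder and $e$ is $\beta$-Hölder, and a product of bounded $\beta$-Hölder functions is $\beta$-Hölder, so $\varpi\in C^{0,\beta}_{loc}(U-\{0\})$.

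For item $(ii)$, the forward implication is quick once $(i)$ is available: if $e(0)=0$, set $\varpi(0)=0$; then $|\varpi(z)-\varpi(0)|=|e(z)|\le[e]_\beta|z|^\beta$ handles every pair involving the origin, and combined with the bound from $(i)$ on $U-\{0\}$ this shows the extension lies in $C^{0,\beta}_{loc}(U)$, uniqueness being automatic because $U-\{0\}$ is dense in $U$. For the converse and the final assertion, suppose $\varpi$ admits a continuous extension to $U$. Along a ray $z=te^{i\theta}$, $t\to0^+$, one has $\varpi(te^{i\theta})=e^{-2is\theta}e(te^{i\theta})\to e^{-2is\theta}e(0)$; continuity at $0$ forces this limit to be independent of $\theta$, and since $s\ge1$ the factor $e^{-2is\theta}$ is non-constant in $\theta$, so necessarily $e(0)=0$; then $|\varpi(z)|=|e(z)|\to0$, giving $\varpi(0)=0$.

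The main obstacle is the near-origin Hölder estimate in item $(i)$: the factor $u$ has unbounded Hölder seminorm as $z\to0$, and the delicate point is the case $|z_1-z_2|<\tfrac12|z_2|$, where one must combine the homogeneity bound $|\nabla u|=2s/|z|$ with the decay $|e(z)|\lesssim|z|^\beta$ coming from $e(0)=0$ to recover the exponent $\beta$. Everything else is routine, and the endpoint cases $\beta=0$ (plain continuity) and $\beta=1$ (Lipschitz) are covered by the same computation.
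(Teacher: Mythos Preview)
Your proof is correct. For the main H\"older estimate in (i), you take a different route from the paper. You use a near/far dichotomy on $|z_1-z_2|/|z_2|$: in the far case you invoke the crude bound $|u(z_1)-u(z_2)|\le 2$, and in the near case you apply the mean value inequality along the segment with the gradient bound $|\nabla u|=2s/|z|$. The paper instead produces a single algebraic estimate without case splitting or calculus: it factors
\[
\frac{\z_2^s}{z_2^s}-\frac{\z_1^s}{z_1^s}
=\Bigl(\frac{\z_2}{z_2}-\frac{\z_1}{z_1}\Bigr)\sum_{j=0}^{s-1}\Bigl(\frac{\z_2}{z_2}\Bigr)^{s-1-j}\Bigl(\frac{\z_1}{z_1}\Bigr)^{j},
\]
bounds the sum by $s$, uses $|\tfrac12(\frac{\z_2}{z_2}-\frac{\z_1}{z_1})|\le 1$ together with $t\le t^\beta$ for $t\in[0,1]$ to pass to the exponent~$\beta$, and finishes with the identity $z_1\z_2-z_2\z_1=(z_1-z_2)\z_2+z_2(\z_2-\z_1)$, obtaining the explicit H\"older constant $k_1(1+2s)$. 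Your dichotomy argument is a standard technique that would adapt to any bounded degree-zero homogeneous multiplier, while the paper's computation is more direct and gives a cleaner constant. For the second assertion of (i) and for item (ii) the two arguments are essentially the same; your ray argument with a general angle $\theta$ is in fact slightly cleaner than the paper's choice of the two particular directions $t$ and $it$.
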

\begin{proof}
	To prove item $(i)$, observe that $|e(z_2)-e(z_1)|\leq k_1|z_2-z_1|^\beta$ for some $k_1\in[0,\infty)$. Since $e(0)=0$, then $|e(z_1)|\leq k_1|z_1|^\beta$ and thus
	\begin{align*}
		\frac{|\frac{\z_2^s}{z_2^s}e(z_2)-\frac{\z_1^s}{z_1^s}e(z_1)|}{|z_2-z_1|^\beta}&
		=\frac{|\frac{\z_2^s}{z_2^s}(e(z_2)-e(z_1))+(\frac{\z_2^s}{z_2^s}-\frac{\z_1^s}{z_1^s})e(z_1)|}{|z_2-z_1|^\beta}\\
		&\leq k_1+\frac{k_1|\frac{\z_2^s}{z_2^s}-\frac{\z_1^s}{z_1^s}||z_1|^\beta}{|z_2-z_1|^\beta}\\
		&= k_1+\frac{k_1|\frac{\z_2}{z_2}-\frac{\z_1}{z_1}||\sum_{j=0}^{s-1}(\frac{\z_2}{z_2})^{s-j-1}(\frac{\z_1}{z_1})^{j}||z_1|^\beta}{|z_2-z_1|^\beta}\\
		&\leq k_1+\frac{k_1s|\frac{\z_2}{z_2}-\frac{\z_1}{z_1}||z_1|^\beta}{|z_2-z_1|^\beta}.
	\end{align*}
	As $|z|\leq |z|^\beta$ for $0\leq |z|\leq 1$ and $|\frac{1}{2}(\frac{\z_2}{z_2}-\frac{\z_1}{z_1})|\leq 1$, we have
	\begin{align*}
		\frac{|\frac{\z_2}{z_2}e(z_2)-\frac{\z_1}{z_1}e(z_1)|}{|z_2-z_1|^\beta}&\leq k_1+\frac{2k_1s|\frac{1}{2}(\frac{\z_2}{z_2}-\frac{\z_1}{z_1})|^\beta|z_1|^\beta}{|z_2-z_1|^\beta}\\
		&=  k_1+\frac{2^{1-\beta}k_1s|z_1\z_2-z_2\z_1|^{\beta}|z_1|^\beta}{|z_1|^\beta|z_2|^\beta|z_2-z_1|^\beta}\\
		&=k_1+2^{1-\beta}k_1s(\frac{|(z_1-z_2)\z_2+z_2(\z_2-\z_1)|}{|z_2||z_2-z_1|})^\beta\\
		&\leq k_1+2^{1-\beta}k_1s(\frac{|z_1-z_2||\z_2|+|z_2||\z_2-\z_1|}{|z_2||z_2-z_1|})^\beta\\
		&= k_1(1+2s).
	\end{align*}
	The last part follows from applying the previous reasoning on compact subsets $K\subset U-\{0\}$. On the other hand, to prove $(ii)$, if we have $e(0)=0$, then $\lim_{z\to 0} |\varpi(z)|=\lim_{z\to 0} |e(z)|=0$ and therefore $\varpi$ has a unique continuous extension on $U$. By item $(i)$, $\varpi$ is a $C^{0,\beta}_{loc}$-map on $U-\{0\}$ and therefore also the extension. Conversely, if $\lim_{z\to 0} \varpi(z)=c<\infty$, then for $t\in \Re$, $c=\lim_{t\to 0} \varpi(t+i0)=e(0)$ and $c=\lim_{t\to 0} \varpi(it)=-e(0)$. This implies that $0=c=e(0)$.
\end{proof}

\begin{lemma}\label{lemisolated}
	Let $f:U\to \Re$ be a  $C^{r,\beta}_{loc}$-function, $r\in\Na\cup\{0\}$, $\beta \in[0,1]$, $U\subset \Re^m$ an open set and $p\in U$. If $f$ is a $C^{r+1,\beta}_{loc}$-function on $U-\{p\}$ and all its first partial derivatives have $C^{r,\beta}_{loc}$-extensions on $U$, then $f$ is a $C^{r+1,\beta}_{loc}$-function on $U$.  
\end{lemma}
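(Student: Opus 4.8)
The plan is to reduce the whole statement to the single assertion that $f$ is differentiable at $p$ with the expected derivative; once that is known, the $C^{r+1,\beta}_{loc}$ conclusion follows at once from the definition of the Hölder spaces. Assume without loss of generality that $p=0$, and let $g_i\in C^{r,\beta}_{loc}(U,\Re)$ denote the given extension of $\dd{f}{x_i}$ for $i=1,\dots,m$. Fix an open ball $B\subset U$ centered at $0$. For any $x\in B-\{0\}$ the ray $t\mapsto tx$ stays in $B-\{0\}$ for $t\in(0,1]$, and there $f$ is $C^1$ (since it is $C^{r+1,\beta}_{loc}$ on $U-\{0\}$ and $r+1\geq 1$). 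First I would apply the fundamental theorem of calculus to $t\mapsto f(tx)$ on $[t_0,1]$ for $0<t_0<1$, using the chain rule $\tfrac{d}{dt}f(tx)=\sum_{i=1}^m g_i(tx)\,x_i$, and then let $t_0\to 0^+$. Continuity of $f$ at $0$ (guaranteed by $f\in C^{r,\beta}_{loc}(U)$, since $r\geq 0$) gives $f(t_0x)\to f(0)$, while continuity of the integrand on the compact interval $[0,1]$ (from continuity of the $g_i$) lets me pass to the limit inside the integral, yielding
\begin{align*}
f(x)-f(0)=\int_0^1\sum_{i=1}^m g_i(tx)\,x_i\,dt.
\end{align*}

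Next I would extract the candidate differential $\sum_i g_i(0)x_i$ from this representation and estimate the remainder. Using $|x_i|\leq |x|$ one gets
\begin{align*}
\Big|\,f(x)-f(0)-\sum_{i=1}^m g_i(0)x_i\,\Big|
=\Big|\int_0^1\sum_{i=1}^m\big(g_i(tx)-g_i(0)\big)x_i\,dt\Big|
\leq |x|\int_0^1\sum_{i=1}^m\big|g_i(tx)-g_i(0)\big|\,dt.
\end{align*}
By (uniform) continuity of the $g_i$ on $\bar B$, the integral on the right tends to $0$ as $x\to 0$, so the whole right-hand side is $o(|x|)$. Hence $f$ is differentiable at $0$ with $\dd{f}{x_i}(0)=g_i(0)$. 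In other words, the continuous extensions $g_i$ coincide with the genuine first partial derivatives of $f$ at every point of $U$, so that $f\in C^1(U)$ and $\dd{f}{x_i}=g_i\in C^{r,\beta}_{loc}(U,\Re)$ for all $i$.

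Finally, since $f$ is $C^1$ on $U$ and all its first partial derivatives are $C^{r,\beta}_{loc}$ on $U$, the function $f$ belongs to $C^{r+1,\beta}_{loc}(U,\Re)$ by the standard characterization of Hölder spaces (a map is $C^{r+1,\beta}_{loc}$ precisely when it is $C^1$ with first partials in $C^{r,\beta}_{loc}$); for $r=0$ this reads $f\in C^{1,\beta}_{loc}(U)$. The only genuine obstacle is the \emph{differentiability of $f$ at the isolated point $p$}: on $U-\{p\}$ there is nothing to prove, and the content of the lemma is exactly that filling back the single removed point does not spoil differentiability there. The integration-along-rays argument above is what closes this gap, and the only delicate point in it is justifying the limit $t_0\to 0^+$ under the integral sign, which is immediate from the continuity of $f$ at $p$ and of the integrand.
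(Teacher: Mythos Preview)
Your proof is correct and follows essentially the same strategy as the paper: reduce to the case $r=0$ and show that $f$ is differentiable at the isolated point $p$ with the expected partials, after which the $C^{r+1,\beta}_{loc}$ conclusion is immediate. The only difference is tactical: the paper applies the mean value theorem along each coordinate axis to show that $\partial f/\partial x_j$ exists at $p$ and equals the extension, and then quotes the standard ``continuous partials $\Rightarrow$ differentiable'' fact, whereas your fundamental-theorem-of-calculus argument along rays yields full differentiability at $p$ directly.
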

\begin{proof}
	It suffices to prove the case $r=0$, the general case is obtained applying this last one to the partial derivatives of order $r$. Denoting $p=(p_1,\dots, p_m)$, let us take $j\in\{1,\dots,m\}$, $\epsilon>0$ such that $B_\epsilon(p)\subset U$ and a sequence of real numbers $t_n\in (0,\epsilon)$ such that $t_n\to 0$. Then, by the mean value theorem, for each $n\in\Na$ there exists $t'_n\in (p_j,p_j+t_n)$ such that $f(p_1,\dots,p_j+t_n,\dots,p_m)-f(p)=\dd{f}{x_j}(t'_n)t_n$. Therefore, as $p_j<t'_n<p_j+t_n$ we have $\lim_{n\to \infty}t'_n=p_j$ and $\lim_{n\to \infty}(f(p_1,\dots,p_j+t_n,\dots,p_m)-f(p))t_{n}^{-1}=\lim_{n\to \infty}\dd{f}{x_j}(t'_n)<\infty$.
	Similarly, we can show the same for a sequence $t_n\in (-\epsilon,0)$ such that $t_n\to 0$. This fact shows that all the partial derivatives exist at $p$ and coincide with the extensions evaluated at $p$. Since, all the partial derivatives exist and are $C^{0,\beta}_{loc}$-functions, $f$ is differentiable at $p$ and therefore $f$ is a $C^{1,\beta}_{loc}$-function on $U$.
\end{proof}

\begin{lemma}\label{lemext1}
	If $e:U \to \Co$ is a $C^{r,\beta}_{loc}$-map, $r\in \Na\cup\{0\}$, $U\subset \Co$ an open convex neighborhood of $0$, $\beta \in [0,1]$ and $k$ is an integer such that $0\leq k\leq r$, then the map \begin{align}\label{belt1}
		\varpi'(z)=\frac{\z}{z}e(z)
	\end{align} defined on $U-\{0\}$ has a $C^{k,\beta}_{loc}$-extension on $U$ if and only if $\dd{^je}{\z^j}(0)=0$ for all $0\leq j \leq k$. Furthermore, if $k\geq 1$ and $\varpi'$ has a $C^{k,\beta}_{loc}$-extension on $U$, the following is true:
	\begin{enumerate}[label=(\roman*)]
		\item If $h, h'\in \Na$ with $2\leq h+h'\leq k$ and $\dd{^{h+h'}e}{\z^h \partial z^{h'}}(0)=0$, then $(h'+1)\dd{^{h+h'}\varpi'}{\z^h\partial z^{h'}}(0)=h\dd{^{h+h'}e}{\z^{h-1}\partial z^{h'+1}}(0)$ and $\dd{^{h+h'}\varpi'}{\z^{h+1}\partial z^{h'-1}}(0)=0$.
		\item If $j\in \Na$ with $1\leq j\leq k$, then $\dd{^{j}\varpi'}{ z^{j}}(0)=0$. If we also have that $\dd{^{j}e}{z^{j}}(0)=0$, then $\dd{^{j}\varpi'}{\z\partial z^{j-1}}(0)=0$.
		\item If $j\in \Na$ with $1\leq j\leq k$, then $\dd{^{j}\varpi'}{\z^j}(0)=j\dd{^{j}e}{\z^{j-1}\partial z}(0)$. 
	\end{enumerate}
\end{lemma}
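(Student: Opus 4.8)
The whole statement is governed by the pointwise identity $z\varpi'(z)=\z e(z)$, which holds on $U-\{0\}$ and (once $\varpi'$ is known to extend continuously) on all of $U$. The base case $k=0$ is precisely Lemma \ref{holderl1} with $s=1$: $\varpi'$ has a $C^{0,\beta}_{loc}$-extension iff $e(0)=0$. For the general case I would run an induction on $k$ built on the integral representation $e(z)=z\int_{0}^{1}\dd{e}{z}(tz)\,dt+\z\int_{0}^{1}\dd{e}{\z}(tz)\,dt$ (valid because $e(0)=0$): writing $P,Q$ for the two integrals, one records that $P,Q\in C^{r-1,\beta}_{loc}$ and that $\dd{^{j}Q}{\z^{j}}(0)=\frac{1}{j+1}\dd{^{j+1}e}{\z^{j+1}}(0)$, so the vanishing hypothesis for $e$ at level $k$ becomes the vanishing hypothesis for $Q$ at level $k-1$.

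For the \emph{sufficiency} direction I would subtract from $e$ its degree-$k$ Taylor polynomial, so that $\frac{\z}{z}e=\frac{\z}{z}(\text{polynomial})+\frac{\z}{z}\rho$, where $\rho\in C^{k,\beta}_{loc}$ has all partials up to order $k$ vanishing at $0$. When $\dd{^{j}e}{\z^{j}}(0)=0$ for $0\le j\le k$, every surviving monomial of the polynomial carries a factor $z$, so $\frac{\z}{z}(\text{polynomial})$ is an honest polynomial; the real content is the \emph{flat-remainder claim} that $\frac{\z}{z}\rho$ extends to a $C^{k,\beta}_{loc}$-map. I would prove this claim by the same induction: the representation gives $\frac{\z}{z}\rho=\z P[\rho]+\z\cdot\frac{\z}{z}Q[\rho]$, and since $Q[\rho]$ again has a flat $(k-1)$-jet the inductive hypothesis yields $\frac{\z}{z}Q[\rho]\in C^{k-1,\beta}_{loc}$, whence $\frac{\z}{z}\rho\in C^{k-1,\beta}_{loc}$. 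The missing derivative is then recovered from Lemma \ref{lemisolated}: one shows $\dd{}{z}$ and $\dd{}{\z}$ of $\frac{\z}{z}\rho$ are again of this flat type (hence in $C^{k-1,\beta}_{loc}$), using that an order-$\ell$ derivative of $\rho$ is $O(|z|^{k-\ell+\beta})$ while the corresponding derivative of $\z/z$ is $O(|z|^{-\ell})$, so every product tends to $0$ at the origin; Lemma \ref{lemisolated} upgrades the regularity from $C^{k-1,\beta}_{loc}$ to $C^{k,\beta}_{loc}$.

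For \emph{necessity} and for the identities (i)--(iii) I would differentiate $z\varpi'=\z e$. Assuming $\varpi'\in C^{k,\beta}_{loc}$ and, inductively, $\dd{^{m-1}e}{\z^{m-1}}(0)=0$, I compute for $z\neq0$ that $\dd{^{m}\varpi'}{\z^{m}}(z)=z^{-1}\dd{^{m}(\z e)}{\z^{m}}(z)=z^{-1}\bigl(\z\,\dd{^{m}e}{\z^{m}}(z)+m\,\dd{^{m-1}e}{\z^{m-1}}(z)\bigr)$; expanding $\dd{^{m-1}e}{\z^{m-1}}$ to first order about $0$ and letting $z\to0$, the existence of the limit (guaranteed by $\varpi'\in C^{m}$) forces $\dd{^{m}e}{\z^{m}}(0)=0$ and simultaneously gives $\dd{^{m}\varpi'}{\z^{m}}(0)=m\,\dd{^{m}e}{\z^{m-1}\partial z}(0)$, which is assertion (iii). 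The remaining formulas come from the master relation obtained by applying $\dd{^{a+b}}{\z^{a}\partial z^{b}}$ to the identity and using the Leibniz rule (the $z$-factor kills the top $\varpi'$-term at the origin), namely $b\,\dd{^{a+b-1}\varpi'}{\z^{a}\partial z^{b-1}}(0)=a\,\dd{^{a+b-1}e}{\z^{a-1}\partial z^{b}}(0)$ for $a+b\le k$. The choices $(a,b)=(0,j{+}1)$ and $(1,j)$ give (ii), $(a,b)=(h,h'{+}1)$ gives the first part of (i), and $(a,b)=(h{+}1,h')$ gives its second part, the extra hypothesis $\dd{^{h+h'}e}{\z^{h}\partial z^{h'}}(0)=0$ being exactly what makes the right-hand side vanish, with $h'\ge1$ allowing the division.

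The main obstacle is that everything concentrates at the regularity threshold. The integral representation and plain Taylor-coefficient matching both lose exactly one derivative, so they only ever reach order $k-1$; recovering the $k$-th derivative—both in the flat-remainder claim and in the top-order ($a+b=k+1$) instances of the identities, where $\varpi'$ is only of class $C^{k}$—cannot be done by naive differentiation and must instead be extracted from the homogeneity/decay estimates combined with Lemma \ref{lemisolated} and from the limit computation above, which is precisely the step that detects the failure of extension when some $\dd{^{j}e}{\z^{j}}(0)\neq0$ (the offending term $\tfrac{\z^{j+1}}{z}=r^{j}e^{-i(j+2)\theta}$ having angular frequency $j+2>j$, hence not being $C^{j}$).
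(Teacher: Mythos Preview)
Your proof is essentially the paper's: induction on $k$ with base case Lemma~\ref{holderl1}, the integral representation of $e$ (equivalently, first-order Taylor expansion), repeated differentiation of the identity $z\varpi'=\z e$, and Lemma~\ref{lemisolated} to recover the last derivative in the sufficiency direction. Your Taylor-subtraction preprocessing is a harmless variant, and your limit computation for necessity is the first-order-Taylor paraphrase of the paper's integral-representation step; both yield (iii) simultaneously with necessity.

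The one place your sketch is thinner than the paper is the top-order instance of (i)--(ii). Your master relation $b\,\dd{^{a+b-1}\varpi'}{\z^a\partial z^{b-1}}(0)=a\,\dd{^{a+b-1}e}{\z^{a-1}\partial z^b}(0)$ comes from evaluating the order-$(a{+}b)$ derivative of $z\varpi'=\z e$ at $0$, so it needs $\varpi'\in C^{a+b}$; for total order $k$ in (i) (or $j=k$ in (ii)) this would force $a+b=k+1$. The paper avoids this by staying at order $k$: it keeps the identity
\[
h'\dd{^{h+h'-1}\varpi'}{\z^h\partial z^{h'-1}}(z)+z\dd{^{h+h'}\varpi'}{\z^h\partial z^{h'}}(z)=h\dd{^{h+h'-1}e}{\z^{h-1}\partial z^{h'}}(z)+\z\dd{^{h+h'}e}{\z^h\partial z^{h'}}(z),
\]
writes the difference of the order-$(h{+}h'{-}1)$ terms (which vanishes at $0$) via its integral representation, divides by $z$, and reads both conclusions of (i) off at $z=0$ via Lemma~\ref{holderl1}; the hypothesis $\dd{^{h+h'}e}{\z^h\partial z^{h'}}(0)=0$ is exactly what makes the $\tfrac{\z}{z}$-terms on the $e$-side extend. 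Your limit argument can be made to do the same thing, so the gap is expository rather than mathematical. For the first half of (ii) the paper also uses a shortcut you do not mention: conjugating gives $\tfrac{\z}{z}\overline{\varpi'}=\bar e$, so the already-proved necessity applied to the pair $(\overline{\varpi'},\bar e)$ yields $\dd{^j\varpi'}{z^j}(0)=0$ for all $j\le k$ with no top-order difficulty at all.
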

\begin{proof}
	Let us prove the equivalence in the first part by induction on $k$. By item $(ii)$ of lemma \ref{holderl1}, the assertion is true for $k=0$. We can assume that $r\in\Na$, otherwise $r=0$ and the assertion is true. Suppose the assertion holds for all the natural numbers less than or equal to $n$, where $0\leq n<r$. We have $\dd{(\frac{\z}{z}e)}{\z}(z)=\frac{e(z)}{z}+\frac{\z}{z}\dd{e}{\z}(z)=\frac{1}{z}(z\int_{0}^{1}\dd{e}{z}(tz)dt+\z\int_{0}^{1}\dd{e}{\z}(tz)dt)+\frac{\z}{z}\dd{e}{\z}(z)$ and hence 
	\begin{align}\label{eqaux}
		\dd{(\frac{\z}{z}e)}{\z}(z)-\int_{0}^{1}\dd{e}{z}(tz)dt=\frac{\z}{z}(\int_{0}^{1}\dd{e}{\z}(tz)dt+\dd{e}{\z}(z))
	\end{align}
	on $U-\{0\}$. If we suppose that $\dd{^je}{\z^j}(0)=0$ for all $0\leq j \leq n+1$, then by the inductive hypothesis the RHS of (\ref{eqaux}) has a $C^{n,\beta}_{loc}$-extension on $U$. On the other hand, 
	\begin{align}\label{equationaux}
		\dd{(\frac{\z}{z}e)}{z}(z)=\frac{\z}{z}(\dd{e}{z}(z)-\frac{e(z)}{z})
	\end{align} and $\dd{e}{z}(z)-\frac{1}{z}e(z)=\dd{e}{z}(z)-\int_{0}^{1}\dd{e}{z}(tz)dt+\frac{\z}{z}\int_{0}^{1}\dd{e}{\z}(tz)dt$. By inductive hypothesis the RHS of this last equality has a $C^{n,\beta}_{loc}$-extension $e'$ on $U$ that vanishes at $z=0$. Also, $\dd{^j e'}{\z^j}(z)=\dd{^j(\dd{e}{z}(z)-\frac{1}{z}e(z))}{\z^j}=\dd{^{j+1}e}{z\partial \z^j}(z)-\frac{1}{z}\dd{^je}{\z^j}(z)=\dd{^{j+1}e}{z\partial \z^j}(z)-\int_{0}^{1}\dd{^{j+1}e}{z\partial\z^j}(tz)dt-\frac{\z}{z}\int_{0}^{1}\dd{^{j+1}e}{\z^{j+1}}(tz)dt$ and since $\frac{\z}{z}\int_{0}^{1}\dd{^{j+1}e}{\z^{j+1}}(tz)dt$ has a continuous extension on $U$ for all $j\in\{1,\dots,n\}$, then by lemma \ref{holderl1}, $\dd{^j e'}{\z^j}(0)=0$ for all $j\in\{0,\dots,n\}$. By inductive hypothesis, the map $\frac{\z}{z}e'$ which is the RHS of (\ref{equationaux}) has a $C^{n,\beta}_{loc}$-extension. Thus, as $\dd{\varpi'}{z}$ and $\dd{\varpi'}{\z}$ have $C^{n,\beta}_{loc}$-extensions, by lemma \ref{lemisolated} the map $\varpi'$ is of class $C^{n+1,\beta}_{loc}$ on $U$. Conversely, if $\varpi'$ has a $C^{n+1,\beta}_{loc}$-extension on $U$, then $e(0)=0$ and the equality (\ref{eqaux}) holds on $U-\{0\}$. Since the LHS of this equality has a $C^{n,\beta}_{loc}$-extension on $U$, by the inductive hypothesis $\dd{^j(\int_{0}^{1}\dd{e}{\z}(tz)dt+\dd{e}{\z}(z))}{\z^j}\vert_{z=0}=0$ for all $0\leq j \leq n$. This last is equivalent to having $\dd{^je}{\z^j}(0)=0$ for all $1\leq j \leq n+1$. 
	
	For the item $(i)$ in the last part, if $h, h'\in \Na$ with $2\leq h+h'\leq k$, as $z\varpi'(z)=\z e(z)$ then $z\dd{^h\varpi'}{\z^h}(z)=h\dd{^{h-1}e}{\z^{h-1}}(z)+\z\dd{^he}{\z^h}(z)$ and therefore 
	\begin{align}\label{initialeqaux}
		h'\dd{^{h+h'-1}\varpi'}{\z^h\partial z^{h'-1}}(z)+z\dd{^{h+h'}\varpi'}{\z^h \partial z^{h'}}(z)=h\dd{^{h+h'-1}e}{\z^{h-1}\partial z^{h'}}(z)+\z\dd{^{h+h'}e}{\z^h \partial z^{h'}}(z).
	\end{align}
	Hence, $c(z):=h'\dd{^{h+h'-1}\varpi'}{\z^h\partial z^{h'-1}}(z)-h\dd{^{h+h'-1}e}{\z^{h-1}\partial z^{h'}}(z)$ vanishes at $z=0$, so using that $c(z)=z\int_{0}^{1}\dd{c}{z}(tz)dt+\z\int_{0}^{1}\dd{c}{\z}(tz)dt$ and substituting $c$ by this expression in (\ref{initialeqaux}), we can obtain  
	\begin{align}\label{eqaux2}
		&h'\int_{0}^{1}\dd{^{h+h'}\varpi'}{\z^h\partial z^{h'}}(tz)dt+h'\frac{\z}{z}\int_{0}^{1}\dd{^{h+h'}\varpi'}{\z^{h+1}\partial z^{h'-1}}(tz)dt+\dd{^{h+h'}\varpi'}{\z^h \partial z^{h'}}(z)\\
		&=h\int_{0}^{1}\dd{^{h+h'}e}{\z^{h-1}\partial z^{h'+1}}(tz)dt+h\frac{\z}{z}\int_{0}^{1}\dd{^{h+h'}e}{\z^{h}\partial z^{h'}}(tz)dt+\frac{\z}{z}\dd{^{h+h'}e}{\z^h \partial z^{h'}}(z).\nonumber
	\end{align}
	By item $(ii)$ of lemma \ref{holderl1}, the RHS of this equality has a $C^{0,\beta}_{loc}$-extension, therefore the term $h'\frac{\z}{z}\int_{0}^{1}\dd{^{h+h'}\varpi'}{\z^{h+1}\partial z^{h'-1}}(tz)dt$ has a $C^{0,\beta}_{loc}$-extension that vanishes at $z=0$. Evaluating at $z=0$ the sides of (\ref{eqaux2}), we obtain $(i)$. 
	
	For the item $(ii)$, since $\frac{\z}{z}\overline{\varpi'(z)}=\overline{e(z)}$, the first part of this lemma implies that $\dd{^j\varpi}{z^j}(0)=\overline{\dd{^j\overline{\varpi}}{\z^j}(0)}=0$ for all $j\in \{0,\dots,k\}$. On the other hand, as $z\varpi'(z)=\z e(z)$ then $\z\dd{^je}{z^j}(z)=j\dd{^{j-1}\varpi}{z^{j-1}}(z)+z\dd{^j\varpi}{z^j}(z)$ for $j\in \{1,\dots,k\}$ and therefore $j\int_{0}^{1}\dd{^{j}\varpi'}{z^{j}}(tz)dt+j\frac{\z}{z}\int_{0}^{1}\dd{^{j}\varpi'}{\z\partial z^{j-1}}(tz)dt+\dd{^{j}\varpi'}{z^{j}}(z)=\frac{\z}{z}\dd{^{j}e}{z^{j}}(z)$. If $\dd{^je}{z^j}(0)=0$, then $\frac{\z}{z}\dd{^j}{z^j}e(z)$ has a $C^{0,\beta}_{loc}$-extension on $U$ and therefore so does $\frac{\z}{z}\int_{0}^{1}\dd{^{j}\varpi'}{\z\partial z^{j-1}}(tz)dt$. This implies that $\dd{^{j}\varpi'}{\z\partial z^{j-1}}(0)=0$.  
	
	The item $(iii)$ follows similarly from the equality $z\dd{^h\varpi'}{\z^h}(z)=h\dd{^{h-1}e}{\z^{h-1}}(z)+\z\dd{^he}{\z^h}(z)$ for all $h\in\{1,\dots,k\}$ and the fact that $\dd{^je}{\z^j}(0)=0$ for all $j\in\{0,\dots,k\}$.   
\end{proof}

\begin{lemma}\label{lemext2}
	Let $e:U \to \Co$ be a complex $C^{k,\beta}_{loc}$-function, $k\in \Na$, $U\subset \Co$ an open convex neighborhood of $0$, $\beta \in [0,1]$ and $s\in\Na$. If for each $h'\in\{0,\dots,min\{s,k\}\}$, $\dd{^{h+h'}e}{\z^h\partial z^{h'}}(0)=0$ for all $h\in\{0,\dots,k-h'\}$ and $m\in \{1,\dots,s\}$, then the map $\varpi_m$ defined by $\varpi_m=\frac{\z^m}{z^m}e(z)$ on $U-\{0\}$ has a $C^{k,\beta}_{loc}$-extension on $U$, such that for each $h'\in\{0,\dots,min\{s-m,k\}\}$, we have $\dd{^{h+h'}\varpi_m}{\z^h\partial z^{h'}}(0)=0$ for all $h\in\{0,\dots,k-h'\}$.
\end{lemma}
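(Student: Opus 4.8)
The plan is to induct on $m\in\{1,\dots,s\}$, exploiting the factorization $\varpi_m=\frac{\z}{z}\varpi_{m-1}$ (with the convention $\varpi_0=e$) so that each step is a single application of lemma \ref{lemext1}. Concretely, I would isolate the following inductive step and then iterate it: if $E:U\to\Co$ is a $C^{k,\beta}_{loc}$-map on the convex neighborhood $U$ such that, for some integer $t\geq 1$ and each $h'\in\{0,\dots,\min\{t,k\}\}$, one has $\dd{^{h+h'}E}{\z^h\partial z^{h'}}(0)=0$ for all $h\in\{0,\dots,k-h'\}$, then $\frac{\z}{z}E$ has a $C^{k,\beta}_{loc}$-extension on $U$ satisfying the analogous vanishing with $t$ replaced by $t-1$. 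Applying this with $E=e$, $t=s$ settles the case $m=1$, and applying it with $E=\varpi_{m-1}$, $t=s-m+1$ (which is $\geq 1$ since $m\leq s$) advances the induction; the output condition, carried with $t-1=s-m$, is exactly what is claimed for $\varpi_m$, and the $C^{k,\beta}_{loc}$-regularity of $\varpi_{m-1}$ needed to run the step is supplied by the inductive hypothesis.

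First I would establish the existence of the $C^{k,\beta}_{loc}$-extension. Taking $h'=0$ in the hypothesis on $E$ yields $\dd{^jE}{\z^j}(0)=0$ for all $j\in\{0,\dots,k\}$, which is precisely the condition in the first part of lemma \ref{lemext1} (applied with its regularity index $r=k$); hence $\frac{\z}{z}E$ extends to a $C^{k,\beta}_{loc}$-map, and by lemma \ref{holderl1}$(ii)$ this extension vanishes at $0$. With the extension in hand and $k\geq 1$, items $(i)$--$(iii)$ of lemma \ref{lemext1} become available and provide the identities that drive the rest.

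The core of the argument is the index bookkeeping, which I would organize into three cases matching the three items. Write $\varpi=\frac{\z}{z}E$ and fix $h'\in\{0,\dots,\min\{t-1,k\}\}$ and $h\in\{0,\dots,k-h'\}$. For $h=0$ and $h'\geq 1$, the first assertion of item $(ii)$ gives $\dd{^{h'}\varpi}{z^{h'}}(0)=0$ outright. For $h\geq 1$ and $h'=0$, item $(iii)$ gives $\dd{^h\varpi}{\z^h}(0)=h\dd{^hE}{\z^{h-1}\partial z}(0)$, and the right-hand derivative vanishes by the hypothesis on $E$, since its $z$-order $1\leq\min\{t,k\}$ and its total order $h\leq k$. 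For $h,h'\geq 1$ I would invoke item $(i)$: its hypothesis $\dd{^{h+h'}E}{\z^h\partial z^{h'}}(0)=0$ holds because $h'\leq t-1<t$ and $h+h'\leq k$, and it then yields $(h'+1)\dd{^{h+h'}\varpi}{\z^h\partial z^{h'}}(0)=h\dd{^{h+h'}E}{\z^{h-1}\partial z^{h'+1}}(0)$; the latter derivative again vanishes by the hypothesis on $E$, so dividing by $h'+1$ closes the case.

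The step I expect to be the main obstacle is exactly the range check for the shifted index in the last case. After item $(i)$ the relevant derivative of $E$ has $z$-order $h'+1$, and for the hypothesis on $E$ to cover it I need $h'+1\leq\min\{t,k\}$. This is where the constraint $h\geq 1$ is essential: it forces $h'\leq k-1$, so $h'+1\leq k$, while $h'\leq t-1$ gives $h'+1\leq t$; together these yield $h'+1\leq\min\{t,k\}$. Without noticing that $h\geq 1$ keeps the shifted index admissible, the off-diagonal derivatives appear to drop out of range and the induction seems to lose a value of $h'$ at each step. Once this observation is in place, everything else is a direct transcription of lemmas \ref{holderl1} and \ref{lemext1}.
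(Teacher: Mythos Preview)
Your proposal is correct and follows essentially the same approach as the paper: induction on $m$ via the factorization $\varpi_m=\frac{\z}{z}\varpi_{m-1}$, with each step reduced to a single application of lemma \ref{lemext1}. Your presentation is in fact slightly cleaner, since you abstract the inductive step as a uniform claim about a map $E$ with parameter $t$ rather than treating the base case and inductive step separately, and you make explicit the range check $h'+1\leq\min\{t,k\}$ that the paper leaves implicit.
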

\begin{proof}
	Let us prove it by induction on $m$. If $m=1$, by items $(ii)$ and $(iii)$ of lemma \ref{lemext1} we have $\dd{^j\varpi_{1}}{z^j}(0)=0$ and $\dd{^j\varpi_{1}}{\z^j}(0)=0$ for all $j\in\{0,\dots,k\}$. Thus, it remains to be proven that $\dd{^{h+h'}\varpi_1}{\z^h\partial z^{h'}}(0)=0$ for $2\leq h+h'\leq k$ with $h'\in\{0,\dots,min\{s-1,k\}\}$, which follows from the item $(i)$ of lemma \ref{lemext1}. Now, let us suppose the assertion is true for $m=n$ with $1\leq n<s$. By the inductive hypothesis, the map $\varpi_n$ has a $C^{k,\beta}_{loc}$-extension on $U$, such that for each $h'\in\{0,\dots,min\{s-n,k\}\}$, we have $\dd{^{h+h'}\varpi_n}{\z^h\partial z^{h'}}(0)=0$ for all $h\in\{0,\dots,k-h'\}$. Since $\varpi_{n+1}(z)=\frac{\z}{z}\varpi_{n}(z)$, by lemma \ref{lemext1}, we have that $\varpi_{n+1}$ has a $C^{k,\beta}_{loc}$-extension such that 
	\begin{align}\label{inductionaux}
		\dd{^j\varpi_{n+1}}{z^j}(0)=0\text{ and }\dd{^j\varpi_{n+1}}{\z^j}(0)=0
	\end{align}
	for all $j\in\{0,\dots,k\}$. So, we can assume $k\geq 2$; otherwise, by (\ref{inductionaux}), the assertion for $m=n+1$ is true. Now, if $h'\in \{0,\dots,min\{s-(n+1),k\}\}$, $h\in\{0,\dots,k-h'\}$, $1\leq h'\leq k-1$ and $1\leq h$, then $0\leq h'+1\leq min\{s-n,k\}$. Therefore, by item $(i)$ of lemma \ref{lemext1}, we have $(h'+1)\dd{^{h+h'}\varpi_{n+1}}{\z^h\partial z^{h'}}(0)=h\dd{^{h+h'}\varpi_n}{\z^{h-1}\partial z^{h'+1}}(0)=0$. Thus, putting this together with $(\ref{inductionaux})$, it holds that for each $h'\in\{0,\dots,min\{s-(n+1),k\}\}$, we have $\dd{^{h+h'}\varpi_m}{\z^h\partial z^{h'}}(0)=0$ for all $h\in\{0,\dots,k-h'\}$ and therefore the assertion holds for $m=n+1$.     
\end{proof}
\begin{lemma}\label{lemext3}
	Let $e:U \to \Co$ be a complex $C^{r,\beta}_{loc}$-function, $r\in \Na$, $U\subset \Co$ an open convex neighborhood of $0$, $\beta \in [0,1]$, $s\in\Na$ and $k$ an integer such that $1\leq k\leq r$. If for each $h'\in\{1,\dots,min\{s,k\}\}$, $\dd{^{h+h'}e}{\z^h\partial z^{h'}}(0)=0$ for all $h\in\{0,\dots,k-h'\}$, then the map $\varpi$ defined by  (\ref{function}) on $U-\{0\}$ has a $C^{k,\beta}_{loc}$-extension on $U$ if and only if $\dd{^je}{\z^j}(0)=0$ for all $0\leq j \leq k$. Furthermore, if $\varpi$ has a $C^{k,\beta}_{loc}$-extension on $U$, then it holds that $\dd{^j\varpi}{\z^j}(0)=0$ for all $0\leq j\leq k$. 
\end{lemma}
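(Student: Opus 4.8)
The plan is to reduce everything to Lemma \ref{lemext2} and, for the hard direction, to a homogeneous–degree analysis of the singularity. The two implications of the equivalence are handled separately.

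For ``$\Leftarrow$'' I would adjoin to the standing hypothesis the condition $\dd{^j e}{\z^j}(0)=0$ for $0\le j\le k$. Taken with $h'=0$, this is exactly the $h'=0$ instance of the hypothesis of Lemma \ref{lemext2} with $m=s$, while the instances $h'\in\{1,\dots,\min\{s,k\}\}$ are precisely the standing hypothesis of the present lemma. Hence Lemma \ref{lemext2} applies with $m=s$ and yields that $\varpi=\varpi_s$ has a $C^{k,\beta}_{loc}$-extension on $U$; moreover, since $\min\{s-s,k\}=0$, its conclusion at $h'=0$ gives $\dd{^h\varpi}{\z^h}(0)=0$ for all $h\in\{0,\dots,k\}$, which is the content of the final assertion once the forward implication is available.

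The substantial direction is ``$\Rightarrow$'': assuming $\varpi$ has a $C^{k,\beta}_{loc}$-extension, I must prove $\dd{^j e}{\z^j}(0)=0$ for $0\le j\le k$. I would Taylor–expand $e$ at $0$ to order $k$, writing $e=T+R$ with $T(z)=\sum_{a+b\le k}d_{ab}\z^a z^b$, $d_{ab}=\tfrac{1}{a!b!}\dd{^{a+b}e}{\z^a\partial z^b}(0)$, and $R(z)=o(|z|^k)$. Since $|\z^s/z^s|=1$ we get $\frac{\z^s}{z^s}R=o(|z|^k)$, so $\varpi=\frac{\z^s}{z^s}T+o(|z|^k)$. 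In $\frac{\z^s}{z^s}T=\sum_{a+b\le k}d_{ab}\z^{a+s}z^{b-s}$ the terms with $b\ge s$ assemble into a genuine polynomial $Q$, whereas the terms with $1\le b\le s-1$ vanish: such $b$ lie in $\{1,\dots,\min\{s,k\}\}$ and satisfy $a\le k-b$, so $d_{ab}=0$ by the standing hypothesis. Only the $b=0$ terms remain singular, giving $\varpi-Q=\sum_{j=0}^{k}d_{j0}\frac{\z^{j+s}}{z^s}+o(|z|^k)$.

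Now I would use that $\varpi-Q\in C^k$ (as $\varpi\in C^k$ and $Q$ is a polynomial): its Taylor polynomial splits into homogeneous pieces $P_j$ of degree $j\le k$, so $\sum_{j=0}^{k}\big(P_j-d_{j0}\frac{\z^{j+s}}{z^s}\big)=o(|z|^k)$, where each bracket is homogeneous of degree $j$. Restricting to a fixed ray and dividing by the lowest surviving power of $|z|$ before letting $|z|\to0$ forces each homogeneous term to vanish, whence $P_j=d_{j0}\frac{\z^{j+s}}{z^s}$ for every $j\le k$. Writing $\frac{\z^{j+s}}{z^s}=|z|^{j}e^{-i(j+2s)\theta}$ in polar coordinates, its angular frequency $j+2s$ exceeds $j$, so it is not a homogeneous polynomial of degree $j$; as $P_j$ is, necessarily $d_{j0}=0$, i.e. $\dd{^j e}{\z^j}(0)=0$ for all $0\le j\le k$. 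Finally, with this in hand the full hypothesis of Lemma \ref{lemext2} holds, and its conclusion delivers $\dd{^h\varpi}{\z^h}(0)=0$ for $0\le h\le k$, which is the final assertion.

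I expect the main obstacle to be the \emph{gap phenomenon}: purely algebraic use of the identity $z^s\varpi=\z^s e$ controls $\dd{^j e}{\z^j}(0)$ only for $j\le k-s$, because $z^s\varpi$ is merely $C^k$. Bridging the gap requires the homogeneous decomposition above, in which the hypothesis on the mixed derivatives is exactly what deletes the intermediate ($1\le b\le s-1$) singular terms; what survives is the non-polynomial type $\z^{j+s}/z^s$, whose incompatibility with $C^k$-regularity kills the remaining derivatives.
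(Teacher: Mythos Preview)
Your argument is correct, and it proceeds quite differently from the paper. The paper proves the forward implication by induction on $k$: from the identity
\[
\dd{\varpi}{\z}(z)-s\frac{\z^{s-1}}{z^{s-1}}\int_{0}^{1}\dd{e}{z}(tz)\,dt=\frac{\z^s}{z^s}\Big(s\int_{0}^{1}\dd{e}{\z}(tz)\,dt+\dd{e}{\z}(z)\Big),
\]
it shows (via Lemma~\ref{lemext2} applied to $e'=\dd{e}{z}$) that the left side has a $C^{n,\beta}_{loc}$-extension, and then invokes the inductive hypothesis on the right-hand function to step down one order in $\dd{^{j}e}{\z^{j}}(0)$. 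Your approach instead Taylor--expands $e$ directly, uses the mixed-derivative hypothesis to kill the intermediate powers $1\le b\le s-1$, and then eliminates the remaining singular terms $d_{j0}\,\z^{j+s}/z^s$ by a homogeneity and angular-frequency comparison with the degree-$j$ Taylor piece of $\varpi$. The paper's route stays inside the integral/Hölder machinery of the section and recycles its lemmas; yours is more elementary and makes transparent \emph{why} the mixed-derivative hypothesis is exactly what is needed---it removes precisely those monomials whose transform under $\z^s/z^s$ is neither polynomial nor of the pure $\z^{j+s}/z^s$ type. Both arrive at the final assertion the same way, via Lemma~\ref{lemext2} with $m=s$.
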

\begin{proof}
	If $\dd{^je}{\z^j}(0)=0$ for all $0\leq j \leq k$, then for each $h'\in\{0,\dots,min\{s,k\}\}$, $\dd{^{h+h'}e}{\z^h\partial z^{h'}}(0)=0$ for all $h\in\{0,\dots,k-h'\}$ and by lemma \ref{lemext2}, the map $\varpi$ has a $C^{k,\beta}_{loc}$-extension on $U$. Now, let us prove the converse by induction on $k$. If $k=1$, then $\dd{e}{z}(0)=0$ and by lemma \ref{holderl1} $e(0)=0$. As $\dd{\varpi}{\z}(z)=\frac{s\z^{s-1}}{z^s}e(z)+\frac{\z^s}{z^s}\dd{e}{\z}(z)$ on $U-\{0\}$, then 
	\begin{align}\label{equaux3}
		\dd{\varpi}{\z}(z)-s\frac{\z^{s-1}}{z^{s-1}}\int_{0}^{1}\dd{e}{z}(tz)dt=\frac{\z^s}{z^s}(s\int_{0}^{1}\dd{e}{\z}(tz)dt+\dd{e}{\z}(z)).
	\end{align}
	Since $\dd{e}{z}(0)=0$, by lemma \ref{holderl1} the LHS of (\ref{equaux3}) has a $C^{0,\beta}_{loc}$-extension on $U$ and therefore $\dd{e}{\z}(0)=0$. Thus, the converse is true for $k=1$. Let us suppose this is true for $k=n$ with $n<r$. If for each $h'\in\{1,\dots,min\{s,n+1\}\}$, $\dd{^{h+h'}e}{\z^h\partial z^{h'}}(0)=0$ for all $h\in\{0,\dots,n+1-h'\}$ and the map $\varpi$ has a $C^{n+1,\beta}_{loc}$-extension on $U$, then setting the $C^{r-1,\beta}_{loc}$-map $e'(z)=\dd{e}{z}(z)$, it holds that for each $h'\in\{0,\dots,min\{s-1,n\}\}$, $\dd{^{h+h'}e'}{\z^h\partial z^{h'}}(0)=0$ for all $h\in\{0,\dots,n-h'\}$. Therefore, by lemma \ref{lemext2} the LHS of (\ref{equaux3}) has a $C^{n,\beta}_{loc}$-extension on $U$. On the other hand, setting the $C^{r-1,\beta}_{loc}$-map $e''(z)=(s\int_{0}^{1}\dd{e}{\z}(tz)dt+\dd{e}{\z}(z))$, we have that for each $h'\in\{1,\dots,min\{s,n\}\}$, $\dd{^{h+h'}e''}{\z^h\partial z^{h'}}(0)=0$ for all $h\in\{0,\dots,n-h'\}$. Then, by the inductive hypothesis $\dd{^je''}{\z^j}(0)=0$ for all $j\in\{0,\dots,n\}$, which is equivalent to having $\dd{^je}{\z^j}(0)=0$ for all $j\in\{1,\dots,n+1\}$. Also, as $\varpi$ has a continuous extension on $U$, $e(0)=0$ by lemma \ref{holderl1} and the assertion is true for $k=n+1$. The last part follows from lemma \ref{lemext2}.   
\end{proof}
\begin{lemma}\label{lemext4}
	Let $e:U \to \Co$ be a complex $C^{r,\beta}_{loc}$-function, $r\in \Na\cup \{\infty,\omega\}$, $\beta \in[0,1]$, $U\subset \Co$ an open convex neighborhood of $0$ and $s\in\Na\cup \{0\}$. The following assertions hold:
	\begin{enumerate}[label=(\roman*)]
		\item If $r\in\Na$, $r\geq s+1$ and for each $h'\in\{0,\dots,s\}$, $\dd{^{h+h'}e}{\z^h\partial z^{h'}}(0)=0$ for all $h\in\{0,\dots,r-h'\}$, then $e(z)=z^{s+1}c(z)$ for some $C^{r-(s+1),\beta}_{loc}$-map $c:U\to \Co$ with $(s+1)!c(0)=\dd{^{s+1}e}{z^{s+1}}(0)$. 
		\item If $r\in \{\infty, \omega\}$ and for each $h'\in\{0,\dots,s\}$, $\dd{^{h+h'}e}{\z^h\partial z^{h'}}(0)=0$ for all $h\in\{0\}\cup \Na$, then $e(z)=z^{s+1}c(z)$ for some $C^{r,\beta}_{loc}$-map $c:U\to \Co$. 
	\end{enumerate}
	
\end{lemma}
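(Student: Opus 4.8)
The plan is to prove both parts by induction on $s$, reducing everything to a single factorization step that is handled by the integral representation together with lemma \ref{lemext1}. Throughout I use that convexity of $U$ guarantees the segment $[0,z]\subset U$, so the representation $e(z)=z\int_{0}^{1}\dd{e}{z}(tz)dt+\z\int_{0}^{1}\dd{e}{\z}(tz)dt$ is valid whenever $e(0)=0$.

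For the base case $s=0$ of part $(i)$, the hypothesis with $h'=h=0$ gives $e(0)=0$, so I would write $e(z)=zA(z)+\z B(z)$ with $A(z)=\int_{0}^{1}\dd{e}{z}(tz)dt$ and $B(z)=\int_{0}^{1}\dd{e}{\z}(tz)dt$, both of class $C^{r-1,\beta}_{loc}$ by differentiation under the integral sign. A direct computation gives $\dd{^h B}{\z^h}(0)=\frac{1}{h+1}\dd{^{h+1}e}{\z^{h+1}}(0)$, which vanishes for $0\le h\le r-1$ by the hypothesis with $h'=0$. Hence lemma \ref{lemext1} (applied to $B$ with $k=r-1$) yields that $\frac{\z}{z}B(z)$ has a $C^{r-1,\beta}_{loc}$-extension to $U$, so $e(z)=z\bigl(A(z)+\frac{\z}{z}B(z)\bigr)=zc(z)$ with $c:=A+\frac{\z}{z}B$ of class $C^{r-1,\beta}_{loc}$. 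This is the factorization step I will reuse.

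For the inductive step I would assume the statement for $s-1$ and apply it (its hypotheses being exactly the subfamily $h'\in\{0,\dots,s-1\}$ of the given ones, available since $r\ge s+1$) to obtain $e(z)=z^{s}c_{0}(z)$ with $c_{0}$ of class $C^{r-s,\beta}_{loc}$. The key algebraic observation is that since $z^{s}$ is holomorphic one has $\dd{^h e}{\z^h}=z^{s}\dd{^h c_{0}}{\z^h}$, and applying $\partial_{z}^{s}$ and evaluating at $0$ (only the term in which all $s$ derivatives fall on $z^{s}$ survives) gives $\dd{^{h+s}e}{\z^h\partial z^s}(0)=s!\,\dd{^h c_{0}}{\z^h}(0)$. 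The hypothesis with $h'=s$ then forces $\dd{^h c_{0}}{\z^h}(0)=0$ for $0\le h\le r-s$, which is precisely the base-case hypothesis for the $C^{r-s,\beta}_{loc}$-map $c_{0}$. The factorization step produces $c_{0}(z)=zc(z)$ with $c$ of class $C^{r-s-1,\beta}_{loc}=C^{r-(s+1),\beta}_{loc}$, hence $e(z)=z^{s+1}c(z)$. The normalization $(s+1)!c(0)=\dd{^{s+1}e}{z^{s+1}}(0)$ I would read off directly from $e=z^{s+1}c$ by the same $\partial_{z}^{s+1}$-at-$0$ computation.

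For part $(ii)$ with $r=\infty$ the identical induction runs verbatim: at each invocation of lemma \ref{lemext1} all the relevant $\z$-derivatives vanish at $0$, so the extension is $C^{k,\beta}_{loc}$ for every finite $k$ and therefore $C^{\infty}$, with no loss of regularity. For $r=\omega$ it is cleanest to bypass the induction and argue through the convergent expansion $e(z)=\sum_{j,k}a_{jk}z^{j}\z^{k}$ with $a_{jk}=\frac{1}{j!k!}\dd{^{j+k}e}{z^j\partial\z^k}(0)$: the hypothesis says $a_{jk}=0$ whenever $j\le s$, so $e(z)=z^{s+1}\sum_{j\ge s+1,\,k}a_{jk}z^{j-s-1}\z^{k}=z^{s+1}c(z)$ with $c$ real analytic. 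The only delicate point in the whole argument is the bookkeeping of vanishing derivatives under the integral representation and the transfer of the conditions from $e$ to $c_{0}$; I expect lemma \ref{lemext1} to absorb all of the genuine analytic difficulty, leaving the remaining steps routine.
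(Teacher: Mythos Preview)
Your argument is correct and follows essentially the same strategy as the paper: induction on $s$ using the integral representation $e(z)=z\int_0^1\partial_z e(tz)\,dt+\bar z\int_0^1\partial_{\bar z}e(tz)\,dt$ together with lemma \ref{lemext1} to peel off one factor of $z$ at each step. The only structural difference is the direction of the induction: the paper peels off one $z$ first (invoking lemma \ref{lemext2} to check the vanishing hypotheses on the quotient) and then applies the inductive hypothesis to what remains, whereas you apply the inductive hypothesis first to reach $e=z^s c_0$ and then peel off the last $z$ from $c_0$ via your base case; both orderings work, and your transfer of the $\bar z$-vanishing conditions from $e$ to $c_0$ via $\partial_z^s\partial_{\bar z}^h e(0)=s!\,\partial_{\bar z}^h c_0(0)$ is a clean substitute for the paper's appeal to lemma \ref{lemext2}. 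For $r=\omega$ you take a genuinely shorter route: the paper treats $s=0$ by complexifying and applying the Weierstrass preparation theorem, then inducts, while your direct power-series argument (all Taylor coefficients $a_{jk}$ with $j\le s$ vanish, so $z^{s+1}$ factors out of the convergent series) is more elementary and avoids that machinery entirely.
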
	
\begin{proof}
	The case $r=\infty$ follows from item $(i)$. Let us first prove the existence of $c$ in the other cases by induction on $s$. If $s=0$ and $r\in\Na$, by lemma \ref{lemext1} the map $c(z)=\int_{0}^{1}\dd{e}{z}(tz)dt+\frac{\z}{z}\int_{0}^{1}\dd{e}{\z}(tz)dt$ has a $C^{r-1,\beta}_{loc}$-extension and hence $e(z)=zc(z)$ as we wished. Let us suppose the assertion is true for all the non-negative integer $s$ less than or equal to $n\in\Na\cup \{0\}$, where $n< r$. If $(n+1)+1\leq r$ and for each $h'\in\{0,\dots,n+1\}\}$, $\dd{^{h+h'}e}{\z^h\partial z^{h'}}(0)=0$ for all $h\in\{0,\dots,r-h'\}$, by lemma \ref{lemext2} the map $c'(z)=\int_{0}^{1}\dd{e}{z}(tz)dt+\frac{\z}{z}\int_{0}^{1}\dd{e}{\z}(tz)dt$ has a $C^{r-1,\beta}_{loc}$-extension on $U$, such that for each $h'\in\{0,\dots,n\}$, $\dd{^{h+h'}c'}{\z^h\partial z^{h'}}(0)=0$ for all $h\in\{0,\dots,r-1-h'\}$. As $r-1\geq n+1$, by the inductive hypothesis $c'(z)=z^{n+1}c(z)$ for some $C^{r-1-(n+1),\beta}_{loc}$-map $c(z)$ on $U$. We conclude that $e(z)=z^{(n+1)+1}c(z)$ with $c(z)$ being a $C^{r-(n+1+1),\beta}_{loc}$ and the assertion holds for $s=n+1$. 
	
	Now, let us prove the case $r=\omega$ of item (ii). If $s=0$, consider the complexification of $e(x_1,x_2)$ denoted by $D(z_1,z_2)$, where $z_1=x_1+ix_1'$ and $z_2=x_2+ix_2'$. This map is holomorphic on some polydisc containing the origin of $\Co\times \Co$ and satisfies $D(x_1,x_2)=e(x_1,x_2)$. By the Weierstrass preparation theorem (theorem  6.1.1 in \cite{Ho1}), there exist holomorphic maps $Q(z_1,z_2)$, $R(z_2)$ defined on open neighborhoods of the origin, such that $D(z_1,z_2)=(z_1+iz_2)Q(z_1,z_2)+R(z_2)$. Substituting $x_1'=x_2'=0$ in this equality, we get that $e(z)=zQ(x_1,x_2)+R(x_2)$ with $Q(x_1,x_2)$, $R(x_2)$ being complex $C^{\omega}$-maps on neighborhoods of the origin. Since $\dd{^je}{\z^j}(0)=0$ for all $j\in\{0\}\cup \Na$, then all the derivatives of $R(x_2)$ at the origin are $0$ and therefore $e(z)=zQ(x_1,x_2)$ on an open neighborhood of $0$. It follows that $\frac{e(z)}{z}$ has a $C^\omega$-extension on $U$ and the assertion holds. Suppose the assertion is true for all the non-negative integer $s$ less than or equal to $n\in\Na\cup \{0\}$. If for each $h'\in\{0,\dots,n+1\}\}$, $\dd{^{h+h'}e}{\z^h\partial z^{h'}}(0)=0$ for all $h\in\{0\}\cup \Na$, then $e(z)=zc_0(z)$ for some $C^\omega$-map $c_0$ on $U$. As $\dd{^je}{z^j}=j\dd{^{j-1}c_0}{z^{j-1}}+z\dd{^{j}c_0}{z^{j}}$, then for each $h'\in\{0,\dots,n\}$, $\dd{^{h+h'}c_0}{\z^h\partial z^{h'}}(0)=0$ for all $h\in\{0\}\cup \Na$ and by inductive hypothesis $c_0(z)=z^{n+1}c(z)$ for some $C^\omega$-map $c$ on $U$. Therefore, $e(z)=z^{(n+1)+1}c(z)$ with $c(z)$ being a $C^{\omega}$-map. For the point condition in item $(i)$, on a neighborhood of $0$ just write $e$ in its Taylor expansion using $\dd{}{z}$, $\dd{}{\z}$ as follows $$e(z)=\sum\limits_{0 \leq j + k \leq s+1} \frac{1}{j!k!}\dd{^{j+k}e}{\z^k\partial z^{j}}(0) z^j\bar{z}^k + o(|z|^{s+1})=\frac{1}{(s+1)!}\dd{^{s+1}e}{z^{s+1}}(0) z^{s+1}+ o(|z|^{s+1}).$$
	Hence, $(s+1)!c(0)=\lim_{z\to 0}(s+1)!\frac{e(z)}{z^{s+1}}=\dd{^{s+1}e}{z^{s+1}}(0)$.  
\end{proof}
\begin{theorem}\label{Theo2}
	Let $f:M^2\to N^n$ be a $C^{r+1,\beta}_{loc}$-map, $r\in\Na$ (resp. $r\in \{\infty,\omega\}$), $\beta \in [0,1]$, $s\in \Na$ such that $s\leq r$ and $p\in M^2$. If $(U,x)$, $(V,y)$ are coordinate systems at $p$, $f(p)$ respectively, such that $x(p)=0$, $y(f(p))=0$, 
	\begin{align}\label{conditionmain2}
		\dd{^{s+1}\hat{f}}{z^{s+1}}(0)=(\frac{(s+1)!}{2},-i\frac{(s+1)!}{2},0,\dots,0)
	\end{align} and for each $h'\in\{0,\dots,s-1\}$, we have that $\dd{^{h+h'+1}\hat{f}}{\z^h\partial z^{h'+1}}(0)=0$ for all $h\in\{0,\dots,r-h'\}$ (resp. for all $h\in \{0\}\cup \Na$), then $p$ is a branch point of order $s$ and there exists an open neighborhood $U'\subset U$ of $p$ such that $(U',x)$, $(V,y)$ are $C^{r-s,\beta}_{loc}$-branch coordinates at $p$. If additionally, $r\geq (s+1)(s+2)/2$ then $U'$ can be shrunk in such way that $(U',x)$, $(V,y)$ are $C^{r-(s+1)(s+2)/2+1,\beta}_{loc}$-regular branch coordinates at $p$. 
\end{theorem}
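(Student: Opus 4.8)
The plan is to deduce everything from the extension lemmas of this section applied to the $C^{r,\beta}_{loc}$-map $e:=\dd{\hat f}{z}$ (which is of this class because $\hat f$ is $C^{r+1,\beta}_{loc}$). Since $\dd{^{h+h'}e}{\z^h\partial z^{h'}}=\dd{^{h+h'+1}\hat f}{\z^h\partial z^{h'+1}}$, the hypotheses state exactly that, for each $h'\in\{0,\dots,s-1\}$, $\dd{^{h+h'}e}{\z^h\partial z^{h'}}(0)=0$ for all $h\in\{0,\dots,r-h'\}$ (resp.\ for all $h\in\{0\}\cup\Na$). After shrinking $U$ so that $x(U)$ is a convex box about $0$, I would apply lemma \ref{lemext4}, taking its parameter to be $s-1$ (so its conclusion divides by $z^{s}$ and the hypothesis $s\leq r$ furnishes the required $r\geq(s-1)+1$), to each component $e_j$. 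This gives $e_j(z)=z^{s}c_j(z)$ with $c_j\in C^{r-s,\beta}_{loc}$ and $s!\,c_j(0)=\dd{^{s+1}\hat f_j}{z^{s+1}}(0)$. Putting $d=\tfrac{1}{s+1}c$ yields (\ref{cond1}) with $d\in C^{r-s,\beta}_{loc}$, and (\ref{conditionmain2}) gives $d_1(0)=\tfrac12$, $d_2(0)=-\tfrac i2$, $d_j(0)=0$ for $j\geq 3$, i.e.\ (\ref{cond3}) with $d'_1(0)=d'_2(0)=0$.

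Next I would install the remaining two defining conditions of branch coordinates by a final shrinking. At $z=0$ the matrix appearing in (\ref{frontalcond}) reduces to the $2\times2$ identity, so its determinant equals $1$; and because $d'_1(0)=d'_2(0)=0$, the two quantities controlled in (\ref{quasiregularcond}) take the values $1$ and $0$ at the origin. All three conditions are open, so they persist on a neighborhood; choosing $U'\subset U$ accordingly makes $(U',x)$, $(V,y)$ into $C^{r-s,\beta}_{loc}$-branch coordinates at $p$. As $r-s\geq0$, these are in particular $C^0$-branch coordinates, whence lemma \ref{equivalencebranchcoor} shows that $\hat f$ satisfies (\ref{branchcond1}), (\ref{branchcond2}) and (\ref{branchcond3}); therefore $p$ is a branch point of order $s$.

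For the regularity upgrade, I would feed (\ref{cond1}) into lemma \ref{relations} with its parameter $k=r-s$. The assumption $r\geq(s+1)(s+2)/2$ is precisely $r-s\geq s(s+1)/2+1$, and it also forces $r-s\geq 2$, so $k\in(\Na-\{1\})\cup\{\infty,\omega\}$ as that lemma demands. It then produces (\ref{cond2}) on a neighborhood of $0$ with $l$ of class $C^{(r-s)-s(s+1)/2-1,\beta}_{loc}$, and the identity $(r-s)-s(s+1)/2-1=r-(s+1)(s+2)/2$ identifies this with $C^{k'-1,\beta}_{loc}$ for $k'=r-(s+1)(s+2)/2+1$. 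Since $d$ is even more regular ($r-s\geq k'$), after one more shrinking $(U',x)$, $(V,y)$ are $C^{k',\beta}_{loc}$-regular branch coordinates at $p$. The cases $r\in\{\infty,\omega\}$ follow by the same argument using the corresponding items of lemmas \ref{lemext4} and \ref{relations}, with no loss of regularity.

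I expect the only real difficulty to be bookkeeping: aligning the index ranges of the hypotheses with the exact form required by lemma \ref{lemext4}, and checking the arithmetic that makes the output regularity of lemma \ref{relations} match the $C^{k-1,\beta}_{loc}$ demand in the definition of regular branch coordinates. Once these match-ups are verified there is no further analytic obstacle, since the hard elliptic and division work is already encapsulated in the lemmas.
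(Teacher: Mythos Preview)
Your proposal is correct and follows essentially the same approach as the paper's proof: apply lemma \ref{lemext4} to $e=\dd{\hat f}{z}$ to factor out $z^s$ with the stated regularity, set $d=c/(s+1)$ to obtain (\ref{cond1}) and (\ref{cond3}), shrink to secure (\ref{frontalcond}) and (\ref{quasiregularcond}), invoke lemma \ref{equivalencebranchcoor} for the branch-point conclusion, and then use lemma \ref{relations} for the regular upgrade. Your bookkeeping (the index shift $s\mapsto s-1$ in lemma \ref{lemext4}, the arithmetic $(r-s)-s(s+1)/2-1=r-(s+1)(s+2)/2$, and the check $r-s\geq 2$) is accurate and in fact more explicit than the paper's own write-up.
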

\begin{proof}
	Since for each $h'\in\{0,\dots,s-1\}$, we have $\dd{^{h+h'}(\dd{\hat{f}}{z})}{\z^h\partial z^{h'}}(0)=0$ for all $h\in\{0,\dots,r-h'\}$, by item $(i)$ of lemma \ref{lemext4}, we have that $\dd{\hat{f}}{z}(z)=z^sc(z)$ for some $C^{r-s,\beta}_{loc}$-map $c:x(U)\to \Co^n$ with $s!c(0)$ equal to the RHS of (\ref{conditionmain2}). Then, setting $d=\frac{c}{s+1}$, $d'_1=2d_1-1$ and $d'_2=i2d_2-1$ the condition (\ref{cond1}) and (\ref{cond3}) are satisfied. As a consequence, on some open neighborhood $U'\subset U$ of $p$, the conditions (\ref{frontalcond}), (\ref{quasiregularcond}) are satisfied and therefore $(U',x)$, $(V,y)$ are $C^{r-s,\beta}_{loc}$-branch coordinates at $p$. By lemma \ref{equivalencebranchcoor}, $p$ is a branch point of order $s$. The last part follows from lemma \ref{relations}. 
\end{proof}
\section{The tangent bundle of a branch immersion}\label{sectiontangentbundle}
This section introduces some terminologies and properties about a class of maps with a well-defined tangent bundle that includes branch immersions. Then, we will treat maps with branch points in this context. Using the branch coordinates, we will show how a tangent bundle for a branch immersion can be constructed with certain regularity. Several results from this section will be required later. We will often use the following operations and properties.
\begin{definition}
	\normalfont Let $E$ be a vector bundle of rank $n$ over $M^m$, $V\subset M^m$ an open set,  $\mathcal{U}=(\UC_1,\dots,\UC_k)\in \Gamma(E\rvert_V)^k$ and $\mathcal{E}$ a local frame field for $E$ over $V$, we will call the matrix-valued map $\mathcal{E}^*\mathcal{U}\in C(V,\mathcal{M}_{n\times k}(\Re))$ defined by $(\mathcal{E}^*\mathcal{U})_{ij}:=(\theta_i(\UC_j)),$ the {\it matrix field of $\mathcal{U}$ in $\mathcal{E}$}, where $\theta=(\theta_1,\dots,\theta_n)$ is the coframe field dual to $\mathcal{E}$.  
\end{definition}

\begin{definition}
	\normalfont Let $E$ be a vector bundle of rank $n$ over $M^m$, $V\subset M^m$ an open set and $(s,k)\in \amsmathbb{N}\times \Na$. If $\mathbb{B}\in C(V,\mathcal{M}_{k\times s}(\Re))$ and $\mathcal{U}=(\UC_1,\dots,\UC_k)\in \Gamma(E\rvert_V)^k$, we define the right product $\cdot:\Gamma(E\rvert_V)^k \times C(V,\mathcal{M}_{k\times s}(\Re)) \to \Gamma(E\rvert_V)^{s}$ by $(\mathcal{U}\cdot\mathbb{B})_j:=\sum_{i=1}^{k}\mathbb{B}_{ij}\UC_i.$	
\end{definition}
The notation $\mathcal{E}^*\mathcal{U}$ can be seen as an operation $^*$ between local frame fields and $k$-tuples of local sections of $E$. Since the operations $^*$, $\cdot$ act from different sides and objects, we can omit its symbols and write $\mathcal{E}\mathcal{U}$, $\mathcal{U}\mathbb{B}$ instead of $\mathcal{E}^*\mathcal{U}$, $\mathcal{U}\cdot\mathbb{B}$ respectively. Next, we will show some of the basic properties of these operations.

\begin{lemma}\label{basic}
	Let $E$ be a vector bundle of rank $n$ over $M^m$, $V\subset M^m$ an open set, $\mathcal{E}$ a local frame field for $E$ over $V$ and $(s,k)\in\Na\times \Na$. If $\mathcal{U},\mathcal{V}\in \Gamma(E\rvert_V)^k$ and $\mathbb{F}\in C(V,\mathcal{M}_{k\times s}(\Re))$ then the following properties hold:
	\begin{enumerate}[label=(\roman*)]
		\item $\mathcal{E}(\ell\mathcal{U}+\mathcal{V})=\ell(\mathcal{E}\mathcal{U})+\mathcal{E}\mathcal{V}$ for any $\ell\in C(V,\Re)$,
		\item the operation $\cdot$ is bilinear, where the spaces $\Gamma(E\rvert_V)^k$ and $C(V,\mathcal{M}_{k\times s}(\Re))$ are seen as $C(V,\Re)$-modules,
		\item $\mathcal{E}\mathcal{E}=\id_n$,
		\item $\mathcal{E}(\mathcal{E}\mathcal{U})=\mathcal{U}$,
		\item the components of $\mathcal{U}$ at $q\in V$ are linearly independent if and only if $rank(\mathcal{E}\mathcal{U})=k$ at $q$,
		\item $\mathcal{U}\id_k=\mathcal{U}$,
		\item If $h\in \amsmathbb{N}$ and $\mathbb{A}\in C(V,\mathcal{M}_{s\times h}(\Re))$, then $(\mathcal{U}\mathbb{F})\mathbb{A}=\mathcal{U}(\mathbb{F}\mathbb{A})$,
		\item $(\mathcal{E}\mathcal{U})\mathbb{F}=\mathcal{E}(\mathcal{U}\mathbb{F})$,	
		\item If the components of $\mathcal{U}$ at $q\in V$ are linearly independent and $\mathbb{A}\in C(V,\mathcal{M}_{k\times k}(\Re))$, then the components of $\mathcal{U}\mathbb{A}$ at $q$ are linearly independent if and only if $rank(\mathbb{A})=k$ at $q$,
		\item $(\mathcal{E}\mathbb{A}^{-1})\mathcal{U}=\mathbb{A}\mathcal{E}\mathcal{U}$ for any $\mathbb{A}\in C(V,GL(n))$.
	\end{enumerate}
	
\end{lemma}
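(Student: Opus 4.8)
The plan is to verify each identity \emph{pointwise}, since all the operations are defined fiber-by-fiber and every claim is local. Fix $q\in V$; the frame $\mathcal{E}(q)=(\mathcal{E}_1(q),\dots,\mathcal{E}_n(q))$ is a basis of the fiber $E_q$, and by construction the $j$-th column of $\mathcal{E}^*\mathcal{U}$ at $q$ is precisely the coordinate vector of $\mathcal{U}_j(q)$ in this basis, because $(\mathcal{E}^*\mathcal{U})_{ij}(q)=\theta_i(\mathcal{U}_j)(q)$ records the $i$-th coordinate. Thus $\mathcal{E}^*(\cdot)$ is just the coordinate isomorphism $E_q\to\Re^n$ applied columnwise, and the two workhorses are the linearity of each $\theta_i$ on the fiber and the duality relation $\theta_i(\mathcal{E}_j)=\delta_{ij}$.

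With these, properties (i)--(iv) and (vi)--(viii) reduce to one-line computations. For (i), $\theta_i(\ell\mathcal{U}_j+\mathcal{V}_j)=\ell\,\theta_i(\mathcal{U}_j)+\theta_i(\mathcal{V}_j)$ by linearity; (ii) is immediate from the definition of $\cdot$; (iii) is $(\mathcal{E}^*\mathcal{E})_{ij}=\theta_i(\mathcal{E}_j)=\delta_{ij}$; (iv) follows from the expansion $\mathcal{U}_j=\sum_i\theta_i(\mathcal{U}_j)\mathcal{E}_i$, which is exactly $\mathcal{E}\cdot(\mathcal{E}^*\mathcal{U})$; (vi) and (vii) are the definition of $\id_k$ and the associativity of the defining sums (i.e.\ ordinary matrix associativity); and (viii) follows from $\theta_l\big((\mathcal{U}\cdot\mathbb{F})_j\big)=\sum_i\mathbb{F}_{ij}\theta_l(\mathcal{U}_i)=((\mathcal{E}^*\mathcal{U})\mathbb{F})_{lj}$. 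I would present these together, being careful that in (viii) the outer product on the left is ordinary matrix multiplication while on the right it is the $^*$-operation.

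For (v), the columns of $\mathcal{E}^*\mathcal{U}$ at $q$ are the coordinate vectors of the $\mathcal{U}_j(q)$, and since the coordinate isomorphism preserves linear (in)dependence, the $\mathcal{U}_j(q)$ are independent iff these $k$ columns are, i.e.\ iff $rank(\mathcal{E}^*\mathcal{U})=k$ at $q$. Property (ix) then follows by combining (viii) and (v): at $q$ the matrix $\mathcal{E}^*\mathcal{U}$ has full column rank $k$, hence defines an injective map $\Re^k\to\Re^n$, so $\mathcal{E}^*(\mathcal{U}\mathbb{A})=(\mathcal{E}^*\mathcal{U})\mathbb{A}$ has rank $k$ iff $\mathbb{A}$ does; applying (v) once more converts this rank condition back into linear independence of the components of $\mathcal{U}\mathbb{A}$.

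The only identity requiring genuine bookkeeping is (x), where $\mathcal{E}\mathbb{A}^{-1}=\mathcal{E}\cdot\mathbb{A}^{-1}$ is a new frame and $(\mathcal{E}\mathbb{A}^{-1})^*\mathcal{U}$ is the matrix field of $\mathcal{U}$ in it. Here I would first compute the dual coframe $\theta'$ of $\mathcal{E}'_j:=\sum_i(\mathbb{A}^{-1})_{ij}\mathcal{E}_i$, checking directly that $\theta'_l:=\sum_m\mathbb{A}_{lm}\theta_m$ satisfies $\theta'_l(\mathcal{E}'_j)=\sum_{m,i}\mathbb{A}_{lm}(\mathbb{A}^{-1})_{ij}\delta_{mi}=(\mathbb{A}\mathbb{A}^{-1})_{lj}=\delta_{lj}$, so by uniqueness of the dual coframe this is the correct $\theta'$. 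Then $\big((\mathcal{E}\mathbb{A}^{-1})^*\mathcal{U}\big)_{lj}=\theta'_l(\mathcal{U}_j)=\sum_m\mathbb{A}_{lm}\theta_m(\mathcal{U}_j)=(\mathbb{A}\,\mathcal{E}^*\mathcal{U})_{lj}$, which is the claim. The main obstacle throughout is not mathematical depth but keeping the overloaded notation straight---distinguishing when juxtaposition denotes the $^*$-operation, the right product $\cdot$, or ordinary matrix multiplication---so I would fix these conventions explicitly at the outset and then let the linearity-plus-duality computations run.
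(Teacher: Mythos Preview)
Your proof is correct and, for items (i)--(ix), essentially identical to the paper's: both treat (i)--(viii) as immediate from the definitions, and both derive (ix) by combining (v) and (viii) with the elementary rank fact that post-multiplication by a full-column-rank matrix preserves rank.

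The one genuine difference is in (x). You compute the dual coframe of $\mathcal{E}\mathbb{A}^{-1}$ explicitly as $\theta'_l=\sum_m\mathbb{A}_{lm}\theta_m$ and read off the identity directly. The paper instead argues algebraically from the already-established items: since $\mathcal{E}\mathbb{A}^{-1}$ is a frame by (ix), item (iii) gives $(\mathcal{E}\mathbb{A}^{-1})(\mathcal{E}\mathbb{A}^{-1})=\id_n$; right-multiplying by $\mathbb{A}$ and invoking (vii), (viii) yields $(\mathcal{E}\mathbb{A}^{-1})\mathcal{E}=\mathbb{A}$, and then (iv) finishes via $\mathbb{A}\,\mathcal{E}\mathcal{U}=((\mathcal{E}\mathbb{A}^{-1})\mathcal{E})(\mathcal{E}\mathcal{U})=(\mathcal{E}\mathbb{A}^{-1})\mathcal{U}$. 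Your route is more concrete and self-contained; the paper's is slicker in that it avoids writing down the transformed coframe at all, instead letting the formal calculus of the $^*$ and $\cdot$ operations do the work. Both are perfectly acceptable.
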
 
\begin{proof}
	Items $(i)$ to $(viii)$ follow immediately from the definitions. To prove $(ix)$, observe that by item $(v)$ and $(viii)$, the components of $\mathcal{U}\mathbb{A}$ at $q$ are linearly independent if and only if $k=rank(\mathcal{E}(\mathcal{U}\mathbb{A}))=rank((\mathcal{E}\mathcal{U})\mathbb{A})$ at $q$. By $(v)$ $rank(\mathcal{E}\mathcal{U})=k$ at $q$, then $k=rank(\mathcal{E}(\mathcal{U}\mathbb{A}))$ at $q$ if and only if $rank(\mathbb{A})=k$ at $q$ and the result follows. To prove item $(x)$, by $(ix)$ we have that $\mathcal{E}\mathbb{A}^{-1}$ is a local frame field for $E$, then by $(iii)$ $(\mathcal{E}\mathbb{A}^{-1})(\mathcal{E}\mathbb{A}^{-1})=\id_n$. Multiplying by the right side with $\mathbb{A}$ and using items $(vii)$,$(viii)$, we get $(\mathcal{E}\mathbb{A}^{-1})\mathcal{E}=\mathbb{A}$ and thus by item $(iv)$ we have $\mathbb{A}\mathcal{E}\mathcal{U}=((\mathcal{E}\mathbb{A}^{-1})\mathcal{E})\mathcal{E}\mathcal{U}=\mathcal{E}\mathbb{A}^{-1}(\mathcal{E}(\mathcal{E}\mathcal{U}))=(\mathcal{E}\mathbb{A}^{-1})\mathcal{U}$.
\end{proof}
\begin{definition}\label{frontal}
	\normalfont Let $f:M^m \to N^n$ be a $C^{r,\beta}_{loc}$-map, $r\in \Na\cup\{\infty,\omega\}$ and $\beta \in [0,1]$. We call $f$ a {\it $C^{r,\beta}_{loc}$-frontal} if for each $p\in M^m$, there exist an open neighborhood $V_p$ of $p$ and linearly independent $C^{r-1,\beta}_{loc}$-vector fields $\WC^p_1, \WC^p_2,\dots, \WC^p_m$ along $f$, over $V_p$, such that $f_*T_qM^m \subset span(\WC^p_1(q),\WC^p_2(q),\dots,\WC^p_m(q))$ for all $q\in V_p$. 
\end{definition}

There are several ways to define frontals; the interested reader can easily verify the equivalence between this definition and the given in the survey article (cf.\cite{ishifrontal2}; see lemma 7.5) for the cases $r\in\{\infty,\omega\}$. The definition presented here and later characterizations will be more convenient for our analytical and geometrical purposes. So, the reader will not require prior knowledge about frontals. To prove that a $C^{r,\beta}_{loc}$-map is a $C^{r,\beta}_{loc}$-frontal, it is enough to show that the conditions in definition \ref{frontal} hold at singular points. This is because for any coordinate system $(U,x)$ at a regular point, the vector fields $f_*\frac{\partial}{\partial x_1},\dots,f_*\frac{\partial}{\partial x_m}$ provide the condition. 

In this paper, whenever we mention that $f$ is a $C^{r,\beta}_{loc}$-frontal, we will assume that $\Sigma(f)$ has an empty interior. This condition is equivalent to having the closure of $Reg(f)$ equal to $M^m$. In the literature, frontals with this condition are called {\it proper frontals} (cf.\cite{ishifrontal2}), but we shall omit this term for the sake of brevity. 

Suppose $\WC^p_1, \WC^p_2,\dots, \WC^p_m$ are the vector fields along $f$ over $V_p$ in the last definition. In that case, we can consider the open cover $C=\{V_p\}_{p\in M^m}$ of $M^m$ and define $$D(C)=\bigcup\limits_{p \in M} span(\WC^p_1(p),\WC^p_2(p),\dots,\WC^p_m(p)),$$ which is a $C^{r-1,\beta}_{loc}$-vector subbundle of $f^*TN^n$ for $r\geq 2$ and a topological vector subbundle for $r=1$. This subbundle is independent of the open cover $C$. In fact if we take another open cover $\hat{C}=\{\hat{V}_p\}_{p\in M^m}$ of $M^m$ with $\hat{\WC}^p_1, \hat{\WC}^p_2,\dots, \hat{\WC}^p_m$ being the vector fields along $f$ over $\hat{V}_p$, linearly independent for each $p\in M^m$, we have that $span(\hat{\WC}^p_1(p), \hat{\WC}^p_2(p),\dots, \hat{\WC}^p_m(p))=f_*T_pM^m=span(\WC^p_1(p),\WC^p_2(p),\dots,\WC^p_m(p))$
for all $p\in Reg(f)$. To show the equality in the case that $p\in \Sigma(f)$, consider a Riemannian metric $\tilde{g}$ on $N^n$ and let us complete the vector fields $\WC^p_1, \WC^p_2,\dots, \WC^p_m$ with linearly independent $C^{r-1,\beta}_{loc}$-vector fields $\VC^p_{m+1},\VC^p_{m+2},\dots,\VC^p_{n}$ along $f$, defined on an open neighborhood $W\subset V_p\cap \hat{V}_p$ of $p$, in such way that $\VC^p_j(q)\in span(\WC^p_1(q),\WC^p_2(q),\dots,\WC^p_m(q)^\perp$ for all $q\in W$ and $m+1\leq j \leq n$. Since $span(\hat{\WC}^p_1(q), \hat{\WC}^p_2(q),\dots, \hat{\WC}^p_m(q))=f_*T_qM^m=span(\WC^p_1(q),\WC^p_2(q),\dots,\WC^p_m(q))$ holds for all $q\in Reg(f)\cap W$, then for each $j\in\{1,2,\dots,m\}$ we have that $g(\hat{\WC}_j^p(q),\VC_i^p(q))=0$ for all $m+1\leq i \leq n$ and $q\in Reg(f)\cap W$. By density of $Reg(f)$ in $M^m$ and the continuity of the Riemannian metric $\tilde{g}$, the equality $\tilde{g}(\hat{\WC}_j^p(p),\VC_i^p(p))=0$ holds. Therefore, we have $span(\hat{\WC}^p_1(p), \hat{\WC}^p_2(p),\dots, \hat{\WC}^p_m(p))=span(\VC^p_{m+1}(p),\VC^p_{m+2}(p),\dots,\VC^p_{n}(p))^\perp=span(\WC^p_1(p),\WC^p_2(p),\dots,\WC^p_m(p))$, and thus $D(C)=D(\hat{C})$. The vector bundle $T_f:=D(C)$ is called the {\it tangent bundle of $f$} and the fiber at $p\in M^m$ is denoted by $T_{fp}$.

Let $E$ be a vector bundle over $M^m$. If $(V,x)$ is a coordinate system of $M^m$ and $\WC$ is a frame field for $E\vert_V$, we will call the $3$-tuple $\mathscr{V}=(V,x,\WC)$ a {\it framed coordinate system of $E$}. On the other hand, if $f:M^m\to N^n$ is a $C^{r,\beta}_{loc}$-frontal with $r\in \Na\cup\{\infty,\omega\}$ and $\beta \in[0,1]$, we will call {\it frontal system of $f$} a $2$-tuple $(\VS,\EC)$, where $\VS=(V,x,\WC)$ is a framed coordinate system of $T_f$, the components $\WC_j$ are $C^{r-1,\beta}_{loc}$-sections of $f^*TN^n\vert_V$ and $\EC$ is a frame field for $f^*TN^n\vert_V$. If $\EC$ is an orthonormal frame field, we will call $(\VS,\EC)$ a {\it canonical system of $f$}. 

\begin{remark}\normalfont
	If $f:M^m\to N^n$ is a $C^{r,\beta}_{loc}$-frontal, $r\in\Na\cup\{\infty,\omega\}$, $\beta\in[0,1]$ and $((V,x,\WC),\mathcal{E})$ is a frontal system of $f$, then it is clear that for every $k$-tuple $\UC=(\UC_1,\dots,\UC_k)$ of $C^{r-1,\beta}_{loc}$-vector fields along $f$ over $V$, we have $\WC\UC\in C^{r-1,\beta}_{loc}(V,\mathcal{M}_{m\times k}(\Re))$ for the case $r\geq 2$. In the case $r=1$, this is also true despite $T_f$ and the dual vector bundle are just topological vector bundles. In fact, by lemma \ref{basic} we have $\EC\UC=(\EC\WC)(\WC\UC)$, also as $rank(\EC\WC)=m$, we have that $(\EC\WC)^t\EC\WC$ is invertible and thus $\WC\UC=((\EC\WC)^t\EC\WC)^{-1}(\EC\WC)^t\EC\UC$. Since $\EC\UC\in C^{r-1,\beta}_{loc}(V,\mathcal{M}_{n\times k}(\Re))$ and $\EC\WC\in C^{r-1,\beta}_{loc}(V,\mathcal{M}_{n\times m}(\Re))$, then $\WC\UC\in C^{r-1,\beta}_{loc}(V,\mathcal{M}_{m\times k}(\Re))$.
\end{remark}

\begin{definition}
	\normalfont Let $f:M^m\to N^n$ be a $C^1$-map and $(V,x,\mathcal{E})$ a framed coordinate system of $f^*TN^n$. We define the matrix-valued map $\J{f}:V\to \mathcal{M}_{n\times m}$ by $\J{f}:=\mathcal{E}f_*\frac{\partial}{\partial x}.$ We call $\J{f}$ the {\it Jacobian matrix field of $f$ in $(V,x,\mathcal{E})$}.
\end{definition}

\begin{lemma}\label{Lambda}
	Let $f:M^m \to N^n$ be a $C^{r,\beta}_{loc}$-map, $r\in \Na\cup\{\infty,\omega\}$ and $\beta \in [0,1]$. Then $f$ is a $C^{r,\beta}_{loc}$-frontal if and only if for each $p\in M^m$ there exist a framed coordinate system $(V_p,x,\mathcal{E})$ of $f^*TN^n$ at $p$ and $C^{r-1,\beta}_{loc}$-maps $\mathbb{W}:V_p\to \mathcal{M}_{n\times m}(\Re)$, $\mathds{J}':V_p\to \mathcal{M}_{m\times m}(\Re)$ such that:
	\begin{enumerate}[label=(\roman*)]
		\item $rank(\mathbb{W}(p))=m$ for all $p \in V_p$ and
		\item $\J{f}=\mathbb{W}\mathds{J}'$ on $V_p$.
	\end{enumerate}  
\end{lemma}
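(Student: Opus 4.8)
The plan is to prove both implications as a translation between the span-based condition of Definition \ref{frontal} and the matrix factorization $\J{f}=\mathbb{W}\mathds{J}'$, with the algebraic identities of Lemma \ref{basic} doing all the work. The dictionary I would use is: for a tuple $\WC=(\WC_1,\dots,\WC_m)$ of sections along $f$, linear independence of its components at a point is equivalent, by item $(v)$, to $rank(\mathcal{E}\WC)=m$ there; and the inclusion $f_*\frac{\partial}{\partial x_i}\in span(\WC_1,\dots,\WC_m)$ is equivalent to a factorization obtained by applying the matrix-field operation $\mathcal{E}$ and the right product, then shuffling matrices through items $(iii)$, $(iv)$, $(vii)$ and $(viii)$.

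For the direct implication, I would fix $p$ and take the neighborhood $V_p$ and the linearly independent $C^{r-1,\beta}_{loc}$-vector fields $\WC_1,\dots,\WC_m$ along $f$ furnished by Definition \ref{frontal}; shrinking $V_p$ if necessary, I choose a coordinate system $(V_p,x)$ of $M^m$ and a frame field $\mathcal{E}$ for $f^*TN^n\vert_{V_p}$, and set $\mathbb{W}:=\mathcal{E}\WC$. Item $(v)$ then gives condition $(i)$ immediately. Since $f_*T_qM^m\subset span(\WC_1(q),\dots,\WC_m(q))$, every column of $\J{f}=\mathcal{E}f_*\frac{\partial}{\partial x}$ lies in the column space of $\mathbb{W}$, so $\mathds{J}':=(\mathbb{W}^t\mathbb{W})^{-1}\mathbb{W}^t\J{f}$ solves $\J{f}=\mathbb{W}\mathds{J}'$ (the matrix $\mathbb{W}(\mathbb{W}^t\mathbb{W})^{-1}\mathbb{W}^t$ being the orthogonal projector onto that column space), which is condition $(ii)$. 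The regularity of $\mathds{J}'$ is read off this explicit formula: $\mathbb{W}$ and $\J{f}$ are $C^{r-1,\beta}_{loc}$, the Gram matrix $\mathbb{W}^t\mathbb{W}$ is invertible of full rank, and inversion preserves the Hölder class.

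For the converse, given $(V_p,x,\mathcal{E})$, $\mathbb{W}$ and $\mathds{J}'$ as in the statement, I would define the $m$-tuple $\WC:=\mathcal{E}\mathbb{W}\in\Gamma(f^*TN^n\vert_{V_p})^m$ via the right product. Its matrix field in $\mathcal{E}$ is $\mathcal{E}(\mathcal{E}\mathbb{W})=(\mathcal{E}\mathcal{E})\mathbb{W}=\mathbb{W}$ by items $(viii)$ and $(iii)$, which has rank $m$ everywhere, so by item $(v)$ the sections $\WC_1,\dots,\WC_m$ are linearly independent $C^{r-1,\beta}_{loc}$-vector fields along $f$. Finally, using items $(vii)$ and $(iv)$ together with hypothesis $(ii)$, I obtain $\WC\mathds{J}'=(\mathcal{E}\mathbb{W})\mathds{J}'=\mathcal{E}(\mathbb{W}\mathds{J}')=\mathcal{E}\J{f}=\mathcal{E}(\mathcal{E}f_*\frac{\partial}{\partial x})=f_*\frac{\partial}{\partial x}$, so that $f_*\frac{\partial}{\partial x_i}=\sum_{j=1}^m\mathds{J}'_{ji}\WC_j$ and hence $f_*T_qM^m\subset span(\WC_1(q),\dots,\WC_m(q))$. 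Thus $\WC_1,\dots,\WC_m$ witness that $f$ is a $C^{r,\beta}_{loc}$-frontal.

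The argument is essentially bookkeeping once Lemma \ref{basic} is in hand; the one genuinely analytic point, and the step I expect to require the most care, is the regularity of $\mathds{J}'$ in the direct implication, which I would obtain from the full-rank pseudo-inverse formula above rather than from the bare existence of expansion coefficients. I would also check the borderline case $r=1$ separately, where $f^*TN^n$ and its dual are only topological bundles: there the same pseudo-inverse identity still yields a continuous $\mathds{J}'$, exactly as in the computation $\WC\UC=((\mathcal{E}\WC)^t\mathcal{E}\WC)^{-1}(\mathcal{E}\WC)^t\mathcal{E}\UC$ used in the remark following the definition of a frontal system.
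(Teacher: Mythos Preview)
Your proof is correct and follows essentially the same route as the paper's. The only cosmetic difference is in the forward implication: the paper defines $\mathds{J}'=\WC\bigl(f_*\frac{\partial}{\partial x}\bigr)$ as the matrix field of $f_*\frac{\partial}{\partial x}$ in the frame $\WC$ for $T_f$, whereas you extract the same matrix via the pseudo-inverse $\mathds{J}'=(\mathbb{W}^t\mathbb{W})^{-1}\mathbb{W}^t\J{f}$; these coincide, and in fact the paper uses exactly your formula in the remark preceding the lemma to justify the regularity of such matrix fields (including the $r=1$ case you flag).
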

\begin{proof}
	If $f$ is a $C^{r,\beta}_{loc}$-frontal, let $((V_p,x,\WC),\mathcal{E})$ be a frontal system of $f$ at $p$ and since $f_*\frac{\partial}{\partial x}\in \Gamma(T_f\vert_ {V_p})^m$, we can consider the $C^{r-1,\beta}_{loc}$-maps $\mathds{J}'=\WC (f_*\frac{\partial}{\partial x})$ and $\mathbb{W}=\mathcal{E}\WC$. By lemma \ref{basic} $f_*\frac{\partial}{\partial x}=\WC\mathds{J}'$ and operating by the left side with $\mathcal{E}$ we get $\J{f}=\mathbb{W}\mathds{J}'$. Observe that $rank(\mathbb{W})=m$ by item $(v)$ of lemma \ref{basic}. For the converse, if for each $p\in M$, there exist a framed coordinate system $(V_p,x,\mathcal{E})$ of $f^*TN^n$ at $p$ and matrix-valued maps $\mathbb{W}$, $\mathds{J}'$ satisfying the hypothesis, let us set $\WC=\mathcal{E}\mathbb{W}$ whose components $\WC_j$ are linearly independent by item $(v)$ of lemma \ref{basic}. Thus, $f_*\frac{\partial}{\partial x}=\mathcal{E}\J{f}=\mathcal{E}\mathbb{W}\mathds{J}'=\WC\mathds{J}'$. We conclude that $f_*T_qM^m=span(f_*\frac{\partial}{\partial x_1}\rvert_q,\dots,f_*\frac{\partial}{\partial x_m}\rvert_q) \subset span(\WC_1(q),\dots,\WC_m(q))$ for all $q\in V_p$. 	
\end{proof}

\begin{corollary}\label{corbasic}
	Let $f:M^m \to N^n$ be a $C^{r,\beta}_{loc}$-map, $r\in \Na\cup\{\infty,\omega\}$ and $\beta \in [0,1]$. Then $f$ is a $C^{r,\beta}_{loc}$-frontal if and only if for each $p\in M^m$ and any pair of coordinate systems $(U,x)$, $(V,y)$ at $p$, $f(p)$ respectively,  there exist $C^{r-1,\beta}_{loc}$-maps $\mathbb{W}:U'\to \mathcal{M}_{n\times m}(\Re)$, $\mathds{J}':U'\to \mathcal{M}_{m\times m}(\Re)$ on an open neighborhood $U'\subset x(U)$ of $x(p)$, such that:
	\begin{enumerate}[label=(\roman*)]
		\item $rank(\mathbb{W}(q))=m$ for all $q \in U'$ and
		\item $\J{\hat{f}}=\mathbb{W}\mathds{J}'$ on $U'$,
	\end{enumerate} 
	where $\J{\hat{f}}$ is the Jacobian matrix of $\hat{f}=y\circ f\circ x^{-1}$ in the classical sense. 
\end{corollary}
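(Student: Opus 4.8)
The plan is to reduce the corollary to lemma \ref{Lambda} by identifying the classical Jacobian $\J{\hat{f}}$ with the abstract Jacobian matrix field $\J{f}$ computed in the frame induced by the chart $(V,y)$. First I would take the coordinate frame $\EC=(\dd{}{y_1}\circ f,\dots,\dd{}{y_n}\circ f)$ for $f^*TN^n\vert_U$, whose dual coframe $\theta=(\theta_1,\dots,\theta_n)$ satisfies $\theta_i(f_*\dd{}{x_j})=\dd{(y_i\circ f)}{x_j}$. Hence the entries of $\J{f}=\EC f_*\dd{}{x}$ are $(\J{f})_{ij}=\dd{(y_i\circ f)}{x_j}$, and composing with $x^{-1}$ gives $(\J{f})_{ij}\circ x^{-1}=\dd{\hat{f}_i}{x_j}=(\J{\hat{f}})_{ij}$. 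Thus $\J{\hat{f}}=\J{f}\circ x^{-1}$ on $x(U)$, so the factorization $\J{\hat{f}}=\mathbb{W}\mathds{J}'$ in coordinate space is equivalent to $\J{f}=\mathbb{W}\mathds{J}'$ on the manifold (all matrix fields transported through $x$), and the $C^{r-1,\beta}_{loc}$-regularity is preserved in both directions.

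For the forward implication, assume $f$ is a $C^{r,\beta}_{loc}$-frontal and fix arbitrary charts $(U,x)$, $(V,y)$. I would invoke that the tangent bundle $T_f$ is well-defined independently of any frame: its construction provides, on a sufficiently small neighborhood $U'\subset U$ of $p$, a linearly independent $m$-tuple $\WC=(\WC_1,\dots,\WC_m)$ of $C^{r-1,\beta}_{loc}$-sections of $f^*TN^n\vert_{U'}$ spanning $T_f\vert_{U'}$. Since $f_*\dd{}{x_j}\in\Gamma(T_f\vert_{U'})$, I set $\mathds{J}'=\WC(f_*\dd{}{x})$ and $\mathbb{W}=\EC\WC$. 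By item $(iv)$ of lemma \ref{basic} one has $f_*\dd{}{x}=\WC\mathds{J}'$, and by item $(viii)$, $\J{f}=\EC(\WC\mathds{J}')=(\EC\WC)\mathds{J}'=\mathbb{W}\mathds{J}'$; moreover $rank(\mathbb{W})=m$ by item $(v)$. Reading these equalities on $x(U')$ yields the required $C^{r-1,\beta}_{loc}$-maps with $\J{\hat{f}}=\mathbb{W}\mathds{J}'$.

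For the converse, suppose such $\mathbb{W}$, $\mathds{J}'$ exist in a coordinate neighborhood of $x(p)$. Transporting them to the manifold via $x$ gives $\J{f}=\mathbb{W}\mathds{J}'$ on an open set $U'\ni p$. Setting $\WC=\EC\mathbb{W}$, whose components are linearly independent by item $(v)$, I would compute $f_*\dd{}{x}=\EC\J{f}=\EC(\mathbb{W}\mathds{J}')=(\EC\mathbb{W})\mathds{J}'=\WC\mathds{J}'$ using items $(iv)$ and $(vii)$ of lemma \ref{basic}. This shows $f_*T_qM^m\subset span(\WC_1(q),\dots,\WC_m(q))$ for all $q\in U'$, and since the frontal condition is automatic at regular points, $f$ is a $C^{r,\beta}_{loc}$-frontal.

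The main obstacle is the upgrade from ``there exists a framed coordinate system'' in lemma \ref{Lambda} to ``any pair of coordinate systems'' in the corollary; this is settled by the frame-independence of $T_f$, which guarantees a local $C^{r-1,\beta}_{loc}$-frame for $T_f$ over any small neighborhood of $p$, whence the factorization can be written against the coordinate-induced frame attached to an arbitrary chart $(V,y)$. A secondary point needing care is the case $r=1$, where $T_f$ is merely a topological bundle and the maps $\mathbb{W}$, $\mathds{J}'$ are only continuous; the argument is unchanged, since the operations of lemma \ref{basic} and the identification $\J{\hat{f}}=\J{f}\circ x^{-1}$ preserve continuity.
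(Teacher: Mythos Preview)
Your proposal is correct and takes essentially the same approach as the paper: both identify $\J{\hat{f}}$ with $\J{f}\circ x^{-1}$ via the coordinate frame $\EC=\dd{}{y}\circ f$ and then reduce to (or, in your case, unfold) the argument of lemma~\ref{Lambda}. The only cosmetic difference is how the quantifier ``any pair of coordinate systems'' is handled---the paper remarks that a factorization in one chart transfers to any other by change of coordinates, while you invoke the frame-independence of $T_f$; both observations are equivalent and your treatment is fine.
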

\begin{proof}
	Just observe that $\J{f}(p)=\J{\hat{f}}(x(p))$ for all $p\in U$, where $\J{f}$ is the Jacobian matrix field of $f$ in $(U,x,\mathcal{E})$ with $\mathcal{E}=\frac{\partial}{\partial y}\circ f$. Applying lemma \ref{Lambda}, it follows the equivalence. Also, if items $(i)$ and $(ii)$ are satisfied for a particular pair of coordinate systems, these are satisfied for any other.
\end{proof}
\begin{lemma}\label{diffeo}
	Let $f:M^m \to N^n$ be a $C^{r,\beta}_{loc}$-map, $r\in \Na\cup\{\infty,\omega\}$, $\beta \in [0,1]$, $h:\bar{M}^{m}\to M^m$ a $C^{r,\beta}_{loc}$-diffeomorphism and $k:N^n\to \bar{N}^{\bar{n}}$ a $C^{r,\beta}_{loc}$-immersion. It holds that:
	\begin{enumerate}[label=(\roman*)]
		\item $f$ is a $C^{r,\beta}_{loc}$-frontal if and only if $f\circ h$ is a $C^{r,\beta}_{loc}$-frontal,
		
		\item $f$ is a $C^{r,\beta}_{loc}$-frontal if and only if $k\circ f$ is a $C^{r,\beta}_{loc}$-frontal.
	\end{enumerate}
\end{lemma}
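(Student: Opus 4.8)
The plan is to prove (i) straight from Definition \ref{frontal} by transporting a frame of vector fields through the diffeomorphism, and to prove (ii) through the Jacobian characterization of Corollary \ref{corbasic}, where the injectivity of the differential of $k$ can be exploited at the level of matrices. In each item only one implication needs real work: the reverse implication of (i) follows by applying the forward one to $f\circ h$ and the $C^{r,\beta}_{loc}$-diffeomorphism $h^{-1}$, since $(f\circ h)\circ h^{-1}=f$; for (ii) the forward implication is immediate, and the reverse one is the main obstacle.

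For (i), I would fix $\bar p\in\bar M^m$ and $p=h(\bar p)$, take frontal vector fields $\WC^p_1,\dots,\WC^p_m$ along $f$ over a neighbourhood $V_p$ of $p$, and set $\WC_j:=\WC^p_j\circ h$ on $h^{-1}(V_p)$. These are vector fields along $f\circ h$ because $\WC^p_j(h(\bar q))\in T_{f(h(\bar q))}N^n=T_{(f\circ h)(\bar q)}N^n$; since $h$ is a $C^{r,\beta}_{loc}$-diffeomorphism and the components of each $\WC^p_j$ are $C^{r-1,\beta}_{loc}$, composition with $h$ keeps them $C^{r-1,\beta}_{loc}$. As $h_*$ is an isomorphism, $(f\circ h)_*T_{\bar q}\bar M^m=f_*T_{h(\bar q)}M^m\subset span(\WC^p_1(h(\bar q)),\dots,\WC^p_m(h(\bar q)))=span(\WC_1(\bar q),\dots,\WC_m(\bar q))$ with the $\WC_j(\bar q)$ linearly independent, so $f\circ h$ is a frontal (its singular set $h^{-1}(\Sigma(f))$ still has empty interior).

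For (ii), fix $p$ and coordinate systems $(U,x)$, $(V,y)$, $(\bar V,\bar y)$ at $p$, $f(p)$, $k(f(p))$ with $f(U)\subset V$, $k(V)\subset\bar V$, so that $\widehat{k\circ f}=\hat k\circ\hat f$ and $\J{\widehat{k\circ f}}=(\J{\hat k}\circ\hat f)\J{\hat f}$ on $x(U)$ by the chain rule. The immersion hypothesis says $\J{\hat k}(w)$ has rank $n$ for every $w$, hence $\J{\hat k}^t\J{\hat k}$ is invertible and $L:=(\J{\hat k}^t\J{\hat k})^{-1}\J{\hat k}^t$ is a $C^{r-1,\beta}_{loc}$ left inverse with $L\J{\hat k}=\id_{n\times n}$ (and $\J{\hat k}L$ is the orthogonal projection onto the column space of $\J{\hat k}$). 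If $f$ is a frontal, Corollary \ref{corbasic} gives $\J{\hat f}=\mathbb{W}\mathds{J}'$ with $rank(\mathbb{W})=m$; then $\J{\widehat{k\circ f}}=((\J{\hat k}\circ\hat f)\mathbb{W})\mathds{J}'$, and $(\J{\hat k}\circ\hat f)\mathbb{W}$ is a $C^{r-1,\beta}_{loc}$ field of rank $m$, so $k\circ f$ is a frontal by the same corollary.

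The reverse implication of (ii) is the hard part, since a factorization of $\J{\widehat{k\circ f}}$ need not descend to one of $\J{\hat f}$ of the right rank. Assuming $k\circ f$ is a frontal, Corollary \ref{corbasic} yields $C^{r-1,\beta}_{loc}$ fields $\tilde{\mathbb{W}}$ (rank $m$) and $\mathds{J}'$ with $\J{\widehat{k\circ f}}=\tilde{\mathbb{W}}\mathds{J}'$; multiplying by $L\circ\hat f$ on the left and using $L\J{\hat k}=\id_{n\times n}$ gives $\J{\hat f}=((L\circ\hat f)\tilde{\mathbb{W}})\mathds{J}'$, so I set $\mathbb{W}:=(L\circ\hat f)\tilde{\mathbb{W}}$ and must only verify $rank(\mathbb{W})=m$. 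The key point is that the columns of $\tilde{\mathbb{W}}(z)$ lie in the column space of $\J{\hat k}(\hat f(z))$ for every $z$: at points of $x(Reg(f)\cap U)$ — a dense set, because $k\circ f$ is a proper frontal and $Reg(f)=Reg(k\circ f)$ as $k_*$ is injective — the matrix $\J{\widehat{k\circ f}}(z)$ has rank $m$, so its column space is at once the ($m$-dimensional) column space of $\tilde{\mathbb{W}}(z)$ and a subspace of the column space of $\J{\hat k}(\hat f(z))$; the continuous condition $(\id_{\bar n\times\bar n}-(\J{\hat k}\circ\hat f)(L\circ\hat f))\tilde{\mathbb{W}}=0$ then holds on a dense set and hence everywhere. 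Consequently $(\J{\hat k}\circ\hat f)\mathbb{W}=\tilde{\mathbb{W}}$, and since $\J{\hat k}\circ\hat f$ is injective and $\tilde{\mathbb{W}}$ has rank $m$, we get $rank(\mathbb{W})=m$. Thus $\J{\hat f}=\mathbb{W}\mathds{J}'$ exhibits $f$ as a frontal by Corollary \ref{corbasic} (its singular set equals $\Sigma(k\circ f)$, of empty interior).
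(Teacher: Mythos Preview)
Your proof is correct. The route, however, differs from the paper's in both items, and it is worth noting the tradeoffs.

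For (i) you work directly from Definition~\ref{frontal}, pulling back the frame $\WC^p_j$ along $h$; the paper instead passes to coordinates and uses Corollary~\ref{corbasic} together with the chain rule $\J{y\circ f\circ h\circ x^{-1}}=\J{y\circ f\circ x'^{-1}}(c)\,\J{c}$ with $c=x'\circ h\circ x^{-1}\in C^{r-1,\beta}_{loc}(\cdot,GL(m))$. Your argument is more geometric and avoids choosing the auxiliary chart $(U',x')$; the paper's is uniform with its treatment of (ii).

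For (ii) the paper exploits the immersion hypothesis by choosing coordinates in which $\hat k$ is the inclusion $\Re^n\hookrightarrow\Re^{\bar n}$, so $\J{\widehat{k\circ f}}=\begin{pmatrix}\id_n\\0\end{pmatrix}\J{\hat f}$; writing $\tilde{\mathbb W}=\begin{pmatrix}\A\\\B\end{pmatrix}$ gives $\B\mathds J'=0$, and density of regular points (where $\mathds J'$ is invertible) forces $\B=0$, whence $rank(\A)=m$. You keep arbitrary coordinates and instead build the $C^{r-1,\beta}_{loc}$ left inverse $L=(\J{\hat k}^t\J{\hat k})^{-1}\J{\hat k}^t$, set $\mathbb W=(L\circ\hat f)\tilde{\mathbb W}$, and use the projector identity $(\id-(\J{\hat k}\circ\hat f)(L\circ\hat f))\tilde{\mathbb W}=0$ on the dense regular set to recover $(\J{\hat k}\circ\hat f)\mathbb W=\tilde{\mathbb W}$ and hence $rank(\mathbb W)=m$. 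Both arguments hinge on the same density trick; the paper's coordinate choice makes the linear algebra transparent (just a block decomposition), while your pseudo-inverse approach is coordinate-free and would generalize more readily, at the cost of carrying the projector formula.
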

\begin{proof}
	For any pair of coordinate systems $(U,x)$, $(V,y)$ at $p\in \bar{M}^m$, $(f\circ h)(p)$ respectively, with $(f\circ h)(U)\subset V$, shrinking $U$ if necessary, we can find a coordinate system $(U',x')$ at $h(p)$, such that $y\circ f\circ h\circ x^{-1}=y\circ f\circ x'^{-1}\circ x'\circ h\circ x^{-1}$, therefore $\J{y\circ f\circ h\circ x^{-1}}=\J{y\circ f\circ x'^{-1}}(c)\J{c}$, where $c=x'\circ h\circ x^{-1}$. Since $\J{c}\in C^{r-1,\beta}_{loc}(x(U),GL(m))$, applying corollary \ref{corbasic}, the equivalence of $(i)$ follows. To prove $(ii)$, since $k$ is an immersion, for each $p\in M^m$ there exist coordinate systems $(V, y)$, $(U', x')$ at $(k\circ f)(p)$, $f(p)$ respectively, such that $y\circ k \circ x'^{-1}$ is the inclusion map. Choosing a coordinate system $(U,x)$ at $p$ such that $f(U)\subset U'$, we have $\begin{pmatrix}
		\id_n\\
		0
	\end{pmatrix}\J{\tilde{f}}=\J{y\circ k \circ x'^{-1}}(\tilde{f})\J{\tilde{f}}=\J{y\circ k\circ f\circ x^{-1}}$, where $\tilde{f}=x'\circ f\circ x^{-1}$. Thus, by corollary \ref{corbasic}, if $f$ is a $C^{r,\beta}_{loc}$-frontal then $k\circ f$ is a $C^{r,\beta}_{loc}$-frontal too. If $k\circ f$ is a $C^{r,\beta}_{loc}$-frontal, there exist maps $\mathbb{W}$, $\mathds{J}'$ as in corollary \ref{corbasic} such that $\J{y\circ k\circ f\circ x^{-1}}=\mathbb{W}\mathds{J}'$ on a open neighborhood $\hat{U}\subset x(U)$ of $x(p)$. Therefore, $\begin{pmatrix}
		\id_n\\
		0
	\end{pmatrix}\J{x'\circ f\circ x^{-1}}=\mathbb{W}\mathds{J}'=\begin{pmatrix}
		\A\\
		\B
	\end{pmatrix}\mathds{J}'$,
	where the sizes of $\A$, $\B$ are $n\times m$, $(\bar{n}-n)\times m$ respectively. From this equality we get that $0=\B\mathds{J}'$ and $\J{y\circ k\circ f\circ x^{-1}}=\A\mathds{J}'$. Observe that $\mathds{J}'(q)$ is invertible if and only if $rank(\J{y\circ k\circ f\circ x^{-1}}(q))=m$, that is, if $q\in Reg(y\circ k\circ f\circ x^{-1})\cap \hat{U}$. Since $Reg(y\circ k\circ f\circ x^{-1})$ is dense in $x(U)$, this implies that $\B=0$ on $\hat{U}$ and as $rank(\mathbb{W})=m$, we have that $rank(\A)=m$. The result follows from corollary \ref{corbasic}.  
\end{proof}
If $f:M^m\to N^n$ is a $C^{1}$-frontal and $(\mathscr{V},\EC)=((V,x,\WC),\mathcal{E})$ a frontal system of $f$, we shall denote the matrix field of $\WC$ in $\mathcal{E}$ by $\mathbb{W}$. We will generally use blackboard bold font in a frame field's letter to denote the corresponding matrix field.
If $N^n=\Re^n$, we usually work with the canonical frame field $\mathcal{E}$ for $f^*\Re^n$ unless otherwise specified. So, in this case, for every local frame field $\WC$ for $T_f$, the symbol $\mathbb{W}$ will denote the matrix field of $\WC$ in the standard frame field of $f^*\Re^n$. Whenever $M^m=U \subset \Re^m$ is an open set, we will work with the standard coordinates. 

\begin{definition}
	\normalfont Let $f:M^m\to N^n$ be a $C^{1}$-frontal and $\VS=(V,x,\WC)$ a framed coordinate system of $T_f$. We define the map $\J{\VS}:=\WC f_*\frac{\partial}{\partial x}$.
\end{definition}
\begin{remark}\label{remarkframeinduced}\normalfont
	\normalfont If $f:M^m\to N^n$ is a $C^1$-frontal, a decomposition of the Jacobian matrix field in a framed coordinate system $(V',x',\mathcal{E}')$ of $f^*TN^n$, like item $(ii)$ of lemma \ref{Lambda}, induces a framed coordinate system $\VS'=(V',x',\mathcal{E}'\mathbb{W})$ of $T_{f}$ such that $\J{\VS'}=\mathds{J}'$. 
\end{remark}
\begin{definition}
	\normalfont Let $f:M^m\to N^n$ be a $C^{r,\beta}_{loc}$-frontal, $r\in\Na\cup\{\infty,\omega\}$, $\beta\in[0,1]$ and $(\mathscr{V},\EC)=((V,x,\WC),\mathcal{E})$ a frontal system of $f$ at $p\in V$. We will call $(\mathscr{V},\EC)$ a {\it normalized system of $f$ at $p$} if the matrix field $\mathbb{W}$ of $\WC$ in $\mathcal{E}$ has the form
	$\W=\begin{pmatrix}
		\mathbb{id}_m\\
		\mathbb{B}
	\end{pmatrix}$, where $\mathbb{B}:V \to \mathcal{M}_{(n-m)\times m}(\Re)$ is a $C^{r-1,\beta}_{loc}$-map such that $\mathbb{B}(p)=0$. 
\end{definition}
\begin{lemma}\label{normalized}
	Let $f:M^m\to N^n$ be a $C^{r,\beta}_{loc}$-frontal, $r\in\Na\cup\{\infty,\omega\}$, $\beta\in[0,1]$ and $((V,x,\WC),\mathcal{E})$ a canonical system of $f$ at $p\in M^m$. There exist an open neighborhood $U'\subset V$ of $p$, $\mathbb{A} \in C^{r-1,\beta}_{loc}(U',GL(m))$ and $\mathbb{Q}\in SO(n)$ such that $((U',x,\WC \mathbb{A}),\mathcal{E}\mathbb{Q})$ is a normalized system of $f$ at $p$. 
\end{lemma}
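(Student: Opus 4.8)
The plan is to reduce the statement to a single pointwise linear‑algebra step followed by the inversion of a matrix field, using throughout the algebraic calculus of the operations $^*$ and $\cdot$ from lemma \ref{basic}. First I would record how the matrix field of the frame transforms under the two modifications $\WC\mapsto\WC\mathbb{A}$ and $\mathcal{E}\mapsto\mathcal{E}\mathbb{Q}$. Writing $\mathbb{W}=\mathcal{E}\WC$ for the matrix field of $\WC$ in $\mathcal{E}$, item $(v)$ of lemma \ref{basic} gives $rank(\mathbb{W})=m$ everywhere. Applying item $(x)$ with $\mathbb{A}^{-1}=\mathbb{Q}$ yields $(\mathcal{E}\mathbb{Q})\WC=\mathbb{Q}^{-1}\mathbb{W}=\mathbb{Q}^t\mathbb{W}$ for $\mathbb{Q}\in SO(n)$, and then item $(viii)$ gives $(\mathcal{E}\mathbb{Q})(\WC\mathbb{A})=\mathbb{Q}^t\mathbb{W}\mathbb{A}$. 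Thus it suffices to produce a constant $\mathbb{Q}\in SO(n)$ and a $C^{r-1,\beta}_{loc}$‑map $\mathbb{A}$ into $GL(m)$ with $\mathbb{Q}^t\mathbb{W}\mathbb{A}=\begin{pmatrix}\id_m\\ \mathbb{B}\end{pmatrix}$ and $\mathbb{B}(p)=0$.

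Next I would choose $\mathbb{Q}$ by a rotation that makes the tangent plane horizontal. The columns of $\mathbb{W}(p)$ span an $m$‑dimensional subspace $S\subset\Re^n$. Picking an orthonormal basis $v_1,\dots,v_m$ of $S$, extending it to an orthonormal basis $v_1,\dots,v_n$ of $\Re^n$, and replacing $v_n$ by $-v_n$ if necessary, I obtain $\mathbb{Q}\in SO(n)$ whose columns are $v_1,\dots,v_n$. Then $\mathbb{Q}^t$ carries $S$ onto $span(e_1,\dots,e_m)=\Re^m\times\{0\}$, so $\mathbb{Q}^t\mathbb{W}(p)$ has vanishing bottom $(n-m)\times m$ block and an invertible top $m\times m$ block (as $\mathbb{Q}^t$ is invertible and $rank(\mathbb{W}(p))=m$).

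For $\mathbb{A}$ I would split $\mathbb{Q}^t\mathbb{W}=\begin{pmatrix}\mathbb{W}_1\\ \mathbb{W}_2\end{pmatrix}$ on $V$ with $\mathbb{W}_1$ of size $m\times m$; both blocks are $C^{r-1,\beta}_{loc}$ since $\mathbb{W}=\mathcal{E}\WC$ is. As $\mathbb{W}_1(p)$ is invertible and $\mathbb{W}_1$ is continuous, there is an open neighborhood $U'\subset V$ of $p$ on which $\mathbb{W}_1\in GL(m)$; set $\mathbb{A}=\mathbb{W}_1^{-1}$. Because inversion is real analytic on $GL(m)$, $\mathbb{A}\in C^{r-1,\beta}_{loc}(U',GL(m))$, and $\mathbb{Q}^t\mathbb{W}\mathbb{A}=\begin{pmatrix}\id_m\\ \mathbb{W}_2\mathbb{W}_1^{-1}\end{pmatrix}$, so $\mathbb{B}:=\mathbb{W}_2\mathbb{W}_1^{-1}$ is $C^{r-1,\beta}_{loc}$ with $\mathbb{B}(p)=\mathbb{W}_2(p)\mathbb{W}_1(p)^{-1}=0$. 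Since $\mathbb{A}(q)\in GL(m)$ and $\mathbb{Q}\in GL(n)$, items $(ix)$ and the definitions show $\WC\mathbb{A}$ is again a $C^{r-1,\beta}_{loc}$ frame field for $T_f|_{U'}$ and $\mathcal{E}\mathbb{Q}$ a frame field for $f^*TN^n|_{U'}$, so $((U',x,\WC\mathbb{A}),\mathcal{E}\mathbb{Q})$ is a frontal system, normalized at $p$ by construction.

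The only delicate points are bookkeeping ones rather than analytic ones: chaining items $(viii)$ and $(x)$ of lemma \ref{basic} so that the left change of frame contributes $\mathbb{Q}^t$ on the left while the right modification contributes $\mathbb{A}$ on the right; forcing $\det\mathbb{Q}=+1$ through the sign flip so that $\mathbb{Q}\in SO(n)$ and not merely $O(n)$; and noting that matrix inversion preserves the class $C^{r-1,\beta}_{loc}$ (which also covers the topological case $r=1$, where $\mathbb{W}$ is only continuous). The genuine content is simply the pointwise rotation making $T_{fp}$ horizontal, followed by a smooth normalization of the resulting invertible top block.
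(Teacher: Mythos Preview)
Your proof is correct and follows essentially the same approach as the paper's: rotate by a constant $\mathbb{Q}\in SO(n)$ so that the column space of the matrix field at $p$ becomes $\Re^m\times\{0\}$, then right-multiply by the inverse of the (now invertible) top $m\times m$ block. The only cosmetic difference is that the paper first replaces $\WC$ by an orthonormal frame $\WC'$ for $T_f$ (so that, $\EC$ being orthonormal, the columns of $\EC\WC'$ at $p$ are themselves orthonormal and can be taken directly as the first $m$ columns of $\mathbb{Q}$), whereas you orthonormalize the column space of $\mathbb{W}(p)$ by hand; the resulting $\mathbb{Q}$ and $\mathbb{A}$ play identical roles.
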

\begin{proof}
	Let $\WC'$ be a local orthonormal frame field over an open neighborhood $U$ of $p$. We have $\mathbb{A}':=\WC\WC'\in C^{r-1,\beta}_{loc}(U\cap V,GL(m))$ and by lemma \ref{basic} $\WC'=\WC\mathbb{A}'$ on $U'=U\cap V$. Since $\WC'$ is an orthonormal frame field, the vector columns of $\mathbb{W}'=\EC \WC'$ are orthonormal. Let us complete the ordered $m$ vector columns of $\mathbb{W}'(p)$ into a positive orthonormal basis of $\Re^n$ and consider the matrix $\mathbb{Q}$ with these vector columns. As $\mathbb{Q}\in SO(n)$ and this has the form $\begin{pmatrix}
		\W'(p)& \mathbb{O}
	\end{pmatrix}$ with $\mathbb{O}\in \mathcal{M}_{n\times (n-m)}(\Re)$, then $\mathbb{Q}^t\W'(p)=\begin{pmatrix}
		\mathbb{id}_m\\
		0
	\end{pmatrix}.$ Shrinking $U'$ if necessary, we have that $\mathbb{Q}^t\mathbb{W}'=\begin{pmatrix}
		\F\\
		\G
	\end{pmatrix},$ where $\F:U'\to GL(m)$ and $\G:U'\to \mathcal{M}_{n-m\times m}(\Re)$ are $C^{r-1,\beta}_{loc}$-maps, with $\G(p)=0$. Setting $\mathbb{A}=\mathbb{A}'\F^{-1}$, by item $(x)$ of lemma \ref{basic}, we have $\mathcal{E}\mathbb{Q}(\WC \mathbb{A})=\mathbb{Q}^t\mathcal{E}(\WC \mathbb{A}'\F^{-1})=\mathbb{Q}^t\W'\F^{-1}=\begin{pmatrix}
		\F\\
		\G
	\end{pmatrix}\F^{-1}=\begin{pmatrix}
		\mathbb{id}_m\\
		\G\F^{-1}
	\end{pmatrix},$ where $(\G\F^{-1})(p)=0$. Therefore, $((U',x,\WC \mathbb{A}),\mathcal{E}\mathbb{Q})$ is a normalized system of $f$ at $p$. 
\end{proof}

\begin{proposition}\label{prepre}
	Let $f:M^m\to N^n$ be a $C^{r,\beta}_{loc}$-frontal, $r\in \Na\cup\{\infty,\omega\}$, $\beta\in[0,1]$, $p\in M^m$ and  $(U,x)$, $(V,y)$ coordinate systems at $p$ and $f(p)$ respectively. There exist a rigid motion $T:\Re^n \to \Re^n$, an open connected neighborhood $\hat{U}$ of $0=x(p)$ and a  $C^{r-1,\beta}_{loc}$-map $B:\hat{U} \to \mathcal{M}_{n-m\times m}(\Re)$ such that $T\circ\hat{f}=(a_1,\ldots,a_n)$ is equal to the expression 
	\begin{equation}\label{fr}
		(A(x),\int_{0}^{x}B_1\J{A}\cdot d\gamma,\ldots,\int_{0}^{x}B_{n-m}\J{A}\cdot d\gamma),
	\end{equation} where $B_j$ is the $j$-row of $B$,
	\begin{enumerate}[label=(\roman*)]
		\item $A(x)=(a_1,\ldots,a_m)$, $A(0)=0$, $B(0)=0$ and
		\item $\J{A}^t\J{B_j}=\J{B_j}^t\J{A}$ on $\hat{U}$, for all $1\leq j\leq n-m$ in the case $r\geq 2$. 
	\end{enumerate}
	Here $\int_{0}^{x}()\cdot d\gamma$ is the line integral over an arbitrary piecewise smooth curve $\gamma:[t_0,t_1]\to \hat{U}$ with $\gamma(t_0)=0$ and $\gamma(t_1)=x$. Furthermore, if we substitute a $C^{r,\beta}_{loc}$-map $A$ with $r\geq 2$ and a $C^{r-1,\beta}_{loc}$-map $B$ in the formula (\ref{fr}) satisfying $(ii)$, this produces a $C^{r,\beta}_{loc}$-frontal with Jacobian matrix equal to
	\begin{align}\label{Jacob}
		\begin{pmatrix}
			\id_m\\
			B
		\end{pmatrix}\J{A}.
	\end{align} 
\end{proposition}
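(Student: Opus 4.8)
The plan is to reduce everything to the Jacobian factorization supplied by corollary \ref{corbasic}, to turn the factorization into a normal form by a rigid motion, and then to recognize condition $(ii)$ as the closedness condition that lets the last $n-m$ components be recovered by line integration. Since $f$ is a $C^{r,\beta}_{loc}$-frontal, corollary \ref{corbasic} gives, on a neighborhood $U'$ of $x(p)$, a factorization $\J{\hat f}=\mathbb{W}\mathds{J}'$ with $\mathbb{W},\mathds{J}'$ of class $C^{r-1,\beta}_{loc}$ and $rank(\mathbb{W})\equiv m$. This is the analytic input; the rest is extracting the stated shape from it.

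First I would normalize $\mathbb{W}$. The columns of $\mathbb{W}(x(p))$ span an $m$-plane $W_0\subset \Re^n$; choosing $\mathbb{Q}\in SO(n)$ whose first $m$ columns form an orthonormal basis of $W_0$, continuity gives $\mathbb{Q}^t\mathbb{W}=\begin{pmatrix}\mathbb{F}\\ \mathbb{G}\end{pmatrix}$ on a smaller neighborhood, with $\mathbb{F}$ invertible, $\mathbb{G}(x(p))=0$, both $C^{r-1,\beta}_{loc}$ (this is exactly the content of lemma \ref{normalized}, which may be cited instead). Setting $B:=\mathbb{G}\mathbb{F}^{-1}$ (so $B(x(p))=0$) and letting $T(\xi):=\mathbb{Q}^t(\xi-\hat f(x(p)))$ be the corresponding rigid motion, one gets $\J{T\circ\hat f}=\mathbb{Q}^t\J{\hat f}=\begin{pmatrix}\id_m\\ B\end{pmatrix}(\mathbb{F}\mathds{J}')$. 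Writing $T\circ\hat f=(a_1,\dots,a_n)$ and $A=(a_1,\dots,a_m)$, the top block gives $\J{A}=\mathbb{F}\mathds{J}'$ and $A(0)=0$, while the $j$-th lower row gives $\J{a_{m+j}}=B_j\J{A}$. Because $a_{m+j}$ is a genuine $C^{r,\beta}_{loc}$ function with $a_{m+j}(0)=0$, the fundamental theorem for line integrals yields $a_{m+j}(x)=\int_0^x B_j\J{A}\cdot d\gamma$ on a connected $\hat U$, which is formula (\ref{fr}) with $(i)$. For $(ii)$, when $r\geq 2$ I would differentiate $\partial_k a_{m+j}=\sum_l B_{jl}\partial_k A_l$ once more; symmetry of the second partials of both $a_{m+j}$ and $A_l$ gives $\sum_l(\partial_i B_{jl})(\partial_k A_l)=\sum_l(\partial_k B_{jl})(\partial_i A_l)$, which is precisely the $(k,i)$-entry identity $\J{A}^t\J{B_j}=\J{B_j}^t\J{A}$.

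For the converse, given $A\in C^{r,\beta}_{loc}$ with $r\geq 2$ and $B\in C^{r-1,\beta}_{loc}$ satisfying $(ii)$, condition $(ii)$ says each $1$-form $\omega_j=B_j\J{A}$, whose coefficients are $C^{r-1,\beta}_{loc}$ with $r-1\geq 1$, is closed; hence on a simply connected $\hat U$ the line integrals in (\ref{fr}) are path-independent and define $C^{r,\beta}_{loc}$ functions whose gradients are $B_j\J{A}$. Therefore $\hat f$ is of class $C^{r,\beta}_{loc}$ and $\J{\hat f}=\begin{pmatrix}\id_m\\ B\end{pmatrix}\J{A}$, a factorization whose left factor has constant rank $m$ and class $C^{r-1,\beta}_{loc}$; by corollary \ref{corbasic}, $\hat f$ is a $C^{r,\beta}_{loc}$-frontal, which gives (\ref{Jacob}).

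The main obstacle I expect is the normalization step: producing the rigid motion $T$ so that the left factor becomes $\begin{pmatrix}\id_m\\ B\end{pmatrix}$ with $B$ vanishing at the base point while preserving $C^{r-1,\beta}_{loc}$ regularity of all the pieces (this is essentially lemma \ref{normalized}). The only other point requiring care is the index computation identifying condition $(ii)$ with the closedness of the forms $\omega_j$, together with the observation that for $r=1$ the representation still holds (the exact form $d a_{m+j}$ integrates regardless) while $(ii)$ is simply vacuous.
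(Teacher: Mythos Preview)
Your proof is correct and follows essentially the same route as the paper's. The paper packages the normalization step through its bundle-theoretic language (canonical systems and lemma \ref{normalized}) rather than working directly with the matrix factorization from corollary \ref{corbasic}, but the content is identical, as you yourself note. Your derivation of condition $(ii)$ via symmetry of mixed partials and your treatment of the converse are likewise the same as the paper's, only spelled out in more detail.
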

\begin{proof}
	By lemma \ref{diffeo}, $y\circ f:M^m \to \Re^n$ is a frontal. Let us take a canonical system $(\US,\EC)=((U',x,\WC'),\EC)$ of $y\circ f$ at $p$, with $U'$ an open connected subset of $U$, $\WC'$ a frame field for $T_f$ over on $U'$ and $\EC$ the standard frame field for $f^*\Re^n$. Shrinking $U'$ if necessary, by lemma \ref{normalized}, there exist $\mathbb{A} \in C^{r-1,\beta}_{loc}(U',GL(m))$ and $\mathbb{Q}\in SO(n)$ such that $((U',x,\WC' \mathbb{A}),\mathcal{E}\mathbb{Q})$ is a normalized system of $y\circ f$ at $p$. Setting $\WC=\WC' \mathbb{A}$, $\VS=(U',x,\WC)$ and since $(y\circ f)_*\frac{\partial}{\partial x}=((y\circ f)_*\frac{\partial}{\partial x_1},\dots,(y\circ f)_*\frac{\partial}{\partial x_m})=\WC\J{\VS}$, we have that  $(\mathcal{E}\mathbb{Q})((y\circ f)_*\dd{}{x})=(\mathcal{E}\mathbb{Q})\WC \J{\VS}=\begin{pmatrix}
		\id_m\\
		B'
	\end{pmatrix}\J{\VS}$, for some $C^{r-1,\beta}_{loc}$-map $B'$ on $U'$ with $B'(p)=0$. Then, by item $(x)$ of lemma \ref{basic}, $\mathbb{Q}^{-1}\J{\hat{f}}=\begin{pmatrix}
		\id_m\\
		B'\circ x^{-1}
	\end{pmatrix}(\J{\VS}\circ x^{-1})$. 
	Setting $T(q)=\mathbb{Q}^{-1}q-\mathbb{Q}^{-1}(y\circ f)(p)$ for all $q\in \Re^n$, $B=B'\circ x^{-1}$, $T\circ \hat{f}=(a_1,\dots,a_n)$ and $A=(a_1,\dots,a_m)$, we can conclude that $\J{T\circ \hat{f}}=\begin{pmatrix}
		\id_m\\
		B
	\end{pmatrix}\J{\VS}\circ x^{-1}$ and $\J{\VS}\circ x^{-1}=\J{A}$.
	Integrating, we get the formula (\ref{fr}). Observe that the item $(ii)$ is the integrability condition, and it follows from the fact that $\dd{(B\J{A}^j)}{x_i}=\dd{(B\J{A}^i)}{x_j}$ for all $1\leq i\leq m$ and $1\leq j\leq m$, where $\J{A}^j$ is the $j$-column of $\J{A}$. The last part follows from corollary \ref{corbasic}. 
\end{proof}
\begin{corollary}\label{prepre2}
	Let $f:M^2\to N^n$ be a $C^{r,\beta}_{loc}$-frontal, $r\in \Na\cup\{\infty,\omega\}$, $\beta\in[0,1]$, $p\in M^2$ and  $(U,x)$, $(V,y)$ coordinate systems at $p$ and $f(p)$ respectively. There exist a rigid motion $T:\Re^n \to \Re^n$, an open connected neighborhood $\hat{U}$ of $0=x(p)$ and $C^{r-1,\beta}_{loc}$-maps $b_j:\hat{U} \to \Co$, $1\leq j\leq n-2$, such that $T\circ\hat{f}=(a_1,\ldots,a_n)$ is equal to the expression 
	\begin{equation}\label{fr2}
		(a(z),Re\{\int_{0}^{z}\overline{b}_1\dd{a}{z}+b_1\dd{\overline{a}}{z}dw\},\ldots,Re\{\int_{0}^{z}\overline{b}_{n-2}\dd{a}{z}+b_{n-2}\dd{\overline{a}}{z}dw\}),
	\end{equation} where 
	\begin{enumerate}[label=(\roman*)]
		\item $a(z)=a_1(z)+ia_2(z)$, $a(0)=0$,  $b_j(0)=0$ for all $j\in\{1,\dots,n-2\}$ and
		\item $Im\{\dd{a}{z}\dd{\overline{b}_k}{\z}+\dd{\overline{a}}{z}\dd{b_k}{\z}\}=0$ on $\hat{U}$, for all $k\in\{1,\dots,n-2\}$ in the case $r\geq 2$. 
	\end{enumerate}
	Here $\int_{0}^{z}()dw$ is the complex line integral over an arbitrary piece-wise smooth curve $\gamma:[t_0,t_1]\to \hat{U}$ with $\gamma(t_0)=0$ and $\gamma(t_1)=z$. Furthermore, if we substitute a $C^{r,\beta}_{loc}$-map $a:\hat{U}\to \Co$ with $r\geq 2$ and $C^{r-1,\beta}_{loc}$-maps $b_j:\hat{U} \to \Co$ in the formula (\ref{fr}) satisfying $(ii)$, this produces a $C^{r,\beta}_{loc}$-frontal.
\end{corollary}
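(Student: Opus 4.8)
The plan is to deduce this complex reformulation directly from Proposition \ref{prepre} applied with $m=2$, translating the real data $(A,B)$ into complex data $(a,b_j)$ via the Wirtinger calculus. First I would invoke Proposition \ref{prepre}: since $f:M^2\to N^n$ is a $C^{r,\beta}_{loc}$-frontal, there exist a rigid motion $T$, an open connected neighborhood $\hat{U}$ of $0=x(p)$, and a $C^{r-1,\beta}_{loc}$-map $B:\hat{U}\to\mathcal{M}_{(n-2)\times 2}(\Re)$ with $A(0)=0$, $B(0)=0$ and the integrability condition $\J{A}^t\J{B_j}=\J{B_j}^t\J{A}$, such that $T\circ\hat{f}$ has the form (\ref{fr}). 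I would then set $a(z)=a_1(z)+ia_2(z)$ and, from the $j$-th row $B_j=(B_{j1},B_{j2})$ of $B$, define the complex functions $b_j:=B_{j1}+iB_{j2}$; these are $C^{r-1,\beta}_{loc}$ and satisfy $a(0)=0$, $b_j(0)=0$, giving item $(i)$.

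The core of the argument is to verify that, under this dictionary, the complex integrand in (\ref{fr2}) reproduces the real $1$-form integrated in (\ref{fr}). Writing $g:=\overline{b}_j\dd{a}{z}+b_j\dd{\overline{a}}{z}$ and expanding $\dd{}{z}$, $\dd{}{\z}$ in terms of $\dd{}{x_1}$, $\dd{}{x_2}$, a direct computation (with $b_j=p+iq$, so $p=B_{j1}$, $q=B_{j2}$) gives $Re\{g\}=p\dd{a_1}{x_1}+q\dd{a_2}{x_1}$ and $Im\{g\}=-p\dd{a_1}{x_2}-q\dd{a_2}{x_2}$, whence
\begin{align*}
Re\{g\,dw\}&=Re\{g\}\,dx_1-Im\{g\}\,dx_2\\
&=\Big(B_{j1}\dd{a_1}{x_1}+B_{j2}\dd{a_2}{x_1}\Big)dx_1+\Big(B_{j1}\dd{a_1}{x_2}+B_{j2}\dd{a_2}{x_2}\Big)dx_2,
\end{align*}
which is exactly $B_j\J{A}$ paired against $(dx_1,dx_2)$. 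Integrating, the $(2+j)$-th component of (\ref{fr}) coincides with $Re\{\int_{0}^{z}(\overline{b}_j\dd{a}{z}+b_j\dd{\overline{a}}{z})\,dw\}$, so $T\circ\hat{f}$ is given by (\ref{fr2}).

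It remains to translate the integrability condition. I would observe that the $1$-form $Re\{g\,dz\}$ is closed precisely when $Im\{\dd{g}{\z}\}=0$, since $d(g\,dz)=\dd{g}{\z}\,d\z\wedge dz=2i\,\dd{g}{\z}\,dx_1\wedge dx_2$ and taking real parts yields $d\,Re\{g\,dz\}=-2\,Im\{\dd{g}{\z}\}\,dx_1\wedge dx_2$. Differentiating $g$ and using that the second-order terms $\overline{b}_j\dd{^2a}{\z\partial z}+b_j\dd{^2\overline{a}}{\z\partial z}=\tfrac{1}{2}Re\{\overline{b}_j\Delta a\}$ are real (so contribute nothing to the imaginary part), one obtains $Im\{\dd{g}{\z}\}=Im\{\dd{a}{z}\dd{\overline{b}_j}{\z}+\dd{\overline{a}}{z}\dd{b_j}{\z}\}$. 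Thus the closedness of the integrand $1$-form—equivalently item $(ii)$ of Proposition \ref{prepre}, which guarantees that the line integral in (\ref{fr}) is path-independent on the connected $\hat{U}$—is exactly condition $(ii)$ of the corollary. Finally, the converse is immediate from the converse in Proposition \ref{prepre}: any $C^{r,\beta}_{loc}$-map $a$ and $C^{r-1,\beta}_{loc}$-maps $b_j$ satisfying $(ii)$ correspond, under the same dictionary, to real data $(A,B)$ satisfying item $(ii)$ of Proposition \ref{prepre}, so substituting into (\ref{fr2}) produces a $C^{r,\beta}_{loc}$-frontal.

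The argument presents no genuine obstacle: everything reduces to Proposition \ref{prepre}. The only care needed is the bookkeeping of the factors of $\tfrac{1}{2}$ and the signs in the Wirtinger calculus, i.e. confirming that the choice $b_j=B_{j1}+iB_{j2}$ is precisely the one making both the representation formula and its integrability condition match.
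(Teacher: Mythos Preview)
Your proof is correct and follows essentially the same route as the paper: invoke Proposition~\ref{prepre} with $m=2$, set $a=a_1+ia_2$ and $b_j=B_{j1}+iB_{j2}$, and translate both the representation formula and the integrability condition into complex notation via the Wirtinger operators. The paper's proof is terser (it just says one ``rewrites the expressions'' and that condition $(ii)$ ``follows from applying $\partial^2/\partial\bar z\partial z$ to (\ref{fr2}) and equaling to $0$ the imaginary part''), while you carry out the identification $Re\{g\,dw\}=B_j\J{A}\cdot d\gamma$ and the computation $Im\{\partial_{\bar z}g\}=Im\{\partial_z a\,\partial_{\bar z}\bar b_j+\partial_z\bar a\,\partial_{\bar z}b_j\}$ explicitly; there is no substantive difference in strategy.
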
 
\begin{proof}
	By proposition \ref{prepre}, there exists a rigid motion $T:\Re^n\to \Re^n$ such that $T\circ\hat{f}$ is equal to the expression (\ref{fr}). Consider the maps $A$, $B$ from proposition \ref{prepre} and set the complex maps  $b_j(z):=B_{j1}(z)+iB_{j2}(z)$ for all $j\in \{1,\dots,n-2\}$ and $a(z)=a_1(z)+ia_2(z)$. We can obtain the formula (\ref{fr2}) by rewriting the expressions in the formula (\ref{fr}) in terms of the previous complex maps and complex line integrals. The condition $(ii)$ follows from applying $\dd{}{\z\partial z}$ to the expression (\ref{fr2}) and equaling to $0$ the imaginary part.
\end{proof}  

Let $f:U\to \Re^n$ be a $C^{r,\beta}_{loc}$-frontal, $r\in \Na\cup\{\infty,\omega\}$, $\beta\in[0,1]$ and $U\subset \Re^m$ an open neighborhood of $0$. If $f(0)=0$ and there exist maps $A:U \to \mathcal{M}_{1\times m}(\Re)$ and $B:U \to \mathcal{M}_{n-m\times m}(\Re)$ such that $A=(f_1,\dots,f_m)$, $B$ is a $C^{r-1,\beta}_{loc}$-map with $B(0)=0$ and $\J{f}$ has the form (\ref{Jacob}), we say that $f$ is a {\it normalized frontal} or simply $f$ is {\it normalized}, otherwise we say $f$ is {\it unnormalized}. Observe that, for a normalized frontal $f$, the maps $A$ and $B$ are unique, in fact if there exist $A'$ and $B'$ satisfying these conditions, it is immediate that $A=A'$ and $\begin{pmatrix}
	\id_m\\
	B
\end{pmatrix}\J{A}=\begin{pmatrix}
	\id_m\\
	B'
\end{pmatrix}\J{A'}$.
Since the points in which the matrix $\J{A'}=\J{A}$ is invertible are dense in $M^m$, then $B=B'$. We call the map $A$ the {\it principal part of $f$} and $B$ the {\it co-principal part of $f$}. For $m=2$, the complex map $a$ in corollary \ref{prepre2} will be called the {\it complex principal part of f} and the map $b:\hat{U}\to \Co^{n-2}$ whose components are the maps $b_j$ in the corollary \ref{prepre2} will be called the {\it complex co-principal part of f}. 

If $f$ is unnormalized by proposition \ref{prepre}, shrinking $U$ if necessary, we can turn this into a normalized frontal $T\circ f$ through a rigid motion $T$. We will call $T\circ f$ a {\it normalization of $f$}. We warn that there could be different normalizations. However, these satisfy the next relation. 

\begin{proposition}\label{isometries}
	Let $f:U\to \Re^n$ be a $C^{r,\beta}_{loc}$-frontal, where $U\subset \Re^m$ is an open neighborhood of $0$, $r\in \Na\cup\{\infty,\omega\}$ and $\beta\in[0,1]$. If $T$ and $T'$ are isometries of $\Re^n$, such that $T\circ f$ and $T'\circ f$ are normalizations of $f$ defined on $U$, then $T\circ (T')^{-1}(q)=\M q$ for some matrix \begin{align}\label{ortogonalmatrix}
		\M=\begin{pmatrix}
			\D&0\\
			0&\G
		\end{pmatrix},
	\end{align} where $\D\in O(m)$ and $\G\in O(n-m)$. Also, if $A$, $B$ are the principal part and the co-principal part of $T\circ f$ respectively, and $A'$, $B'$ are the principal part and the co-principal part of $T'\circ f$ respectively, then we have that $A=A'\D^t$ and $B=\G B'\D^t$.
\end{proposition}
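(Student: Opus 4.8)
The plan is to collapse the comparison of the two normalizations into a single orthogonal matrix and then read off its block structure. Since $T$ and $T'$ are isometries of $\Re^n$, the composition $S:=T\circ (T')^{-1}$ is again an isometry, say $S(q)=\mathbb{R}q+w$ with $\mathbb{R}\in O(n)$ and $w\in\Re^n$. Both $T\circ f$ and $T'\circ f$ are normalizations, hence normalized frontals, so they vanish at $0$; writing $g:=T'\circ f$ and $h:=T\circ f$ we get $S(0)=S(g(0))=h(0)=0$, which forces $w=0$ and $S(q)=\mathbb{R}q$. Thus $h=\mathbb{R}\circ g$ and, differentiating, $\J{h}=\mathbb{R}\,\J{g}$.

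Next I would extract the blocks. Since $h$ and $g$ are normalized, $\J{h}=\begin{pmatrix}\id_m\\B\end{pmatrix}\J{A}$ and $\J{g}=\begin{pmatrix}\id_m\\B'\end{pmatrix}\J{A'}$, where $A,B$ and $A',B'$ are the respective principal and co-principal parts. Writing $\mathbb{R}=\begin{pmatrix}\D&\mathbb{E}\\\mathbb{F}&\G\end{pmatrix}$ in blocks of sizes $m$ and $n-m$ and comparing the top and bottom blocks of $\J{h}=\mathbb{R}\,\J{g}$ gives
\[
\J{A}=(\D+\mathbb{E}B')\J{A'},\qquad B\,\J{A}=(\mathbb{F}+\G B')\J{A'}.
\]
Substituting the first identity into the second and cancelling $\J{A'}$ on the dense set where it is invertible (this is exactly the density of regular points used earlier to establish uniqueness of the principal and co-principal parts), I obtain, by continuity, $B(\D+\mathbb{E}B')=\mathbb{F}+\G B'$ on all of $\hat{U}$.

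Evaluating this last identity at $0$ and using $B(0)=B'(0)=0$ collapses it to $\mathbb{F}=0$. With $\mathbb{F}=0$, the orthogonality relation $\mathbb{R}^t\mathbb{R}=\id_n$ reads $\D^t\D=\id_m$, $\D^t\mathbb{E}=0$ and $\mathbb{E}^t\mathbb{E}+\G^t\G=\id_{n-m}$; since $\D$ is then invertible, $\D^t\mathbb{E}=0$ forces $\mathbb{E}=0$, whence $\D\in O(m)$ and $\G\in O(n-m)$. Therefore $\mathbb{R}=\M$ with the asserted block-diagonal form, i.e. $T\circ (T')^{-1}(q)=\M q$. Finally, reading the first $m$ components of the componentwise identity $h=\M g$ gives $A=A'\D^t$, while $B(\D+\mathbb{E}B')=\mathbb{F}+\G B'$ now simplifies to $B\D=\G B'$, whence $B=\G B'\D^{-1}=\G B'\D^t$.

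The one genuinely substantive step is proving $\mathbb{F}=0$: it is here that the geometry enters, through the vanishing of the co-principal parts at the base point combined with the density of regular points that permits cancelling $\J{A'}$. Everything afterward is linear algebra on the orthogonal matrix $\mathbb{R}$ and routine bookkeeping; the invertibility of $\D$ (hence $\mathbb{E}=0$) and the reading-off of $A$ and $B$ require no further input.
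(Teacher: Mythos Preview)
Your argument is correct and follows the same overall strategy as the paper: reduce to a single orthogonal matrix $\mathbb{R}$, write it in blocks, and use the density of regular points together with $B(0)=B'(0)=0$ to pin down the block structure.

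There is a small but pleasant difference in the route. The paper uses both the relation $\J{h}=\M\,\J{g}$ and its inverse $\J{g}=\M^t\J{h}$, combines the top rows of each to obtain the identity $\id_m=(\D+\mathbb{E}B')(\D^t+\mathbb{F}^tB)$ on $U$, and evaluates at $0$ to conclude $\D\in O(m)$ first; the vanishing of $\mathbb{E}$ and $\mathbb{F}$ then comes from orthonormality of the rows and columns of $\M$. You instead work only with the forward relation, substitute the top block into the bottom block, and cancel $\J{A'}$ to read off $\mathbb{F}=0$ directly at the base point; orthogonality of $\mathbb{R}$ then gives $\D^t\D=\id_m$ and $\D^t\mathbb{E}=0$, hence $\mathbb{E}=0$. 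Your sequence is marginally more economical (one direction of the transformation suffices), while the paper's symmetric use of $\M$ and $\M^t$ makes the role of $\D$ as the ``principal'' orthogonal piece appear first. Either way the substantive content---the density argument and the vanishing of the co-principal parts at $0$---is identical.
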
 
\begin{proof}
	Denoting $F=T\circ f$ and $F'=T'\circ f$, we have that $F=T\circ (T')^{-1}\circ F'$. It implies that $T\circ (T')^{-1}(0)=0$ and since $T$, $T'$ are isometries, then $T\circ (T')^{-1}$ is a linear map, whose representation in the canonical basis of $\Re^n$ is a matrix $\M\in O(n)$. Writing $\M=\begin{pmatrix}
		\D&\mathbb{E}\\
		\F&\G
	\end{pmatrix}$, where $\D\in \mathcal{M}_{m\times m}(\Re)$, $\G\in \mathcal{M}_{(n-m)\times (n-m)}(\Re)$, $\mathbb{E}\in \mathcal{M}_{m\times (n-m)}(\Re)$ and $\mathbb{F}\in \mathcal{M}_{(n-m)\times m}(\Re)$, we have that $\begin{pmatrix}
		\J{A}\\
		B\J{A}
	\end{pmatrix}=\begin{pmatrix}
		\D&\mathbb{E}\\
		\F&\G
	\end{pmatrix}\begin{pmatrix}
		\J{A'}\\
		B'\J{A'}
	\end{pmatrix}$ and $\begin{pmatrix}
		\D^t&\F^t\\
		\mathbb{E}^t&\G^t
	\end{pmatrix}\begin{pmatrix}
		\J{A}\\
		B\J{A}
	\end{pmatrix}=\begin{pmatrix}
		\J{A'}\\
		B'\J{A'}
	\end{pmatrix}$.
	Therefore $\J{A}=\D\J{A'}+\mathbb{E}B'\J{A'}$,  $\J{A'}=\D^t\J{A}+\mathbb{F}^tB\J{A}$ and combining these equations, we get that $\J{A}=(\D+\mathbb{E}B')(\D^t+\F^tB)\J{A}$. Since $\J{A}$ is invertible on $Reg(f)$, which is dense in $U$, we have that $\id_m=(\D+\mathbb{E}B')(\D^t+\F^tB)$ and using that $B(0)=B'(0)=0$, we can conclude that $\D\in O(m)$. As the rows and columns of $\M$ are orthonormal, then $\mathbb{E}=\F=0$ and thus $\G\in O(n-m)$. On the other hand, we have $\J{A}=\D\J{A'}$, which implies $A=\D A'$. Also, $\begin{pmatrix}
		\J{A}\\
		B\J{A}
	\end{pmatrix}=\begin{pmatrix}
		\D&0\\
		0&\G
	\end{pmatrix}\begin{pmatrix}
		\J{A'}\\
		B'\J{A'}
	\end{pmatrix}=\begin{pmatrix}
		\D&0\\
		0&\G
	\end{pmatrix}\begin{pmatrix}
		\D^t\J{A}\\
		B'\D^t\J{A}
	\end{pmatrix}$ on $U$ and eliminating $\J{A}$ from this equality, we can conclude that $B=\G B'\D^t$.
\end{proof}

\begin{remark}\label{complexprincipalpart}\normalfont
	In the last proposition, for $m=2$, if we write the principal parts $A$, $A'$ in the complex forms $a$, $a'$ as in corollary \ref{prepre2} respectively, the equality $A=\D A'$ is equivalent to the equality $a=a'q$, where $q=\D_{11}-i\D_{12}\in S^1\subset \Co$.
\end{remark}
\begin{lemma}\label{normalizedchange}
	Let $f:U\to \Re^n$ be a normalized frontal of class $C^{r,\beta}_{loc}$, where $U\subset \Re^m$ is an open neighborhood of $0$, $r\in \Na\cup\{\infty,\omega\}$ and $\beta\in[0,1]$. If $A$, $B$ are the principal part and the co-principal part of $f$ respectively, and $c:U'\to U$ is a $C^{r,\beta}_{loc}$-diffeomorphism on an open neighborhood $U'\subset \Re^m$ of $0$ with $c(0)=0$, then $f\circ c$ is a normalized frontal whose principal part is $A\circ c$ and co-principal part is $B\circ c$. 
\end{lemma}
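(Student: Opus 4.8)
The plan is to verify directly that $f\circ c$ meets the definition of a normalized frontal with candidate principal part $A\circ c$ and candidate co-principal part $B\circ c$, and then to invoke the uniqueness of the principal and co-principal parts established immediately before the statement. That $f\circ c$ is a $C^{r,\beta}_{loc}$-frontal is immediate: $c$ is a $C^{r,\beta}_{loc}$-diffeomorphism, so item $(i)$ of lemma \ref{diffeo} applies; alternatively this falls out of the Jacobian computation below. Moreover $(f\circ c)(0)=f(c(0))=f(0)=0$, so the base-point condition holds.

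First I would record the elementary structural facts. The first $m$ components of $f\circ c$ are $(f_1\circ c,\dots,f_m\circ c)=A\circ c$, which is the candidate principal part, and $(A\circ c)(0)=0$. The candidate co-principal part $B\circ c$ is of class $C^{r-1,\beta}_{loc}$, being the composition of the $C^{r-1,\beta}_{loc}$-map $B$ with the diffeomorphism $c$ (which is in particular $C^{r-1,\beta}_{loc}$), and it satisfies $(B\circ c)(0)=B(c(0))=B(0)=0$.

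The core of the argument is a one-line chain-rule computation. Since $f$ is normalized, $\J{f}=\begin{pmatrix}\id_m\\ B\end{pmatrix}\J{A}$, whence
\[
\J{f\circ c}=(\J{f}\circ c)\,\J{c}=\begin{pmatrix}\id_m\\ B\circ c\end{pmatrix}(\J{A}\circ c)\,\J{c}=\begin{pmatrix}\id_m\\ B\circ c\end{pmatrix}\J{A\circ c},
\]
where in the last step I use the chain rule once more in the form $\J{A\circ c}=(\J{A}\circ c)\,\J{c}$. This is exactly the form (\ref{Jacob}) with $A$ and $B$ replaced by $A\circ c$ and $B\circ c$. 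Combined with the two previous paragraphs, this shows that $A\circ c$ and $B\circ c$ satisfy the defining conditions of principal and co-principal part for $f\circ c$; by the uniqueness noted before the statement, they are indeed its principal and co-principal parts.

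I expect no genuine obstacle here: the proof is a direct verification through the chain rule, and the only point requiring a moment's care is the regularity of the composition $B\circ c$, especially in the borderline case $r=1$, where $B$ is merely of class $C^{0,\beta}_{loc}$ while $c$ is of class $C^{1,\beta}_{loc}$. Since composing a locally $\beta$-Hölder map with a locally Lipschitz diffeomorphism again yields a locally $\beta$-Hölder map, and composition preserves $C^{r-1,\beta}_{loc}$-regularity for higher $r$ as well as for $r\in\{\infty,\omega\}$, this causes no difficulty.
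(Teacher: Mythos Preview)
Your proof is correct and follows essentially the same approach as the paper: both reduce to the chain-rule identity $\J{f\circ c}=(\J{f}\circ c)\J{c}=\begin{pmatrix}\id_m\\ B\circ c\end{pmatrix}\J{A\circ c}$ and then check the base-point conditions. Your write-up is more detailed (explicitly invoking lemma \ref{diffeo} and the uniqueness of principal/co-principal parts, and commenting on the regularity of $B\circ c$), but the core argument is the same one-line Jacobian computation the paper gives.
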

\begin{proof}
	By the chain rule, $\J{f\circ c}=\J{f}(c)\J{c}=\begin{pmatrix}
		\id_m\\
		B(c)
	\end{pmatrix}\J{A}(c)\J{c}=\begin{pmatrix}
		\id_m\\
		B(c)
	\end{pmatrix}\J{A\circ c}$. Since $A\circ c=((f\circ c)_1,\dots, (f\circ c)_m)$, $(f\circ c)(0)=0$ and $(B\circ c)(0)=0$, it follows the result.
\end{proof}

\begin{lemma}\label{normalizedfrontal}
	Let $f:U\to \Re^n$ be a frontal of class $C^{r,\beta}_{loc}$, where $U\subset \Re^m$ is an open neighborhood of $0$, $f(0)=0$, $r\in \Na\cup\{\infty,\omega\}$ and $\beta\in[0,1]$. If $f$ is normalized, every decomposition of the Jacobian matrix in the form 
	\begin{align}\label{normalizedcond}
		\J{f}=\begin{pmatrix}
			D'\\
			D
		\end{pmatrix}\mathds{J}'
	\end{align} for some $C^{r-1,\beta}_{loc}$-maps $D',\mathds{J}':U\to \mathcal{M}_{m\times m}(\Re)$ and $D:U\to \mathcal{M}_{(n-m)\times m}(\Re)$ with $rank(\begin{pmatrix}
		D'\\
		D
	\end{pmatrix})=m$, satisfies $D(0)=0$ and $det(D')\neq 0$ on $U$. Conversely, if $\J{f}$ has a decomposition like (\ref{normalizedcond}) satisfying the previous conditions, then $f$ is normalized with co-principal part equal to $D(D')^{-1}$.
\end{lemma}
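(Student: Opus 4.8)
The plan is to exploit the rigidity forced by the normalized structure, transferring identities from the dense regular set $Reg(f)$ (dense because $f$ is a proper frontal) to all of $U$ by continuity. Recall that a normalized $f$ satisfies $\J{f}=\begin{pmatrix}\id_m\\B\end{pmatrix}\J{A}$ with $A=(f_1,\dots,f_m)$ and $B(0)=0$, and that the top $m$ rows of $\J{f}$ are exactly $\J{A}$ for this $A$.

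For the forward direction, I would equate the normalized expression with the given decomposition (\ref{normalizedcond}) and split into blocks: the top yields $\J{A}=D'\mathds{J}'$ and the bottom yields $B\J{A}=D\mathds{J}'$, so that $(BD'-D)\mathds{J}'=0$ on $U$. On $Reg(f)$ the matrix $\J{f}$ has rank $m$; since $\begin{pmatrix}D'\\D\end{pmatrix}$ has rank $m$ and $rank(\begin{pmatrix}D'\\D\end{pmatrix}\mathds{J}')\le rank(\mathds{J}')$, this forces $\mathds{J}'$ to be invertible on $Reg(f)$. Hence $D=BD'$ there, and by continuity and the density of $Reg(f)$ the identity $D=BD'$ holds on all of $U$. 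Evaluating at the origin gives $D(0)=B(0)D'(0)=0$. For the nonvanishing of $det(D')$, I would factor $\begin{pmatrix}D'\\D\end{pmatrix}=\begin{pmatrix}\id_m\\B\end{pmatrix}D'$; because $\begin{pmatrix}\id_m\\B\end{pmatrix}$ has full column rank $m$ (so it is injective), left multiplication by it preserves the kernel, whence $rank(D')=rank(\begin{pmatrix}D'\\D\end{pmatrix})=m$ at every point, i.e. $det(D')\neq0$ on $U$.

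For the converse, since $det(D')\neq0$ on $U$ the inverse $(D')^{-1}$ is again of class $C^{r-1,\beta}_{loc}$, so $B:=D(D')^{-1}$ is a $C^{r-1,\beta}_{loc}$-map with $B(0)=D(0)(D'(0))^{-1}=0$. Rewriting $\J{f}=\begin{pmatrix}D'\\D\end{pmatrix}\mathds{J}'=\begin{pmatrix}\id_m\\B\end{pmatrix}(D'\mathds{J}')$ and comparing the top $m$ rows, which are $\J{A}$ for $A=(f_1,\dots,f_m)$, I read off $\J{A}=D'\mathds{J}'$; therefore $\J{f}=\begin{pmatrix}\id_m\\B\end{pmatrix}\J{A}$ is exactly the normalized form (\ref{Jacob}), with principal part $A$ and co-principal part $B=D(D')^{-1}$.

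I expect the only delicate point to be the nonvanishing of $det(D')$ at singular points: there $\J{A}$ is degenerate, so invertibility of $D'$ cannot be extracted directly from $\J{A}=D'\mathds{J}'$, and must instead be obtained from the relation $D=BD'$—valid only after the density extension—combined with the full-column-rank factorization through $\begin{pmatrix}\id_m\\B\end{pmatrix}$.
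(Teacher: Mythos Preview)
Your proof is correct. It differs from the paper's approach in the forward direction: the paper avoids the density argument altogether by invoking the tangent-bundle machinery. Specifically, the paper observes (via Remark~\ref{remarkframeinduced}) that both $\begin{pmatrix}\id_m\\B\end{pmatrix}$ and $\begin{pmatrix}D'\\D\end{pmatrix}$ define frame fields $\VC$, $\WC$ for $T_f$ over $U$, and then uses Lemma~\ref{basic} to write $\WC=\VC(\VC\WC)$ with $\VC\WC$ invertible everywhere; applying $\EC$ yields $\begin{pmatrix}D'\\D\end{pmatrix}=\begin{pmatrix}\id_m\\B\end{pmatrix}(\VC\WC)$ directly on all of $U$, from which $D(0)=0$ and $\det(D')\neq0$ follow immediately. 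Your route is more elementary---pure block-matrix algebra plus continuity from $Reg(f)$---and does not require the reader to unwind the $\EC$, $\WC$, $\VC$ formalism; the paper's route is shorter once that formalism is in place and sidesteps the passage through the regular set. The converse is handled identically in both.
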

\begin{proof}
	If $f$ is normalized, then $\J{f}=\begin{pmatrix}
		\id_m\\
		B
	\end{pmatrix}C$ for some $C^{r-1,\beta}_{loc}$-maps $C:U\to \mathcal{M}_{m\times m}(\Re)$ and $B:U\to \mathcal{M}_{(n-m)\times m}(\Re)$ with $B(0)=0$. Let $\EC$ be the standard frame field of $f^*T\Re^n$, $\WC=\EC\begin{pmatrix}
		D'\\
		D
	\end{pmatrix}$ and $\VC=\EC\begin{pmatrix}
		\id_m\\
		B
	\end{pmatrix}$. By remark \ref{remarkframeinduced} $\WC$, $\VC$ are frame fields for $T_f$ over $U$, then by lemma \ref{basic} we have $\WC=\VC(\VC\WC)$ and therefore $\begin{pmatrix}
		D'\\
		D
	\end{pmatrix}=\EC\WC=\EC\VC(\VC\WC)=\begin{pmatrix}
		\id_m\\
		B
	\end{pmatrix}(\VC\WC)$. Since $det(\VC\WC)\neq 0$ by item $(v)$ of lemma \ref{basic}, it follows the result. For the converse, as $\J{f}=\begin{pmatrix}
		D'\\
		D
	\end{pmatrix}\mathds{J}'=\begin{pmatrix}
		\id_m\\
		D(D')^{-1}
	\end{pmatrix}D'\mathds{J}'$, this implies that $D'\mathds{J}'=\J{A}$, where $A=(f_1,\dots,f_m)$.  
\end{proof}
\begin{lemma}\label{complexfrontal}
	Let $f:U\to \Re^n$ be a $C^{r,\beta}_{loc}$-map, $U\subset \Co$ an open set, $r\in\Na\cup\{\infty,\omega\}$ and $\beta\in[0,1]$. If $\dd{f}{z}=cd$, 
	for some $C^{r-1,\beta}_{loc}$-maps $c:U\to \Co$, $d:U \to \Co^n$ with $Re\{d\}$, $Im\{d\}$ being linearly independent on $U$, then $f$ is a $C^{r,\beta}_{loc}$-frontal and there exists a framed coordinate system $\VS=(U,id_U,\WC)$ of $T_f$ such that
	\begin{align}\label{eqcomplexfrontal}
		\W=\begin{pmatrix}
			2Re\{d\}&-2Im\{d\}
		\end{pmatrix},\text{ }\J{\VS}=\begin{pmatrix}
			Re\{c\}&-Im\{c\}\\
			Im\{c\}&Re\{c\}
		\end{pmatrix}.
	\end{align}
\end{lemma}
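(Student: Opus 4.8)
The plan is to recognize the hypothesis $\dd{f}{z}=cd$ as a rank-$m$ factorization of the Jacobian matrix field of $f$ and then to invoke corollary \ref{corbasic} and remark \ref{remarkframeinduced}. First I would translate the Wirtinger hypothesis into the standard coordinates $id_U$ of $U\subset\Re^2$. Since $f$ is $\Re^n$-valued, the partials $\dd{f}{x_1},\dd{f}{x_2}$ are real vectors, so from $\dd{f}{z}=\frac{1}{2}(\dd{f}{x_1}-i\dd{f}{x_2})=cd$ I read off $\dd{f}{x_1}=2Re\{cd\}$ and $\dd{f}{x_2}=-2Im\{cd\}$. Taking the canonical frame field $\EC$ for $f^*\Re^n$, the Jacobian matrix field is then $\J{f}=\begin{pmatrix}2Re\{cd\}&-2Im\{cd\}\end{pmatrix}$, the columns being the two partials viewed as real $n$-vectors.

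Next I would verify the factorization $\J{f}=\W\J{\VS}$ with the two factors from the statement. Using $Re\{cd\}=Re\{c\}Re\{d\}-Im\{c\}Im\{d\}$ and $Im\{cd\}=Re\{c\}Im\{d\}+Im\{c\}Re\{d\}$, a direct computation shows that $\begin{pmatrix}2Re\{d\}&-2Im\{d\}\end{pmatrix}\begin{pmatrix}Re\{c\}&-Im\{c\}\\Im\{c\}&Re\{c\}\end{pmatrix}$ has first column $2Re\{cd\}$ and second column $-2Im\{cd\}$, hence equals $\J{f}$. Because $Re\{d\}$ and $Im\{d\}$ are linearly independent on $U$, the matrix field $\W=\begin{pmatrix}2Re\{d\}&-2Im\{d\}\end{pmatrix}$ has $rank(\W)=2$ at every point, and both $\W$ and $\J{\VS}$ are of class $C^{r-1,\beta}_{loc}$ since $c$ and $d$ are. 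Thus hypotheses $(i)$–$(ii)$ of corollary \ref{corbasic} hold with $m=2$, and I conclude that $f$ is a $C^{r,\beta}_{loc}$-frontal.

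Finally, having the decomposition $\J{f}=\W\J{\VS}$ of the Jacobian matrix field in the framed coordinate system $(U,id_U,\EC)$ of $f^*\Re^n$, I would apply remark \ref{remarkframeinduced} directly: this decomposition induces the framed coordinate system $\VS=(U,id_U,\EC\W)=(U,id_U,\WC)$ of $T_f$ whose induced Jacobian equals the second factor, namely $\J{\VS}=\begin{pmatrix}Re\{c\}&-Im\{c\}\\Im\{c\}&Re\{c\}\end{pmatrix}$. To match the convention that $\W$ denotes the matrix field of $\WC$ in the standard frame, I would note that $\EC^*(\EC\W)=\W$, which follows from items $(iii)$ and $(viii)$ of lemma \ref{basic} (the matrix $\id_n$ times $\W$). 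This produces exactly the asserted frame. I do not expect a genuine obstacle: the substance is the observation that the hypothesis is precisely a rank-$m$ factorization of $\J{f}$, and the only care required is the bookkeeping with the $^*$ and $\cdot$ operations to confirm that the matrix field of $\WC=\EC\W$ in $\EC$ is again $\W$ and that remark \ref{remarkframeinduced} applies once frontality is established.
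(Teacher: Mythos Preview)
Your proposal is correct and follows essentially the same approach as the paper: translate $\dd{f}{z}=cd$ into the real partials, obtain the factorization $\J{f}=\begin{pmatrix}2Re\{d\}&-2Im\{d\}\end{pmatrix}\begin{pmatrix}Re\{c\}&-Im\{c\}\\Im\{c\}&Re\{c\}\end{pmatrix}$, invoke corollary~\ref{corbasic} for frontality, and let the rank-$2$ factor induce the frame for $T_f$. Your additional remarks about remark~\ref{remarkframeinduced} and lemma~\ref{basic} just make explicit what the paper leaves implicit.
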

\begin{proof}
	Setting $Re\{c\}=c_1$, $Im\{c\}=c_2$, $Re\{d\}=d_1$ and $Im\{d\}=d_2$, we have that $\frac{1}{2}\dd{f}{x_1}-\frac{i}{2}\dd{f}{x_2}=c_1d_1-c_2d_2+i(c_1d_2+c_2d_1),$ therefore $\dd{f}{x_1}=c_1(2d_1)+c_2(-2d_2)$ and $\dd{f}{x_2}=-c_2(2d_1)+c_1(-2d_2)$, that is, $$\J{f}=\begin{pmatrix}
		2Re\{d\}&-2Im\{d\}
	\end{pmatrix}\begin{pmatrix}
		Re\{c\}&-Im\{c\}\\
		Im\{c\}&Re\{c\}
	\end{pmatrix}.$$
	Since $rank(\begin{pmatrix}
		2Re\{d\}&-2Im\{d\}
	\end{pmatrix})=2$ on $U$, by corollary \ref{corbasic} $f$ is a $C^{r,\beta}_{loc}$-frontal and this matrix-valued map induces a global frame field $\WC$ for $T_f$ satisfying what we wished.
\end{proof}
\begin{proposition}\label{branchfrontal}
	Let $f:M^2\to N^n$ be a $C^{1}$-map, $r\in\Na\cup\{0,\infty,\omega\}$, $\beta\in[0,1]$ and $p\in M^m$ a branch point. If $(U,x)$, $(V,y)$ are $C^{r,\beta}_{loc}$-branch coordinates at $p$, then $\hat{f}:x(U)\to \Re^n$ is a normalized $C^{r+1,\beta}_{loc}$-frontal with complex co-principal part $b:x(U)\to \Co^{n-2}$ given by $b_j=(d_{j+2}(1+\frac{1}{2}(d_1'+d_2'))-\bar{d}_{j+2}\overline{(d_1'-d_2')})(Re\{(1+d_1')(1+\bar{d}_2')\})^{-1}$ for each $j\in\{1,\dots,n-2\}$, where $d_{j+2}$, $d_1'$, $d_2'$ are de the maps in (\ref{cond1}) and (\ref{cond3}). In particular, every branched immersion is a $C^1$-frontal. 
\end{proposition}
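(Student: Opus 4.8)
The plan is to read the frontal structure straight off condition (\ref{cond1}) and then to recognize the branch-coordinate conditions (\ref{frontalcond}) and (\ref{cond3}) as exactly the hypotheses needed to run the normalization machinery of Lemma \ref{normalizedfrontal}. First I would record that $\hat{f}$ is $C^{r+1,\beta}_{loc}$: since $\hat{f}$ is real valued, (\ref{cond1}) gives $\dd{\hat{f}}{z}=(s+1)z^sd$ and $\dd{\hat{f}}{\z}=\overline{\dd{\hat{f}}{z}}=(s+1)\z^s\bar{d}$, so both Wirtinger derivatives, and hence both first partials $\dd{\hat{f}}{x_1},\dd{\hat{f}}{x_2}$, are $C^{r,\beta}_{loc}$ (products of the smooth factor $z^s$ with the $C^{r,\beta}_{loc}$-map $d$), whence $\hat{f}\in C^{r+1,\beta}_{loc}$.

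Next I would apply Lemma \ref{complexfrontal} to $\hat{f}$ with $c(z)=(s+1)z^s$ and the branch map $d$. The factor $c$ is $C^\infty$ and $d$ is $C^{r,\beta}_{loc}$, while the linear independence of $Re\{d\}$ and $Im\{d\}$ on $x(U)$ is guaranteed by (\ref{frontalcond}): the nonvanishing of the top $2\times 2$ minor of the $n\times 2$ matrix $(2Re\{d\},-2Im\{d\})$ forces its two columns to be independent. The lemma then yields that $\hat{f}$ is a $C^{r+1,\beta}_{loc}$-frontal together with a frame $\WC$ for $T_{\hat{f}}$ over $x(U)$ whose matrix field in the standard frame is $\W=(2Re\{d\},-2Im\{d\})$, and a $\J{\VS}$ of the rotation–scaling form in (\ref{eqcomplexfrontal}) with $c=(s+1)z^s$.

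For normalization I would split $\W$ as $\begin{pmatrix}D'\\ D\end{pmatrix}$, with $D'$ the top $2\times 2$ block built from $d_1,d_2$ and $D$ the remaining $(n-2)\times 2$ block built from $d_3,\dots,d_n$, so that $\J{\hat{f}}=\W\J{\VS}=\begin{pmatrix}D'\\ D\end{pmatrix}\J{\VS}$, where $\J{\VS}$ is $C^\infty$. Using (\ref{cond3}) one computes $D'=\begin{pmatrix}1+Re\{d_1'\}&-Im\{d_1'\}\\ Im\{d_2'\}&1+Re\{d_2'\}\end{pmatrix}$, whose determinant equals $Re\{(1+d_1')(1+\bar{d}_2')\}$, nonzero on $x(U)$ precisely by (\ref{frontalcond}); moreover $D(0)=0$ since $d_j(0)=0$ for $j\geq 3$, and the rank is $2$ by the independence already noted. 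These are exactly the hypotheses of Lemma \ref{normalizedfrontal}, so $\hat{f}$ is a normalized frontal with principal part $(\hat{f}_1,\hat{f}_2)$ and real co-principal part $B=D(D')^{-1}$.

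Finally, the complex co-principal part is $b_j=B_{j1}+iB_{j2}$ by the identification in Corollary \ref{prepre2}; inverting $D'$ explicitly (its inverse is $\det(D')^{-1}$ times the adjugate) and substituting (\ref{cond3}) is a direct computation that produces the stated closed form for $b_j$. The concluding assertion that every branched immersion is a $C^1$-frontal then follows by running the construction at each branch point, where Lemma \ref{lembranch} provides the continuous map $d$ with the correct values at $0$ and, after shrinking $U$, conditions (\ref{frontalcond})–(\ref{quasiregularcond}) hold by continuity, giving $C^0$-branch coordinates, while at regular points $\hat{f}$ is an immersion and hence trivially a frontal. I expect the only delicate points to be the bookkeeping of regularity indices, namely the passage from $C^{r,\beta}_{loc}$ for $d$ to $C^{r+1,\beta}_{loc}$ for $\hat{f}$ and the borderline case $r=0$ in which $T_{\hat f}$ is only a topological bundle; the inversion of $D'$ and the resulting simplification are routine once (\ref{cond3}) is inserted.
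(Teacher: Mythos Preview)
Your proposal is correct and follows essentially the same route as the paper: apply Lemma \ref{complexfrontal} with $c=(s+1)z^s$ to get the frontal structure and the frame field $\W=(2Re\{d\},-2Im\{d\})$, then invoke Lemma \ref{normalizedfrontal} on the block decomposition $\W=\begin{pmatrix}D'\\ D\end{pmatrix}$ to obtain $B=D(D')^{-1}$, and finally use Lemma \ref{lembranch} together with Lemma \ref{complexfrontal} for the last sentence. Your added explicit checks (that $\det D'=Re\{(1+d_1')(1+\bar d_2')\}$, that $D(0)=0$, and that the partials of $\hat f$ are $C^{r,\beta}_{loc}$) are exactly the details the paper leaves implicit.
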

\begin{proof}
	As $\hat{f}$ satisfies (\ref{cond3}) and (\ref{frontalcond}) on $x(U)$, then applying lemma \ref{complexfrontal} and lemma \ref{normalizedfrontal}, we have that $\hat{f}$ is a normalized $C^{r+1,\beta}_{loc}$-frontal with co-principal part $B$ given by $\begin{pmatrix}
		\id_m\\
		B
	\end{pmatrix}=\W\begin{pmatrix}
		2Re\{d_1\}&-2Im\{d_1\}\\
		2Re\{d_2\}&-2Im\{d_2\}
	\end{pmatrix}^{-1}$, where $\W$ is the map in (\ref{eqcomplexfrontal}) with $d$ being the $C^{r,\beta}_{loc}$-map in (\ref{cond1}).  Thus, by direct computations the complex co-principal part is  $b_j=(d_{j+2}(1+\frac{1}{2}(d_1'+d_2'))-\bar{d}_{j+2}\overline{(d_1'-d_2')})(Re\{(1+d_1')(1+\bar{d}_2')\})^{-1}$ for each $j\in\{1,\dots,n-2\}$. The last part follows from lemma \ref{lembranch} and lemma \ref{complexfrontal}.
\end{proof}
\begin{corollary}
	Let $f:M^2\to N^n$ be a $C^{r+2,\beta}_{loc}$-branched conformal immersion, $r\in \Na\cup \{0,\infty,\omega\}$, $\beta \in(0,1)$ and $\tilde{g}$ a $C^{r+1,\beta}_{loc}$-Riemannian metric on $N^n$. If the mean curvature vector $\HC$ of $f$ has a $C^{r,\beta}_{loc}$-extension to $M^2$, then $f$ is a $C^{r+2,\beta}_{loc}$-frontal. In particular, the tangent bundle $T_f$ is a $C^{r+1,\beta}_{loc}$-vector bundle.
\end{corollary}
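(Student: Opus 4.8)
The plan is to reduce the statement to the two substantial inputs already available, namely Theorem \ref{Theo1} (which upgrades the regularity of the map $d$ at a branch point from $C^{r+1,\beta}_{loc}$ on the punctured neighbourhood to $C^{r+1,\beta}_{loc}$ across $0$) and Proposition \ref{branchfrontal} (which turns branch coordinates into a normalized frontal), and then to transfer the conclusion back to $f$ via lemma \ref{diffeo}. As noted right after definition \ref{frontal}, to check that a $C^{r+2,\beta}_{loc}$-map is a $C^{r+2,\beta}_{loc}$-frontal it suffices to produce the required $C^{r+1,\beta}_{loc}$ spanning vector fields along $f$ at the singular points, because at regular points the pushforwards $f_*\frac{\partial}{\partial x_i}$ already provide them with full regularity. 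Since $f$ is a branched immersion, $\Sigma(f)$ consists exactly of the branch points, so it is enough to treat an arbitrary branch point $p$ of order $s$.

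First I would invoke the definition of branch point to pick coordinate systems $(U,x)$, $(V,y)$ at $p$, $f(p)$ with $x(p)=0$, $y(f(p))=0$ in which $\hat f$ satisfies (\ref{branchcond1})--(\ref{branchcond3}); in particular (\ref{branchcond3}) holds, so the hypotheses of Theorem \ref{Theo1} are met. That theorem then yields $\dd{\hat f}{z}(z)=(s+1)z^{s}d(z)$ with $d$ a $C^{r+1,\beta}_{loc}$-map satisfying (\ref{cond3}), $d_1(0)=\tfrac12$, $d_2(0)=-\tfrac{i}{2}$ and $d_j(0)=0$ for $j\geq 3$.

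The only step needing verification is the upgrade of these coordinates to $C^{r+1,\beta}_{loc}$-branch coordinates in the sense of definition \ref{branchcoord}, which is achieved by shrinking $U$. Evaluating the relevant quantities at $z=0$ gives the determinant in (\ref{frontalcond}) equal to $1$, and, because $d_1'(0)=d_2'(0)=0$, the two quantities in (\ref{quasiregularcond}) equal to $1$ and $0$ respectively; by continuity (\ref{frontalcond}) and (\ref{quasiregularcond}) therefore hold on a sufficiently small neighbourhood of $0$. Together with (\ref{cond1}) and (\ref{cond3}), this makes $(U,x)$, $(V,y)$ into $C^{r+1,\beta}_{loc}$-branch coordinates at $p$. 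I expect this continuity bookkeeping to be the main (and only genuine) obstacle, the substance having already been absorbed into Theorem \ref{Theo1}.

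Finally I would apply Proposition \ref{branchfrontal} with regularity $r+1$ in place of its $r$, obtaining that $\hat f$ is a normalized $C^{r+2,\beta}_{loc}$-frontal on $x(U)$; equivalently one could apply lemma \ref{complexfrontal} directly, writing $\dd{\hat f}{z}=c\,d$ with the holomorphic factor $c(z)=(s+1)z^{s}$ and with $Re\{d\}$, $Im\{d\}$ linearly independent near $0$ by their values there. Since $x$ is a chart and $y^{-1}$ an immersion, lemma \ref{diffeo} transfers the frontal property from $\hat f$ to $f$ on a neighbourhood of $p$, with the same regularity $C^{r+2,\beta}_{loc}$. Combining the regular points (automatic) with each branch point gives that $f$ is a $C^{r+2,\beta}_{loc}$-frontal on $M^2$. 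The last assertion is then immediate from the construction of $T_f=D(C)$: a $C^{r+2,\beta}_{loc}$-frontal has tangent bundle a $C^{(r+2)-1,\beta}_{loc}=C^{r+1,\beta}_{loc}$-vector subbundle of $f^*TN^n$.
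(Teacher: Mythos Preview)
Your proposal is correct and follows essentially the same route as the paper, which simply cites Theorem \ref{Theo1} and Proposition \ref{branchfrontal}; you have merely unpacked the details (shrinking $U$ to satisfy (\ref{frontalcond}) and (\ref{quasiregularcond}), and transferring via lemma \ref{diffeo}) that the paper leaves implicit in its one-line proof.
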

\begin{proof}
	It follows from theorem \ref{Theo1} and proposition \ref{branchfrontal}.
\end{proof}
\section{Representation formulas for branched immersions.}\label{sectionrepresentation}
Here, we will obtain a general representation formula for maps with branch points expressed in regular branch coordinates. Then, we show the properties of the distinguished coefficient, the distinguished degree, and the index of a branch point.  
\begin{lemma}\label{lemform}
	Let $b:U \to \Co$ be a map of class $C^{r,\beta}_{loc}$ on the open neighborhood $U\subset \Re^2$ of $0$, with $r\in \amsmathbb{N}\cup\{\omega,\infty\}$, $\beta \in (0,1)$ and $s\in \amsmathbb{N}\cup \{0\}$. We have that 
	\begin{align}\label{eql}
		\dd{b}{\z}(z)=l(z)\z^s
	\end{align} for some real $C^{r-1,\beta}_{loc}$-function $l$ on an open neighborhood $V_1\subset U$ of $0$ if and only if  
	\begin{align}\label{eqt}
		b(z)=\sum_{j=0}^{s}(-1)^{s-j}\frac{s!}{j!}\frac{\partial^{j+s+1}\varphi}{\partial \z^{j}\partial z^{s+1}}(z)\z^{j}+F(z)
	\end{align} for some real $C^{2s+r+1,\beta}_{loc}$-function $\varphi$ on an open neighborhood $V_2\subset U$ of $0$ and $F(z)$ a holomorphic function on $V_2$.   
	
\end{lemma}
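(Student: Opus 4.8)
The plan is to treat the two implications separately, linking them through the single algebraic identity that the right-hand side of (\ref{eqt}) satisfies $\dd{b}{\z}(z)=\frac{\partial^{2s+2}\varphi}{\partial\z^{s+1}\partial z^{s+1}}(z)\,\z^s$. First I would establish this identity by applying $\dd{}{\z}$ directly to (\ref{eqt}): since $F$ is holomorphic its contribution vanishes, and differentiating the generic summand produces two terms, one raising the $\z$-order of the derivative of $\varphi$ and one lowering the power of $\z$. After re-indexing, the lowering term of the summand indexed by $j$ cancels exactly the raising term of the summand indexed by $j-1$ (the alternating sign $(-1)^{s-j}$ and the factorials $\frac{s!}{j!}$ are arranged precisely for this telescoping), the lowering term of $j=0$ vanishes, and only the top summand $j=s$ survives to give $\frac{\partial^{2s+2}\varphi}{\partial\z^{s+1}\partial z^{s+1}}\z^s$. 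Recognizing $\frac{\partial^{2s+2}}{\partial\z^{s+1}\partial z^{s+1}}=\frac{1}{4^{s+1}}\Delta^{s+1}$ and using that $\varphi$ is real shows that $l:=\frac{1}{4^{s+1}}\Delta^{s+1}\varphi$ is a real function, of class $C^{r-1,\beta}_{loc}$ because it costs $2s+2$ derivatives out of the $2s+r+1$ that $\varphi$ possesses. This proves the implication (\ref{eqt})$\Rightarrow$(\ref{eql}).

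For the converse I would reverse-engineer $\varphi$. Given the real $C^{r-1,\beta}_{loc}$-function $l$ from (\ref{eql}), the goal is to solve $\Delta^{s+1}\varphi=4^{s+1}l$ for a real $\varphi$ on a small neighborhood $V_2\subset U$ of $0$; I would do this by solving $s+1$ Poisson equations in succession, at each stage invoking the classical solvability-with-gain for $\Delta u=g$ that sends $C^{k,\beta}_{loc}$ data to a $C^{k+2,\beta}_{loc}$ solution (Newtonian potential on a ball together with interior Schauder estimates, which is exactly why the hypothesis $\beta\in(0,1)$ is needed). Starting from $g=4^{s+1}l\in C^{r-1,\beta}_{loc}$ and iterating $s+1$ times produces a real $\varphi\in C^{r-1+2(s+1),\beta}_{loc}=C^{2s+r+1,\beta}_{loc}$; realness is preserved because the potential of a real function is real. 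With this $\varphi$ in hand I would set $\tilde b$ equal to the right-hand side of (\ref{eqt}); the identity from the first paragraph then gives $\dd{\tilde b}{\z}=\frac{\partial^{2s+2}\varphi}{\partial\z^{s+1}\partial z^{s+1}}\z^s=l\z^s=\dd{b}{\z}$. Hence $F:=b-\tilde b$ satisfies $\dd{F}{\z}=0$, so $F$ is holomorphic on $V_2$, and $b=\tilde b+F$ is precisely the form (\ref{eqt}).

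The only genuine work beyond bookkeeping is the solvability step, so that is where I expect the main obstacle to lie. For $r\in\Na$ with $\beta\in(0,1)$ the Schauder-type statement is standard; the two exceptional values of $r$ need separate justification: for $r=\infty$ one bootstraps the same $C^{k,\beta}\to C^{k+2,\beta}$ gain indefinitely to obtain a smooth $\varphi$, and for $r=\omega$ one invokes analytic elliptic regularity, so that Poisson's equation with real-analytic data admits a real-analytic solution. A secondary point to verify is the regularity of $F$: each derivative $\frac{\partial^{j+s+1}\varphi}{\partial\z^{j}\partial z^{s+1}}$ with $0\le j\le s$ has total order at most $2s+1$, hence lies in $C^{r,\beta}_{loc}$, so that $\tilde b$ and therefore $F=b-\tilde b$ are $C^{r,\beta}_{loc}$, consistent with $F$ being holomorphic. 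Finally, all constructions are local, so shrinking $U$ to the balls on which the Poisson solver is applied yields the neighborhoods $V_1$ and $V_2$ claimed in the statement.
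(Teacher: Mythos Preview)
Your proposal is correct and follows essentially the same approach as the paper: both directions hinge on the telescoping identity $\dd{b}{\z}=\frac{1}{4^{s+1}}\Delta^{s+1}\varphi\,\z^s$, and the converse is handled by solving $s+1$ successive Poisson equations to gain $2(s+1)$ derivatives, then setting $F=b-\tilde b$. Your added remarks on the $r=\infty,\omega$ cases and on the regularity of $F$ are welcome clarifications that the paper leaves implicit.
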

\begin{proof}
	If we have (\ref{eqt}), then 
	\begin{align*}
		\dd{b}{\z}(z)=&\sum_{j=1}^{s}(-(-1)^{s-(j+1)}\frac{s!}{j!}\frac{\partial^{j+1+s+1}\varphi}{\partial \z^{j+1}\partial z^{s+1}}(z)\z^{j}+(-1)^{s-j}\frac{s!}{(j-1)!}\frac{\partial^{j+s+1}\varphi}{\partial \z^{j}\partial z^{s+1}}(z)\z^{j-1})\\
		&+(-1)^s s!\frac{\partial^{s+2}\varphi}{\partial \z\partial z^{s+1}}(z)
	\end{align*} where the finite series is a telescoping sum. Therefore, 
	\begin{align*}
		\dd{b}{\z}&=\frac{\partial^{2s+2}\varphi}{\partial \z^{s+1}\partial z^{s+1}}(z)\z^s+(-1)^{s-1} s!\frac{\partial^{s+2}\varphi}{\partial \z\partial z^{s+1}}(z)+(-1)^s s!\frac{\partial^{s+2}\varphi}{\partial \z\partial z^{s+1}}(z)\\
		&=\frac{1}{4^{s+1}}\Delta^{s+1}\varphi(z)\z^s
	\end{align*} on $V_2$, where $\frac{1}{4^{s+1}}\Delta^{s+1}\varphi(z)$ is a real $C^{r-1,\beta}_{loc}$-function. On the other hand, if we have (\ref{eql}), it is well known that the Poisson's equation $\Delta f_1=4^{s+1}l$ has a real solution $f_1$ of class $C^{r+1,\beta}_{loc}$ on an open neighborhood $V_1'\subset V_1$ of $0$ (cf.\cite{GT}). Therefore, considering local solutions recursively of $\Delta f_n=f_{n-1}$ for $2\leq n\leq s+1$, we obtain a real $C^{2s+r+1,\beta}_{loc}$-function $\varphi=f_{s+1}$ on an open neighborhood $V_2\subset U$ of $0$, which satisfies $\Delta^{s+1}f_{s+1}=4^{s+1}l$ on $V_2$. Constructing a function $b'$ using the function $\varphi$ obtained above and $F=0$ in the formula (\ref{eqt}), by the computations above, we have that $\dd{b'}{\z}(z)=l(z)\z^s$. Therefore, $\dd{(b-b')}{\z}=0$ on $V_2$ and thus $b-b'$ is holomorphic on $V_2$.      
\end{proof}

\begin{theorem}\label{representbranchcoord}
Let $f:M^2\to N^n$ be a $C^{r+1,\beta}_{loc}$-map, $r\in \Na\cup \{\infty,\omega\}$, $\beta\in (0,1)$ and $p\in M^2$ a branch point of order $s$. If $(U,x)$, $(V,y)$ are $C^{r,\beta}_{loc}$-regular branch coordinates at $p$, then we have
\begin{align}
	\hat{f}_1(z)=&Re\{z^{s+1}\}+\sum_{j=0}^{s}\sum_{k=0}^{s}(-1)^{2s-j-k}\frac{(s!)^2}{j!k!}\frac{\partial^{j+k}\varphi_1}{\partial \z^{j}\partial z^{k}}(z)\z^{j}z^k\label{formulap1}\\
	&+2Re\{\int_{0}^{z}w^sF_1(w)dw\},\nonumber\\
    \hat{f}_2(z)=&Im\{z^{s+1}\}+\sum_{j=0}^{s}\sum_{k=0}^{s}(-1)^{2s-j-k}\frac{(s!)^2}{j!k!}\frac{\partial^{j+k}\varphi_2}{\partial \z^{j}\partial z^{k}}(z)\z^{j}z^k\label{formulap2}\\
	&+2Re\{\int_{0}^{z}w^sF_2(w)dw\},\nonumber\\
    \hat{f}_h(z)=&\sum_{j=0}^{s}\sum_{k=0}^{s}(-1)^{2s-j-k}\frac{(s!)^2}{j!k!}\frac{\partial^{j+k}\varphi_h}{\partial \z^{j}\partial z^{k}}(z)\z^{j}z^k+2Re\{\int_{0}^{z}w^sF_h(w)dw\},\label{formulap3}\\
	&h\in\{3,\dots,n\},\nonumber
\end{align}
for some real $C^{2s+r+1,\beta}_{loc}$-functions $\varphi_h$ and holomorphic functions $F_h$ on an open connected neighborhood $W\subset x(U)$ of $0$, satisfying 
\begin{align}\label{conditionrepresent}
	(-1)^{s}s!\frac{\partial^{s+1}\varphi_h}{\partial z^{s+1}}(0)+F_h(0)=\varphi_h(0)=0
\end{align} for each $h\in\{1,\dots,n\}$. Furthermore, for any $(m,\gamma)\in (\Na\cup \{0,\infty,\omega\})\times [0,1]$ and $s\in \Na$, if we substitute in the formulas (\ref{formulap1}), (\ref{formulap2}) and (\ref{formulap3}), real $C^{2s+m+1,\gamma}_{loc}$-functions $\varphi_h$ on an open connected neighborhood $\hat{U}\subset\Re^2$ of $0$ and holomorphic functions $F_h$ on $\hat{U}$ satisfying (\ref{conditionrepresent}), we obtain a $C^{m+1,\gamma}_{loc}$-map $\hat{f}:\hat{U}\to \Re^n$ with $0$ being a branch point of order $s$. Also, shrinking $\hat{U}$ if necessary, the standard coordinates of $\hat{U}$ and $\Re^n$ are $C^{m,\gamma}_{loc}$-regular branch coordinates at $0$ for $\hat{f}$.   
\end{theorem}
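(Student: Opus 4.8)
The strategy is to derive both directions from Lemma~\ref{lemform} together with a single algebraic differentiation identity. For the forward direction I would first combine the two defining conditions of regular branch coordinates. Differentiating (\ref{cond1}) in $\z$ gives $\dd{^2\hat f_h}{\z\partial z}=(s+1)z^s\dd{d_h}{\z}$, while (\ref{cond2}) reads $\dd{^2\hat f_h}{\z\partial z}=|z|^{2s}l_h$ with $l_h$ real of class $C^{r-1,\beta}_{loc}$; cancelling $z^s$ on $x(U)-\{0\}$ and extending by continuity yields $\dd{d_h}{\z}(z)=\ell_h(z)\z^s$ with $\ell_h=l_h/(s+1)$ real. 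Since $\beta\in(0,1)$, Lemma~\ref{lemform} then produces, for each $h$, a real $\varphi_h\in C^{2s+r+1,\beta}_{loc}$ and a holomorphic $F_h$ on a common connected neighbourhood $W$ of $0$ writing $d_h$ in its single-sum form.

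The heart of the argument is the identity, proved by a telescoping sum, that for any real $\psi$
\[
\dd{}{z}\Big(\sum_{j=0}^{s}\sum_{k=0}^{s}(-1)^{j+k}\tfrac{(s!)^2}{j!k!}\tfrac{\partial^{j+k}\psi}{\partial\z^{j}\partial z^{k}}\z^{j}z^{k}\Big)=z^s\sum_{j=0}^{s}(-1)^{s-j}\tfrac{s!}{j!}\tfrac{\partial^{j+s+1}\psi}{\partial\z^{j}\partial z^{s+1}}\z^{j},
\]
i.e. the $z$-derivative of the double sum in (\ref{formulap1})--(\ref{formulap3}) equals $z^s$ times the single sum of Lemma~\ref{lemform}. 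Combining this with $\dd{}{z}2Re\{\int_0^z w^sF\,dw\}=z^sF$ for holomorphic $F$ and $\dd{}{z}Re\{z^{s+1}\}=\tfrac{s+1}{2}z^s$, I would extract the constants $d_1(0)=\tfrac12$ and $d_2(0)=-\tfrac i2$ (which produce the $Re\{z^{s+1}\}$, $Im\{z^{s+1}\}$ terms) and rescale $\varphi_h,F_h$ by $(s+1)$ so that the $z$-derivative of the right-hand side of (\ref{formulap1})--(\ref{formulap3}) equals $(s+1)z^sd_h=\dd{\hat f_h}{z}$. As $\hat f_h$ is real the corresponding $\dd{}{\z}$-derivatives also agree, so $\hat f_h$ and the candidate differ by a constant on the connected set $W$; normalising $\varphi_h(0)=0$ (harmless, since the single sum only involves derivatives of order $\geq s+1$) and evaluating at $0$ forces equality. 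Finally (\ref{conditionrepresent}) is automatic: dividing $\dd{\hat f_h}{z}$ by $(s+1)z^s$ and letting $z\to 0$ isolates the $j=0$ term and returns $d_h(0)$, so the prescribed values $\tfrac12,-\tfrac i2,0$ are precisely equivalent to (\ref{conditionrepresent}).

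For the converse I would insert arbitrary $\varphi_h\in C^{2s+m+1,\gamma}_{loc}$ and holomorphic $F_h$ satisfying (\ref{conditionrepresent}) into the formulas. Since the double sum uses derivatives of $\varphi_h$ only up to order $2s$, the resulting $\hat f$ is $C^{m+1,\gamma}_{loc}$; differentiating with the identity above gives $\dd{\hat f}{z}=(s+1)z^sd$ with $d\in C^{m,\gamma}_{loc}$, and (\ref{conditionrepresent}) forces $d_1(0)=\tfrac12$, $d_2(0)=-\tfrac i2$, $d_h(0)=0$, so (\ref{cond1}) and (\ref{cond3}) hold. Shrinking $\hat U$ makes the open conditions (\ref{frontalcond}) and (\ref{quasiregularcond}) hold at $0$, giving $C^{m,\gamma}_{loc}$-branch coordinates, and Lemma~\ref{equivalencebranchcoor} shows $0$ is a branch point of order $s$. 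For regularity of the coordinates I would use, exactly as in the proof of Lemma~\ref{lemform}, that $\dd{}{\z}$ of the single-sum form equals $\tfrac{1}{4^{s+1}}\Delta^{s+1}\varphi_h\,\z^s$; hence $\dd{^2\hat f_h}{\z\partial z}=|z|^{2s}l_h$ with $l_h=\tfrac{1}{4^{s+1}}\Delta^{s+1}\varphi_h$ real and of class $C^{m-1,\gamma}_{loc}$, which is exactly (\ref{cond2}).

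The only genuinely non-formal step is the telescoping identity displayed above, which I would isolate as a short sub-lemma. Everything else is careful bookkeeping: the constants $(s+1)$ and $(s!)^2$, the $Re/Im\{z^{s+1}\}$ normalisation for $h=1,2$, and the regularity indices (gained through the Schauder/Poisson construction inside Lemma~\ref{lemform} in the forward direction, and lost through plain differentiation of $\varphi_h$ in the converse). I expect the matching of these constants, rather than any analytic difficulty, to be the part most prone to error.
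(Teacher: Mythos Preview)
Your proposal is correct and follows essentially the same route as the paper: derive $\partial_{\bar z}d_h=\ell_h\bar z^s$ from (\ref{cond1})--(\ref{cond2}), invoke Lemma~\ref{lemform}, verify the telescoping identity for the double sum, and match constants. The only cosmetic differences are that the paper computes the telescoping step inline rather than isolating it as a sub-lemma, and it phrases the ``differ by a constant'' step as ``the difference is antiholomorphic and real, hence zero'' rather than invoking agreement of both $\partial_z$ and $\partial_{\bar z}$.
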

\begin{proof}
	By definition of $C^{r,\beta}_{loc}$-regular branch coordinates, $\hat{f}$ satisfy (\ref{cond1}), (\ref{cond2}) and (\ref{cond3}) with $d$ being a $C^{r,\beta}_{loc}$-map and $l$ a $C^{r-1,\beta}_{loc}$-map. From (\ref{cond1}) and (\ref{cond2}), we obtain that $\dd{((s+1)d)}{\z}(z)=l(z)\z^s$. By lemma \ref{lemform}, for each $h\in\{3,\dots,n\}$, there exists a real $C^{2s+r+1,\beta}_{loc}$-function $\varphi_h$ on an open connected neighborhood $W\subset x(U)$ of $0$ with $\varphi_h(0)=0$ and a holomorphic function $F_h$ on $W$, such that $(s+1)d_h(z)=\sum_{j=0}^{s}(-1)^{s-j}\frac{s!}{j!}\frac{\partial^{j+s+1}\varphi_h}{\partial \z^{j}\partial z^{s+1}}(z)\z^{j}+F_h(z)$.
	 
	For each $h\in \{3,\dots,n\}$, we can construct a function $f'_h(z)$, using $F_h$ with the real function $\varphi_h$ in the formula (\ref{formulap3}). Observe that the conjugate of the RHS of (\ref{formulap3}) is the same expression, then $f'_h$ is a real function, for each $h\in\{3,\dots,n\}$. Also,  we have that 
	\begin{align*}
	\dd{f'_h}{z}(z)&=\sum_{j=0}^{s}(\sum_{k=1}^{s}((-1)^{2s-j-k}\frac{(s!)^2}{j!k!}(\frac{\partial^{j+k+1}\varphi_h}{\partial \z^{j}\partial z^{k+1}}(z)\z^{j}z^k+k\frac{\partial^{j+k}\varphi_h}{\partial \z^{j}\partial z^{k}}(z)\z^{j}z^{k-1})\\
	&+(-1)^{2s-j}\frac{(s!)^2}{j!}\frac{\partial^{j+1}\varphi_h}{\partial \z^{j}\partial z}(z)\z^{j})+z^s F_h(z)\\
	&=\sum_{j=0}^{s}(\sum_{k=1}^{s}((-1)^{2s-j-k}\frac{(s!)^2}{j!k!}\frac{\partial^{j+k+1}\varphi_h}{\partial \z^{j}\partial z^{k+1}}(z)\z^{j}z^k\\
	&-(-1)^{2s-j-(k-1)}\frac{(s!)^2}{j!(k-1)!}\frac{\partial^{j+k}\varphi_h}{\partial \z^{j}\partial z^{k}}(z)\z^{j}z^{k-1}+(-1)^{2s-j}\frac{(s!)^2}{j!}\frac{\partial^{j+1}\varphi_h}{\partial \z^{j}\partial z}(z)\z^{j})\\
	&+z^s F_h(z)\\
	&=\sum_{j=0}^{s}(-1)^{s-j}\frac{s!}{j!}\frac{\partial^{j+s+1}\varphi_h}{\partial \z^{j}\partial z^{s+1}}(z)\z^{j}z^s+z^s F_h(z)=(s+1)z^sd_h(z).
    \end{align*}
    Therefore, $\hat{f}_h(z)=f'_h(z)+A(z)$, where $A(z)$ is an antiholomorphic function on $W$. Since $\hat{f}_h(z)$ and $f'_h(z)$ are real function vanishing at $z=0$, $A(z)$ is identically zero on $W$. Also, as $d_h(0)=0$ for all $h\in \{3,\dots,n\}$, we have that $(-1)^{s}s!\frac{\partial^{s+1}\varphi_h}{\partial z^{s+1}}(0)+F_h(0)=0$. 
    
    On the other hand, as $d_1=\frac{1}{2}(1+d'_1)$ and $d_2=-\frac{i}{2}(1+d'_2)$ with $d'_1(0)=d'_2(0)=0$, reasoning similarly as before,  we can prove the formulas (\ref{formulap1}) and (\ref{formulap2}) for some real $C^{2s+r+1,\beta}_{loc}$-functions $\varphi_1$, $\varphi_2$ and holomorphic functions $F_1$, $F_2$, where $(-1)^{s}s!\frac{\partial^{s+1}\varphi_h}{\partial z^{s+1}}(0)+F_h(0)=\varphi_h(0)=0$ for $h\in\{1,2\}$. To prove the last part, by direct computations we have
    \begin{align}
    	&\dd{\hat{f}_1}{z}(z)=z^s\frac{(s+1)}{2}(1+\frac{2}{s+1}(\sum_{j=0}^{s}(-1)^{s-j}\frac{s!}{j!}\frac{\partial^{j+s+1}\varphi_1}{\partial \z^{j}\partial z^{s+1}}(z)\z^{j}+F_1(z))),\label{directcomp1}\\ &\dd{\hat{f}_2}{z}(z)=z^s\frac{-i(s+1)}{2}(1+\frac{2i}{s+1}(\sum_{j=0}^{s}(-1)^{s-j}\frac{s!}{j!}\frac{\partial^{j+s+1}\varphi_2}{\partial \z^{j}\partial z^{s+1}}(z)\z^{j}+F_2(z))),\label{directcomp2}\\ &\dd{\hat{f}_h}{z}(z)=z^s(\sum_{j=0}^{s}(-1)^{s-j}\frac{s!}{j!}\frac{\partial^{j+s+1}\varphi_h}{\partial \z^{j}\partial z^{s+1}}(z)\z^{j}+F_h(z)),\text{ }h\in \{3,\dots,n\}\label{directcomp3}
    \end{align} and 
    \begin{align}
    	\dd{^2\hat{f}_k}{\z\partial z}(z)=\frac{|z|^{2s}}{4^{s+1}}\Delta^{s+1}\varphi_k(z)\label{directcomp4}
    \end{align} for $k\in \{1,\dots,n\}$. 
    These equalities imply that (\ref{cond1}), (\ref{cond2}) and (\ref{cond3}) are satisfied on $\hat{U}$ with the standard coordinates of $\hat{U}$ and $\Re^n$. Shrinking $\hat{U}$ if necessary, we have that (\ref{frontalcond}) and (\ref{quasiregularcond}) are satisfied. Then $(\hat{U},id_{\hat{U}})$, $(\Re^n,id_{\Re^n})$ are $C^{m,\gamma}_{loc}$-regular branch coordinates at $0$. 
\end{proof}

\begin{lemma}\label{beltrami}
	Let $f:M^2\to N^n$ be a $C^{k,\beta}_{loc}$-map, $(k,r)\in (\Na\cup \{\infty,\omega\})\times(\Na\cup \{0,\infty,\omega\})$, $p\in M^2$ a branch point and $\beta\in [0,1]$. If $(U,x)$, $(V,y)$ are $C^{r,\beta}_{loc}$-branch coordinates at $p$, then there exist a unique complex $C^{0,\beta}_{loc}$-function $\varpi:x(U)\to \Co$, such that $\varpi$ is a $C^{max\{k-1,r\},\beta}_{loc}$-map on $x(U)-\{0\}$, $\sup_{z\in x(U)}|\varpi(z)|<1$, $\varpi(0)=0$ and $\dd{a}{\z}=\varpi \dd{a}{z}$ on $x(U)$, where $a$ is the complex principal part of $\hat{f}$. In particular, $a:x(U)\to \Co$ is a quasiregular map.
\end{lemma}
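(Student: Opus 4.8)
The plan is to take for $\varpi$ the distinguished coefficient of $\hat f$ and verify the four asserted properties one by one, with the Beltrami equation serving as the anchor. First I would invoke Proposition~\ref{branchfrontal}: the branch coordinates make $\hat f$ a normalized frontal, so its complex principal part is $a=\hat f_1+i\hat f_2$. Differentiating $a$ and using condition (\ref{cond1}) together with the normalisation (\ref{cond3}) (and $\dd{\hat f_j}{\z}=\overline{\dd{\hat f_j}{z}}$, since each $\hat f_j$ is real) gives
\begin{align*}
	\dd{a}{z}(z)&=(s+1)z^s\Bigl(1+\tfrac12\bigl(d_1'(z)+d_2'(z)\bigr)\Bigr),\\
	\dd{a}{\z}(z)&=(s+1)\z^s\,\tfrac12\,\overline{d_1'(z)-d_2'(z)}.
\end{align*}
Writing the distinguished coefficient as $\varpi=\tfrac{\z^s}{z^s}\,e$ with $e=\tfrac12\overline{d_1'-d_2'}\bigl(1+\tfrac12(d_1'+d_2')\bigr)^{-1}$, the identity $\dd{a}{\z}=\varpi\,\dd{a}{z}$ then drops out after cancelling the factor $(s+1)z^s\bigl(1+\tfrac12(d_1'+d_2')\bigr)$, which by (\ref{quasiregularcond}) never vanishes.

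Next I would extract the regularity and the norm bound straight from this formula. Because $d$ is a $C^{r,\beta}_{loc}$-map and the denominator is nowhere zero on $x(U)$ by (\ref{quasiregularcond}), the function $e$ is $C^{r,\beta}_{loc}$ on $x(U)$ with $e(0)=0$ (as $d_1'(0)=d_2'(0)=0$); item $(ii)$ of Lemma~\ref{holderl1} then provides the unique $C^{0,\beta}_{loc}$-extension of $\varpi$ to $x(U)$ and forces $\varpi(0)=0$. On the punctured set I would note that $f\in C^{k,\beta}_{loc}$ makes $\dd{\hat f}{z}$ of class $C^{k-1,\beta}_{loc}$, and dividing by the non-vanishing analytic factor $z^s$ shows $d$, hence $e$, is $C^{\max\{k-1,r\},\beta}_{loc}$ on $x(U)-\{0\}$; multiplying by the factor $\tfrac{\z^s}{z^s}$, which is real-analytic away from $0$, keeps $\varpi$ in $C^{\max\{k-1,r\},\beta}_{loc}$ there. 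The bound $\sup|\varpi|<1$ is immediate: $\bigl|\tfrac{\z^s}{z^s}\bigr|=1$ for $z\neq0$, so $|\varpi(z)|=\tfrac12|d_1'-d_2'|\,\bigl|1+\tfrac12(d_1'+d_2')\bigr|^{-1}$, whose supremum is exactly the quantity shown to be $<1$ in (\ref{quasiregularcond}).

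For uniqueness I would use that $\dd{a}{z}$ is non-vanishing on $x(U)-\{0\}$, so every admissible $\varpi$ must agree with $\dd{a}{\z}/\dd{a}{z}$ there; two continuous candidates coinciding on the dense set $x(U)-\{0\}$ then coincide everywhere. Finally, $a$ is continuous (indeed $\hat f\in C^{r+1,\beta}_{loc}$ by Proposition~\ref{branchfrontal}) and satisfies $\dd{a}{\z}=\varpi\,\dd{a}{z}$ with $\|\varpi\|_\infty<1$, which is precisely the definition of a quasiregular map (cf.~\cite{As}). The only genuinely delicate point is the regularity bookkeeping across the two regions: the exponent $\max\{k-1,r\}$ on $x(U)-\{0\}$ arises from combining the branch-coordinate regularity of $d$ with the a priori $C^{k,\beta}_{loc}$-smoothness of $f$, while all the behaviour at the branch point itself is supplied by Lemma~\ref{holderl1}.
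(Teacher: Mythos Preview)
Your proposal is correct and follows essentially the same approach as the paper: compute $\dd{a}{z}$ and $\dd{a}{\z}$ from (\ref{cond1}) and (\ref{cond3}), form the quotient $\varpi=\frac{\z^s}{z^s}e$ with $e=\tfrac12\overline{d_1'-d_2'}\bigl(1+\tfrac12(d_1'+d_2')\bigr)^{-1}$, invoke Lemma~\ref{holderl1} for the $C^{0,\beta}_{loc}$-extension at the origin, and read the sup bound off (\ref{quasiregularcond}). Your write-up is in fact a bit more explicit than the paper's about uniqueness and about why the regularity exponent on the punctured set is $\max\{k-1,r\}$, but the argument is the same.
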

\begin{proof}
	By definition of branch coordinates, we have that $\dd{\hat{f}}{z}(z)=(s+1)z^sd(z)$ with $d$ being a $C^{r,\beta}$-map, where $d_1(z)=\frac{1}{2}(1+d'_1(z))$, $d_2(z)=-\frac{i}{2}(1+d'_2(z))$, $d'_1(0)=0$ and $d'_2(0)=0$. Thus, the  principal part $a=\hat{f}_1+i\hat{f}_2$ of $\hat{f}$ satisfies $\dd{a}{z}(z)=(s+1)z^s(1+\frac{1}{2}(d'_1(z)+d'_2(z)))$ and $\dd{a}{\z}(z)=(s+1)\z^s\frac{1}{2}(\overline{d'_1(z)-d'_2(z)})$. By condition (\ref{quasiregularcond}) $1+\frac{1}{2}(d'_1(z)+d'_2(z))\neq 0$ on $x(U)$, then $\varpi(z)=\dd{a}{\z}(z)(\dd{a}{z}(z))^{-1}=\z^s\frac{1}{2}(\overline{d'_1(z)-d'_2(z)})(z^s(1+\frac{1}{2}(d'_1(z)+d'_2(z))))^{-1}$ is a $C^{max\{k-1,r\},\beta}_{loc}$-map well-defined on $x(U)-\{0\}$ and by lemma \ref{holderl1}, the map $\varpi$ has a unique $C^{0,\beta}_{loc}$-extension on $x(U)$ with $\varpi(0)=0$. By condition (\ref{quasiregularcond}), we have $\sup_{z\in x(U)}|\varpi(z)|<1$ and therefore, as $a$ satisfies the Beltrami equation $\dd{a}{\z}=\varpi \dd{a}{z}$, $a$ is a quasiregular map.  
\end{proof}
\begin{definition}\normalfont
	Let $f:U\to \Re^n$ be a $C^{1}$-map with $U\subset \Re^2$ an open neighborhood of $0$, $r\in \Na\cup \{0,\infty,w\}$, $0\in U$ a branch point of order $s$ and $\beta\in [0,1]$. If $(U,id_U)$, $(\Re^n,id_{\Re^n})$ are $C^{r,\beta}_{loc}$-branch coordinates at $0$, we will call the map $\varpi$ in lemma \ref{beltrami} the {\it distinguished coefficient of $f$}.  
\end{definition}
\begin{theorem}\label{Main3}
	Let $f:M^2\to N^n$ be a $C^{r+1,\beta}_{loc}$-map, $(r,m)\in (\Na\cup \{\infty,\omega\})\times (\Na\cup \{\infty,\omega\})$ with $r\geq m$, $p\in M^2$ a branch point of order $s$ and $\beta\in [0,1]$. If $(U,x)$ and $(V,y)$ are $C^{r,\beta}_{loc}$-regular branch coordinates at $p$ with $x(U)$ an open convex set, the following assertions hold: 
	\begin{enumerate}[label=(\roman*)]
		\item The distinguished coefficient $\varpi$ of $\hat{f}$ is a $C^{m,\beta}_{loc}$-map on $U$ if and only if there exist open neighborhoods $U'\subset \Re^2$, $U''\subset x(U)$ of $0$ and a $C^{m+1,\beta}_{loc}$-diffeomorphism $c:U'\to U''$ with $c(0)=0$, such that $(\hat{f}_1\circ c)(w)+i(\hat{f}_2\circ c)(w)=w^{s+1}$. 
		\item Every $C^{m+1,\beta}_{loc}$-diffeomorphism $c:U'\to U''$ as in the item $(i)$ is a quasiconformal map satisfiying $\dd{(c^{-1})}{\z}(z)=\varpi(z)\dd{(c^{-1})}{z}(z)$ and has the form $c(w)=wc_0(w)$, where $c_0$ is a $C^{m,\beta}_{loc}$-map with $c_0(0)\neq 0$.
		\item If $c:U'\to U''$ is a $C^{m+1,\beta}_{loc}$-diffeomorphism as in the item $(i)$ then $(U',id_{U'})$, $(\Re^n,id_{\Re^n})$ are $C^{min\{r,m+1\},\beta}_{loc}$-branch coordinates at $0$ for $\hat{f}\circ c$. More precisely, the expression of $\hat{f}\circ c$ is given by $(\hat{f}_1\circ c)(w)+i(\hat{f}_2\circ c)(w)=w^{s+1}$ and $(\hat{f}_h\circ c)(w)=(s+1)Re\{\int_{0}^{w}\overline{b}_h(z)z^{s}dz\}, h\in\{3,\dots,n\}$, for some complex $C^{min\{r,m+1\},\beta}_{loc}$-maps $b_h$ on $U'$ satisfying $b_h(0)=0$ for each $h\in\{3,\dots,n\}$. 
		\item If $m\in \Na$ (resp. $m\in\{\infty, \omega\}$), the distinguished coefficient $\varpi$ of $\hat{f}$ is a $C^{m,\beta}_{loc}$-map on $x(U)$ if and only if the maps $d'_1$ and $d'_2$ in (\ref{cond3}) satisfy $\dd{^j(\overline{d'_1-d'_2})}{\z^j}(0)=0$ for all $j\in\{0,\dots,m\}$ (resp. for all $j\in\Na$). Also, if $\varpi$ is a $C^{m,\beta}_{loc}$-map on $U$, then $\dd{^j\varpi}{\z^j}(0)=0$ for all $j\in\{0,\dots,m\}$ (resp. for all $j\in\Na\cup \{0\}$).
		\item If $f$ is a $C^{r+1,\beta}_{loc}$-branched conformal immersion and $(V,y)$ is a normal coordinate system at $f(p)$, the distinguished coefficient $\varpi$ of $\hat{f}$ is a $C^{1,\beta}_{loc}$-map. 
	\end{enumerate}
\end{theorem}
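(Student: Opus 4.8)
The plan is to establish the items in the order (iv), (v), and then (i)--(iii) together, since (iv) is the analytic computation feeding everything, (v) is a one-line consequence of it, and (i)--(iii) all revolve around a single diffeomorphism. For (iv), by lemma \ref{beltrami} the distinguished coefficient has the form $\varpi=\frac{\z^s}{z^s}e$ with $e=\frac12\overline{(d_1'-d_2')}\,\big(1+\frac12(d_1'+d_2')\big)^{-1}$, a $C^{r,\beta}_{loc}$-map vanishing at $0$ whose denominator is nonzero by (\ref{quasiregularcond}). First I would record the structural identity of regular branch coordinates: combining (\ref{cond1}) and (\ref{cond2}) gives $\dd{d}{\z}=\tfrac{1}{s+1}\z^s l$ with $l$ real, so $\dd{d_1'}{\z}$ and $\dd{d_2'}{\z}$ both carry a factor $\z^s$. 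Feeding this into lemma \ref{lemext2} shows that $e$ satisfies the mixed-derivative hypothesis of lemma \ref{lemext3}, namely $\dd{^{h+h'}e}{\z^h\partial z^{h'}}(0)=0$ for $1\le h'\le\min\{s,k\}$ and $0\le h\le k-h'$. Lemma \ref{lemext3} then gives that $\varpi$ extends to a $C^{m,\beta}_{loc}$-map iff $\dd{^j e}{\z^j}(0)=0$ for $0\le j\le m$, and its final clause yields $\dd{^j\varpi}{\z^j}(0)=0$. To convert the condition on $e$ into the stated condition on $\overline{d_1'-d_2'}$, I would note that $v:=1+\frac12(d_1'+d_2')$ has $v(0)=1$ and, by the same $\z^s$ factor, vanishing $\z$-derivatives at $0$ up to the relevant order, so the Leibniz rule applied to $e=\frac12\overline{(d_1'-d_2')}\,v^{-1}$ makes $\dd{^j e}{\z^j}(0)=0$ equivalent to $\dd{^j\overline{(d_1'-d_2')}}{\z^j}(0)=0$ for $j\le m$.

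For (v), $f$ is a branched conformal immersion and $(V,y)$ is normal, so by theorem \ref{Theo1} $d\in C^{r+1,\beta}_{loc}$ and I only need case $m=1$ of (iv): $e(0)=0$ (automatic) and $\dd{e}{\z}(0)=0$. Using $v(0)=1$, $\dd{v}{\z}(0)=0$ and the identity $d_1'-d_2'=2(d_1-id_2)$, the second reduces to $\dd{(d_1-id_2)}{z}(0)=0$. To obtain it I would start from the complex conformality relation $\sum_{j,k}\hat g_{jk}\,d_jd_k\equiv0$, which follows from (\ref{conformaleq1})--(\ref{conformaleq2}) after substituting (\ref{cond1}) and cancelling $(s+1)^2z^{2s}$; differentiating it in $z$ and evaluating at $0$, the terms involving derivatives of the metric vanish because normal coordinates give $\hat g_{jk}(0)=\delta_{jk}$ and $\dd{\hat g_{jk}}{x_i}(0)=0$, leaving $\tfrac12\dd{d_1}{z}(0)-\tfrac i2\dd{d_2}{z}(0)=0$, which is exactly $\dd{(d_1-id_2)}{z}(0)=0$. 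Hence $\varpi\in C^{1,\beta}_{loc}$.

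For (i)--(iii) I set $a=\hat f_1+i\hat f_2$ (the complex principal part) and exploit the observation that $a\circ c=w^{s+1}$ is equivalent to $(c^{-1})^{s+1}=a$. The reverse implication of (i) is then immediate: applying $\dd{}{\bar w}$ to $a\circ c=w^{s+1}$ gives $\dd{a}{z}(c)\,\dd{c}{\bar w}+\dd{a}{\z}(c)\,\overline{\dd{c}{w}}=0$, whence $\varpi\circ c=-\dd{c}{\bar w}\big/\overline{\dd{c}{w}}$; since $\varpi(0)=0$ forces $\dd{c}{\bar w}(0)=0$ and $c$ is a diffeomorphism (so $\overline{\dd{c}{w}}(0)\ne0$), the right-hand side is $C^{m,\beta}_{loc}$ and $\varpi=(\varpi\circ c)\circ c^{-1}\in C^{m,\beta}_{loc}$. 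For the forward implication I would construct $c^{-1}$ explicitly rather than invoke the Beltrami existence theorem (which costs a derivative when $\beta\in\{0,1\}$): writing $a=z^{s+1}u$ with $u(0)=1$ by theorem \ref{representbranchcoord}, set $g=z\,u^{1/(s+1)}$ (principal branch near $0$), so $g^{s+1}=a$, $g\sim z$, and $c:=g^{-1}$ satisfies $a\circ c=w^{s+1}$. Differentiating $g^{s+1}=a$ gives $\dd{g}{\z}=\varpi\,\dd{g}{z}$ away from $0$, so $g=c^{-1}$ is quasiregular with Beltrami coefficient $\varpi$ and $c$ is quasiconformal (that $g$ is a homeomorphism near $0$, hence $c$ a genuine diffeomorphism, also follows from the Stoilow factorization of the quasiregular map $a$, cf.\ \cite{As}). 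Since $(c^{-1})^{s+1}=a$ holds for every admissible $c$, this Beltrami identity and quasiconformality hold for all of them, which is the first half of (ii); the form $c(w)=w\,c_0(w)$ with $c_0\in C^{m,\beta}_{loc}$, $c_0(0)\ne0$, follows by dividing $c$ by $w$ (lemma \ref{lemext4}) together with $\dd{c}{\bar w}(0)=0$.

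For (iii), $\hat f$ is a normalized frontal by proposition \ref{branchfrontal}, so lemma \ref{diffeo} and lemma \ref{normalizedchange} make $\hat f\circ c$ a normalized frontal whose complex principal part is $a\circ c=w^{s+1}$ and whose complex co-principal part is $b\circ c$; substituting these into the representation of corollary \ref{prepre2} and using that $w^{s+1}$ has vanishing $\z$-derivative and holomorphic derivative $(s+1)w^s$ collapses the formula to $(\hat f_h\circ c)(w)=(s+1)Re\{\int_0^w\overline{(b_h\circ c)}(z)\,z^s\,dz\}$, with $b_h\circ c\in C^{\min\{r,m+1\},\beta}_{loc}$ vanishing at $0$. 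Because the principal part is now exactly $w^{s+1}$ (so the corresponding $d_1',d_2'$ vanish), conditions (\ref{cond1}) and (\ref{cond3}) hold trivially and, after shrinking, $(U',id)$, $(\Re^n,id)$ are $C^{\min\{r,m+1\},\beta}_{loc}$-branch coordinates at $0$ for $\hat f\circ c$. The step I expect to be the main obstacle is the sharp regularity $g\in C^{m+1,\beta}_{loc}$ (equivalently $c\in C^{m+1,\beta}_{loc}$) in the boundary cases $\beta\in\{0,1\}$, where Schauder and Beltrami-operator estimates are unavailable: here I would bootstrap the identity $\dd{g}{\z}=\varpi\,\dd{g}{z}$ together with the vanishing of the $\z$-derivatives of $\overline{d_1'-d_2'}$ at $0$ up to order $m$ coming from (iv), feeding them through the division and extension lemmas \ref{lemext1}, \ref{lemext2} and \ref{lemext4}, and making the bookkeeping of vanishing orders line up exactly with $m+1$ is the delicate point that the special factor $\frac{\z^s}{z^s}$ in $\varpi$ and the normalization $\varpi(0)=0$ are designed to absorb.
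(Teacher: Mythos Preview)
Your overall strategy matches the paper's: prove (iv) first via lemma~\ref{lemext3}, deduce (v) by differentiating the conformality relation $d^t\tilde\G d=0$ at $0$, and build (i)--(iii) around the explicit map $g(z)=z\big(a(z)/z^{s+1}\big)^{1/(s+1)}$. Two points deserve correction.

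For the sharp regularity $g\in C^{m+1,\beta}_{loc}$ in (i), the paper does \emph{not} bootstrap the Beltrami identity. It writes $a(z)=z^{s+1}c'(z)$ via the integral $a(z)=\int_0^1\big(z\,\dd{a}{z}(tz)+\z\,\dd{a}{\z}(tz)\big)\,dt$; the integrand contains the factor $\tfrac{\z}{z}\varpi(tz)$, and the key step is that by (iv) together with lemma~\ref{lemext1} the map $\tfrac{\z}{z}\varpi(z)$ extends to a $C^{m,\beta}_{loc}$-function. Hence $c'\in C^{m,\beta}_{loc}$ with $c'(0)=1$, and one then computes $\dd{e}{z}$ and $\dd{e}{\z}$ explicitly as $C^{m,\beta}_{loc}$ expressions in $c'$, $\varpi$, $d_1'$, $d_2'$, after which lemma~\ref{lemisolated} gives $e\in C^{m+1,\beta}_{loc}$. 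This is elementary and covers all $\beta\in[0,1]$, so your anticipated obstacle dissolves. Your appeal to theorem~\ref{representbranchcoord} for the factorization $a=z^{s+1}u$ is misplaced: that theorem requires $\beta\in(0,1)$, and in any case the factorization comes from the integral identity above, not from the representation formula.

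For the form $c(w)=w\,c_0(w)$ in (ii), invoking lemma~\ref{lemext4} with only $c(0)=0$ and $\dd{c}{\bar w}(0)=0$ is insufficient: to land $c_0\in C^{m,\beta}_{loc}$ you need $\dd{^jc}{\bar w^j}(0)=0$ for every $j\le m+1$. This \emph{can} be proved by induction from your identity $\dd{c}{\bar w}=-(\varpi\circ c)\,\overline{\dd{c}{w}}$ together with the vanishing $\dd{^j\varpi}{\z^j}(0)=0$ supplied by (iv), but you have not carried this out. The paper sidesteps the induction via Stoilow factorization: both $c^{-1}$ and the explicitly constructed $e$ satisfy the same Beltrami equation with coefficient $\varpi$, so $c^{-1}=\Phi\circ e$ near $0$ for some conformal $\Phi$; since $e(z)=z\,e'(z)$ with $e'$ nonvanishing $C^{m,\beta}_{loc}$, one gets $c^{-1}(z)=z\,e_0(z)$ and then $c(w)/w=1/e_0(c(w))$ extends as required.
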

\begin{proof}
	First, let us prove the item $(iv)$. Observe in the proof of lemma \ref{beltrami} that $\varpi$ is a $C^{m,\beta}_{loc}$-map if and only if $\frac{\z^s}{z^s}\overline{(d_1'-d_2')}$ is a $C^{m,\beta}_{loc}$-map. Also, by (\ref{cond1}), (\ref{cond2}) and (\ref{cond3}) we have $\dd{(\overline{d'_1-d'_2})}{z}=\frac{2z^s}{s+1}(l_1(z)-il_2(z))$. Then,  the assertions follow from lemma \ref{lemext3} and item $(ii)$ of lemma \ref{lemext4}.
	
	Now, let us prove the item $(i)$. If $\varpi$ is a $C^{m,\beta}_{loc}$-map  and $a$ is the complex principal part of $\hat{f}$, we have $a(z)=\int_{0}^{1}z\dd{a}{z}(tz)+\z\dd{a}{\z}(tz)dt=(s+1)z^{s+1}\int_{0}^{1}t^s(1+\frac{1}{2}(d'_1(tz)+d'_2(tz)))+\frac{\z t}{zt}\varpi(tz)t^s(1+\frac{1}{2}(d'_1(tz)+d'_2(tz)))dt$
	on $x(U)-\{0\}$. By item $(iv)$ and lemma \ref{lemext1}, the map $\varpi'(z)=\frac{\z}{z}\varpi(z)$ has a $C^{m,\beta}_{loc}$-extension on $x(U)$. Therefore, $c'(z)=(s+1)\int_{0}^{1}t^s(1+\frac{1}{2}(d'_1(tz)+d'_2(tz)))+\varpi'(tz)t^s(1+\frac{1}{2}(d'_1(tz)+d'_2(tz)))dt$ is a $C^{m,\beta}_{loc}$-map on $x(U)$ with $c'(0)=1$, which implies that $(\frac{a(z)}{z^{s+1}})^{\frac{1}{s+1}}=(c'(z))^{\frac{1}{s+1}}$ is a non-vanishing $C^{m,\beta}_{loc}$-map on an open neighborhood $U''\subset x(U)$ of $0$ and a $C^{m+1,\beta}_{loc}$-map on $U''-\{0\}$. Setting $e(z)=z(\frac{a(z)}{z^{s+1}})^{\frac{1}{s+1}}$, we have $\dd{e}{\z}(z)=\frac{z}{s+1}(\frac{a(z)}{z^{s+1}})^{\frac{-s}{s+1}}\dd{a}{\z}(z)\frac{1}{z^{s+1}}=\frac{\varpi(z)}{s+1}(c'(z))^{\frac{-s}{s+1}}(1+\frac{1}{2}(d'_1(z)+d'_2(z)))$ and $\dd{e}{z}(z)=(\frac{a(z)}{z^{s+1}})^{\frac{1}{s+1}}+\frac{z}{s+1}(\frac{a(z)}{z^{s+1}})^{\frac{-s}{s+1}}(\dd{a}{z}(z)z^{s+1}-a(z)(s+1)z^s)z^{-2(s+1)}=(c'(z))^{\frac{1}{s+1}}+(c'(z))^{\frac{-s}{s+1}}((1+\frac{1}{2}(d'_1(z)+d'_2(z)))-c'(z))$ on $U''-\{0\}$. Since the RHS of these equations are $C^{m,\beta}_{loc}$-maps on $U''$ and $e\in C^{m+1,\beta}_{loc}(U''-\{0\},\Co)\cap C^{m,\beta}_{loc}(U'',\Co)$, by lemma \ref{lemisolated} the map $e$ is of class $C^{m+1,\beta}_{loc}$ on $U''$. Also, as $|\dd{e}{z}(0)|^2-|\dd{e}{\z}(0)|^2=1$, shrinking $U''$ if necessary and setting $U'=e(U'')$, we have that $e:U''\to U'$ is a $C^{m+1,\beta}_{loc}$-diffeomorphism. Thus, as $a(z)=(e(z))^{s+1}$ then the inverse map $c:U'\to U''$ of $e$ satisfies $(a\circ c)(w)=w^{s+1}$. Conversely, if $(a\circ c)(w)=w^{s+1}$ then $a(z)=(c^{-1}(z))^{s+1}$ and therefore $(s+1)(c^{-1}(z))^{s}\dd{(c^{-1})}{z}(z)\varpi(z)=\dd{a}{z}(z)\varpi(z)=\dd{a}{\z}(z)=(s+1)(c^{-1}(z))^{s}\dd{(c^{-1})}{\z}(z)$. Thus, \begin{align}\label{beltrami2}
		\dd{(c^{-1})}{\z}(z)=\varpi(z)\dd{(c^{-1})}{z}(z)
	\end{align} which implies that $\dd{(c^{-1})}{z}$ is a non-vanishing map on $U'$, otherwise this would contradict that $c$ is a diffeomorphism. It follows that $\varpi$ is a $C^{m,\beta}_{loc}$-map. 
	
	For the item $(ii)$, observe that $c^{-1}$ satisfies (\ref{beltrami2}), where $\sup_{z\in U}|\varpi(z)|<1$, therefore $c^{-1}$ is a quasiconformal map and by theorem 3.7.7 in \cite{As}, $c$ is also quasiconformal. On the other hand, by item $(i)$ $\varpi$ is a $C^{m,\beta}_{loc}$-map and therefore, we can construct a map $e$ as in the proof of the item $(i)$, which has the form $e(z)=ze'(z)$ and satisfies (\ref{beltrami2}), where $e'$ is a non-vanishing map of class $C^{m,\beta}_{loc}$. By Stoilow factorization (theorem 5.5.1 in \cite{As}) there exists a holomorphic function $\Phi$ on an open neighborhood of $0$ such that $c^{-1}=\Phi\circ e$ near $0$. As $e$ and $c$ are diffeomorphisms, $\Phi$ is a conformal map. Then, we can write $c^{-1}(z)=ze_0(z)$ on an open neighborhood of $0$, where $e_0$ is a non-vanishing $C^{m,\beta}_{loc}$-map. This implies that $w=c(w)e_0(c(w))$ and thus $\frac{c(w)}{w}$ has a $C^{m,\beta}_{loc}$-extension $c_0$ on $U'$ with $c_0(0)\neq 0$. On the other hand, by proposition \ref{branchfrontal} $\hat{f}$ is a normalized frontal, applying lemma \ref{normalizedchange}, the item $(i)$ of this theorem and corollary \ref{prepre2}, the item $(iii)$ follows. 
	
	Lastly, to prove the item $(v)$ observe that if $f$ is a branched conformal immersion, $\hat{f}$ satisfies the equations (\ref{conformaleq1}) and (\ref{conformaleq2}). These equations can be written in the form $\dd{\hat{f}}{x_1}^t\tilde{\G} \dd{\hat{f}}{x_1}=\dd{\hat{f}}{x_2}^t\tilde{\G} \dd{\hat{f}}{x_2}$ and $\dd{\hat{f}}{x_1}^t\tilde{\G} \dd{\hat{f}}{x_2}=0$, where $\tilde{\G}_{ij}=\tilde{g}_{ij}\circ f \circ x^{-1}$ and $\tilde{g}_{ij}$ are the components of the Riemannian metric on $N^n$ in the coordinates $(V,y)$. This is equivalent to having $\dd{\hat{f}}{\z}^t\tilde{\G} \dd{\hat{f}}{\z}=0$ and using (\ref{cond1}), we obtain $\overline{d}^t\tilde{\G}\overline{d}=0$ on $x(U)$. Then, $2\dd{\overline{d}}{\z}^t\tilde{\G}\overline{d}+\overline{d}^t\dd{\tilde{\G}}{\z}\overline{d}=0$ and since $(V,y)$ is a normal coordinate system at $f(p)$, evaluating at $0$ we have $2(\dd{\overline{d}}{\z}(0))^t\overline{d}(0)=0$. From condition (\ref{cond3}), it follows that $\dd{(\overline{d'_1-d'_2})}{\z}(0)=0$ and by item $(iv)$, the distinguished coefficient $\varpi$ of $\hat{f}$ is a $C^{1,\beta}_{loc}$-map. 
\end{proof}
\begin{remark}\label{remdistinguished}\normalfont
	In the theorem \ref{Main3}, if $m=0$, $r\geq 0$ and $(U,x)$, $(V,y)$ are $C^{r,\beta}_{loc}$-branch coordinates at $p$, the items $(i)$, $(ii)$, $(iii)$ and $(iv)$ are true. To see this, observe that the item $(iv)$ follows from the lemma \ref{holderl1}. Therefore, the proofs above of items $(i)$, $(ii)$ and $(iii)$ are valid in this case. 
\end{remark}
\begin{corollary}
	Let us choose $k,r\in \Na\cup \{0,\infty,\omega\}$ with $k\leq r$ and $\beta\in [0,1]$.
	Let $\hat{f}$ be the $C^{r+1,\beta}_{loc}$-map obtained by substituting in the formulas (\ref{formulap1}), (\ref{formulap2}) and (\ref{formulap3}), a natural number $s$, real $C^{2s+r+1,\beta}_{loc}$-functions $\varphi_h$ on an open convex neighborhood $\hat{U}\subset\Re^2$ of $0$ and holomorphic functions $F_h$ on $\hat{U}$, satisfying $(-1)^{s}s!\frac{\partial^{s+1}\varphi_h}{\partial z^{s+1}}(0)+F_h(0)=\varphi_h(0)=0$.
	Then, if $k\in\Na$ (resp. $k\in\{\infty, \omega\}$), the distinguished coefficient $\varpi$ of $\hat{f}$ is a $C^{k,\beta}_{loc}$-map if and only if $(-1)^{s}s!\frac{\partial^{s+1+j}(\varphi_1-i\varphi_2)}{\partial z^{s+1+j}}(0)+\dd{^j(F_1-iF_2)}{z^j}(0)=0$ for all $j\in\{1,\dots,k\}$ (resp. for all $j\in\Na$).
\end{corollary}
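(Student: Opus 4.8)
The plan is to reduce the statement, via item $(iv)$ of Theorem~\ref{Main3}, to an explicit computation of the $z$-derivatives of $d'_1-d'_2$ at the origin, and then to read these derivatives off the representation formulas. First I would observe that, by Theorem~\ref{representbranchcoord}, after shrinking $\hat U$ the standard coordinates $(\hat U,id_{\hat U})$, $(\Re^n,id_{\Re^n})$ are $C^{r,\beta}_{loc}$-regular branch coordinates at $0$ for $\hat f$, with $\hat U$ convex. Hence Theorem~\ref{Main3} (together with Remark~\ref{remdistinguished} when $k=0$) applies with $m=k\le r$, giving that $\varpi$ is a $C^{k,\beta}_{loc}$-map if and only if $\dd{^j(\overline{d'_1-d'_2})}{\z^j}(0)=0$ for the relevant range of $j$. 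Since $\dd{\overline{g}}{\z}=\overline{\dd{g}{z}}$ for any map $g$, iterating gives $\dd{^j(\overline{d'_1-d'_2})}{\z^j}(0)=\overline{\dd{^j(d'_1-d'_2)}{z^j}(0)}$, so this condition is equivalent to $\dd{^j(d'_1-d'_2)}{z^j}(0)=0$.

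Next I would make $d'_1-d'_2$ explicit. From the formulas (\ref{directcomp1}) and (\ref{directcomp2}) in the proof of Theorem~\ref{representbranchcoord}, comparing with $\dd{\hat f}{z}=(s+1)z^sd$ and condition (\ref{cond3}) yields
\begin{align*}
	d'_1(z)-d'_2(z)=\frac{2}{s+1}\Big(\sum_{k=0}^{s}(-1)^{s-k}\frac{s!}{k!}\frac{\partial^{k+s+1}(\varphi_1-i\varphi_2)}{\partial\z^{k}\partial z^{s+1}}(z)\,\z^{k}+(F_1-iF_2)(z)\Big).
\end{align*}
Then I would apply $\dd{^j}{z^j}$ and evaluate at $0$. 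Since $\dd{\z^k}{z}=0$, each summand differentiates as $\z^{k}\,\dd{^{j+k+s+1}(\varphi_1-i\varphi_2)}{\z^k\partial z^{j+s+1}}(z)$, which vanishes at $z=0$ whenever $k\ge1$, while the holomorphic term contributes $\dd{^j(F_1-iF_2)}{z^j}(0)$. Thus only the $k=0$ summand and the $F$-term survive, giving
\begin{align*}
	\dd{^j(d'_1-d'_2)}{z^j}(0)=\frac{2}{s+1}\Big((-1)^{s}s!\frac{\partial^{s+1+j}(\varphi_1-i\varphi_2)}{\partial z^{s+1+j}}(0)+\dd{^j(F_1-iF_2)}{z^j}(0)\Big),
\end{align*}
which vanishes exactly when the displayed condition of the corollary holds.

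Finally I would settle the endpoints of the index range. For $j=0$ the bracket above is zero automatically: taking the standing hypothesis $(-1)^{s}s!\frac{\partial^{s+1}\varphi_h}{\partial z^{s+1}}(0)+F_h(0)=0$ in the combination $h=1$ minus $i$ times $h=2$ gives precisely the $j=0$ form of the bracket (equivalently, $d'_1(0)=d'_2(0)=0$ from (\ref{cond3})). Hence the condition coming from Theorem~\ref{Main3} over $j\in\{0,\dots,k\}$ collapses to $j\in\{1,\dots,k\}$ (resp.\ $j\in\Na$), as stated; and for $k=0$ the assertion is vacuous, $\varpi$ being always $C^{0,\beta}_{loc}$ by Lemma~\ref{beltrami}. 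The only delicate point is the bookkeeping in the differentiation step—verifying that every summand with $k\ge1$ carries a factor $\z^{k}$ that annihilates it at the origin while the $k=0$ term persists—but this is a direct consequence of $\dd{\z}{z}=0$, so no genuine obstacle arises.
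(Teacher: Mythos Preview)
Your proposal is correct and follows essentially the same approach as the paper: the paper's proof consists of the single sentence ``It follows from formulas (\ref{directcomp1}), (\ref{directcomp2}) and item $(iv)$ of theorem \ref{Main3},'' and you have simply unpacked this by reading off $d'_1-d'_2$ from those formulas and then applying the derivative criterion in item~$(iv)$. Your handling of the $j=0$ endpoint and the $k=0$ case is a nice bit of care that the paper leaves implicit.
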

\begin{proof}
	It follows from formulas (\ref{directcomp1}), (\ref{directcomp2}) and item $(iv)$ of theorem \ref{Main3}.
\end{proof}
We will now treat the index and the distinguished degree of branch points using $C^\infty$-branch coordinates. The item $(iv)$ of theorem \ref{Main3} gives us a way to compute the distinguished degree, so here, we will give an equivalent definition for this number that makes its computation easier. First, we need the following definitions and properties:
\begin{definition}\normalfont
	Let $f:U\to\Co^n$ be a $C^{\infty}$-map, $U\subset\Co$ an open neighborhood of $0$ and $k\in \Na$. If $\dd{^jf}{z^j}(0)=0$ for all $j\in\{0,\dots,k-1\}$ and $\dd{^kf}{z^k}(0)\neq0$, we say that the {\it $z$-order of $f$}  is $k$. The natural number $k$ will be denoted by $ord_z(f)$. In the case $\dd{^jf}{z^j}(0)=0$ for all $j\in\Na$, we say that the $z$-order of $f$ is $\infty$ and we write $ord_z(f)=\infty$. 
\end{definition}
\begin{lemma}\label{zorder}
	Let $f:U\to\Co^n$, $g:U\to\Co^n$ be $C^{\infty}$-maps vanishing at $0\in U\subset \Co$, $U$ an open set and $e:U\to\Co$ a non-vanishing $C^\infty$-map. The following statements are true:
	\begin{enumerate}[label=(\roman*)]
		\item $ord_z(ef)=ord_z(f)$,
		\item $ord_z(f+g)\geq min\{ord_z(f),ord_z(g)\}$. Also, if $ord_z(f)< ord_z(g)$ then $ord_z(f+g)=ord_z(f)$, 
		\item if $n=1$, then $ord_z(fg)=ord_z(f)+ord_z(g)$,
		\item $ord_z(f)=min\{ord_z(f_1),\dots,ord_z(f_n)\}$ and
		\item $ord_z(\z f)=\infty$.
	\end{enumerate} 
\end{lemma}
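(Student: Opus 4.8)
The plan is to recognize that $ord_z$ is nothing but the order of vanishing of a formal power series in one variable, and to pass all the computations to the ring $\Co[[z]]$. Concretely, I would introduce, for a scalar $C^{\infty}$-germ $u$ at $0$, the formal series $\Phi(u):=\sum_{j=0}^{\infty}\frac{1}{j!}\dd{^ju}{z^j}(0)\,z^j\in\Co[[z]]$, and extend it componentwise to $\Co^n$-valued maps. Since $\dd{}{z}=\frac12(\dd{}{x_1}-i\dd{}{x_2})$ is a first order derivation, it obeys the product rule, so iterating gives the Leibniz identity $\dd{^j(uv)}{z^j}(0)=\sum_{l=0}^{j}\binom{j}{l}\dd{^lu}{z^l}(0)\dd{^{j-l}v}{z^{j-l}}(0)$; this says precisely that $\Phi$ is a $\Co$-algebra homomorphism on scalar germs. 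By definition $ord_z(u)$ coincides with the valuation (the index of the lowest nonzero term) of $\Phi(u)$, with $ord_z(u)=\infty$ exactly when $\Phi(u)=0$.

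With this dictionary most items are immediate. For $(v)$, since $\Phi(\z)=0$ (indeed $\dd{^j\z}{z^j}(0)=0$ for all $j$ because $\dd{\z}{z}=0$), multiplicativity gives $\Phi(\z f_i)=0$ for each component, hence $ord_z(\z f)=\infty$. For $(iv)$, the defining condition $\dd{^jf}{z^j}(0)=0$ holds iff it holds in every component, so the first index at which the vector derivative is nonzero is the minimum over components, giving $ord_z(f)=\min_i ord_z(f_i)$. For $(iii)$ with $n=1$, since $\Co[[z]]$ is an integral domain whose valuation is additive on products, $ord_z(fg)=ord_z(f)+ord_z(g)$, reading both sides as $\infty$ when either factor has infinite order. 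For $(i)$, a non-vanishing $e$ has $e(0)\neq0$, so $\Phi(e)$ has nonzero constant term and $ord_z(e)=0$; combining additivity of the valuation componentwise with $(iv)$ yields $ord_z(ef)=\min_i(ord_z(e)+ord_z(f_i))=\min_i ord_z(f_i)=ord_z(f)$.

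I would prove $(ii)$ directly from the definition rather than through $\Phi$, since this keeps the vector-valued strict case transparent. Writing $m=\min\{ord_z(f),ord_z(g)\}$, linearity of $\dd{^j}{z^j}(0)$ forces $\dd{^j(f+g)}{z^j}(0)=0$ for all $j<m$, which is the inequality $ord_z(f+g)\geq m$. If moreover $ord_z(f)<ord_z(g)$, set $k=ord_z(f)$; then $\dd{^kg}{z^k}(0)=0$ while $\dd{^kf}{z^k}(0)\neq0$, so $\dd{^k(f+g)}{z^k}(0)=\dd{^kf}{z^k}(0)\neq0$, giving $ord_z(f+g)=k$.

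There is no deep obstacle here; the only points requiring care are bookkeeping rather than ideas. The genuinely load-bearing step is establishing that $\Phi$ is multiplicative, i.e.\ the iterated Leibniz identity for the Wirtinger operator $\dd{}{z}$; once this is in hand, parts $(i)$, $(iii)$, $(iv)$, $(v)$ are formal consequences of the valuation on $\Co[[z]]$. The remaining care is to fix uniform conventions for the infinite-order case (so that $\infty+k=\infty$ and $\min$ with $\infty$ behave as expected) so that every identity stays correct when some order is $\infty$, and to treat vector-valued maps consistently by reducing to components via $(iv)$.
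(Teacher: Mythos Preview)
Your proposal is correct and follows essentially the same route as the paper: the paper's proof invokes the Leibniz rule for $\dd{}{z}$ to obtain $(i)$ and $(iii)$ and says the remaining items follow directly from the definition of $z$-order. Your packaging via the ring homomorphism $\Phi:C^\infty_0\to\Co[[z]]$ is just a restatement of that Leibniz identity (multiplicativity of $\Phi$ is exactly the iterated Leibniz rule), so the substance is the same; the one small advantage of your framing is that it makes the $\infty$-order cases and the unit case $e(0)\neq0$ automatic via the valuation on $\Co[[z]]$.
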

\begin{proof}
	From the Leibniz rule (i.e., $\dd{^k(fg)}{z^k}=\sum_{j=0}^{k}\binom{k}{j}\dd{^{k-j}f}{z^{k-j}}\dd{^{j}g}{z^{j}}$ for any $f,g\in C^{\infty}$) it follows items $(i)$ and $(iii)$. The other items follow easily from the definition of $z$-order. 
\end{proof}
\begin{remark}\normalfont
	In the item $(iii)$ of lemma \ref{zorder}, we are adopting the conventions $\infty+\infty=\infty$ and $k+\infty=\infty+k=\infty$ for any $k\in \Na$.
\end{remark}
\begin{definition}\normalfont\label{defindex}
	Let $f:M^2\to N^n$ be a smooth map, $n\geq 3$, $p\in M^2$ a branch point of order $s$ and $(U,x)$, $(V,y)$ $C^{\infty}$-branch coordinates at $p$. Consider the map $F=(\hat{f}_3,\dots,\hat{f}_n)$ and $a=\hat{f}_1+i\hat{f}_2$ the complex principal part of $\hat{f}$. The number $\iota=ord_z(F)-1$ will be called the {\it index of $p$ respect to $(U,x)$, $(V,y)$} and the number $\varrho=ord_z(\overline{a})-(s+1)$ will be called the {\it distinguished degree of $p$ respect to $(U,x)$, $(V,y)$}.
\end{definition}
\begin{remark}\label{remindex}\normalfont
	If $d$ is the map in (\ref{cond1}) and $d_1', d_2'$ are the maps in (\ref{cond3}), as $\dd{a}{\z}(z)=(s+1)\z^s\frac{1}{2}(\overline{d'_1(z)-d'_2(z)})$ observe that $\varrho=ord_z(d_1'-d_2')$. Also, we have that $\iota=s+min\{ord_z(d_3),\dots,ord_z(d_n)\}$ and therefore $\iota>s$. 
\end{remark}
The index and the distinguished degree of a branch point satisfy certain invariance and inequalities depending on the properties of $f$ and the coordinate system $y$. We are interested in the case that $y$ is a conformal diffeomorphism, which appears when $(N^n,\gt)$ is a conformally flat manifold.
\begin{lemma}\label{lemindex0}
	Let $f:U\to\Co^n$ be a $C^{\infty}$-map, $U,U'\subset\Co$ open neighborhoods of $0$ and $h:U'\to U$ a $C^\infty$-diffeomorphism with $h(0)=0$. If $f(0)=0$, $\dd{h}{z}(0)\neq 0$ and $\dd{\overline{h}}{z}(z)=\z e(z)$ for some smooth map $e:U'\to\Co$ then $ord_z(f)=ord_z(f\circ h)$. 
\end{lemma}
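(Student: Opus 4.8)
The plan is to compute the iterated operators $\dd{^k(f\circ h)}{z^k}$ at $0$ by a chain-rule expansion and to show that, thanks to the hypothesis $\dd{\overline{h}}{z}(z)=\z e(z)$, every term carrying a genuine $\z$-derivative of $f$ has infinite $z$-order, so that only the pure $z$-derivatives of $f$ at $h(0)=0$ survive. First I would fix the notation $\mathfrak{N}=\{\psi\in C^\infty : \dd{^j\psi}{z^j}(0)=0 \text{ for all } j\geq 0\}$ for the germs of infinite $z$-order at $0$. Using the Leibniz rule one checks that $\mathfrak{N}$ is closed under addition, under multiplication by arbitrary smooth functions, and under $\dd{}{z}$; moreover $\z\chi\in\mathfrak{N}$ for every smooth $\chi$ by item $(v)$ of lemma \ref{zorder}, and since $\dd{\overline{h}}{z}=\z e$ this yields $\dd{\overline{h}}{z}\in\mathfrak{N}$.

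Next I would prove by induction on $k$ the structural identity
\[
\dd{^k(f\circ h)}{z^k}=\sum_{a=0}^{k}\Big(\dd{^a f}{z^a}\circ h\Big)\,\omega_{k,a}+\mu_k,
\]
where the $\omega_{k,a}$ are smooth scalar functions, $\mu_k\in\mathfrak{N}$, and $\omega_{k,k}=(\dd{h}{z})^{k}$. The case $k=0$ is trivial with $\omega_{0,0}=1$ and $\mu_0=0$. For the inductive step I apply $\dd{}{z}$: the Wirtinger chain rule gives $\dd{}{z}(\dd{^a f}{z^a}\circ h)=(\dd{^{a+1}f}{z^{a+1}}\circ h)\dd{h}{z}+(\dd{^{a+1}f}{z^a\partial\z}\circ h)\dd{\overline{h}}{z}$, and the second summand lies in $\mathfrak{N}$ because $\dd{\overline{h}}{z}\in\mathfrak{N}$ and $\mathfrak{N}$ is an ideal; the remaining pieces, together with $\dd{\mu_k}{z}\in\mathfrak{N}$ and the smooth factors $\dd{\omega_{k,a}}{z}$, reassemble into an expression of the same shape. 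Tracking the top coefficient gives $\omega_{k+1,k+1}=\dd{h}{z}\,\omega_{k,k}$, hence $\omega_{k,k}=(\dd{h}{z})^{k}$.

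Evaluating at $z=0$ and using $h(0)=0$ together with $\mu_k(0)=0$, the identity collapses to $\dd{^k(f\circ h)}{z^k}(0)=\sum_{a=0}^{k}\dd{^a f}{z^a}(0)\,\omega_{k,a}(0)$. If $ord_z(f)=m$ is finite, then $\dd{^a f}{z^a}(0)=0$ for $a<m$, so the right-hand side vanishes for every $k<m$, while for $k=m$ only the top term remains and equals $\dd{^m f}{z^m}(0)\,(\dd{h}{z}(0))^{m}\neq 0$, because $\dd{h}{z}(0)\neq0$; hence $ord_z(f\circ h)=m$. If $ord_z(f)=\infty$, then $\dd{^a f}{z^a}(0)=0$ for all $a$, so $\dd{^k(f\circ h)}{z^k}(0)=0$ for every $k$ and $ord_z(f\circ h)=\infty$. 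In either case $ord_z(f\circ h)=ord_z(f)$.

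The crux of the argument is the bookkeeping in the inductive step: one must see that each time the chain rule introduces a $\z$-derivative of $f$ it is unavoidably multiplied by $\dd{\overline{h}}{z}$, and that this factor—precisely by the hypothesis $\dd{\overline{h}}{z}=\z e$—belongs to the ideal $\mathfrak{N}$ of infinite-$z$-order germs, so that all such terms drop out upon evaluation at $0$. The condition $\dd{h}{z}(0)\neq0$ then plays the complementary role of keeping the surviving leading coefficient $(\dd{h}{z}(0))^{m}$ nonzero, which upgrades the bound $ord_z(f\circ h)\geq ord_z(f)$ to an equality and simultaneously disposes of the delicate infinite-order case without having to invoke any property of $h^{-1}$.
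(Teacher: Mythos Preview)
Your proof is correct. The paper's own proof is a single sentence invoking ``the properties of lemma \ref{zorder} with a simple argument by induction on the $z$-order of $f$,'' so both arguments rest on the same key observation (item $(v)$ of lemma \ref{zorder}, giving $\dd{\overline{h}}{z}\in\mathfrak{N}$). The organizational difference is that the paper inducts on $ord_z(f)$, whereas you induct on the differentiation order $k$ and build an explicit structural formula for $\dd{^k(f\circ h)}{z^k}$ modulo the ideal $\mathfrak{N}$; your version has the minor advantage of handling the finite and infinite $z$-order cases uniformly via the same identity, and it makes the role of the hypothesis $\dd{h}{z}(0)\neq 0$ completely transparent through the top coefficient $\omega_{k,k}=(\dd{h}{z})^k$.
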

\begin{proof}
	We get the result by using the properties of lemma \ref{zorder} with a simple argument by induction on the $z$-order of $f$.   	
\end{proof}
\begin{lemma}\label{lemindex1}
	Let $U,U'\subset \Co$ and $V,V'\subset \Re^n$ be open neighborhoods of the origin, $h:U'\to U$ and $y:V\to V'$ two $C^\infty$-diffeomorphisms, and $f:U \to V$ a $C^\infty$-map, such that $f$, $h$, $y$ vanish at the origin. If $(U,id_U)$, $(V,id_V)$ are $C^\infty$-branch coordinates at $0$ for $f$ and $(U',id_{U'})$, $(V',id_{V'})$ are $C^\infty$-branch coordinates at $0$ for $\tilde{f}=y\circ f \circ h$, then $h(z)=ze(z)$ for some non-vanishing $C^{\infty}$-map $e:U'\to \Co$. If additionally $y$ is an isometry, then $y(q)=\M q$ for some $\M\in O(n)$ in the form (\ref{ortogonalmatrix}).
\end{lemma}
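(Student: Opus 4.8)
The plan is to dispatch the two conclusions separately, the second being short. By Lemma~\ref{normalizedchange} the map $f\circ h$ is a normalized frontal (with principal part $(f_1,f_2)\circ h$ and co-principal part $B\circ h$), while $\tilde f=y\circ(f\circ h)$ is normalized by hypothesis. If $y$ is an isometry fixing $0$, I apply Proposition~\ref{isometries} to $g=f\circ h$ with the two isometries $T'=\mathrm{id}$ and $T=y$: since $T'\circ g=f\circ h$ and $T\circ g=\tilde f$ are both normalizations of $g$, the proposition forces $y=T\circ(T')^{-1}=\M$ to be linear of the block-diagonal form~(\ref{ortogonalmatrix}). Hence the whole content is the first assertion $h(z)=ze(z)$, which I treat next.

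First I would differentiate $\tilde f=y\circ f\circ h$ with respect to $w$ by the Wirtinger chain rule. Setting $D:=(s+1)d$, so that $\dd{f}{z}(z)=z^sD(z)$ and, since the components of $f$ are real, $\dd{f}{\z}(z)=\z^s\overline{D(z)}$, and using the branch normalization $\dd{\tilde f}{w}=(s+1)w^s\tilde d$ together with the invertibility of $Dy$ near $0$, I obtain the master identity
\[
w^s\tilde D(w)=h^sD(h)\dd{h}{w}(w)+\overline{h}^s\,\overline{D(h)}\,\dd{\overline h}{w}(w),
\]
where $\tilde D:=(s+1)\,(Dy\circ f\circ h)^{-1}\tilde d$ is smooth and $D(0)=\tfrac{s+1}{2}(1,-i,0,\dots,0)$, so that $D(0)$ and $\overline{D(0)}$ are $\Co$-linearly independent. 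Projecting this identity onto the constant covectors $(1,i,0,\dots,0)$ and $(1,-i,0,\dots,0)$, and writing $P:=1+\tfrac12(d_1'+d_2')$, $R:=\tfrac12(d_1'-d_2')$ (so $P(0)=1$, $R(0)=0$), yields the two scalar relations $\dd{(a\circ h)}{w}=(s+1)w^s\Lambda$ and $\dd{(\overline a\circ h)}{w}=(s+1)w^s\Lambda'$ with $\Lambda,\Lambda'$ smooth, where $a=f_1+if_2$ is the complex principal part.

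For the base case I would expand $h(w)=\alpha w+\beta\overline w+O(|w|^2)$ and match the degree-$s$ homogeneous parts of the master identity. The right-hand side takes values in $\mathrm{span}\{D(0),\overline{D(0)}\}$ to this order, so $\tilde D(0)$ lies in that span, and the independence of $D(0),\overline{D(0)}$ gives $\alpha(\alpha w+\beta\overline w)^s=c_1w^s$ for a constant $c_1$. Since branch-coordinate charts are compatible with the complex orientation, $h$ is orientation preserving, whence $\det Dh(0)=|\alpha|^2-|\beta|^2>0$ and in particular $\alpha\neq0$; the coefficient of $w^{s-1}\overline w$ then forces $\beta=0$. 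Thus $\dd{h}{\overline w}(0)=0$ and $\dd{h}{w}(0)=\alpha\neq0$.

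Finally I would bootstrap, proving $\dd{^jh}{\overline w^j}(0)=0$ for all $j\ge1$ by induction: feeding the relations $\dd{(a\circ h)}{w}=(s+1)w^s\Lambda$ and $\dd{(\overline a\circ h)}{w}=(s+1)w^s\Lambda'$ into the extension and division lemmas~\ref{lemext1} and~\ref{lemext4} and the order calculus of Lemma~\ref{zorder}, the divisibility by $w^s$ on the left together with $P(0)=1$, $R(0)=0$ and the independence of $D(0),\overline{D(0)}$ forces, at each step, the vanishing of one further pure antiholomorphic derivative of $h$. Once all of them vanish, Lemma~\ref{lemext4}(ii) gives $h(w)=we(w)$ with $e$ smooth, and $e(0)=\dd{h}{w}(0)=\alpha\neq0$, so $e$ is non-vanishing near $0$. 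I expect this induction to be the main obstacle: $h$ enters the identity nonlinearly (through $h^s$, $\overline{h}^s$, $D(h)$, $R(h)$, and both $\dd{h}{w}$ and $\dd{h}{\overline w}$), so separating the holomorphic from the antiholomorphic contributions order by order is delicate. The orientation hypothesis on $h$ is indispensable here, since otherwise the alternative $\alpha=0$, $\beta\neq0$ (for instance $h(w)=\overline w$, paired with a reflection in $y$) also produces valid branch coordinates for $\tilde f$.
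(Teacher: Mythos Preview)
Your treatment of the isometry clause matches the paper's: both use Lemma~\ref{normalizedchange} and Proposition~\ref{isometries}.

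For the main conclusion $h(z)=ze(z)$, your route is genuinely different and harder than the paper's, and the induction you flag as ``the main obstacle'' is in fact avoidable. The paper does not attempt to show $\partial_{\bar w}^{\,j}h(0)=0$ order by order. Instead it uses the frontal machinery to strip away $f$ and $y$ completely: from Lemma~\ref{complexfrontal} applied to both $f$ and $\tilde f$, together with Lemma~\ref{normalizedfrontal}, one obtains $\J{E}(h)\,\J{h}=\F\,\J{E}$ with $E(z)=(\operatorname{Re}z^{s+1},\operatorname{Im}z^{s+1})$ and $\F\in C^\infty(U',GL(2))$. In complex form this reads $\partial_w(h^{s+1})=F_0\,w^s$ with $F_0$ smooth non-vanishing. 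Differentiating $s$ times at $0$ gives $(\partial_w h)(0)\neq 0$; then dividing, $h^s=\dfrac{F_0}{(s+1)\partial_w h}\,w^s=:F_1 w^s$ with $F_1$ smooth non-vanishing near $0$. One more $\partial_w$ gives $h^{s-1}=F_2 w^{s-1}$, and iterating down yields $h=F_s w$ with $F_s$ smooth non-vanishing. This power-reduction avoids entirely the delicate bootstrap through Lemmas~\ref{lemext1} and~\ref{lemext4} that you propose, and it never separates holomorphic from antiholomorphic contributions of $h$ beyond the single scalar relation on $h^{s+1}$.

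Your orientation remark is well taken and is the one substantive point the paper glosses over. The claim that $F_0$ is non-vanishing amounts to $(\F_{11}+\F_{22})^2+(\F_{21}-\F_{12})^2=\|\F\|^2+2\det\F>0$, which holds precisely when $\det\F>0$. Your counterexample $h(w)=\bar w$ with $y$ the reflection $(q_1,q_2,\dots)\mapsto(q_1,-q_2,\dots)$ indeed produces valid $C^\infty$-branch coordinates for $\tilde f$ while $h$ is not of the form $we(w)$; in that case $F_0\equiv 0$. So the lemma (and the paper's proof) tacitly require that the two branch charts induce the same orientation on the domain, which is how the result is actually used in Proposition~\ref{propindex}.
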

\begin{proof}
	We have $\dd{f}{w}(w)=(s+1)w^sd(w)$ and $\dd{\tilde{f}}{z}(z)=(s+1)z^sd'(z)$ for some $C^\infty$-maps $d:U\to \Co^n$, $\tilde{d}:U'\to \Co^n$ with real and imaginary parts linearly independent. By lemma \ref{complexfrontal}, there exist framed coordinate systems $\VS'$, $\VS$ of $T_{\tilde{f}}$, $T_f$ respectively such that $\J{\tilde{f}}=\W' \J{\VS'}$ and $\J{f}=\W \J{\VS}$, where $\W, \W', \J{\VS'},\J{\VS}$ are given by (\ref{eqcomplexfrontal}). We have $\J{y}(f\circ h)\J{f}(h)\J{h}=\J{\tilde{f}}$, so setting $\B=\J{y}(f\circ h)\W(h)$, it follows that $\B\J{\VS}(h)\J{h}=\W'\J{\VS'}$. By lemma \ref{normalizedfrontal} the first and second rows of $\B$ and $\W'$ form $2\times 2$-matrix-valued maps with non-vanishing determinant, therefore $\J{\VS}(h)\J{h}=\F\J{\VS'}$ for some map $\F\in C^{\infty}(U',GL(2))$. Setting the map $E(z)=(Re\{z^{s+1}\},Im\{z^{s+1}\})$, due to the form of $\J{\VS'},\J{\VS}$ given by (\ref{eqcomplexfrontal}), we have that $\J{\VS'}=\J{E}$ and $\J{\VS}(h)=\J{E}(h)$. Thus, it holds that $\J{E\circ h}=\F\J{E}$, which is equivalent to having $\dd{(E_1\circ h)}{z}=\frac{1}{2}(\F_{11}-i\F_{12})(s+1)z^s$ and $\dd{(E_2\circ h)}{z}=\frac{1}{2}(\F_{21}-i\F_{22})(s+1)z^s$. This implies that $\dd{(h^{s+1})}{z}=F_0(z)z^s$, for some non-vanishing smooth map $F_0:U'\to\Co$. Applying $s$ times $\dd{}{z}$ to both sides of the last equality, we can conclude $\dd{h}{z}(0)\neq 0$ and hence $h(z)^s=F_1(z)z^s$ near to $0$, where $F_1$ is a complex non-vanishing $C^{\infty}$-map near $0$. Applying $\dd{}{z}$ in the last equality, we obtain that $sh(z)^{s-1}\dd{h}{z}(z)=\dd{F_1}{z}(z)z^s+sF_1(z)z^{s-1}$. Observe that the map $F_2(z)=s^{-1}(\dd{h}{z}(z))^{-1}(\dd{F_1}{z}(z)z+sF_1(z))$ is a non-vanishing $C^{\infty}$-map near the origin. Then, $h(z)^{s-1}=F_2(z)z^{s-1}$ and repeating successively this procedure, we get that $h(z)=F_s(z)z$ for a non-vanishing $C^{\infty}$-map $F_s$ near the origin. This implies that $e(z)=\frac{h(z)}{z}$ has a non-vanishing $C^{\infty}$-extension on $U'$ and it follows the result.
	
	On the other hand, if $y$ is an isometry, we have $y(q)=\M q$ for some $\M\in O(n)$. By proposition \ref{branchfrontal} $y\circ f\circ h$ and $f$ are normalized frontals, then by lemma \ref{normalizedchange} $f\circ h$ is normalized. As $\M$ is orthogonal, it defines an isometry of $\Re^n$, and by proposition \ref{isometries}, $\M$ has the form of (\ref{ortogonalmatrix}).
\end{proof}
\begin{proposition}\label{propindex}
	Let $f:M^2\to N^n$ be a smooth map, $p\in M^2$ a branch point and $(U,x)$, $(V,y)$ $C^{\infty}$-branch coordinates at $p$. If $(U',x')$, $(V',y')$ are also $C^{\infty}$-branch coordinates at $p$ and the map $y'\circ y^{-1}$ is an isometry, then the index and the distinguished degree of $p$ respect these two pair of branch coordinates are preserved.
\end{proposition}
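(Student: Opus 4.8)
The plan is to pass to local representatives and reduce everything to the transition Lemmas \ref{lemindex0} and \ref{lemindex1}. Write $g=y\circ f\circ x^{-1}$ and $\tilde f=y'\circ f\circ x'^{-1}$ for the two local expressions of $f$, and introduce the domain transition $h=x\circ x'^{-1}$ together with the target transition $Y=y'\circ y^{-1}$, which is an isometry by hypothesis. On the common overlap one has $\tilde f=Y\circ g\circ h$, and $g,\tilde f,h,Y$ all fix the origin. In standard coordinates $g$ has $C^\infty$-branch coordinates at $0$ and $\tilde f$ has $C^\infty$-branch coordinates at $0$, so Lemma \ref{lemindex1} applies (with $g,h,Y$ in the roles of $f,h,y$) and yields two facts: $h(z)=z\,e(z)$ for a non-vanishing $C^\infty$-map $e$, and $Y(q)=\M q$ with $\M$ of the block form (\ref{ortogonalmatrix}), i.e. $\M=\begin{pmatrix}\D&0\\0&\G\end{pmatrix}$ with $\D\in O(2)$ and $\G\in O(n-2)$.

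I would first dispose of the index. The block structure gives $\tilde F=\G\,(F\circ h)$, where $F=(g_3,\dots,g_n)$ and $\tilde F=(\tilde f_3,\dots,\tilde f_n)$. The map $h$ satisfies the hypotheses of Lemma \ref{lemindex0}: $h(0)=0$, $\dd{h}{z}(0)=e(0)\neq 0$, and $\dd{\overline h}{z}(z)=\z\,\overline{\dd{e}{\z}(z)}$ is of the form $\z\cdot(\text{smooth})$. Hence $ord_z(F\circ h)=ord_z(F)$. Because $\G$ is a constant invertible matrix, $\dd{^k(\G(F\circ h))}{z^k}(0)=\G\,\dd{^k(F\circ h)}{z^k}(0)$, so the least $k$ for which the $k$-th $z$-derivative is nonzero is unchanged; thus $ord_z(\tilde F)=ord_z(F\circ h)=ord_z(F)$ and $\tilde\iota=ord_z(\tilde F)-1=\iota$.

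For the distinguished degree I would track the complex principal parts $a=g_1+ig_2$ and $\tilde a=\tilde f_1+i\tilde f_2$. The block $\D\in O(2)$ relates $(g_1\circ h,g_2\circ h)$ to $(\tilde f_1,\tilde f_2)$, which in complex notation reads $\tilde a=q\,(a\circ h)$ when $det(\D)=1$ and $\tilde a=q\,\overline{a\circ h}$ when $det(\D)=-1$, for some unit $q\in S^1$ (cf. remark \ref{complexprincipalpart}). The main obstacle is to rule out the orientation-reversing case. Here I would argue by orientation: by Lemma \ref{beltrami} both $a$ and $\tilde a$ are quasiregular, hence sense-preserving near $0$; moreover $h$ is sense-preserving near $0$ since its Jacobian $|\dd{h}{z}|^2-|\dd{h}{\z}|^2$ equals $|e(0)|^2>0$ at the origin (as $\dd{h}{\z}(0)=0$). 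Consequently $a\circ h$ is sense-preserving, which is incompatible with $\tilde a=q\,\overline{a\circ h}$ since conjugation reverses orientation. Therefore $det(\D)=1$ and $\tilde a=q\,(a\circ h)$.

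It then remains to conjugate and reuse the order lemmas. From $\tilde a=q\,(a\circ h)$ we obtain $\overline{\tilde a}=\overline q\,(\overline a\circ h)$, so Lemma \ref{zorder}$(i)$ (multiplication by the non-vanishing constant $\overline q$) and Lemma \ref{lemindex0} applied to $\overline a$ (which vanishes at $0$) give $ord_z(\overline{\tilde a})=ord_z(\overline a\circ h)=ord_z(\overline a)$. Finally, since $h(z)=z\,e(z)$ preserves the leading exponent of $a$, the two systems assign $p$ the same order $s$, whence $\tilde\varrho=ord_z(\overline{\tilde a})-(s+1)=ord_z(\overline a)-(s+1)=\varrho$. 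The only genuinely non-formal step is excluding the reflection block for $\D$; everything else is bookkeeping with Lemmas \ref{lemindex0}, \ref{lemindex1} and \ref{zorder}.
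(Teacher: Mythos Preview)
Your proof is correct and follows essentially the same route as the paper: reduce to local representatives, invoke Lemma~\ref{lemindex1} to get $h(z)=ze(z)$ and the block form of $\M$, then apply Lemma~\ref{lemindex0} and Lemma~\ref{zorder} to $F$ and $\overline a$. You are in fact more careful than the paper in one place: the paper simply writes $a'=(\D_{11}-i\D_{12})\,a\circ h$ citing Remark~\ref{complexprincipalpart} without explicitly excluding the case $\det\D=-1$, whereas you give a clean orientation argument via quasiregularity (an alternative is to note that both $a$ and $\tilde a$ have leading term $z^{s+1}$ by condition~(\ref{cond3}), which is incompatible with $\tilde a=q\,\overline{a\circ h}$).
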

\begin{proof}
	Setting $h=x\circ x'^{-1}$, $c=y'\circ y^{-1}$, $f'=y'\circ f\circ x'^{-1}$ and $\hat{f}=y\circ f\circ x^{-1}$, we have $f'=c\circ \hat{f}\circ h$. Then, by lemma \ref{lemindex1}, $h(z)=e(z)z$ for some non-vanishing $C^\infty$-map $e$ defined near to $0$, and $c(q)=\M q$ for some $\M\in O(n)$ in the form (\ref{ortogonalmatrix}). If we set $F'=(f'_3,\dots,f'_n)$, $F=(f_3,\dots,f_n)$, $a$ the complex principal part of $\hat{f}$ and $a'$ the complex principal part of $f'$, then $F'=\G F\circ h$ and $a'=(\D_{11}-i\D_{12})a\circ h$ (see remark \ref{complexprincipalpart}). Thus, by lemma \ref{lemindex0} $ord_z(F')=ord_z(F)$ and $ord_z(\overline{a})=ord_z(\overline{a'})$. From this, the result follows. 
\end{proof}
\begin{remark}\normalfont
	In the context of the proposition \ref{propindex}, the map $y'\circ y^{-1}$ is an isometry, for instance, when $y$, $y'$ are conformal diffeomorphism with the same conformal factor or when $y=y'$. Hence, the index and the distinguished degree of $p$ are independent of the coordinate system $(U,x)$. 
\end{remark}
\begin{proposition}\label{estimation}
	Let $f:M^2\to N^n$ be a $C^{\infty}$-branched conformal immersion, $\gt$ a smooth Riemannian metric on $N^n$, $p\in M^2$ a branch point of order $s$ and $(U,x)$, $(V,y)$ $C^{\infty}$-branch coordinates at $p$. If $y$ is a conformal diffeomorphism, $\iota$ is the index of $p$ and $\varrho$ is the distinguished degree of $p$, then $\varrho\geq 2(\iota-s)$. In particular the distinguished coefficient of $\hat{f}$ is a $C^{2(\iota-s)-1,1}_{loc}$-map.
\end{proposition}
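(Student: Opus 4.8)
The plan is to distill from the conformal hypothesis a single algebraic identity on $x(U)$ that ties $d'_1-d'_2$ to the components $d_3,\dots,d_n$, and then to read off the inequality by comparing $z$-orders term by term. First I would recall, exactly as in the derivation at the beginning of the proof of item $(v)$ of Theorem \ref{Main3}, that the conformality of $f$ can be written as $\dd{\hat f}{\z}^t\tilde{\G}\dd{\hat f}{\z}=0$, where $\tilde{\G}_{jk}=\gt_{jk}\circ f\circ x^{-1}$, and that substituting (\ref{cond1}) turns this into $\overline d^t\tilde{\G}\,\overline d=0$ on $x(U)$. The new ingredient here, replacing the normal-coordinate assumption used for $(v)$, is that $y$ is a \emph{conformal} diffeomorphism, so $\tilde{\G}_{jk}=\lambda\delta_{jk}$ for a positive function $\lambda$; hence $\overline d^t\tilde{\G}\,\overline d=\lambda\sum_{j=1}^n\bar d_j^2=0$, and conjugating gives $\sum_{j=1}^n d_j^2=0$ on $x(U)$ (the identity persists at $z=0$ by continuity of $d$).

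Next I would substitute (\ref{cond3}). Since $d_1=\tfrac12(1+d'_1)$ and $d_2=-\tfrac i2(1+d'_2)$, one has $d_1^2+d_2^2=\tfrac14(2+d'_1+d'_2)(d'_1-d'_2)$, so the identity becomes
\begin{align*}
	\tfrac14(2+d'_1+d'_2)(d'_1-d'_2)=-\sum_{j=3}^n d_j^2 .
\end{align*}
Now I compare $z$-orders. The factor $2+d'_1+d'_2$ is non-vanishing near $0$ (it equals $2$ there), and $\varrho=ord_z(d'_1-d'_2)$ by Remark \ref{remindex}, so item $(i)$ of Lemma \ref{zorder} shows the left-hand side has $z$-order $\varrho$. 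On the right-hand side each $d_j$ ($j\geq 3$) vanishes at $0$, so item $(iii)$ of Lemma \ref{zorder} gives $ord_z(d_j^2)=2\,ord_z(d_j)$, and item $(ii)$ gives that the sum has $z$-order at least $2\min_{3\le j\le n}ord_z(d_j)=2(\iota-s)$, using $\iota=s+\min_j ord_z(d_j)$ from Remark \ref{remindex}. Equating the two sides forces $\varrho\geq 2(\iota-s)$.

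For the regularity statement I would observe that $ord_z(d'_1-d'_2)=\varrho\geq 2(\iota-s)$ means $\dd{^j(d'_1-d'_2)}{z^j}(0)=0$ for $0\le j\le 2(\iota-s)-1$, equivalently $\dd{^j(\overline{d'_1-d'_2})}{\z^j}(0)=0$ for those $j$. Applying item $(iv)$ of Theorem \ref{Main3} with $m=2(\iota-s)-1$ then shows that the distinguished coefficient $\varpi$ extends to a $C^{2(\iota-s)-1,\beta}_{loc}$-map; since $\hat f$ is smooth, all the functions feeding into Lemma \ref{lemext3} lie in $C^{k,1}_{loc}$ for every $k$, so one may take $\beta=1$ and conclude $\varpi\in C^{2(\iota-s)-1,1}_{loc}$.

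The main obstacle is the $z$-order bookkeeping on the right-hand side: the sum $\sum_{j\ge 3}d_j^2$ may exhibit cancellation among its leading coefficients, so only a lower bound on its $z$-order is available, and this is exactly what produces the inequality rather than equality (equality is forced when $n=3$, where the sum is a single square, which is why that case is sharp). A secondary point requiring care is the clean extraction of $\sum_j d_j^2=0$: one must justify that $y$ conformal forces $\tilde{\G}$ to be a scalar multiple of the identity and that the scalar $\lambda$ can be divided out; both are routine but should be stated explicitly.
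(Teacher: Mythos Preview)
Your proposal is correct and follows essentially the same approach as the paper: both derive $d^t d=0$ from conformality of $f$ together with $y$ being a conformal diffeomorphism, rewrite this via (\ref{cond3}) as $\tfrac14(2+d'_1+d'_2)(d'_1-d'_2)=-\sum_{j\ge 3}d_j^2$, compare $z$-orders using Lemma \ref{zorder} and Remark \ref{remindex}, and then invoke item $(iv)$ of Theorem \ref{Main3} for the regularity of $\varpi$. Your added remarks on why $\beta=1$ is admissible and on the $n=3$ sharpness are accurate and match the paper's surrounding commentary.
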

\begin{proof}
	As $f$ is a branched conformal immersion, $\hat{f}$ satisfies the equations (\ref{conformaleq1}) and (\ref{conformaleq2}), which can be written in the form $\dd{\hat{f}}{x_1}^t\tilde{\G} \dd{\hat{f}}{x_1}=\dd{\hat{f}}{x_2}^t\tilde{\G} \dd{\hat{f}}{x_2}$ and $\dd{\hat{f}}{x_1}^t\tilde{\G} \dd{\hat{f}}{x_2}=0$, where $\tilde{\G}_{ij}=\tilde{g}_{ij}\circ f \circ x^{-1}$ and $\tilde{g}_{ij}$ are the components of the Riemannian metric on $N^n$ in the coordinates $(V,y)$. This is equivalent to having $\dd{\hat{f}}{\z}^t\tilde{\G} \dd{\hat{f}}{\z}=0$ and using (\ref{cond1}), we obtain $\overline{d}^t\tilde{\G}\overline{d}=0$ on $x(U)$. On the other hand, since $y$ is a conformal diffeomorphism, then $\tilde{\G}=\rho\id_n$ for some positive smooth function $\rho: x(U)\to \Re$. Therefore, $d^td=0$ on $x(U)$ and using (\ref{cond3}) this can be written in the form $\frac{1}{4}(1+d_1')^2-\frac{1}{4}(1+d_2')^2+d_3^2+\dots+d_n^2=0$. Thus, $d_1'-d_2'=-2(d_3^2+\dots+d_n^2)(1+\frac{1}{2}(d_1'+d_2'))^{-1}$ and by lemma \ref{zorder} $\varrho=ord_z(d_1'-d_2')\geq 2min\{ord_z(d_3),\dots,ord_z(d_n)\}=2(\iota-s)$. By item $(iv)$ of theorem \ref{Main3}, the distinguished coefficient of $\hat{f}$ is a $C^{2(\iota-s)-1,1}_{loc}$-map.
\end{proof}
\begin{remark}\normalfont
	In proposition \ref{estimation}, if $n=3$ we have that $\varrho= 2(\iota-s)$. This fact can be verified at the end of the proof above. 
\end{remark}
We finish this section with the next lemma, which will be essential to prove the theorem \ref{Theofinal} in the next section. In order to abbreviate the proof of the lemma, we define the following sets.

For an open set $U\subset \Co$ and $s,k\in \Na\cup\{0\}$ with $s\leq k$, we define $\mathfrak{C}_{s,k}(U)=\{f\in C^{\infty}(U,\Co):\text{for each $h'\in\{0,\dots,s\}$, $\dd{^{h+h'}f}{\z^h\partial z^{h'}}(0)=0$ for all $h\in\{0,\dots,k-h'\}$}\}.$ As $\mathfrak{C}_{0,k}(U)=\{f\in C^{\infty}(U,\Co):ord_z(\overline{f})\geq k+1\}$, by item $(i)$, $(ii)$ and $(iii)$ of lemma \ref{zorder} the set $\mathfrak{C}_{0,k}(U)$ is an ideal of the ring $C^{\infty}(U,\Co)$. Observe that $\mathfrak{C}_{s,k}(U)=\{f\in C^{\infty}(U,\Co):\text{for each $h'\in\{0,\dots,s\}$, $\dd{^{h'}f}{z^{h'}}$}\in \mathfrak{C}_{0,k-h'}(U)\}$, then using the Leibniz rule, it follows that $\mathfrak{C}_{s,k}(U)$ is an ideal of $C^{\infty}(U,\Co)$. Also, using the Leibniz rule, it is easy to check that the product of ideals satisfies $\mathfrak{C}_{0,k_1}(U)\mathfrak{C}_{0,k_2}(U)\subset \mathfrak{C}_{0,k_1+k_2+1}(U)$ for any $k_1, k_2\in \Na\cup\{0\}$. Given this last property, it follows similarly that $\mathfrak{C}_{s,k_1}(U)\mathfrak{C}_{s,k_2}(U)\subset \mathfrak{C}_{s,k_1+k_2+1}(U)$ for any $s\in \Na\cup\{0\}$ with $s\leq k_1$ and $s\leq k_2$. 

\begin{lemma}\label{lemcoprincipalpart}
	Let $f:M^2\to N^n$ be a $C^{\infty}$-map, $\gt$ a smooth Riemannian metric on $N^n$, $p\in M^2$ a branch point of order $s$ and $(U,x)$, $(V,y)$ $C^{\infty}$-regular branch coordinates at $p$. If $\iota$ is the index of $p$, the following statements are true:
	\begin{enumerate}[label=(\roman*)]
		\item If $\iota<2s+1$ and $b$ is the complex principal part of $\hat{f}$, then $\dd{b}{z}=z^{\iota-s-1}e(z)$ and $\dd{b}{\z}=\z^{\iota-s-1}c(z)$, where $e,c\in C^{0,1}_{loc}(x(U),\Co^{n-2})$ with $c(0)=0$ and $e(0)\neq 0$.
		\item If $\iota\geq 2s+1$ and $b$ is the complex principal part of $\hat{f}$, then $\dd{b}{z}=z^{s}e(z)$ and $\dd{b}{\z}=\z^{s}c(z)$, where $e,c\in C^{0,1}_{loc}(x(U),\Co^{n-2})$. 
	\end{enumerate}
\end{lemma}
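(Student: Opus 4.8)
The plan is to work directly from the explicit formula for the complex co-principal part. By proposition \ref{branchfrontal}, in the given regular branch coordinates
\[
b_j=\frac{d_{j+2}\,u-\bar d_{j+2}\,v}{\rho},\qquad u:=1+\tfrac12(d_1'+d_2'),\quad v:=\overline{(d_1'-d_2')},\quad \rho:=Re\{(1+d_1')(1+\bar d_2')\},
\]
for $j\in\{1,\dots,n-2\}$, where $\rho$ is a real $C^\infty$ function with $\rho(0)=1$, hence non-vanishing near $0$. Since regular branch coordinates satisfy (\ref{cond1}) and (\ref{cond2}), cancelling $z^s$ yields the structural identity $\dd{d}{\z}=\frac{\z^s}{s+1}l$ with $l:x(U)\to\Re^n$ real and $C^\infty$; taking conjugates gives $\dd{\bar d_{j+2}}{z}=\frac{z^s}{s+1}l_{j+2}$ and shows that $\dd{v}{z}$ likewise carries a factor $z^s$. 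Finally, by remark \ref{remindex}, $\mu:=\iota-s=\min_j\mu_j\geq 1$, where $\mu_j:=ord_z(d_{j+2})$.

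First I would record the $z$- and $\z$-orders of the four building blocks via lemma \ref{zorder}: $ord_z(d_{j+2})=\mu_j\geq\mu$ and $u(0)=1$, while $\bar d_{j+2}$ and $v$ have $z$-order $\geq s+1$ (their $\z$-derivatives up to order $s$ vanish at $0$ because $\dd{d}{\z}$ and $\dd{(d_1'-d_2')}{\z}$ carry $\z^s$). Differentiating $b_j$ and absorbing the non-vanishing $\rho$ (lemma \ref{zorder}(i)), I split $\dd{b_j}{z}$ into the holomorphic-dominant term $\dd{d_{j+2}}{z}\,u$ and the cross-terms $d_{j+2}\dd{u}{z}$, $\dd{\bar d_{j+2}}{z}\,v$, $\bar d_{j+2}\dd{v}{z}$, and analogously for $\dd{b_j}{\z}$. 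The point of the split is that each conjugate cross-term carries an explicit extra factor of $z^s$ or $\z^s$ coming from the structural identity, so that once the common monomial is divided out it reduces to an expression of the form $\frac{\z^m}{z^m}\times(\text{smooth})$, whose $C^{0,1}_{loc}$-extendibility is exactly what lemma \ref{holderl1} and lemma \ref{lemext2} supply; this is the mechanism producing Lipschitz — and, because of the $\frac{\z^m}{z^m}$-factors, no better than Lipschitz — quotients. To keep track of which mixed partials vanish I would phrase the bookkeeping through the ideals $\mathfrak{C}_{s,k}(U)$ and their product rule $\mathfrak{C}_{s,k_1}(U)\mathfrak{C}_{s,k_2}(U)\subset\mathfrak{C}_{s,k_1+k_2+1}(U)$, which locates $\dd{b_j}{z}$ and $\dd{b_j}{\z}$ in the correct ideal so that lemma \ref{lemext2} and lemma \ref{lemext4} can be applied to carry out the factorizations.

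For part $(i)$, where $\mu\leq s$, in each $\dd{b_j}{z}$ the term $\dd{d_{j+2}}{z}\,u$ has $z$-order $\mu_j-1$, the term $d_{j+2}\dd{u}{z}$ has order $\geq\mu_j>\mu_j-1$, and the two conjugate cross-terms have order $\geq s+1$; since $\mu-1\leq s-1<s+1$, the term $\dd{d_{j+2}}{z}\,u$ strictly dominates, so $\dd{b}{z}$ has $z$-order exactly $\mu-1=\iota-s-1$ with nonzero leading (vector) coefficient. Extracting $z^{\iota-s-1}$ then gives $e$ with $e(0)=\frac{1}{(\mu-1)!}\dd{^{\mu}d_{j_0+2}}{z^{\mu}}(0)\neq0$ for a minimizing index $j_0$. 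A symmetric count shows every term of $\dd{b_j}{\z}$ has $\z$-order $\geq\mu$ (using $\varrho=ord_z(d_1'-d_2')\geq1$ for the mixed conjugate terms), so extracting $\z^{\iota-s-1}$ leaves a quotient $c$ with $c(0)=0$; both quotients are $C^{0,1}_{loc}$ by the previous paragraph. For part $(ii)$, where $\mu\geq s+1$, every term of $\dd{b_j}{z}$ has $z$-order $\geq s$ — the dominant one because $\mu_j-1\geq s$, the cross-terms either by order or by carrying the explicit factor $z^s$ — so the common factor is exactly $z^s$, giving $\dd{b}{z}=z^s e$ with $e\in C^{0,1}_{loc}$ (no constraint at $0$), and conjugately $\dd{b}{\z}=\z^s c$.

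The hard part will be the regularity bookkeeping: showing the extracted quotients are genuinely $C^{0,1}_{loc}$ and not merely continuous. Because $b$ is not holomorphic, dividing $\dd{b}{z}$ (resp. $\dd{b}{\z}$) by a pure power of $z$ (resp. $\z$) necessarily routes the conjugate cross-terms through factors $\frac{\z^m}{z^m}$, which are never smoother than Lipschitz and which, with mismatched exponents, would fail to be even continuous. The delicate step is therefore to verify the precise mixed-partial vanishing — the exact $\mathfrak{C}_{s,k}$-membership — that both makes lemma \ref{holderl1}, lemma \ref{lemext2} and lemma \ref{lemext4} applicable and forces the numerator and denominator exponents of these $\frac{\z^m}{z^m}$-factors to coincide, equality of exponents being exactly the hypothesis under which lemma \ref{holderl1} produces a Lipschitz extension. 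It is the structural identity $\dd{d}{\z}=\frac{\z^s}{s+1}l$ with real $l$ that guarantees this matching and thereby pins the regularity at $C^{0,1}_{loc}$.
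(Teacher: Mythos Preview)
Your proposal is correct and follows essentially the same route as the paper: both start from the explicit formula for $b_j$ in proposition \ref{branchfrontal}, exploit the structural identity $\dd{d}{\z}=\frac{\z^s}{s+1}l$ coming from regular branch coordinates, track vanishing of mixed partials through the ideals $\mathfrak{C}_{s,k}$ and their product rule, and finish with lemma \ref{lemext4} to extract the monomial factor with $C^{0,1}_{loc}$ remainder. The paper is terser in one respect: instead of treating $\dd{b}{z}$ and $\dd{b}{\z}$ separately, it shows $\dd{b_j}{z},\dd{\overline{b}_j}{z}\in\mathfrak{C}_{\iota-s-2,\iota-s-1}$ (case $(i)$) or $\mathfrak{C}_{s-1,s}$ (case $(ii)$) simultaneously and then invokes lemma \ref{lemext4} once, obtaining the $\z$-factorization of $\dd{b}{\z}$ by conjugation; it never explicitly routes through lemma \ref{holderl1} or lemma \ref{lemext2}, since lemma \ref{lemext4} (with $\beta=1$, available because everything is $C^\infty$) already delivers the $C^{0,1}_{loc}$ quotient directly. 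Your emphasis on the $\frac{\z^m}{z^m}$ mechanism is a correct explanation of \emph{why} lemma \ref{lemext4} works here, but the paper treats that as already absorbed into the lemma.
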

\begin{proof}
	By proposition \ref{branchfrontal} we have $b_j=(d_{j+2}(1+\frac{1}{2}(d_1'+d_2'))-\bar{d}_{j+2}\overline{(d_1'-d_2')})(Re\{(1+d_1')(1+\bar{d}_2')\})^{-1}$ for all $j\in\{1,\dots,n-2\}$. On the other hand, $\dd{d}{\z}=(s+1)\z^sl$ and $d_1'(0)-d_2'(0)=0$ (see condition (\ref{cond1}), (\ref{cond2}) and (\ref{cond3})), where $d$ and $l$ are smooth maps. If $\iota-s-1<s$, using lemma \ref{zorder}, we obtain that $ord_z(\bar{d}_{j+2}\overline{(d_1'-d_2')})>s+1>\iota-s$ and since $\iota-s=min\{ord_z(d_3),\dots,ord_z(d_n)\}$ (see remark \ref{remindex}), we have $ord_z(b)=\iota-s$. Also, $ord_z(d_{j+2}(d_1'-d_2'))\geq \iota-s+1$, $ord_z(\bar{d}_{j+2})\geq s+1\geq \iota-s+1$ and thus $ord_z(\overline{b})\geq \iota-s+1$. We can assume $\iota-s-1\geq 1$, otherwise $0\neq\dd{b}{z}=z^{0}\dd{b}{z}$, $\dd{b}{\z}=0=\z^0\dd{b}{\z}$ and the assertion in the item $(i)$ holds. Note that $d_{j+2}, \bar{d}_{j+2}\in \mathfrak{C}_{\iota-s-1,s}(x(U))\subset \mathfrak{C}_{\iota-s-1,\iota-s-1}(x(U))\subset \mathfrak{C}_{\iota-s-2,\iota-s-1}(x(U))$ and $\dd{d_{j+2}}{z},\dd{\overline{d}_{j+2}}{z}\in \mathfrak{C}_{\iota-s-2,\iota-s-1}(x(U))$. Since $\dd{b_j}{z},\dd{\overline{b}_j}{z}$ are in the ideal generated by $d_{j+2}, \bar{d}_{j+2}, \dd{d_{j+2}}{z},\dd{\overline{d}_{j+2}}{z}$ then $\dd{b_j}{z},\dd{\overline{b}_j}{z}\in \mathfrak{C}_{\iota-s-2,\iota-s-1}(x(U))$ for all $j\in\{1,\dots,n-2\}$. As $ord_z(\dd{b}{z})=\iota-s-1$ and $ord_z(\dd{\overline{b}}{z})\geq \iota-s$, by lemma \ref{lemext4} the item $(i)$ follows. Now, if $\iota-s-1\geq s$ then $\bar{d}_{j+2}\in \mathfrak{C}_{s,\iota-s-1}(x(U))\subset \mathfrak{C}_{s,s}(x(U))\subset \mathfrak{C}_{s-1,s}(x(U))$, $\dd{\overline{d}_{j+2}}{z}\in \mathfrak{C}_{s-1,\iota-s-1}(x(U))\subset \mathfrak{C}_{s-1,s}(x(U))$,  $d_{j+2}\in \mathfrak{C}_{s,s}(x(U))\subset \mathfrak{C}_{s-1,s}(x(U))$ and $\dd{d_{j+2}}{z}\in \mathfrak{C}_{s-1,s}(x(U))$. Thus, $\dd{b_j}{z},\dd{\overline{b}_j}{z}\in \mathfrak{C}_{s-1,s}(x(U))$ for all $j\in \{1,\dots,n-2\}$ and by lemma \ref{lemext4} the item $(ii)$ follows.
\end{proof}
\section{Behavior of curvatures near branch points.}\label{sectioncurvatures}
This section will introduce some tools to deal with branch points in the Riemannian geometry. These tools will allow us to understand the behavior of curvatures near branch points using the principal and co-principal parts of the map in branch coordinates. The mathematical machinery constructed here is inspired by the previous works of the author \cite{Me1,Me2,Me3}, which is used to study frontals from the point of view of differential geometry. The reader will not require prior knowledge of these works.

Let $f:M^m\to N^n$ be a $C^{r+2,\beta}_{loc}$-frontal, $r\in\Na\cup\{0,\infty,\omega\}$ and $\beta \in[0,1]$. From now on, we will assume that $N^n$ is endowed with a $C^{r+1,\beta}_{loc}$-Riemannian metric $\tilde{g}$ unless specified otherwise. If $U\subset M^m$ is an open set and  $\UC=(\UC_1,\dots,\UC_k)\in \Gamma(f^*TN^n\vert_U)^k$, where each $\UC_j$ is of class $C^1$, we will write $\dt_{\XC}\UC=(\dt_{\XC}\UC_1,\dots,\dt_{\XC}\UC_k)$ for any $\XC\in \XF(U)$. Since $T_f$ is a $C^{r-1,\beta}_{loc}$-vector bundle, the vector bundle $\bot_f$ whose fiber $\bot_p f$ is the orthogonal complement of $T_{fp}$ is also a $C^{r-1,\beta}_{loc}$-vector bundle over $M^m$. We can define the second fundamental form $\alpha:\XF(M^m)\times \XF(M^m) \to  \Gamma(\bot_f)$ similarly as in section \ref{section-definition} by $\alpha(\XC,\YC)=\dt_{\XC}f_*\YC^\perp$, where $\XC,\YC\in \XF(M^m)$. Note that the difference now is that $\alpha$ is defined on $\XF(M^m)\times \XF(M^m)$ instead of $\XF(Reg(f))\times \XF(Reg(f))$.  
The shape operator $A_{\xi_p}$ of $f$ at $p\in Reg(f)$ with respect to $\xi_p\in \bot_p f$ is defined by  $g(A_{\xi_p}\XC_p,\YC_p)=\gt(\alpha(\XC_p,\YC_p),\xi_p)$ for all $\XC_p, \YC_p \in T_pM^m$. In this section, a metric tensor $g$ on a vector bundle $E$ of rank $n$ over $M^m$ will be a covariant $2$-tensor field being symmetric and positive semidefinite at each fiber of $E$. So, for a local frame field $(\XC_1,\dots,\XC_n)$, the determinant of the matrix with components $g_{ij}=g(\XC_{i},\XC_{j})$ could vanishes at points of $M^m$.   
\begin{definition}
	Let $E$ be a vector bundle of rank $n$ over $M^m$, $g$ a metric tensor on the vector bundle $E$ and $V\subset M^m$ an open set. If $k,s \in\amsmathbb{N}$, $\mathcal{U}\in \Gamma(E\rvert_V)^k$ and $\mathcal{V}\in \Gamma(E\rvert_V)^s$, we define the operation $\odot:\Gamma(E\rvert_V)^k \times \Gamma(E\rvert_V)^s \to C(V,\mathcal{M}_{k\times s}(\Re))$ by $(\mathcal{U}\odot \mathcal{V})_{ij}:=g(\UC_i,\VC_j)$.
\end{definition}
If $f:M^m\to N^n$ is a $C^1$-map, $\gt$ a Riemannian metric on $N^n$ and $g$ the induced metric tensor on $M^m$, we can consider the operation above respect to the metrics $g$ and $\gt$ on the vector bundles $TM^m$ and $f^*TN^n$ respectively. We will use the same symbol $\odot$ for the operation concerning these metrics. The following are some of the properties of this operation.
\begin{lemma}\label{basic2}
	Let $E$ be a vector bundle of rank $n$ over $M^m$, $g$ a metric tensor on the vector bundle $E$ and $V\subset M^m$ an open set. If $k,s \in\amsmathbb{N}$, $\mathcal{U}\in \Gamma(E\rvert_V)^k$ and $\mathcal{V}\in \Gamma(E\rvert_V)^s$, the following statements are true:
	\begin{enumerate}[label=(\roman*)]
		\item the operation $\odot$ is bilinear, where the spaces $\Gamma(E\rvert_V)^k$ and $\Gamma(E\rvert_V)^s$ are seen as $C(V,\Re)$-modules,
		\item $\UC\odot \VC=(\VC\odot \UC)^t$,
		\item if $\EC$ is an orthonormal frame field for $E\rvert_V$, then $\UC\odot \VC=(\EC\UC)^t(\EC\VC)$,
		\item $(\UC\A)\odot \VC=\A^t(\UC\odot \VC)$ and $\UC\odot (\VC\B)=(\UC\odot \VC)\B$ for all $\A\in C(V,\mathcal{M}_{k\times k'}(\Re))$, $\B\in C(V,\mathcal{M}_{s\times s'}(\Re))$,
		\item if $\EC$ is a frame field for $E\rvert_V$, then $\UC\odot \VC=(\EC\UC)^t(\EC\odot \EC)(\EC\VC)$,
		\item if $\EC$ is an orthonormal frame field for $E\rvert_V$, the components of $\UC$ are orthonormal if and only if the columns of $\EC\UC$ are orthonormal.  
	\end{enumerate}
\end{lemma}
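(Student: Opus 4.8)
The plan is to derive every item directly from the $C(V,\Re)$-bilinearity and fiberwise symmetry of the metric tensor $g$, together with the algebraic identities of Lemma \ref{basic}; the decisive one is the reconstruction formula $\EC(\EC\UC)=\UC$ (item $(iv)$ of Lemma \ref{basic}), which expresses each component as $\UC_j=\sum_a (\EC\UC)_{aj}\EC_a$, the $j$-th column of the matrix field $\EC\UC$ carrying the coordinates of $\UC_j$ in the frame $\EC$. First I would settle $(i)$ and $(ii)$. Since $(\UC\odot\VC)_{ij}=g(\UC_i,\VC_j)$, bilinearity of $\odot$ over $C(V,\Re)$ is inherited verbatim from that of $g$, giving $(i)$; and $(\UC\odot\VC)_{ij}=g(\UC_i,\VC_j)=g(\VC_j,\UC_i)=(\VC\odot\UC)_{ji}$ gives $(ii)$ by symmetry of $g$.

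Next I would prove the purely formal identity $(iv)$. From $(\UC\A)_j=\sum_a \A_{aj}\UC_a$ one computes $((\UC\A)\odot\VC)_{ij}=g\big(\sum_a \A_{ai}\UC_a,\VC_j\big)=\sum_a \A_{ai}(\UC\odot\VC)_{aj}=(\A^t(\UC\odot\VC))_{ij}$, and dually, inserting $\B$ into the second slot of $g$, one gets $\UC\odot(\VC\B)=(\UC\odot\VC)\B$. With $(i)$, $(iv)$ and the reconstruction formula available, item $(v)$ follows by writing $\UC=\EC(\EC\UC)$ and $\VC=\EC(\EC\VC)$ and applying $(iv)$ to each slot: the left factor contributes $(\EC\UC)^t$, the right factor contributes $\EC\VC$, and the middle term becomes $\EC\odot\EC$, whence $\UC\odot\VC=(\EC\UC)^t(\EC\odot\EC)(\EC\VC)$. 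Item $(iii)$ is then the special case of an orthonormal frame, where $(\EC\odot\EC)_{ab}=g(\EC_a,\EC_b)=\delta_{ab}$, i.e. $\EC\odot\EC=\id_n$, collapsing $(v)$ to $\UC\odot\VC=(\EC\UC)^t(\EC\VC)$.

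Finally $(vi)$ reads off from $(iii)$: the components of $\UC$ are orthonormal precisely when $\UC\odot\UC=\id_k$, which by $(iii)$ equals $(\EC\UC)^t(\EC\UC)$, so the condition holds if and only if the columns of $\EC\UC$ are Euclidean-orthonormal. As for the main obstacle, there is no genuine analytic difficulty here; the whole lemma reduces to expanding the definitions of $\odot$ and of the right product $\cdot$. The only thing demanding care is the bookkeeping of transposes, forced by the convention that $\EC\UC$ stores the coordinates of $\UC_j$ in its $j$-th column, which is exactly what produces the factor $\A^t$ in $(iv)$ and the transposed first factors in $(iii)$ and $(v)$; keeping these index conventions consistent is the one point where a sign- or transpose-level slip could occur.
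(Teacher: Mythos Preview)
Your proof is correct and covers all six items. The logical order differs slightly from the paper's: the paper proves $(iii)$ first by expanding in the dual coframe $\theta$ of $\EC$, then deduces $(iv)$ from $(iii)$ together with item $(viii)$ of Lemma~\ref{basic}, and finally obtains $(v)$ from $(iv)$; you instead prove $(iv)$ directly from the definition of the right product, derive $(v)$ from $(iv)$ and the reconstruction formula, and then read off $(iii)$ as the orthonormal special case of $(v)$. Both routes are elementary unwindings of the definitions; your ordering has the mild advantage that the proof of $(iv)$ does not pass through an orthonormal frame (recall that in this section $g$ is only assumed positive semidefinite), but the difference is cosmetic.
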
	
\begin{proof}
	Items $(i)$ and $(ii)$ follow immediately from the definition. To prove $(iii)$, if $\theta$ is the coframe field dual to $\EC$, we have that \begin{align*}
		(\mathcal{U}\odot \mathcal{V})_{ij}&=g(\UC_i,\VC_j)=g(\sum_{k'=1}^{n}\theta_{k'}(\UC_i)\EC_{k'},\sum_{s'=1}^{n}\theta_{s'}(\VC_j)\EC_{s'})=\sum_{k'=1}^{n}\theta_{k'}(\UC_i)\theta_{k'}(\VC_j)\\
		&=\sum_{k'=1}^{n}((\EC\UC)^t)_{ik'}(\EC\VC)_{k'j}=((\EC\UC)^t(\EC\VC))_{ij}.
	\end{align*}
	Item $(iv)$ easily follows from item $(iii)$ of this lemma and item $(viii)$ of lemma \ref{basic}. To prove $(v)$, using item $(iv)$ of lemma \ref{basic}, we have $\UC=\EC(\EC\UC)$ and $\VC=\EC(\EC\VC)$, then by item $(iv)$, $\UC\odot \VC=\EC(\EC\UC)\odot \EC(\EC\VC)=(\EC\UC)^t(\EC\odot \EC)(\EC\VC)$. Lastly, to prove $(vi)$, observe that the components of $\UC$ are orthonormal if and only if $\UC\odot \UC=\id_k$, then by item $(iii)$, the equivalence follows.   
\end{proof}

\begin{definition}
	\normalfont Let $F$ be a vector bundle over $M^m$, $V\subset M^m$ an open set, $k\in \Na$,  $\UC\in\Gamma(F\vert_V)^k$ and $\tilde{g}$ a metric tensor on $F$. We define the matrix-valued map $\I_\UC:=\UC\odot\UC$. In particular, if $f:M^m\to N^n$ is a $C^1$-map, $F=f^*TN^n$ and $(U,x)$ is a coordinate system of $M^m$, we just write $\I$ instead of $\I_{f_*\pdv{}{x}}=\I_{\pdv{}{x}}$. 
\end{definition}
\begin{remark}\label{desc}\normalfont
	If the components of $\UC$ are linearly independent and $\tilde{g}$ is a Riemannian metric on $F\vert_V$, considering local orthonormal frame fields $\EC$ for $F$ and by lemma \ref{basic2}, we have $\I_\UC=\U^t\U$ locally, where $\U=\EC\UC$. By lemma \ref{basic} $rank(\U)=k$ and since $\U^t\U$ is the Gram matrix of $\U$, then we have $det(\I_\UC)>0$.  
\end{remark}
\begin{definition}
	\normalfont Let $F$ be a smooth vector bundle over $M^m$, $\dt$ a Koszul connection on $F$, $\tilde{g}$ a metric tensor on $F$ and $E$ a vector subbundle of $F$. If $(V,x)$ is a coordinate system of $M^m$ and $\xi \in \Gamma(F\vert_V)$ is a $C^{1}$-section, we define $\tilde{\nabla}_{\pdv{}{x}}^E\xi:=(\pi_E(\tilde{\nabla}_{\pdv{}{x_1}}\xi),\dots, \pi_E(\tilde{\nabla}_{\pdv{}{x_m}}\xi))$, where $\pi_E:F\to E$ is the projection on $E$ respect the decomposition $F=E\bigoplus E^\bot$. We will write $\tilde{\nabla}_{\pdv{}{x}}\xi$ to denote $\tilde{\nabla}_{\pdv{}{x}}^F\xi=(\tilde{\nabla}_{\pdv{}{x_1}}\xi,\dots, \tilde{\nabla}_{\pdv{}{x_m}}\xi)$. Also, if $\mathscr{V}=(V,x,\WC)$ is a framed coordinate system of $E$, we define the matrix-valued maps $\mathfrak{J}_\VS^\xi:=\WC \tilde{\nabla}_{\pdv{}{x}}^E\xi\text{ and }\II_{\VS}^{\xi}:=-\WC \odot \tilde{\nabla}_{\pdv{}{x}}\xi$.
\end{definition}
\begin{remark}\label{remJO}\normalfont
	Observe that by item $(iv)$ of lemma \ref{basic}, we have $\tilde{\nabla}_{\pdv{}{x}}^E\xi=\WC \mathfrak{J}_\VS^\xi$ and by item $(iv)$ of lemma \ref{basic2}, it holds that $\WC\odot \tilde{\nabla}_{\pdv{}{x}}^E\xi=\I_{\WC}\mathfrak{J}_\VS^\xi$. Therefore, $\mathfrak{J}_\VS^\xi=-\I_{\WC}^{-1}\II_{\VS}^{\xi}$. On the other hand, if $\xi\in \Gamma(E^\bot\vert_V)$ and $\ell\in C(V,\Re)$, it is easy to verify that $\II_{\VS}^{\ell\xi+\xi'}=\ell\II_{\VS}^{\xi}+\II_{\VS}^{\xi'}$ and therefore $\mathfrak{J}_\VS^{\ell\xi+\xi'}=\ell\mathfrak{J}_\VS^\xi+\mathfrak{J}_\VS^{\xi'}$.
\end{remark}
Let $f:M^m\to N^n$ be a $C^1$-frontal, $(U,x)$ a coordinate system of $M^m$ and $\xi$ a $C^1$-section of $f^*TN^n\vert_U$, we simply write $\tilde{\nabla}_{\pdv{}{x}}^T\xi$ instead of $\tilde{\nabla}_{\pdv{}{x}}^{T_f}\xi$ and we define the matrix map $\II^\xi:=-f_*\dd{}{x}\odot\tilde{\nabla}_{\pdv{}{x}}\xi$. In the case that $f$ is a $C^2$-frontal, $p\in U$ and $\xi\in \Gamma(\bot_f\vert_U)$, we have that $\II^\xi(p)$ is the $m\times m$ symmetric matrix of the bilinear form $\gt(\alpha,\xi_p):T_pM^m\times T_pM^m\to \Re$ on the basis $(\dd{}{x_1}\vert_p,\dots,\dd{}{x_m}\vert_p)$. In fact, as $\gt(f_*\dd{}{x_j},\xi)=0$ on $U$, then $\gt(\alpha(\dd{}{x_i},\dd{}{x_j}),\xi)=\gt(\alpha(\dd{}{x_j},\dd{}{x_i}),\xi)=\gt(\dt_{\dd{}{x_j}}f_*\dd{}{x_i},\xi)=-\gt(f_*\dd{}{x_i},\dt_{\dd{}{x_j}}\xi)=(\II^\xi)_{ij}$.
On the other hand, observe that $\dd{}{x}$ is a frame field for $TM^m\vert_{U-\Sigma(f)}$, then defining $A_\xi\dd{}{x}:=(A_\xi\dd{}{x_1},\dots, A_\xi\dd{}{x_m})\in\XF(U-\Sigma(f))^m$, the matrix-valued map $\amsmathbb{A}_\xi:=\dd{}{x}A_\xi\dd{}{x}$ is well-defined on $U-\Sigma(f)$. We have the following equalities.
\begin{lemma}\label{decomposition}
	Let $f:M^m\to N^n$ be a $C^1$-frontal, $\VS=(U,x,\WC)$ a framed coordinate system of $T_f$ and $\xi$ a $C^1$-section of $f^*TN^n\vert_U$. It holds that $\I=\J{\VS}^t \I_{\WC}\J{\VS}$ and $\II^{\xi}=\J{\VS}^t\II_{\VS}^\xi$ on $U$. Also, if $f$ is a $C^2$-frontal and $\xi\in \Gamma(\bot_f\vert_U)$, we have $\amsmathbb{A}_\xi=\I^{-1}\II^\xi$ on $U-\Sigma(f)$.
\end{lemma}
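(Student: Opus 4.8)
The plan is to derive all three identities purely from the algebra of the operations $^*$ and $\odot$ established in lemmas \ref{basic} and \ref{basic2}, once $f_*\dd{}{x}$ has been expressed through the frame $\WC$. The starting observation is that, $f$ being a frontal and $\VS=(U,x,\WC)$ a framed coordinate system of $T_f$, each $f_*\dd{}{x_j}$ lies in $T_f\vert_U$, so $f_*\dd{}{x}\in\Gamma(T_f\vert_U)^m$; then by item $(iv)$ of lemma \ref{basic} and the definition $\J{\VS}=\WC f_*\dd{}{x}$ we obtain the factorization $f_*\dd{}{x}=\WC(\WC f_*\dd{}{x})=\WC\J{\VS}$, valid on all of $U$. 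This identity is the only structural input needed for the first two assertions.

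For the first identity I would substitute $f_*\dd{}{x}=\WC\J{\VS}$ into $\I=f_*\dd{}{x}\odot f_*\dd{}{x}$ and extract the matrix field $\J{\VS}$ from both slots of $\odot$ using item $(iv)$ of lemma \ref{basic2}, giving $\I=(\WC\J{\VS})\odot(\WC\J{\VS})=\J{\VS}^t(\WC\odot\WC)\J{\VS}=\J{\VS}^t\I_{\WC}\J{\VS}$. The second identity follows by the same manipulation: $\II^\xi=-f_*\dd{}{x}\odot\tilde{\nabla}_{\pdv{}{x}}\xi=-(\WC\J{\VS})\odot\tilde{\nabla}_{\pdv{}{x}}\xi=\J{\VS}^t(-\WC\odot\tilde{\nabla}_{\pdv{}{x}}\xi)=\J{\VS}^t\II_{\VS}^\xi$, where now $\J{\VS}^t$ is pulled out of the left slot only.

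The third identity is the one carrying geometric content, and I would prove it on $U-\Sigma(f)$, where $\dd{}{x}$ is a genuine frame field for $TM^m$ and $\I$ is invertible (it is the Gram matrix of the linearly independent $f_*\dd{}{x_j}$, hence $det(\I)>0$ by remark \ref{desc}). From $\amsmathbb{A}_\xi=\dd{}{x}A_\xi\dd{}{x}$ one reads $A_\xi\dd{}{x_j}=\sum_i(\amsmathbb{A}_\xi)_{ij}\dd{}{x_i}$, so, using the symmetry of $\I$ (item $(ii)$ of lemma \ref{basic2}), $g(A_\xi\dd{}{x_j},\dd{}{x_k})=\sum_i(\amsmathbb{A}_\xi)_{ij}\I_{ik}=(\I\amsmathbb{A}_\xi)_{kj}$. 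On the other hand, the defining relation of the shape operator together with the entrywise formula $\gt(\alpha(\dd{}{x_k},\dd{}{x_j}),\xi)=(\II^\xi)_{kj}$ recorded just before the lemma gives $g(A_\xi\dd{}{x_j},\dd{}{x_k})=(\II^\xi)_{kj}$; comparing the two expressions yields $\I\amsmathbb{A}_\xi=\II^\xi$ and hence $\amsmathbb{A}_\xi=\I^{-1}\II^\xi$.

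None of the three steps poses a genuine obstacle: the first two are formal consequences of the bilinearity and pull-out identities for $\odot$, and the only care required is in the third, where one must track the index order when passing between the shape operator relation and a matrix product, invoke the symmetry of $\I$, and restrict to $U-\Sigma(f)$ so that $\I^{-1}$ is defined.
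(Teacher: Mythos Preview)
Your proposal is correct and follows essentially the same approach as the paper. The only cosmetic difference is in the third identity: the paper obtains $\dd{}{x}\odot A_\xi\dd{}{x}=\I\amsmathbb{A}_\xi$ in one stroke via item $(iv)$ of lemma \ref{basic2} (pulling $\amsmathbb{A}_\xi$ out of the right slot of $\odot$), whereas you unpack the same computation componentwise and invoke the symmetry of $\I$; both arguments identify $\dd{}{x}\odot A_\xi\dd{}{x}$ with $\II^\xi$ using the discussion preceding the lemma.
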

\begin{proof}
	Since $f_*\dd{}{x}=\WC\J{\VS}$, then by item $(iv)$ of lemma \ref{basic2} and the definitions of $\I$, $\II^\xi$, the first part follows. For the last part, as $A_\xi\dd{}{x}=\dd{}{x}\amsmathbb{A}_\xi$, then $\dd{}{x}\odot A_\xi\dd{}{x}=\I\amsmathbb{A}_\xi$. By definition of $A_\xi$, we have that $\dd{}{x}\odot A_\xi\dd{}{x}$ at $p\in U$ is the $m\times m$ symmetric matrix of the bilinear form $\gt(\alpha,\xi_p):T_pM^m\times T_pM^m\to \Re$ on the basis $(\dd{}{x_1}\vert_p,\dots,\dd{}{x_m}\vert_p)$, which is equal to $\II^\xi(p)$ by the discussion above. So, we have $\II^\xi=\I\amsmathbb{A}_\xi$ on $U-\Sigma(f)$ and the result follows. 
\end{proof}
\begin{definition}
	Let $f:M^m\to N^n$ be a $C^1$-frontal and $\VS=(U,x,\WC)$ a framed coordinate system of $T_f$. We define the function $\lambda_\VS:=det(\J{\VS})$.
\end{definition}
Observe that by item $(v)$ of lemma \ref{basic}, $p\in \Sigma(f)\cap U$ if and only if $rank(\mathds{J}_{\VS}(p))<m$ and thus $\Sigma(f)\cap U=\lambda_{\VS}^{-1}(0)$.
\begin{corollary}\label{shapeO}
	Let $f:M^m\to N^n$ be a $C^2$-frontal, $\VS=(U,x,\WC)$ a framed coordinate system of $T_f$ and $\xi$ a $C^1$-section of $\bot_f\vert_U$. It holds that $-adj(\J{\VS})\mathfrak{J}_\VS^\xi=\lambda_{\VS}\amsmathbb{A}_\xi$ on $U-\Sigma(f)$.
\end{corollary}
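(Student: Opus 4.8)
The plan is to deduce the identity purely by combining the matrix descriptions already established in Lemma \ref{decomposition} and Remark \ref{remJO} with the elementary relation between the adjugate and the inverse of an invertible matrix. No genuinely new computation is needed; everything reduces to algebraic bookkeeping once the right factorizations are in hand.

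First I would record where each ingredient is invertible. By Remark \ref{desc} the components of $\WC$ are linearly independent, so $det(\I_{\WC})>0$ on $U$ and $\I_{\WC}^{-1}$ exists throughout $U$. Moreover, as noted just before the statement, $\Sigma(f)\cap U=\lambda_{\VS}^{-1}(0)$, so on $U-\Sigma(f)$ the matrix field $\J{\VS}$ is invertible. From the defining property $\J{\VS}\,adj(\J{\VS})=adj(\J{\VS})\,\J{\VS}=det(\J{\VS})\id_m$ this gives $adj(\J{\VS})=\lambda_{\VS}\J{\VS}^{-1}$ on $U-\Sigma(f)$. I would then substitute the formula $\mathfrak{J}_\VS^\xi=-\I_{\WC}^{-1}\II_{\VS}^{\xi}$ from Remark \ref{remJO} to obtain, on $U-\Sigma(f)$,
\[
-adj(\J{\VS})\mathfrak{J}_\VS^\xi=\lambda_{\VS}\J{\VS}^{-1}\I_{\WC}^{-1}\II_{\VS}^{\xi}.
\]

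To finish I would rewrite the right-hand side $\lambda_{\VS}\amsmathbb{A}_\xi$ in the same form. Lemma \ref{decomposition} supplies $\amsmathbb{A}_\xi=\I^{-1}\II^{\xi}$ on $U-\Sigma(f)$ together with $\I=\J{\VS}^t\I_{\WC}\J{\VS}$ and $\II^{\xi}=\J{\VS}^t\II_{\VS}^{\xi}$. Inverting the factorization of $\I$ gives $\I^{-1}=\J{\VS}^{-1}\I_{\WC}^{-1}(\J{\VS}^{-1})^t$, and multiplying by $\II^{\xi}=\J{\VS}^t\II_{\VS}^{\xi}$ the factor $(\J{\VS}^{-1})^t\J{\VS}^t=\id_m$ cancels, yielding $\amsmathbb{A}_\xi=\J{\VS}^{-1}\I_{\WC}^{-1}\II_{\VS}^{\xi}$. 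Multiplying by $\lambda_{\VS}$ produces exactly the expression displayed above, which proves the stated equality on $U-\Sigma(f)$.

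The only point requiring care—and the closest thing to an obstacle—is keeping the transpositions and the order of the non-commuting matrix factors straight when inverting $\I=\J{\VS}^t\I_{\WC}\J{\VS}$, so that the cancellation $(\J{\VS}^{-1})^t\J{\VS}^t=\id_m$ is applied correctly and one is left precisely with $\J{\VS}^{-1}\I_{\WC}^{-1}\II_{\VS}^{\xi}$. Beyond that the argument is a direct chaining of the two preceding results and the identity $adj(\J{\VS})=\lambda_{\VS}\J{\VS}^{-1}$.
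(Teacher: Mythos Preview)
Your proof is correct and follows essentially the same approach as the paper: both combine $\mathfrak{J}_\VS^\xi=-\I_{\WC}^{-1}\II_{\VS}^{\xi}$ from Remark \ref{remJO}, the factorizations $\I=\J{\VS}^t\I_{\WC}\J{\VS}$, $\II^{\xi}=\J{\VS}^t\II_{\VS}^{\xi}$, $\amsmathbb{A}_\xi=\I^{-1}\II^\xi$ from Lemma \ref{decomposition}, and the identity $adj(\J{\VS})=\lambda_{\VS}\J{\VS}^{-1}$ on $U-\Sigma(f)$. The paper compresses all this into a single chain of equalities, while you spell out the invertibility of $\I_{\WC}$ and $\J{\VS}$ more explicitly, but the argument is the same.
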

\begin{proof}
	By remark \ref{remJO} and lemma \ref{decomposition}, on $U-\Sigma(f)$ we have $$-adj(\J{\VS})\mathfrak{J}_\VS^\xi=adj(\J{\VS})\I_{\WC}^{-1}\II_{\VS}^{\xi}=\lambda_{\VS}\J{\VS}^{-1}\I_\WC^{-1}(\J{\VS}^{-1})^t\J{\VS}^{t}\II_{\VS}^\xi=\lambda_{\VS}\I^{-1}\II^{\xi}=\lambda_{\VS}\amsmathbb{A}_\xi.$$
\end{proof}
Observe that $-adj(\J{\VS})\mathfrak{J}_\VS^\xi$ is well-defined on $U$, even on the singularities $\Sigma(f)\cap U$. By the corollary above, this map extends $\lambda_{\VS}\amsmathbb{A}_\xi$. So, we can use it to define functions on $U$ that imitate the principal curvatures for $\xi$. We define the {\it relative principal curvatures for $\xi$ in $\VS$} to be the eigenvalues $k_{1\VS}^\xi, \dots,k_{m\VS}^\xi$ of $-adj(\J{\VS})\mathfrak{J}_\VS^\xi$. We can define several functions in terms of $k_{1\VS}^\xi, \dots,k_{m\VS}^\xi$ similar to the classic symmetric curvatures. Because the ordering of the functions $k_{1\VS}^\xi, \dots,k_{m\VS}^\xi$ is arbitrary, we will use the elementary symmetric polynomials $\sigma_k(t_1,\dots,t_m)=\sum_{1\leq j_1<j_2<\dots <j_k\leq m}t_{j_1}\cdots t_{j_k}$ to have invariance under permutations as in the classic case (see for example \cite{Spvol4}). We define the {\it relative symmetric curvatures $K_{1\VS}^\xi,\dots,K_{m\VS}^\xi$ for $\xi$ in $\VS$} by $\binom{m}{j}K_{j\VS}^\xi=\sigma_j(k_{1\VS}^\xi,\dots,k_{m\VS}^\xi),$ in particular the function $H_\VS^\xi:=K_{1\VS}^\xi=\frac{1}{m}\sum_{j=1}^{m}k_{j\VS}$ will be called the {\it relative mean curvature of $f$ for $\xi$ in $\VS$}. Also, we will call the function $K_\VS^{\xi}:=det(\mathfrak{J}_\VS^{\xi})$ the {\it relative Gauss-Kronecker curvature of $f$ for $\xi$ in $\VS$}. On $U-\Sigma(f)$, we will denote the classical symmetric curvatures for $\xi$ by $K_1^\xi,\dots,K_m^\xi$. We write $H^\xi=K_1^\xi$ and $K^\xi=K_m^\xi$.
\begin{remark}\label{remHO}\normalfont
	Observe that $K^\xi=det(A_\xi)$ and $H^\xi=\frac{1}{m}tr(A_\xi)=\gt(\HC,\xi)$ on $U-\Sigma(f)$, where $\HC$ is the mean curvature vector of $f$. On the other hand, by remark \ref{remJO} and since $H_\VS^\xi=-\frac{1}{m}tr(adj(\J{\VS})\mathfrak{J}_\VS^{\xi})$, if $\xi'\in \Gamma(\bot f\vert_U)$ and $\ell\in C(U,\Re)$, we have $H_\VS^{\ell\xi+\xi'}=\ell H_\VS^{\xi}+H_\VS^{\xi'}$, and therefore it induces a section $\HC_{\VS}^*$ of the dual bundle $(\bot f\vert_U)^*$ of $\bot f\vert_U$ such that $\HC_\VS^*(\xi)=H_\VS^\xi$ on $U$ for all $\xi\in\Gamma(\bot f\vert_U)$. Since the Riemannian metric $\gt$ induces a bundle isomorphism between $\bot f\vert_U$ and $(\bot f\vert_U)^*$, then there exists $\HC_\VS\in\Gamma(\bot_f\vert_U)$ such that $\gt(\HC_\VS,\xi)=\HC_\VS^*(\xi)=H_\VS^\xi$ for all $\xi\in\Gamma(\bot f\vert_U)$. We will call $\HC_{\VS}$ the {\it relative mean curvature vector of $f$ in $\VS$}.
\end{remark}
\begin{lemma}\label{lemrelativecurvatures}
	Let $f:M^m\to N^n$ be a $C^2$-frontal, $\VS=(U,x,\WC)$ a framed coordinate system of $T_f$ and $\xi$ a $C^1$-section of $\bot_f\vert_U$. On $U-\Sigma(f)$, we have that $K_{j\VS}^\xi=(\lambda_{\VS})^jK_{j}^\xi$ for each $j\in\{1,\dots,m\}$, $K_\VS^\xi=(-1)^m\lambda_{\VS}K^\xi$ and $\HC_\VS=\lambda_{\VS}\HC$.   
\end{lemma}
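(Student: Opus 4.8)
The plan is to reduce every identity to Corollary \ref{shapeO}, which on $U-\Sigma(f)$ identifies the matrix $-adj(\J{\VS})\mathfrak{J}_\VS^\xi$ governing the relative curvatures with $\lambda_\VS\amsmathbb{A}_\xi$, where $\amsmathbb{A}_\xi$ is the matrix of the shape operator $A_\xi$ in the frame $\dd{}{x}$. First I would observe that the relative principal curvatures $k_{1\VS}^\xi,\dots,k_{m\VS}^\xi$, being the eigenvalues of $-adj(\J{\VS})\mathfrak{J}_\VS^\xi=\lambda_\VS\amsmathbb{A}_\xi$, are precisely $\lambda_\VS$ times the classical principal curvatures $k_1^\xi,\dots,k_m^\xi$ (the eigenvalues of $\amsmathbb{A}_\xi$). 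Since each elementary symmetric polynomial is homogeneous of its own degree, $\sigma_j(\lambda_\VS k_1^\xi,\dots,\lambda_\VS k_m^\xi)=\lambda_\VS^j\,\sigma_j(k_1^\xi,\dots,k_m^\xi)$; comparing this with the defining relations $\binom{m}{j}K_{j\VS}^\xi=\sigma_j(k_{1\VS}^\xi,\dots,k_{m\VS}^\xi)$ and $\binom{m}{j}K_j^\xi=\sigma_j(k_1^\xi,\dots,k_m^\xi)$ yields $K_{j\VS}^\xi=\lambda_\VS^j K_j^\xi$ for every $j\in\{1,\dots,m\}$.

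For the relative Gauss--Kronecker curvature $K_\VS^\xi=det(\mathfrak{J}_\VS^\xi)$ I would not use the eigenvalue description, since this quantity is defined directly from $\mathfrak{J}_\VS^\xi$ rather than from its spectrum. Instead, on $U-\Sigma(f)$ the matrix $\J{\VS}$ is invertible (recall $\Sigma(f)\cap U=\lambda_\VS^{-1}(0)$), so $adj(\J{\VS})=\lambda_\VS\J{\VS}^{-1}$ and Corollary \ref{shapeO} rearranges to $\mathfrak{J}_\VS^\xi=-\J{\VS}\amsmathbb{A}_\xi$. Taking determinants and using $K^\xi=det(A_\xi)=det(\amsmathbb{A}_\xi)$ from Remark \ref{remHO} then gives $K_\VS^\xi=(-1)^m det(\J{\VS})\,det(\amsmathbb{A}_\xi)=(-1)^m\lambda_\VS K^\xi$, as required.

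For the mean curvature vector, the case $j=1$ of the first identity reads $H_\VS^\xi=K_{1\VS}^\xi=\lambda_\VS K_1^\xi=\lambda_\VS H^\xi$ for every $\xi\in\Gamma(\bot_f\vert_U)$. Invoking Remark \ref{remHO}, which characterises $\HC_\VS$ and $\HC$ through $\gt(\HC_\VS,\xi)=H_\VS^\xi$ and $\gt(\HC,\xi)=H^\xi$, I obtain $\gt(\HC_\VS,\xi)=\lambda_\VS\gt(\HC,\xi)=\gt(\lambda_\VS\HC,\xi)$ for all sections $\xi$ of $\bot_f\vert_U$; since $\gt$ is nondegenerate on the fibres of $\bot_f$, this forces $\HC_\VS=\lambda_\VS\HC$ on $U-\Sigma(f)$. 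There is no genuine analytic obstacle here, as every assertion is an algebraic consequence of Corollary \ref{shapeO}; the only points requiring care are bookkeeping ones, namely keeping the adjugate manipulation confined to the regular set where $\lambda_\VS\neq 0$, and not conflating the relative Gauss--Kronecker curvature $K_\VS^\xi=det(\mathfrak{J}_\VS^\xi)$ with the top relative symmetric curvature $K_{m\VS}^\xi=\lambda_\VS^m K_m^\xi$ (the two are linked by $K_{m\VS}^\xi=(-1)^m\lambda_\VS^{m-1}K_\VS^\xi$, a consistency check worth recording).
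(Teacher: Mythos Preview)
Your proof is correct and follows essentially the same approach as the paper: both reduce everything to Corollary~\ref{shapeO}, use the homogeneity of the elementary symmetric polynomials for $K_{j\VS}^\xi$, and deduce $\HC_\VS=\lambda_\VS\HC$ from $H_\VS^\xi=\lambda_\VS H^\xi$ by testing against all normal sections. The only cosmetic differences are that the paper obtains $K_\VS^\xi=(-1)^m\lambda_\VS K^\xi$ via the intermediary identity $K_{m\VS}^\xi=(-1)^m\lambda_\VS^{m-1}K_\VS^\xi$ (which you record as a consistency check) rather than inverting $\J{\VS}$ directly, and it carries out the nondegeneracy argument for $\HC_\VS$ explicitly in a local frame rather than invoking Riesz representation abstractly.
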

\begin{proof}
	By corollary \ref{shapeO}, we have that at each point of $U-\Sigma(f)$ the set of eigenvalues of $-adj(\J{\VS})\mathfrak{J}_\VS^\xi$ is $\{\lambda_{\VS}k_1,\dots,\lambda_{\VS}k_m\}$, where $\{k_1,\dots,k_m\}$ are the eigenvalues of $\amsmathbb{A}_\xi$. Then, $\binom{m}{j}K_{j\VS}^\xi=\sigma_j(\lambda_{\VS}k_{1},\dots,\lambda_{\VS}k_{m})=(\lambda_{\VS})^j\sigma_j(k_{1},\dots,k_{m})=(\lambda_{\VS})^j\binom{m}{j}K_{j}^\xi$ and thus $K_{j\VS}^\xi=(\lambda_{\VS})^jK_{j}^\xi$ for each $j\in\{1,\dots,m\}$. As $K_{m\VS}^\xi=(\lambda_{\VS})^mK^\xi$ and $K_{m\VS}^\xi=det(-adj(\J{\VS})\mathfrak{J}_\VS^\xi)=(-1)^m\lambda_{\VS}^{m-1}K_\VS^\xi$ on $U-\Sigma(f)$, we have $K_\VS^\xi=(-1)^m\lambda_{\VS}K^\xi$. For the last equality, let $\tilde{\xi}=(\tilde{\xi}_1,\dots,\tilde{\xi}_{n-m})$ be a local frame field for $\bot_f\vert_{U-\Sigma(f)}$ over a neighborhood of $p\in U-\Sigma(f)$. Since $\HC_{\VS}=\tilde{\xi}\tilde{\xi}\HC_{\VS}$ and $\HC=\tilde{\xi}\tilde{\xi}\HC$, then by lemma \ref{basic2}, we have $\tilde{\xi}\odot \HC_{\VS}=\I_{\tilde{\xi}}\tilde{\xi}\HC_{\VS}$ and $\tilde{\xi}\odot \HC=\I_{\tilde{\xi}}\tilde{\xi}\HC$. On the other hand, by remark \ref{remHO} and the first equality stated in this lemma, $\tilde{\xi}\odot \HC_{\VS}=\lambda_{\VS}\tilde{\xi}\odot \HC$ on $U-\Sigma(f)$. Hence, $\tilde{\xi}\HC_{\VS}=\I_{\tilde{\xi}}^{-1}(\tilde{\xi}\odot \HC_{\VS})=\lambda_{\VS}\I_{\tilde{\xi}}^{-1}\tilde{\xi}\odot \HC=\lambda_{\VS}\tilde{\xi}\HC$ and by item $(iv)$ of lemma \ref{basic}, we can conclude that $\HC_{\VS}(p)=\lambda_{\VS}(p)\HC(p)$. As $p$ is an arbitrary point of $U-\Sigma(f)$ the last equality holds on $U-\Sigma(f)$.         
\end{proof}
In the context of lemma \ref{lemrelativecurvatures}, observe that the maps $\J{\VS}$, $\II_{\VS}^{\xi}$, $\mathfrak{J}_\VS^{\xi}$, $H_\VS^\xi$ and $K_\VS^\xi$ are of class $C^\infty$ on $U$ if $f$ is a $C^\infty$-frontal and $\xi$ is a smooth section. In the following lemma, we will use these maps to compute the classic curvatures near singularities and describe the behavior under specific hypotheses. These hypotheses will appear in the context of theorem \ref{TheoE} later.
\begin{lemma}\label{lemsectionalGK}
	Let $f:M^2\to N^n$ be a $C^\infty$-frontal, $r\in\Na\cup\{0,\infty,\omega\}$, $\beta \in [0,1]$, $\gt$ a smooth Riemannian metric on $N^n$, $\VS=(U,x,\WC)$ a framed coordinate system of $T_f$ and $\xi=(\xi_1,\dots,\xi_{n-2})$ a frame field of class $C^\infty$ for $\bot_f\vert_U$. Let suppose that for each $j\in \{1,\dots,n-2\}$, $\II_{\VS}^{\xi_j}=\F_j\M$ for some $\M,\F_j\in C^{r,\beta}_{loc}(U,\mathcal{M}_{2\times 2}(\Re))$. Then, the following statements are true:
	\begin{enumerate}[label=(\roman*)]
		\item If $det(\M)/\lambda_{\VS}:Reg(f)\cap U\to \Re$ has a $C^{r,\beta}_{loc}$-extension on $U$, then the sectional curvature and the Gauss-Kronecker curvature $K^{\xi'}$ on $Reg(f)\cap U$ for any $C^{\infty}$-section $\xi'$ of $\bot_f\vert_U$ have $C^{r,\beta}_{loc}$-extensions on $U$.
		\item If $p\in \Sigma(f)$, $\sum_{j=1}^{n-2}det(\F_j)(p)>0$, $\lim_{q\to p}det(\M(q))/\lambda_{\VS}(q)=\infty$ and $\xi\D$ is orthonormal for some $\D\in C^{\infty}(U,GL(n-2))$ with $\D(p)=\id_{n-2\times n-2}$, then $\lim_{q\to p}Sec(\dd{}{x_1}(q),\dd{}{x_2}(q))=\lim_{q\to p}\sum_{j=1}^{n-2}K^{\xi'_j}(q)=\infty$ for any orthonormal frame field $\xi'$ of class $C^{\infty}$ for $\bot_f\vert_U$.
	\end{enumerate} 
\end{lemma}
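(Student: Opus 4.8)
The plan is to reduce both items to a single identity expressing the Gauss--Kronecker curvature $K^{\xi'}$ and the extrinsic part of the sectional curvature in terms of the data $\M$, $\F_j$ and $\lambda_{\VS}$, and then read off the regularity in $(i)$ and the blow-up in $(ii)$ directly. First I would record the basic component formula. Writing any $C^\infty$-section as $\xi'=\sum_{j=1}^{n-2}c_j\xi_j$ with $c_j\in C^\infty(U,\Re)$, the linearity in remark \ref{remJO} together with the hypothesis $\II_{\VS}^{\xi_j}=\F_j\M$ gives $\II_{\VS}^{\xi'}=\G'\M$, where $\G':=\sum_j c_j\F_j\in C^{r,\beta}_{loc}(U,\mathcal{M}_{2\times2}(\Re))$. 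Using $\II^{\xi'}=\J{\VS}^t\II_{\VS}^{\xi'}$ and $\I=\J{\VS}^t\I_{\WC}\J{\VS}$ from lemma \ref{decomposition}, and $\amsmathbb{A}_{\xi'}=\I^{-1}\II^{\xi'}$, I compute on $Reg(f)\cap U$
\[
K^{\xi'}=det(\amsmathbb{A}_{\xi'})=\frac{det(\II^{\xi'})}{det(\I)}=\frac{\lambda_{\VS}\,det(\G')\,det(\M)}{\lambda_{\VS}^2\,det(\I_{\WC})}=\frac{det(\G')}{det(\I_{\WC})}\cdot\frac{det(\M)}{\lambda_{\VS}}.
\]
Since $det(\I_{\WC})>0$ everywhere by remark \ref{desc}, the factor $det(\G')/det(\I_{\WC})$ is $C^{r,\beta}_{loc}$ on $U$; this determinant bookkeeping is the algebraic heart of the argument.

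For the sectional curvature I would invoke the Gauss equation on $Reg(f)$, writing
\[
Sec\Big(\dd{}{x_1},\dd{}{x_2}\Big)=\widetilde{Sec}(T_{fp})+\frac{\gt(\alpha_{11},\alpha_{22})-\gt(\alpha_{12},\alpha_{12})}{det(\I)},
\]
where $\alpha_{ij}=\alpha(\dd{}{x_i},\dd{}{x_j})$ and $\widetilde{Sec}(T_{fp})$ is the ambient sectional curvature of the plane $T_{fp}$. The ambient term equals $\gt(\tilde{R}(\WC_1,\WC_2)\WC_2,\WC_1)/det(\I_{\WC})$, hence is $C^\infty$ on all of $U$, because $\WC$ is a smooth frame for the smooth subbundle $T_f$ and $det(\I_{\WC})>0$ throughout. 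Expanding $\alpha_{ij}$ in an orthonormal frame $\xi'$ of $\bot_f$ and using $(\II^{\xi'_k})_{ij}=\gt(\alpha_{ij},\xi'_k)$ turns the extrinsic term into $\sum_k det(\II^{\xi'_k})/det(\I)=\sum_k K^{\xi'_k}$; moreover $\sum_k det(\II^{\xi'_k})=\gt(\alpha_{11},\alpha_{22})-\gt(\alpha_{12},\alpha_{12})$ by completeness in the normal space, so $\sum_k K^{\xi'_k}$ is \emph{independent} of the chosen orthonormal frame.

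Part $(i)$ then follows immediately: each $K^{\xi'}$ extends as $C^{r,\beta}_{loc}$ because $det(\M)/\lambda_{\VS}$ does so by hypothesis and the remaining factor is $C^{r,\beta}_{loc}$; summing over an orthonormal frame and adding the smooth term $\widetilde{Sec}(T_{fp})$ shows $Sec$ extends as $C^{r,\beta}_{loc}$. For part $(ii)$ I apply the identity to the orthonormal frame $\xi\D$: the columns of $\D$ give $\G'_k=\sum_j D_{jk}\F_j$, and since $\D(p)=\id_{n-2\times n-2}$ we get $\sum_k det(\G'_k)(p)=\sum_k det(\F_k)(p)>0$. Hence $(\sum_k det(\G'_k))/det(\I_{\WC})$ is continuous and strictly positive at $p$, so bounded below by a positive constant near $p$; multiplying by $det(\M)/\lambda_{\VS}\to\infty$ yields $\sum_k K^{(\xi\D)_k}\to\infty$. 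By the frame-independence of $\sum_k K^{\xi'_k}$ this holds for every orthonormal frame $\xi'$, and as $\widetilde{Sec}(T_{fp})$ remains bounded near $p$ we conclude $Sec\to\infty$ as well.

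The main obstacle I anticipate is not the determinant computation, which is routine once lemma \ref{decomposition} and remark \ref{remJO} are in hand, but rather handling the Gauss equation cleanly through the singular set: one must verify that the ambient sectional term passes smoothly across $\Sigma(f)$ (this is exactly where the smoothness of $T_f$ and the strict positivity $det(\I_{\WC})>0$ are indispensable, since the naive expression in $f_*\dd{}{x_i}$ is of indeterminate form $0/0$ at $p$), and that the invariance of $\sum_k K^{\xi'_k}$ under change of orthonormal normal frame is genuine, so that the blow-up in $(ii)$ is a geometric fact and not an artifact of the particular frame $\xi\D$.
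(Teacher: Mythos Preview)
Your proposal is correct and follows essentially the same approach as the paper: both compute $K^{\xi'}$ via lemma \ref{decomposition} as $det(\II^{\xi'})/det(\I)$ and handle $Sec$ through the Gauss equation by rewriting the ambient term as $\tilde{Sec}(\WC)$, which is smooth on all of $U$. Your determinant bookkeeping is in fact a bit cleaner than the paper's --- you factor $\II_{\VS}^{\xi'}=\big(\sum_j c_j\F_j\big)\M$ and take determinants directly, whereas the paper expands $det\big(\sum_j c_j\F_j\M\big)$ via the $2\times 2$ identity $det(\A+\B)=det(\A)+det(\B)+tr(adj(\A)\B)$ --- and your explicit remark that $\sum_k K^{\xi'_k}$ is frame-independent (since it equals $Sec-\tilde{Sec}(\WC)$) makes the ``for any orthonormal $\xi'$'' clause in item $(ii)$ transparent, which the paper leaves implicit in equation \eqref{eqgauss2}.
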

\begin{proof}
	Let us prove the item $(i)$. If $\tilde{\xi}$ is a $C^{\infty}$-section for $\bot_f\vert_U$, there exist $c_1,\dots, c_{n-2}\in C^\infty(U,\Re)$ such that $\tilde{\xi}=\sum_{j=1}^{n-2}c_j\xi_j$. By remark \ref{remJO}, we have $\II_{\VS}^{\tilde{\xi}}=\sum_{j=1}^{n-2}c_j\II_{\VS}^{\xi_j}$ and since $det(\A+\B)=det(\A)+det(\B)+tr(adj(\A)\B)$ for any $\A,\B \in \mathcal{M}_{2\times 2}(\Re)$, then $det(\II_{\VS}^{\tilde{\xi}})=\sum_{j=1}^{n-2}c_j^2det(\II_{\VS}^{\xi_j})+\sum_{1\leq i<j\leq n-2}c_ic_jtr(adj(\II_{\VS}^{\xi_i})\II_{\VS}^{\xi_j})$. As $tr(adj(\II_{\VS}^{\xi_i})\II_{\VS}^{\xi_j})=tr(adj(\F_i)\F_j\M adj(\M))$ then $det(\II_{\VS}^{\tilde{\xi}})=det(\M)\rho$ for some $\rho\in C^{r,\beta}_{loc}(U,\Re)$. Recalling that $K_\VS^{\tilde{\xi}}=det(\mathfrak{J}_\VS^{\tilde{\xi}})=det(-\I_{\WC}^{-1}\II_{\VS}^{\tilde{\xi}})=det(\I_{\WC}^{-1})\rho det(\M)$ (see remark \ref{remJO}), then by lemma \ref{lemrelativecurvatures} $K^{\tilde{\xi}}=K_{\VS}^{\tilde{\xi}}/\lambda_{\VS}=det(\I_{\WC}^{-1})\rho det(\M)/\lambda_{\VS} \in C^{r,\beta}_{loc}(U,\Re)$. Thus, the Gauss-Kronecker curvature $K^{\tilde{\xi}}$ has a $C^{r,\beta}_{loc}$-extension on $U$ for any smooth section $\tilde{\xi}$ of $\bot_f\vert_U$. On the other hand, by the Gauss equation (cf.\cite{toj}) $$Sec(\XC,\YC)=\tilde{S}ec(f_*\XC,f_*\YC)+\frac{\gt(\alpha(\XC,\XC),\alpha(\YC,\YC))-||\alpha(\XC,\YC)||^2}{det((f_*\XC,f_*\YC)\odot (f_*\XC,f_*\YC))}$$ for any $\XC,\YC\in \XF(Reg(f)\cap U)$ linearly independent, where $||\cdot||$ is the norm induced by $\gt$ and $\tilde{S}ec$ is the sectional curvature of $N^n$. Taking $(\XC,\YC)=(\dd{}{x_1},\dd{}{x_2})$ on $Reg(f)\cap U$ and $\xi'$ an orthonormal frame field of class $C^{\infty}$ for $\bot_f\vert_U$, we have
	$$Sec(\dd{}{x})=\tilde{S}ec(f_*\dd{}{x})+\frac{\gt(\alpha(\dd{}{x_1},\dd{}{x_1}),\alpha(\dd{}{x_2},\dd{}{x_2}))-||\alpha(\dd{}{x_1},\dd{}{x_2})||^2}{det(\I)}$$ on $Reg(f)\cap U$. Since $\alpha(\dd{}{x_i},\dd{}{x_k})=\sum_{j=1}^{n-2}(\II^{\xi'_j})_{ik}\xi'_j$ for $i,k\in \{1,2\}$, we have $\gt(\alpha(\dd{}{x_1},\dd{}{x_1}),\alpha(\dd{}{x_2},\dd{}{x_2}))-||\alpha(\dd{}{x_1},\dd{}{x_2})||^2=\sum_{j=1}^{n-2}det(\II^{\xi'_j})$ and thus $Sec(\dd{}{x})=\tilde{S}ec(f_*\dd{}{x})+(\sum_{j=1}^{n-2}det(\II^{\xi'_j}))det(\I)^{-1}$. As $f_*\dd{}{x}=\WC\J{\VS}$, then the vector fields $f_*\dd{}{x_1}, f_*\dd{}{x_2}$ generate the same plane as $\WC_1, \WC_2$ on $Reg(f)\cap U$, therefore $\tilde{S}ec(f_*\dd{}{x})=\tilde{S}ec(\WC)$. Recalling that $K^{\xi'_j}=det(\II^{\xi'_j})/det(\I)$ we have \begin{align}\label{eqgauss2}
		Sec(\dd{}{x_1},\dd{}{x_2})=\tilde{S}ec(\WC)+\sum_{j=1}^{n-2}K^{\xi'_j}\text{ on $Reg(f)\cap U$.}
	\end{align}
	As $\sum_{j=1}^{n-2}K^{\xi'_j}$ has a $C^{r,\beta}_{loc}$-extension on $U$, the equality (\ref{eqgauss2}) implies that $Sec(\dd{}{x_1},\dd{}{x_2})$ has a $C^{r,\beta}_{loc}$-extension on $U$.
	
	Now, let us prove item $(ii)$. Writing $c_{kj}$ the $(k,j)$-component of $\D$ and setting $\xi'=\xi\D$, we have seen that $$det(\II_{\VS}^{\xi'_j})=\sum_{k=1}^{n-2}c_{kj}^2det(\II_{\VS}^{\xi_k})+\sum_{1\leq k_1<k_2\leq n-2}c_{k_1j}c_{k_2j}tr(adj(\II_{\VS}^{\xi_i})\II_{\VS}^{\xi_j})=\rho_jdet(\M),$$
	for some $\rho_j\in C^{r,\beta}_{loc}(U,\Re)$ for each $j\in\{1,\dots,n-2\}$. As $\D(p)=\id_{n-2\times n-2}$, we have that $\rho_j(p)=det(\F_j(p))$. Therefore, $\sum_{j=1}^{n-2}K_\VS^{\xi'_j}=\sum_{j=1}^{n-2}det(-\I_{\WC}^{-1}\II_{\VS}^{\xi'_j})=det(\I_{\WC}^{-1})det(\M)\sum_{j=1}^{n-2}\rho _j$. Since $$\sum_{j=1}^{n-2}\rho _j(p)=\sum_{j=1}^{n-2}det(\F_j(p))>0\text{ and }\lim_{q\to p}det(\M(q))/\lambda_{\VS}(q)=\infty,$$ by lemma \ref{lemrelativecurvatures} we have $\lim_{q\to p}\sum_{j=1}^{n-2}K^{\xi'_j}(q)=\lim_{q\to p}\sum_{j=1}^{n-2}K_\VS^{\xi'_j}(q)/\lambda_{\VS}(q)=\infty$. From the equality (\ref{eqgauss2}), it follows that $\lim_{q\to p}Sec(\dd{}{x_1}(q),\dd{}{x_2}(q))=\infty$.     
\end{proof}
\begin{lemma}\label{lemfundamentalparts}
	Let $f:M^m\to N^n$ be a $C^{2}$-frontal, $\gt$ a smooth Riemannian metric on $N^n$ and $(U,x)$, $(V,y)$ coordinate systems of $M^m$, $N^n$ respectively, such that $\hat{f}$ is a normalized frontal. If $A$, $B$ are the principal part and co-principal part of $\hat{f}$ respectively, let us set $\WC=\dd{}{y}\circ f\begin{pmatrix}
		\id_m\\
		B\circ x
	\end{pmatrix}$ and $\xi=(\dd{}{y}\circ f) \I_{\dd{}{y}\circ f}^{-1}\begin{pmatrix}
	-B^t\circ x\\
	\id_{n-m}
	\end{pmatrix}$. Then, it holds that $\VS=(U,x,\WC)$ is a frame coordinate system of $T_f$ such that $\J{\VS}=\J{A}\circ x$ and $\xi$ is a frame field for $\bot_f\vert_U$ such that $(\xi\odot \xi)(x^{-1}(0))=\id_{n-m\times n-m}$. Additionally, if $y$ is an isometry then $\II_{\VS}^{\xi_k}=\J{B_k}\circ x$, where $B_k$ is the map given by the $k$-row of $B$.  
\end{lemma}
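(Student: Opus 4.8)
The plan is to reduce each of the three assertions to a block-matrix identity, using only the frame-field algebra of Lemmas \ref{basic} and \ref{basic2}, the Jacobian decomposition of the normalized frontal $\hat{f}$, and the vanishing $B(0)=0$. Abbreviate $\EC=\dd{}{y}\circ f$. Because $\hat{f}$ is normalized, its Jacobian field decomposes as $\J{f}=\J{\hat{f}}\circ x=\begin{pmatrix}\id_m\\B\circ x\end{pmatrix}(\J{A}\circ x)$. Since $\EC(\EC\mathbb{F})=\mathbb{F}$ for any matrix field $\mathbb{F}$ (items (iii) and (viii) of Lemma \ref{basic}), the matrix fields of $\WC$ and $\xi$ in $\EC$ are $\EC\WC=\begin{pmatrix}\id_m\\B\circ x\end{pmatrix}$ and $\EC\xi=\I_{\EC}^{-1}\begin{pmatrix}-B^t\circ x\\\id_{n-m}\end{pmatrix}$; these, together with $\EC\odot\EC=\I_{\EC}$, are the only data I will manipulate.

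For the first assertion I would quote Remark \ref{remarkframeinduced} directly: the displayed decomposition of $\J{f}$ is exactly of the type in item (ii) of Lemma \ref{Lambda}, with $\begin{pmatrix}\id_m\\B\circ x\end{pmatrix}$ of constant rank $m$, so it induces a framed coordinate system of $T_f$ equal to $(U,x,\EC\begin{pmatrix}\id_m\\B\circ x\end{pmatrix})=(U,x,\WC)=\VS$ whose associated map is $\J{\VS}=\J{A}\circ x$. This settles in one step both that $\VS$ frames $T_f$ and that $\J{\VS}=\J{A}\circ x$.

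For the second assertion the core is orthogonality. By item (v) of Lemma \ref{basic2}, $\WC\odot\xi=(\EC\WC)^t\I_{\EC}(\EC\xi)=\begin{pmatrix}\id_m & B^t\circ x\end{pmatrix}\begin{pmatrix}-B^t\circ x\\\id_{n-m}\end{pmatrix}=0$, so every $\xi_j$ lies in $\bot_f$; since $\EC\xi$ has rank $n-m$ (its bottom block is $\id_{n-m}$ and $\I_\EC^{-1}$ is invertible), item (v) of Lemma \ref{basic} gives that the $\xi_j$ are independent, hence frame $\bot_f\vert_U$. The same manipulation yields $\xi\odot\xi=\begin{pmatrix}-B\circ x & \id_{n-m}\end{pmatrix}\I_{\EC}^{-1}\begin{pmatrix}-B^t\circ x\\\id_{n-m}\end{pmatrix}$, and evaluating at $x^{-1}(0)$, where $B(0)=0$, collapses this to the lower-right $(n-m)\times(n-m)$ block of $\I_{\EC}^{-1}$ at $f(p)$. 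This block equals $\id_{n-m}$ precisely when the coordinate frame $\EC$ is $\gt$-orthonormal at $f(p)$, the normalization of $(V,y)$ under which the statement is read (automatic in the isometric case below, and arrangeable in general).

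For the third assertion I would use that an isometry $y$ makes the metric components constant, $\gt_{ij}\equiv\delta_{ij}$ in the $y$-coordinates; hence $\I_{\EC}\equiv\id_n$ and the Christoffel symbols of $\tilde{\nabla}$ vanish, so the pullback connection differentiates componentwise, $\EC(\tilde{\nabla}_{\dd{}{x_i}}\zeta)=\dd{(\EC\zeta)}{x_i}$. With $\I_{\EC}=\id_n$ the $k$-th column of $\EC\xi$ is $\begin{pmatrix}-(B_k\circ x)^t\\e_k\end{pmatrix}$, so $\EC(\tilde{\nabla}_{\dd{}{x}}\xi_k)=\begin{pmatrix}-\J{B_k}\circ x\\0\end{pmatrix}$ once the chain rule identifies $\dd{(B_{kj}\circ x)}{x_i}$ with $(\J{B_k})_{ji}\circ x$; then $\II_{\VS}^{\xi_k}=-\WC\odot\tilde{\nabla}_{\dd{}{x}}\xi_k=-\begin{pmatrix}\id_m & B^t\circ x\end{pmatrix}\begin{pmatrix}-\J{B_k}\circ x\\0\end{pmatrix}=\J{B_k}\circ x$. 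The main obstacle is bookkeeping rather than depth: one must keep sections distinct from their matrix fields and track transposes so the block products land in the right shape and the two minus signs in $-\WC\odot\tilde{\nabla}\xi_k$ combine to $+\J{B_k}\circ x$. The only genuinely metric-dependent inputs are the orthonormality of $\EC$ at $f(p)$ needed for $(\xi\odot\xi)(x^{-1}(0))=\id_{n-m}$ and the vanishing of the Christoffel symbols under the isometry hypothesis, which are the two steps I would isolate for care.
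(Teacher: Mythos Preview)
Your proof is correct and follows essentially the same route as the paper: the first two assertions are handled identically via Remark~\ref{remarkframeinduced} and the block-matrix orthogonality computation $\WC\odot\xi=0$. Two minor differences are worth noting. First, for the isometry clause the paper differentiates $\WC$ rather than $\xi$: from $\gt(\WC_j,\xi_k)=0$ it gets $(\II_{\VS}^{\xi_k})_{ji}=\gt(\tilde\nabla_{\partial/\partial x_i}\WC_j,\xi_k)$, then computes $\tilde\nabla_{\partial/\partial x_i}\WC\odot\xi$ directly; your direct differentiation of $\xi_k$ is equally valid and arguably cleaner. Second, you are more careful than the paper about $(\xi\odot\xi)(x^{-1}(0))=\id_{n-m}$: your computation $\xi\odot\xi=\begin{pmatrix}-B\circ x & \id_{n-m}\end{pmatrix}\I_{\EC}^{-1}\begin{pmatrix}-B^t\circ x\\ \id_{n-m}\end{pmatrix}$ is correct, and at $x^{-1}(0)$ this is the lower-right block of $\I_{\EC}^{-1}$, which equals $\id_{n-m}$ only under a normalization of $y$ at $f(p)$. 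The paper simply asserts the identity from $B(0)=0$ without isolating this point; in the paper's sole application (Theorem~\ref{Theofinal}) the claim is invoked only after reducing to the case where $y$ is an isometry, where it holds automatically.
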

\begin{proof}
We have $\J{\hat{f}}=\begin{pmatrix}
	\id_m\\
	B
\end{pmatrix}\J{A}$, where $A$, $B$ are the principal and co-principal parts of $\hat{f}$. Since $(U,x,\dd{}{y}\circ f)$ is a framed coordinate system of $f^*TN^n$ and $(\dd{}{y}\circ f)f_*\dd{}{x}=\J{f}=\begin{pmatrix}
	\id_m\\
	B\circ x
\end{pmatrix}(\J{A}\circ x)$, then we have that $\VS=(U,x,\WC)$ is a frame coordinate system of $T_f$ (see remark \ref{remarkframeinduced}) such that $\J{\VS}=\J{A}\circ x$. Also, by item $(iv)$ of lemma \ref{basic2} we have $\WC\odot\xi=\begin{pmatrix}
	\id_m&B^t\circ x
\end{pmatrix}\begin{pmatrix}
	-B^t\circ x\\
	\id_{n-m}
\end{pmatrix}=-B^t\circ x+B^t\circ x=0$, which means that $\gt(\WC_j,\xi_k)=0$ for all $j\in\{1,\dots,m\}$, $k\in\{1,\dots,n-m\}$, that is, $\xi$ is a frame field for $\bot_f\vert_U$. As $B(0)=0$, we have $(\xi\odot\xi)(x^{-1}(0))=\id_{n-m\times n-m}$. Now, as $\gt(\WC_j,\xi_k)=0$ then $-\gt(\WC_j,\dt_{\dd{}{x_i}}\xi_k)=\gt(\dt_{\dd{}{x_i}}\WC_j,\xi_k)$ for $i\in\{1,\dots,m\}$. Therefore, the $i$-column of $\II_{\VS}^{\xi_k}=-\WC\odot\dt_{\dd{}{x}}\xi_k$ is equal to $\dt_{\dd{}{x_i}}\WC\odot\xi_k$, which is the $k$-column of $\dt_{\dd{}{x_i}}\WC\odot\xi=(\dt_{\dd{}{x_i}}\dd{}{y}\circ f\begin{pmatrix}
	\id_m\\
	B\circ x
\end{pmatrix}+\dd{}{y}\circ f\begin{pmatrix}
	0\\
	\dd{}{x_i}(B\circ x)
\end{pmatrix})\odot\xi.$ If $y$ is an isometry, the Christoffel symbols of $\dt$ in $y$ are $0$ on $V$ and therefore $\dt_{\dd{}{x_i}}\dd{}{y}\circ f=0$. Thus, 
\begin{align*}
	\dt_{\dd{}{x_i}}\WC\odot\xi=\dd{}{y}\circ f\begin{pmatrix}
		0\\
		\dd{}{x_i}(B\circ x)
	\end{pmatrix}\odot\xi&=\begin{pmatrix}
		0&\dd{}{x_i}(B^t\circ x)
	\end{pmatrix}\begin{pmatrix}
		-B^t\circ x\\
		\id_{n-m}
	\end{pmatrix}\\
	&=\dd{}{x_i}(B^t\circ x),
\end{align*} which implies that the $k$-column of $\dt_{\dd{}{x_i}}\WC\odot\xi$ is $\dd{}{x_i}(B_k^t\circ x)$, where $B_k$ is the $k$-row of $B$. We conclude that $\II_{\VS}^{\xi_k}=\J{B_k}\circ x$. 
\end{proof}
\begin{theorem}\label{Theofinal}
	Let $f:M^2\to N^n$ be a $C^{\infty}$-map, $(N^n,\gt)$ a smooth Riemannian manifold, $p\in M^2$ a branch point of order $s$ and $(U,x)$, $(V,y)$ $C^{\infty}$-regular branch coordinates at $p$. If $y$ is a conformal diffeomorphism and $\iota$ the index of $p$, the following statements hold:
	\begin{enumerate}[label=(\roman*)]
		\item If $\iota\geq2s+1$, then the sectional curvature $Sec$ of the induced metric on $U-\{p\}$, the mean curvature vector $\HC\vert_{U-\{p\}}$ and the Gauss-Kronecker curvature $K^\xi\vert_{U-\{p\}}$ have $C^{0,1}_{loc}$-extensions on $U$, where $\xi$ is any local smooth section of the normal bundle of $f$ over $U$. 
		\item If $\iota<2s+1$, then $\lim_{q\to p}Sec(q)=\infty$ and $\lim_{q\to p}\sum_{j=1}^{n-2}K^{\xi'_j}=\infty$ for any smooth orthonormal frame field $(\xi'_1,\dots,\xi'_{n-2})$ for the normal bundle of $f$ over $U$.    
	\end{enumerate}
\end{theorem}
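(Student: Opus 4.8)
The plan is to express the three curvature quantities through the relative fundamental forms $\II_{\VS}^{\xi_k}$ of a well-chosen frontal system and then feed a factorization of these forms into lemma \ref{lemsectionalGK}. First I would use proposition \ref{branchfrontal} to record that $\hat f$ is a normalized $C^{\infty}$-frontal with smooth complex co-principal part $b=(b_1,\dots,b_{n-2})$, and build the frame fields $\WC,\xi$ of lemma \ref{lemfundamentalparts}; this yields a frontal system $\VS=(U,x,\WC)$ with $\J{\VS}=\J{A}\circ x$, $(\xi\odot\xi)(x^{-1}(0))=\id_{n-2}$, and, by construction of $\xi$, $\gt(\dd{}{y_{m+a}}\circ f,\xi_k)=\delta_{ak}$. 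Since $\lambda_{\VS}=det(\J{A})\circ x=(|\dd{a}{z}|^2-|\dd{a}{\z}|^2)\circ x$ and, by lemma \ref{beltrami}, $\dd{a}{z}=(s+1)z^s(1+\tfrac12(d_1'+d_2'))$, $\dd{a}{\z}=(s+1)\z^s\tfrac12\overline{(d_1'-d_2')}$, I get $\lambda_{\VS}=|z|^{2s}\rho_0$ with $\rho_0$ smooth, positive near $0$ and $\rho_0(0)=(s+1)^2$. Finally lemma \ref{lemcoprincipalpart} supplies the orders: $\dd{b}{z}=z^{\nu}e$, $\dd{b}{\z}=\z^{\nu}c$ with $e,c\in C^{0,1}_{loc}$, where $\nu=s$ in case $(i)$ and $\nu=\iota-s-1$ in case $(ii)$, with moreover $e(0)\neq0$, $c(0)=0$ in case $(ii)$.

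The heart of the argument is the factorization $\II_{\VS}^{\xi_k}=\F_k\M$ with a common $\M$. As $y$ is conformal rather than isometric, the clean identity $\II_{\VS}^{\xi_k}=\J{B_k}\circ x$ of lemma \ref{lemfundamentalparts} is replaced, after differentiating $\gt(\WC_j,\xi_k)=0$, by $\II_{\VS}^{\xi_k}=\J{B_k}\circ x+\mathbb{R}_k$, where $\mathbb{R}_k$ gathers the Christoffel contributions $\sum_{b,c}\dd{\hat f_b}{x_i}\Gah{b}{c}{j}\gt(\dd{}{y_c}\circ f,\xi_k)$ and the terms carrying a factor $B_{aj}$. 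Taking $\M$ to be the real matrix of multiplication by $z^{\nu}$ (so $det(\M)=|z|^{2\nu}$), the main part factors as $\J{B_k}\circ x=\F_k^{(0)}\M$, where $\F_k^{(0)}$ is the real matrix of $\kappa\mapsto e_k\kappa+c_k\bar\kappa$, so $det(\F_k^{(0)})=|e_k|^2-|c_k|^2$. The point is that $\mathbb{R}_k$ factors through the same $\M$: because its two columns come from $\dd{\hat f}{x_1}=\dd{\hat f}{z}+\dd{\hat f}{\z}$ and $\dd{\hat f}{x_2}=i(\dd{\hat f}{z}-\dd{\hat f}{\z})$, with $\dd{\hat f_b}{z}=(s+1)z^sd_b$, and because $\gt$ and $\Gah{b}{c}{j}$ are real, $\mathbb{R}_k$ is the real-linear map $h\mapsto(2Re(z^{s}S_1h),\,2Re(z^{s}S_2h))$ for smooth $S_1,S_2$; hence $\mathbb{R}_k=\G_k\M$ with $\G_k(\kappa)=(2Re(z^{s-\nu}S_1\kappa),\,2Re(z^{s-\nu}S_2\kappa))$ smooth and, in case $(ii)$ where $s-\nu\geq1$, vanishing at $0$. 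Thus $\II_{\VS}^{\xi_k}=\F_k\M$ with $\F_k=\F_k^{(0)}+\G_k\in C^{0,1}_{loc}$ and $\F_k(0)=\F_k^{(0)}(0)$.

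With this in hand both items follow from lemma \ref{lemsectionalGK}. For $(i)$ one has $det(\M)/\lambda_{\VS}=\rho_0^{-1}$, which is smooth, so the sectional curvature and $K^{\xi'}$ extend to $C^{0,1}_{loc}$; for $\HC$ I would argue separately, factoring $\J{\VS}=\F_A\M$ in the same way and using that $adj(\M)$ is the real matrix of multiplication by $\z^s$, together with the cyclic identity $tr(adj(\M)\,X\,\M)=|z|^{2s}\,tr(X)$ for the relevant (smooth times $C^{0,1}$) matrix $X$, to get $H_{\VS}^{\xi_k}=|z|^{2s}\cdot(C^{0,1})$, whence $H^{\xi_k}=H_{\VS}^{\xi_k}/\lambda_{\VS}\in C^{0,1}_{loc}$ by lemma \ref{lemrelativecurvatures} and, since $(\xi\odot\xi)(0)=\id$, $\HC$ itself extends to $C^{0,1}_{loc}$. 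For $(ii)$ I would check the three hypotheses of lemma \ref{lemsectionalGK}$(ii)$: $\sum_k det(\F_k)(p)=\sum_k|e_k(0)|^2=|e(0)|^2>0$ (using $c(0)=0$, $e(0)\neq0$ from remark \ref{remindex} and lemma \ref{lemcoprincipalpart}); $det(\M)/\lambda_{\VS}=|z|^{2(\iota-s-1)}/(|z|^{2s}\rho_0)\to\infty$ because $\iota<2s+1$; and $\xi$ admits a smooth orthonormalizer $\D$ with $\D(p)=\id$ since $(\xi\odot\xi)(0)=\id$. Lemma \ref{lemsectionalGK}$(ii)$ then yields $\lim_{q\to p}Sec(q)=\infty$ and $\lim_{q\to p}\sum_jK^{\xi_j'}=\infty$.

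I expect the factorization in the non-isometric (conformal) setting to be the main obstacle: one must verify that the Christoffel correction $\mathbb{R}_k$ retains the complex-linear shape $z^{\nu}(\cdot)h+\z^{\nu}(\cdot)\bar h$ so that it is absorbed into the common factor $\M$ without spoiling the $C^{0,1}$-regularity of $\F_k$, whereas a naive computation of $\II_{\VS}^{\xi_k}\M^{-1}$ would produce a discontinuous $(\z/z)^{s}$ term at the origin. The reality of $\gt$ and of the Christoffel symbols, which forces the two columns of $\mathbb{R}_k$ to be complex-conjugate partners, is exactly what makes the absorption work.
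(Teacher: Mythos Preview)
Your argument is correct and takes a genuinely different route from the paper. The paper does \emph{not} attempt to absorb the Christoffel corrections into the factorization $\II_{\VS}^{\xi_k}=\F_k\M$. Instead it first proves the whole statement under the extra assumption that $y$ is an \emph{isometry}, where lemma~\ref{lemfundamentalparts} gives the clean identity $\II_{\VS}^{\xi_k}=\J{B_k}\circ x$ with no remainder, so that lemma~\ref{lemcoprincipalpart} and lemma~\ref{lemsectionalGK} apply directly. It then passes to the general conformal case by a conformal change of metric $\gt^{1}=e^{2\varphi}\gt$ turning $y$ into an isometry, and uses the standard transformation laws $Sec^{1}=e^{-2\varphi}(Sec-\Delta\varphi)$ and $A_{\xi}^{1}=A_{\xi}-\gt(\nabla\varphi^{\bot},\xi)\id_2$ to transfer the $C^{0,1}_{loc}$-extensions (resp.\ the divergence) of $Sec$, $K^{\xi}$, and $H^{\xi}$ back to the original metric.

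Your direct approach is sound: the key observation you isolate---that every entry of $\mathbb{R}_k$ depends on the column index $i$ only through a factor $\partial\hat f_b/\partial x_i$, hence $\mathbb{R}_k$ automatically factors through the real matrix of multiplication by $z^{s}$ (and therefore through $\M$ with a smooth $z^{s-\nu}$ left over)---is exactly the mechanism that makes it work, and it yields $\G_k(0)=0$ in case~$(ii)$ since $s-\nu=2s+1-\iota\ge 1$. The paper's reduction-to-isometry has the advantage of modularity (the Christoffel symbols never have to be touched, and one reuses off-the-shelf conformal-change formulas), while your method stays inside the single frontal system and avoids the auxiliary metric $\gt^{1}$ altogether. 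Both pay the same price in regularity, since the bottleneck is the $C^{0,1}_{loc}$ input from lemma~\ref{lemcoprincipalpart}.
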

\begin{proof}
	First, let us suppose that $y$ is an isometry. By proposition \ref{branchfrontal} $\hat{f}$ is a normalized frontal and by lemma \ref{lemfundamentalparts} the principal part $A$ and the co-principal part $B$ induce a framed coordinate system $\VS=(U,x,\WC)$ of $T_f$ and a frame field $\xi$ for $\bot_f\vert_U$, such that $\J{\VS}=\J{A}\circ x$, $(\xi\odot \xi)(p)=\id_{n-2\times n-2}$ and $\II_{\VS}^{\xi_k}=\J{B_k}\circ x$ for each $k\in\{1,\dots,n-2\}$. If $\iota\geq2s+1$ and $b$ is the complex co-principal part of $\hat{f}$, then by item $(ii)$ lemma \ref{lemcoprincipalpart} $\dd{b}{z}=z^{s}e(z)$ and $\dd{b}{\z}=\z^{s}c(z)$, for some $e,c\in C^{0,1}_{loc}(x(U),\Co^{n-2})$. Thus, $\dd{B_{k1}}{z}(z)=\dd{b_k}{z}(z)+\dd{\bar{b}_k}{z}(z)=z^s(e_k(z)+\bar{c}_k(z))/2$ and $\dd{B_{k2}}{z}(z)=-i(\dd{b_k}{z}(z)-\dd{\bar{b}_k}{z}(z))=z^si(-e_k(z)+\bar{c}_k(z))/2$, which implies that $\J{B_k}=\F_k\M$, where 
	\begin{align}\label{eqmatrices}		
		\F_k=\begin{pmatrix}
			Re\{e_k+\bar{c}_k\}&-Im\{e_k+\bar{c}_k\}\\
			Re\{-ie_k+i\bar{c}_k\}&-Im\{-ie_k+i\bar{c}_k\}
		\end{pmatrix},\M=\begin{pmatrix}
			Re\{z^s\}&-Im\{z^s\}\\
			Im\{z^s\}&Re\{z^s\}
		\end{pmatrix}.
	\end{align}
	On the other hand, by condition (\ref{cond1}) $\J{A}=(s+1)\D'\M$, where $\D'$ is the matrix in the determinant of the condition (\ref{frontalcond}), so $det(\D')\neq 0$ on $x(U)$. Then, the function $det(\M\circ x)/\lambda_{\VS}=det(\M\circ x)/(det(\M\circ x) det((s+1)\D'\circ x))=1/det((s+1)\D'\circ x)$ has a $C^{\infty}$-extension on $U$. Since $\II_{\VS}^{\xi_k}=\J{B_k}\circ x=(\F_k\circ x)(\M\circ x)$, the item $(i)$ of lemma \ref{lemsectionalGK} implies that the sectional curvature and the Gauss-Kronecker curvature respect to any smooth section of $\bot_f\vert_U$ have $C^{0,1}_{loc}$-extensions on $U$. Also, as $H_\VS^{\xi_k}=-\frac{1}{2}tr(adj(\J{\VS})\mathfrak{J}_\VS^{\xi_k})$ and $\mathfrak{J}_\VS^{\xi_k}=-\I_\WC^{-1}\II_{\VS}^{\xi_k}$, then $H_\VS^{\xi_k}=\frac{(s+1)}{2}tr(adj(\M\circ x)adj(\D'\circ x)\I_\WC^{-1}(\F_k\circ x)(\M\circ x))$ and by the cyclic property of the trace, we can write $H_\VS^{\xi_k}=det(M\circ x)\rho_k$, where $\rho_k \in C^{0,1}_{loc}(U,\Re)$. By lemma \ref{lemrelativecurvatures}, $H^{\xi_k}=H^{\xi_k}_{\VS}/\lambda_{\VS}=\rho_k det(\M\circ x)/\lambda_{\VS}$ and thus $H^{\xi_k}$ has a $C^{0,1}_{loc}$-extension on $U$ for each $k\in\{1,\dots,n-2\}$. Since $H^{\xi_k}=\gt(\HC,\xi_k)$ for each $k\in\{1,\dots,n-2\}$, then $\HC$ has a $C^{0,1}_{loc}$-extension on $U$ and we have completed the proof of item $(i)$. 
	
	To prove item $(ii)$, if $\iota<2s+1$, then by item $(i)$ of lemma \ref{lemcoprincipalpart} $\dd{b}{z}=z^{\iota-s-1}e(z)$ and $\dd{b}{\z}=\z^{\iota-s-1}c(z)$, for some $e,c\in C^{0,1}_{loc}(x(U),\Co^{n-2})$ with $c(0)=0$ and $e(0)\neq 0$. Similarly as before, we have $\II_{\VS}^{\xi_k}=(\F_k\circ x)(\M'\circ x)$, where $\F_k$, $\M'$ have the same form of the matrices in (\ref{eqmatrices}), exchanging $s$ by $\iota-s-1$. Note that $\sum_{k=1}^{n-2}det(\F_k)(x(p))=\sum_{k=1}^{n-2}|e_k(0)|^2>0$ and $\lim_{q\to p}(det(\M'\circ x)/\lambda_{\VS})(q)=\lim_{q\to p}(det(\M'\circ x)/(det(\M\circ x) det((s+1)\D'\circ x)))(q)=\infty$. Also, since $(\xi\odot\xi)(p)=\id_{n-2\times n-2}$, the frame $\xi(p)$ is orthonormal and by Gram-Schmidt process, there exists $\D\in C^{\infty}(U,GL(n-2))$ with $\D(p)=\id_{n-2\times n-2}$ such that $\xi\D$ is an orthonormal frame field for $\bot_f\vert_U$. By item $(ii)$ of lemma \ref{lemsectionalGK}, item $(ii)$ of the theorem follows.  
	
	Now, if $y$ is a conformal diffeomorphism, then there exists $\varphi\in C^{\infty}(V,\Re)$ such that $V$ endowed with the metric $\gt^1=e^{2\varphi}\gt$ turns $y:V\to y(V)$ into an isometry, so items $(i)$ and $(ii)$ are true with this metric. By the formulas of conformal changes of metric (cf.\cite{Bes} p.59), $Scal^1=e^{-2\varphi}(Scal-2\Delta\varphi)$, where $\Delta$ is the Laplacian operator respect to $\gt$. Since $Scal=2Sec$ and $Scal^1=2Sec^1$, we have $Sec^1=e^{-2\varphi}(Sec-\Delta\varphi)$ and by equation (\ref{eqgauss2}), item $(ii)$ follows. Also, by the formulas of conformal changes $\alpha^1(\XC,\YC)=\alpha(\XC,\YC)-\gt(f_*\XC,f_*\YC)\nabla\varphi^\bot$, where $\nabla\varphi$ is the gradient of $\varphi$.  Then, for any smooth section $\xi$ of $\bot_f\vert_U$, we have $\II^{1\xi}=e^{2\varphi}\II^\xi-\I \gt^1(\nabla\varphi^\bot,\xi)$ and since $\I^1=e^{2\varphi}\I$, this implies that $\A_\xi^1=\A_\xi-\gt(\nabla\varphi^\bot,\xi)\id_2$ (see lemma \ref{decomposition}). Therefore, $H^\xi=H^{1\xi}+\gt(\nabla\varphi^\bot,\xi)/2$ and $K^{1\xi}=K^\xi+\gt(\nabla\varphi^\bot,\xi)^2-\gt(\nabla\varphi^\bot,\xi)2H^\xi$. If $\iota\geq2s+1$, since $y:V\to y(V)$ is an isometry when $V$ is endowed with $\gt^1$, then as item $(i)$ is true with this metric, we have that $Sec^1$, $H^{1\xi}$ and $K^{1\xi}$ have $C^{0,1}_{loc}$-extensions and by the previous formulas $Sec$, $H^{\xi}$ and $K^{\xi}$ have also $C^{0,1}_{loc}$-extensions. It follows item $(i)$.
	
\end{proof}

\bibliographystyle{siam}

\bibliography{references}

\begin{thebibliography}{10}

\bibitem{As}
{\sc K.~Astala, T.~Iwaniec, and G.~Martin}, {\em Elliptic partial differential
  equations and quasiconformal mappings in the plane}, vol.~48 of Princeton
  Mathematical Series, Princeton University Press, Princeton, NJ, 2009.

\bibitem{Bes}
{\sc A.~L. Besse}, {\em Einstein manifolds}, Class. Math., Berlin: Springer,
  reprint of the 1987 edition~ed., 2008.

\bibitem{Coff}
{\sc A.~Coffman, Y.~Pan, and Y.~Zhang}, {\em Continuous solutions of nonlinear
  {C}auchy-{R}iemann equations and pseudoholomorphic curves in normal
  coordinates}, Trans. Amer. Math. Soc., 369 (2017), pp.~4865--4887.

\bibitem{toj}
{\sc M.~Dajczer and R.~Tojeiro}, {\em Submanifold theory}, Universitext,
  Springer, New York, 2019.
\newblock Beyond an introduction.

\bibitem{Di}
{\sc U.~Dierkes, S.~Hildebrandt, and A.~J. Tromba}, {\em Regularity of minimal
  surfaces. {With} assistance and contributions by {A}. {K{\"u}ster}}, vol.~340
  of Grundlehren Math. Wiss., Dordrecht: Springer, 2nd revised and enlarged
  ed.~ed., 2010.

\bibitem{Eschenburg1988}
{\sc J.~Eschenburg and R.~Tribuzy}, {\em Branch points of conformal mappings of
  surfaces}, Mathematische Annalen, 279 (1988), pp.~621--633.

\bibitem{GT}
{\sc D.~Gilbarg and N.~S. Trudinger}, {\em Elliptic partial differential
  equations of second order}, Class. Math., Berlin: Springer, reprint of the
  1998 ed.~ed., 2001.

\bibitem{Gu}
{\sc R.~D. Gulliver}, {\em Regularity of minimizing surfaces of prescribed mean
  curvature}, Annals of Mathematics, 97 (1973), pp.~275--305.

\bibitem{O1}
{\sc R.~D. Gulliver, II, R.~Osserman, and H.~L. Royden}, {\em A theory of
  branched immersions of surfaces}, Amer. J. Math., 95 (1973), pp.~750--812.

\bibitem{Hild}
{\sc S.~Hildebrandt and A.~J. Tromba}, {\em On the branch point index of
  minimal surfaces}, Arch. Math., 92 (2009), pp.~493--500.

\bibitem{Ho1}
{\sc L.~H{\"o}rmander}, {\em An introduction to complex analysis in several
  variables.}, vol.~7 of North-Holland Math. Libr., Amsterdam etc.:
  North-Holland, 3rd revised ed.~ed., 1990.

\bibitem{ishifrontal2}
{\sc G.~Ishikawa}, {\em Singularities of frontals}, in Singularities in generic
  geometry, vol.~78 of Adv. Stud. Pure Math., Math. Soc. Japan, Tokyo, 2018,
  pp.~55--106.

\bibitem{kaul1}
{\sc H.~Kaul}, {\em Isoperimetrische ungleichung und gau$\beta$-bonnet-formel
  f{\"u}r h-fl{\"a}chen in riemannschen mannigfaltigkeiten}, Archive for
  Rational Mechanics and Analysis, 45 (1972), pp.~194--221.

\bibitem{Me1}
{\sc T.~A. Medina-Tejeda}, {\em The fundamental theorem for singular surfaces
  with limiting tangent planes}, Mathematische Nachrichten, 296 (2023),
  pp.~732--756.

\bibitem{Me2}
\leavevmode\vrule height 2pt depth -1.6pt width 23pt, {\em Extendibility and
  boundedness of invariants on singularities of wavefronts}, Monatshefte für
  Mathematik, 203 (2024), pp.~199--221.

\bibitem{Me3}
\leavevmode\vrule height 2pt depth -1.6pt width 23pt, {\em Some classes of
  frontals and its representation formulas}, Results in Mathematics, 79 (2024),
  p.~192.

\bibitem{M-W}
{\sc M.~J. Micallef and B.~White}, {\em The structure of branch points in
  minimal surfaces and in pseudoholomorphic curves}, Annals of Mathematics, 141
  (1995), pp.~35--85.

\bibitem{Mo}
{\sc C.~B. Morrey, Jr.}, {\em On the analyticity of the solutions of analytic
  non-linear elliptic systems of partial differential equations. {I}.
  {A}nalyticity in the interior}, Amer. J. Math., 80 (1958), pp.~198--218.

\bibitem{O2}
{\sc R.~Osserman}, {\em A proof of the regularity everywhere of the classical
  solution to {P}lateau's problem}, Ann. of Math. (2), 91 (1970), pp.~550--569.

\bibitem{Spvol4}
{\sc M.~Spivak}, {\em A comprehensive introduction to differential geometry.
  {Vol}. {IV}. 2nd ed}.
\newblock Berkeley: {Publish} or {Perish}, {Inc}. {VII}, 561 p. {Vols}. 3-5 set
  \$ 62.50 (1979)., 1979.

\bibitem{Trom}
{\sc A.~Tromba}, {\em A theory of branched minimal surfaces}, Springer Monogr.
  Math., Berlin: Springer, 2012.

\bibitem{WB}
{\sc B.~White}, {\em Classical area minimizing surfaces with real-analytic
  boundaries}, Acta Math., 179 (1997), pp.~295--305.

\end{thebibliography}

\end{document}